\renewcommand{\labelenumi}{\alph{enumi})}
\renewcommand{\emph}[1]{\textbf{#1}}
\newcommand{\alignDotsSpace}{~\,}
\newcommand{\alignLongFormula}{\quad}
\newcommand{\DegRevLex}{\texttt{DegRevLex}}
\newcommand{\oline}[1]{\overline{#1}}
\newcommand{\restrict}[2]{#1 \vert_{#2}}
\newcommand{\RedR}[1]{\stackrel{#1}{\longrightarrow}}
\newcommand{\RedL}[1]{\stackrel{#1}{\longleftarrow}}
\newcommand{\RedLR}[1]{\stackrel{#1}{\longleftrightarrow}}
\DeclareMathOperator{\BF}{BF}
\DeclareMathOperator{\codim}{codim}
\DeclareMathOperator{\ind}{ind}
\DeclareMathOperator{\lcm}{lcm}
\DeclareMathOperator{\LT}{LT}
\DeclareMathOperator{\Mat}{Mat}
\DeclareMathOperator{\NF}{NF}
\DeclareMathOperator{\NR}{NR}
\DeclareMathOperator{\Pos}{Pos}
\DeclareMathOperator{\SV}{S}
\DeclareMathOperator{\Supp}{Supp}
\DeclareMathOperator{\Syz}{Syz}
\DeclareMathOperator{\tr}{tr}
\newtheoremstyle
    {proposition}
    {2ex plus0.4ex minus0.2ex}
    {2ex plus0.4ex minus0.2ex}
    {\itshape}
    {}
    {\bfseries}
    {}
    {0.5em}
    {\thmname{#1} \thmnumber{#2.} \thmnote{(#3)\newline}}
\theoremstyle{proposition}
\newtheorem{thm}{Theorem}[section]
\newtheorem{lem}[thm]{Lemma}
\newtheorem{prop}[thm]{Proposition}
\newtheorem{cor}[thm]{Corollary}
\newtheoremstyle
    {definition}
    {2ex plus0.4ex minus0.2ex}
    {2ex plus0.4ex minus0.2ex}
    {}
    {}
    {\bfseries}
    {}
    {0.5em}
    {\thmname{#1} \thmnumber{#2.} \thmnote{(#3)\newline}}
\theoremstyle{definition}
\newtheorem{defn}[thm]{Definition}
\newtheorem{exmp}[thm]{Example}
\newtheorem{rem}[thm]{Remark}
\newcommand{\CHOOSE}{\textbf{choose }}
\newcommand{\assign}{:=}
\definecolor{linkcolor}{rgb}{0.0, 0.0, 0.7}
\author{Markus Kriegl}
\address{Fakultät für Informatik und Mathematik, Universität Passau, D-94030
Passau, Germany}
\email{markus.kriegl@uni-passau.de}
\date{\today}
\title{Module Border Bases}
\subjclass[2010]{Primary 13P10}
\keywords{border bases, module border bases, quotient module border bases,
subideal border bases, characterization, computation}
\thanks{Special thanks go to Martin Kreuzer and Tobias Kaiser, both Universität
Passau (Germany), who have corrected my master thesis---which brought me to
the ideas in this paper---and with whom I have had helpful discussions that led
to the final version of this paper.}
\begin{document}

%
%

\begin{abstract}
In this paper, we generalize the notion of border bases of zero-dimensional
polynomial ideals to the module setting. To this end, we introduce order
modules as a generalization of order ideals and module border bases of
submodules with finite codimension in a free module as a generalization of
border bases of zero-dimensional ideals in the first part of this paper. In
particular, we extend the division algorithm for border bases to the module
setting, show the existence and uniqueness of module border bases, and
characterize module border bases analogously like border bases via the special
generation property, border form modules, rewrite rules, commuting matrices, and
liftings of border syzygies. Furthermore, we deduce Buchberger's Criterion for
Module Border Bases and give an algorithm for the computation of module border
bases that uses linear algebra techniques. In the second part, we further
generalize the notion of module border bases to quotient modules. We then show
the connection between quotient module border bases and special module border
bases and deduce characterizations similar to the ones for module border bases.
Moreover, we give an algorithm for the computation of quotient module border
bases using linear algebra techniques, again. At last, we prove that subideal
border bases are isomorphic to special quotient module border bases. This
isomorphy immediately yields characterizations and an algorithm for the
computation of subideal border bases.
\end{abstract}

\maketitle

%
%

\section{Introduction}
\label{sect:intro}
Border bases of zero-dimensional ideals have turned out to be a good alternative
for Gröbner bases as they have nice numerical properties, see for instance
\cite{Stetter2004}, \cite{OilHilbert}, \cite{Mourrain-pythagore} and
\cite{Mourrain-stableNF}. In the paper \cite{SubBB}, border bases have been
generalized to subideal border bases. The main difference is that the authors
additionally ensure that all the computation takes place in a so-called
\enquote{subideal}. This allows them to take external knowledge about the
system into account, e.\,g.\ if some physical properties are known to be
satisfied. We use a similar construction in this paper in order to generalize
border bases and subideal border bases of zero-dimensional ideals, i.\,e.\
ideals with finite codimension, to the module setting. Therefore, we introduce
new concepts, namely module border bases and quotient module border bases of
submodules with finite codimension.

In Part~\ref{part:freeMod} of this paper, we introduce the notion of module
border bases of free modules with finite rank as a generalization of border
bases to the module setting. More precisely, we regard a polynomial ring~$P$ in
finitely many indeterminates and determine bases of $P$-submodules $U \subseteq
P^r$ with finite codimension similar to border bases, where $r \in \mathbb N$.
We show that we can reuse the concepts and ideas of border bases introduced
in \cite[Section~6.4]{KR2}, \cite{CharBB}, and \cite{CompBB} in an almost
straightforward way to have corresponding results for module border bases. We
firstly introduce the basic concepts and a division algorithm for module border
bases (Theorem~\ref{thm:divAlg}) in Section~\ref{sect:divAlg}, then we prove the
existence and uniqueness of module border bases
(Proposition~\ref{thm:existUnique}) in Section~\ref{sect:existUnique}. In
Section~\ref{sect:chars}, we characterize module border bases via the special
generation property (Theorem~\ref{thm:specGen}), via border form modules
(Theorem~\ref{thm:BFMod}), via rewrite rules (Theorem~\ref{thm:rewrite}), via
commuting matrices (Theorem~\ref{thm:commMat}), and via liftings of border
syzygies (Theorem~\ref{thm:liftings}). Finally, we determine Buchberger's
Criterion for Module Border Bases (Theorem~\ref{thm:buchbCrit}), which allows us
to check easily whether a given module border prebasis is a module border basis,
or not. At last, we give an algorithm for the computation of module border bases
which uses linear algebra techniques in Section~\ref{sect:moduleBBAlg}.\\
In Part~\ref{part:quotMod}, we further generalize the notion of module border
bases of free modules to module border bases of quotient modules. More
precisely, we regard a polynomial ring~$P$ in finitely many indeterminates and
determine bases of $P$-submodules $U^S \subseteq P^r / S$ with finite
codimension similar to border bases, where $r \in \mathbb N$ and $S \subseteq
P^r$ is a given $P$-submodule. In Section~\ref{sect:quotModuleBB}, we introduce
the basic concepts of quotient module border bases. For a quotient module
border prebasis, we then define characterizing module border prebases
(Definition~\ref{defn:charModuleBB}) and use these characterizing module
border prebases to characterize quotient module border bases
(Theorem~\ref{thm:char-quot}). As an immediate consequence, we give an algorithm
for the computation of quotient module border bases with linear algebra
techniques (Corollary~\ref{thm:quotModuleBBAlg}) and characterize quotient
module border bases similar to module border bases via the special generation
property (Corollary~\ref{thm:specGen-quot}), via border form modules
(Corollary~\ref{thm:BFMod-quot}), via rewrite rules
(Corollary~\ref{thm:rewrite-quot}), via commuting matrices
(Corollary~\ref{thm:commMat-quot}), via liftings of border syzygies
(Corollary~\ref{thm:liftings-quot}), and via Buchberger's Criterion for Quotient
Module Border Bases (Corollary~\ref{thm:buchbCrit-quot}).\\
At last, we show that subideal border bases---which have recently been
introduced in \cite{SubBB}---can be regarded as special quotient module border
bases in Section~\ref{sect:subBB}. Therefore, the characterizations of quotient
module border bases in Section~\ref{sect:quotModuleBB} immediately yield
corresponding characterizations of subideal border bases
(Remark~\ref{rem:char-subBB}). In particular, we get another proof for the
characterization of subideal border bases via the special generation property,
which has already been proven in \cite{SubBB}, and we get the characterizations
of subideal border bases via border form modules, via rewrite rules, via
commuting matrices, via liftings of border syzygies, and via Buchberger's
Criterion for Subideal Border Bases which have not been proven by now. Moreover,
the algorithm for the computation of quotient module border bases can be used to
determine an algorithm for the computation of subideal border bases with linear
algebra techniques (Corollary~\ref{thm:subBBcompute}). The only way to compute
subideal border bases by now uses Gröbner bases techniques and thus needs, in
general, much more computational effort (cf.\ \cite[Section~6]{SubBB}).
Furthermore, we give some remarks how the further results about subideal border
bases in \cite{SubBB}, e.\,g.\ a division algorithm or an index, coincide with
our construction of subideal border bases as special quotient module border
bases.

For the whole paper, we use the notation and results of \cite{KR1} and
\cite{KR2}. In particular, we let $K$ be a field and $P = K[x_1, \ldots, x_n]$
be the polynomial ring in the indeterminates $x_1, \ldots, x_n$. Furthermore, we
let $r \in \mathbb N$ and $\{ e_1, \ldots, e_r \} \subseteq P^r$ be the
canonical $P$-module basis of the free $P$-module $P^r$. The monoid of all terms
$x_1^{\alpha_1} \cdots x_n^{\alpha_n} \in P$ with $\alpha_1, \ldots, \alpha_n
\in \mathbb N$ is denoted by $\mathbb T^n$ and the monoid of all terms $t e_k
\in P^r$ with $t \in \mathbb T^n$ and $k \in \{ 1, \ldots, r \}$ is denoted by
$\mathbb T^n \langle e_1, \ldots, e_r \rangle$. The set of all terms in $\mathbb
T^n$ with degree $d \in \mathbb N$ is denoted by $\mathbb T^n_d$ and we use
similar constructions, e.\,g.\ $\mathbb T^n_{\leq d} \langle e_1, \ldots, e_r
\rangle$ denotes the set of all terms $t e_k \in \mathbb T^n \langle e_1,
\ldots, e_r \rangle$ such that $\deg(t) \leq d$. Moreover, for any term ordering
$\sigma$ on the set of terms $\mathbb T^n \langle e_1, \ldots, e_r \rangle$ and
$P$-submodule $U \subseteq P^r$, we let $\LT_\sigma \{ U \} = \{
\LT_\sigma(\mathcal V) \mid \mathcal V \in U \setminus \{ 0 \} \}$ be the
monomodule of all leading terms of the vectors in $U \setminus \{ 0 \}$, and we
let $\mathcal O_\sigma(U) = \mathbb T^n \langle e_1, \ldots, e_r \rangle
\setminus \LT_\sigma \{ U \}$. At last, for every $P$-module~$M$ (respectively
$K$-vector space) and every $P$-submodule $U \subseteq M$ (respectively
$K$-vector subspace), we let
\begin{align*}
\varepsilon_U: M \twoheadrightarrow M / U, \quad m \mapsto m + U
\end{align*}
be the canonical $P$-module epimorphism ($K$-vector space epimorphism).

%
%

\part{Module Border Bases of Free Modules}
\label{part:freeMod}

In the first part of this paper, we generalize the results about border bases of
zero-dimensional ideals in $P$, i.\,e.\ ideals which have finite $K$-codimension
in~$P$, in \cite[Section~6.4]{KR2}, \cite{CharBB}, and \cite{CompBB} to the
module setting. We imitate the definitions and propositions there in order to
have similar results for $P$-submodules $U \subseteq P^r$ of finite
$K$-codimension in the free $P$-module $P^r$. At first, we generalize the
concepts of border bases and the corresponding division algorithm to module
border bases in Section~\ref{sect:divAlg}. Section~\ref{sect:existUnique} then
shows the existence and uniqueness of module border bases. In
Section~\ref{sect:chars}, we characterize module border bases the same way as
border bases have been characterized. At last we determine an algorithm that
computes module border bases in Section~\ref{sect:moduleBBAlg}.

%
%

\section{Module Border Division}
\label{sect:divAlg}

In this section, we generalize the concept of border basis of zero-dimensional
ideals from \cite[Section~6.4]{KR2} to the module setting in a straightforward
way. We see that the analogs of order ideals, namely order modules, cf.\
Definition~\ref{defn:orderModule}, are also subsets of $\mathbb T^n \langle e_1,
\ldots, e_r \rangle$ which are closed under forming divisors. This allows us to
define the concepts of borders in Definition~\ref{defn:border}, of the index of
a order module in Definition~\ref{defn:index} and module border bases in
Definition~\ref{defn:moduleBB}. At last, we determine a division algorithm for
module border bases in Theorem~\ref{thm:divAlg}. Similar to Gröbner bases and
border bases, this division algorithm plays a central role in many proofs. We
end up this section with some immediate consequences of this division algorithm.

We start with the definition and properties of order ideals. Our definition of
order ideals generalizes the order ideals defined in \cite[Defn.~6.4.3]{KR2},
namely we also regard the empty set as an order ideal. Though this seems to be
quite a technical act, we see in Remark~\ref{rem:needEmptyOrderModule} that this
generalization is necessary.

\begin{defn}
\label{defn:orderIdeal}
A set $\mathcal O \subseteq \mathbb T^n$ is called an \emph{order ideal} if it
is closed under forming divisors.
\end{defn}

\begin{defn}
\label{defn:orderIdealBorder}
Let $\mathcal O \subseteq \mathbb T^n$ be an order ideal.
\begin{enumerate}
  \item We call the set
  \begin{align*}
  \partial^1 \mathcal O = \partial \mathcal O = ((\mathbb T^n_1 \cdot \mathcal
  O) \cup \{ 1 \}) \setminus \mathcal O \subseteq \mathbb T^n
  \end{align*}
  the \emph{(first) border} of~$\mathcal O$. The \emph{(first) border closure}
  of~$\mathcal O$ is the set
  \begin{align*}
  \oline{\partial^1 \mathcal O} = \oline{\partial \mathcal O} = \mathcal O \cup
  \partial \mathcal O \subseteq \mathbb T^n.
  \end{align*}
  \item For every $k \in \mathbb N \setminus \{ 0 \}$, we inductively define the
  \emph{$(k+1)^\text{st}$~border} of~$\mathcal O$ by the rule
  \begin{align*}
  \partial^{k+1} \mathcal O = \partial (\oline{\partial^k \mathcal O}) \subseteq
  \mathbb T^n,
  \end{align*}
  and the \emph{$(k+1)^\text{st}$~border closure} by the rule
  \begin{align*}
  \oline{\partial^{k+1} \mathcal O} = \oline{\partial^k \mathcal O} \cup
  \partial^{k+1} \mathcal O \subseteq \mathbb T^n.
  \end{align*}
  For convenience, we let
  \begin{align*}
  \partial^0 \mathcal O = \oline{\partial^0 \mathcal O} = \mathcal O.
  \end{align*}
\end{enumerate}
\end{defn}

\begin{prop}
\label{thm:orderIdealBorder}
Let $\mathcal O \subseteq \mathbb T^n$ be an order ideal.
\begin{enumerate}
  \item For every $k \in \mathbb N$, we have a disjoint union
  \begin{align*}
  \oline{\partial^k \mathcal O} = \bigcup_{i=0}^k \partial^i \mathcal O.
  \end{align*}
  \item We have a disjoint union
  \begin{align*}
  \mathbb T^n = \bigcup_{i=0}^\infty \partial^i \mathcal O.
  \end{align*}
  \item For every $k \in \mathbb N \setminus \{ 0 \}$, we have
  \begin{align*}
  \partial^k \mathcal O & = ((\mathbb T^n_k \cdot \mathcal O) \cup \mathbb
  T^n_{k-1}) \setminus (\mathbb T^n_{<k} \cdot \mathcal O)\\
  & = \begin{cases} (\mathbb T^n_k \cdot \mathcal O) \setminus (\mathbb T^n_{<k}
  \cdot \mathcal O) & \text{if $\mathcal O \neq \emptyset$},\\
  \mathbb T^n_{k-1} & \text{if $\mathcal O = \emptyset$}.
  \end{cases}
  \end{align*}
  \item Let $t \in \mathbb T^n$ be a term. Then there exists a factorization of
  the form $t = t' b$ with a term $t' \in \mathbb T^n$ and $b \in \partial
  \mathcal O$ if and only if $t \in \mathbb T^n \setminus \mathcal O$.
\end{enumerate}
\end{prop}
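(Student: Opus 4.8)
The plan is to prove the four assertions roughly in the order (a), then a closed formula for the iterated border closures, then (c), then (b), and finally (d).

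Assertion (a) I would prove by induction on $k$. For $k = 0$ it is the definitional equality $\overline{\partial^0\mathcal{O}} = \mathcal{O} = \partial^0\mathcal{O}$. For the step, $\overline{\partial^{k+1}\mathcal{O}} = \overline{\partial^k\mathcal{O}} \cup \partial^{k+1}\mathcal{O} = \bigl(\bigcup_{i=0}^k \partial^i\mathcal{O}\bigr) \cup \partial^{k+1}\mathcal{O}$ by the inductive hypothesis, and since $\partial^{k+1}\mathcal{O} = \partial(\overline{\partial^k\mathcal{O}})$ is, by the very definition of $\partial$, disjoint from $\overline{\partial^k\mathcal{O}} = \bigcup_{i=0}^k \partial^i\mathcal{O}$, the union remains disjoint. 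Along the way one checks, by a parallel induction, that each $\overline{\partial^k\mathcal{O}}$ is again an order ideal (the border closure of an order ideal $\mathcal{P}$ equals $(\mathbb{T}^n_{\leq 1}\cdot\mathcal{P}) \cup \{1\}$, which is clearly closed under forming divisors), so that iterating $\partial$ is legitimate.

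Next I would establish the auxiliary identity $\overline{\partial^k\mathcal{O}} = (\mathbb{T}^n_{\leq k}\cdot\mathcal{O}) \cup \mathbb{T}^n_{\leq k-1}$ for all $k \in \mathbb{N}$ (with the convention $\mathbb{T}^n_{\leq -1} = \emptyset$), again by induction on $k$; the step uses that $\overline{\partial^{k+1}\mathcal{O}} = \overline{\partial^k\mathcal{O}} \cup (\mathbb{T}^n_1 \cdot \overline{\partial^k\mathcal{O}}) \cup \{1\}$ (which drops out of the definition of $\partial$ together with $A \cup (B \setminus A) = A \cup B$) and the monoid identities $\mathbb{T}^n_{\leq j} \cup (\mathbb{T}^n_1 \cdot \mathbb{T}^n_{\leq j}) = \mathbb{T}^n_{\leq j+1}$. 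From this, assertion (c) follows quickly: by (a) the union in the auxiliary identity is disjoint, so $\partial^k\mathcal{O} = \overline{\partial^k\mathcal{O}} \setminus \overline{\partial^{k-1}\mathcal{O}}$ for $k \geq 1$; substituting the auxiliary identity and simplifying the set difference, I would distinguish the case $\mathcal{O} = \emptyset$ (where every $\mathbb{T}^n_{\leq j}\cdot\mathcal{O}$ is empty, giving $\partial^k\mathcal{O} = \mathbb{T}^n_{k-1}$) from the case $\mathcal{O} \neq \emptyset$ (where $1 \in \mathcal{O}$ forces $\mathbb{T}^n_{\leq j} \subseteq \mathbb{T}^n_{\leq j+1}\cdot\mathcal{O}$, so that $\overline{\partial^k\mathcal{O}} = \mathbb{T}^n_{\leq k}\cdot\mathcal{O}$ and hence $\partial^k\mathcal{O} = (\mathbb{T}^n_k\cdot\mathcal{O})\setminus(\mathbb{T}^n_{<k}\cdot\mathcal{O})$). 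The two displayed forms in the statement coincide because $\mathbb{T}^n_{k-1} \subseteq \mathbb{T}^n_{<k}\cdot\mathcal{O}$ precisely when $\mathcal{O} \neq \emptyset$.

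Assertion (b) then follows: disjointness of the $\partial^i\mathcal{O}$ is immediate from (a) (for $i < j$, $\partial^j\mathcal{O}$ is disjoint from $\overline{\partial^{j-1}\mathcal{O}} \supseteq \partial^i\mathcal{O}$), and any term $t$ of degree $d$ lies in $\overline{\partial^{d+1}\mathcal{O}} = \bigcup_{i=0}^{d+1}\partial^i\mathcal{O}$ since the auxiliary identity gives $\mathbb{T}^n_{\leq d} \subseteq \overline{\partial^{d+1}\mathcal{O}}$. Finally, for (d), the implication $\Rightarrow$ is immediate: $b \in \partial\mathcal{O}$ implies $b \notin \mathcal{O}$, and if $t = t'b$ lay in $\mathcal{O}$ its divisor $b$ would too, a contradiction. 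For $\Leftarrow$, given $t \notin \mathcal{O}$, I would pick among all divisors of $t$ not lying in $\mathcal{O}$ (a nonempty set, as it contains $t$) one, $b$, of minimal degree; if $\mathcal{O} = \emptyset$ then $b = 1 \in \partial\mathcal{O}$, and otherwise $1 \in \mathcal{O}$ forces $\deg b \geq 1$, so writing $b = x_i b'$ with $b'$ a divisor of $b$ (hence of $t$) of smaller degree, minimality yields $b' \in \mathcal{O}$, whence $b \in (\mathbb{T}^n_1 \cdot \mathcal{O}) \setminus \mathcal{O} \subseteq \partial\mathcal{O}$; in either case $b \mid t$ gives the factorization $t = t'b$. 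The arguments are routine; the only real care needed is the uniform handling of the empty order ideal and the bookkeeping around the stray term $\{1\}$ in the definition of $\partial$, which is exactly what forces the case distinction in (c).
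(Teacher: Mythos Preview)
Your proposal is correct. For parts (a)--(c) it is essentially the paper's argument, though you organize it more cleanly: you isolate the identity $\overline{\partial^k\mathcal O} = (\mathbb T^n_{\leq k}\cdot\mathcal O)\cup\mathbb T^n_{\leq k-1}$ as an explicit lemma and deduce (c) from it, whereas the paper runs the induction directly on the formula for $\partial^k\mathcal O$ and invokes this closure identity in passing without separately proving it. Your route makes the bookkeeping (especially around the $\{1\}$ term and the empty case) more transparent.

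The genuine difference is in (d). The paper proves the backward direction by first locating $t$ in some $\partial^k\mathcal O$ via (b) and (c), writing $t = x_\ell t_1 t_2$ with $t_1\in\mathbb T^n_{k-1}$ and $t_2\in\mathcal O$, and arguing that $x_\ell t_2\in\partial\mathcal O$. Your argument---pick a divisor $b$ of $t$ outside $\mathcal O$ of minimal degree, then any $x_i^{-1}b$ lies in $\mathcal O$ so $b\in\partial\mathcal O$---is more elementary: it uses only the definition of an order ideal and of $\partial$, and does not depend on (b) or (c) at all. This makes (d) logically independent of the earlier parts, which is a small gain; the paper's approach, on the other hand, yields slightly more, namely that the cofactor $t'$ can be taken of degree exactly $\ind_{\mathcal O}(t)-1$, which is what is later recorded as Proposition~\ref{thm:orderIdealIndex}(a).
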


\begin{proof}
We start to prove claim~a) with induction over $k \in \mathbb N$. For the
induction start, Definition~\ref{defn:orderIdealBorder} yields
\begin{align*}
\oline{\partial^0 \mathcal O} = \partial^0 \mathcal O = \bigcup_{i=0}^0
\partial^i \mathcal O
\end{align*}
and
\begin{align*}
\oline{\partial^1 \mathcal O} = \oline{\partial^0 \mathcal O} \cup \partial^1
\mathcal O = \partial^0 \mathcal O \cup \partial^1 \mathcal O = \bigcup_{i=0}^1
\partial^i \mathcal O.
\end{align*}
For every $k \in \mathbb N \setminus \{ 0 \}$,
Definition~\ref{defn:orderIdealBorder} and the induction hypothesis yield
\begin{align*}
\oline{\partial^{k+1} \mathcal O} = \oline{\partial^k \mathcal O} \cup
\partial^{k+1} \mathcal O = \bigcup_{i=0}^k \partial^i \mathcal O \cup
\partial^{k+1} \mathcal O = \bigcup_{i=0}^{k+1} \partial^i \mathcal O.
\end{align*}
Moreover, for every $i, j \in \mathbb N$ with $i \neq j$, the borders
$\partial^i \mathcal O$ and $\partial^j \mathcal O$ are disjoint according to
Definition~\ref{defn:orderIdealBorder}. Thus the claim follows.

Since every term in~$\mathbb T^n$ is in~$\partial^i \mathcal O$ for some $i \in
\mathbb N$ by Definition~\ref{defn:orderIdealBorder}, claim~b) is a direct
consequence of claim~a).

We now prove claim~c) by induction over $k \in \mathbb N \setminus \{ 0 \}$. For
the induction start $k=1$, Definition~\ref{defn:orderIdealBorder} yields
\begin{align*}
\partial^1 \mathcal O & = ((\mathbb T^n_1 \cdot \mathcal O) \cup \{ 1 \})
\setminus \mathcal O\\
& = ((\mathbb T^n_1 \cdot \mathcal O) \cup \mathbb T^n_0) \setminus (\mathbb
T^n_{<1} \cdot \mathcal O).
\end{align*}
For the induction step, we let $k > 1$. Then the induction hypothesis and
Definition~\ref{defn:orderIdealBorder} yield
\begin{align*}
\partial^k \mathcal O & = \partial (\oline{\partial^{k-1} \mathcal O})\\
& = \partial(\oline{((\mathbb T^n_{k-1} \cdot \mathcal O) \cup \mathbb
T^n_{k-2}) \setminus (\mathbb T^n_{<k-1} \cdot \mathcal O)})\\
& = \partial((\mathbb T^n_{\leq k-1} \cdot \mathcal O) \cup \mathbb T^n_{\leq
k-2})\\
& = ((\mathbb T^n_1 \cdot \mathbb T^n_{\leq k-1} \cdot \mathcal O) \cup
(\mathbb T^n_1 \cdot \mathbb T^n_{\leq k-2})) \setminus ((\mathbb T^n_{\leq k-1}
\cdot \mathcal O) \cup \mathbb T^n_{\leq k-2})\\
& = ((\mathbb T^n_k \cdot \mathcal O) \cup \mathbb T^n_{k-1}) \setminus (\mathbb
T^n_{<k} \cdot \mathcal O).
\end{align*}
If $\mathcal O \neq \emptyset$, we have $1 \in \mathcal O$ by
Definition~\ref{defn:orderIdeal}, and thus $\mathbb T^n_{k-1} \setminus (\mathbb
T^n_{<k} \cdot \mathcal O) = \emptyset$, and the equation above shows
\begin{align*}
\partial^k \mathcal O & = ((\mathbb T^n_k \cdot \mathcal O) \setminus (\mathbb
T^n_{<k} \cdot \mathcal O)) \cup (\mathbb T^n_{k-1} \setminus (\mathbb
T^n_{<k} \cdot \mathcal O))\\
& = (\mathbb T^n_k \cdot \mathcal O) \setminus (\mathbb T^n_{<k} \cdot \mathcal
O).
\end{align*}
If $\mathcal O = \emptyset$, the equation above yields
\begin{align*}
\partial^k \mathcal O = (\emptyset \cup \mathbb T^n_{k-1}) \setminus \emptyset =
\mathbb T^n_{k-1},
\end{align*}
and thus claim~c) follows.

Finally, we prove claim~d). We distinguish two cases.\\
For the first case, suppose that $\mathcal O = \emptyset$. Then we have
$\partial \mathcal O = \{ 1 \}$ by Definition~\ref{defn:orderIdealBorder} and
for every term $t \in \mathbb T^n \setminus \mathcal O = \mathbb T^n$, there is
the factorization $t = t \cdot 1$.\\
For the second case, suppose that $\mathcal O \neq \emptyset$. Let $t \in
\mathbb T^n \setminus \mathcal O$. Then there exists a $k \in \mathbb N
\setminus \{ 0 \}$ such that $t \in \partial^k \mathcal O = (\mathbb T^n_k \cdot
\mathcal O) \setminus (\mathbb T^n_{<k} \cdot \mathcal O)$ according to the
claims~b) and~c). In particular, we can write $t = x_\ell t_1 t_2$ with $\ell
\in \{ 1, \ldots, n \}$, $t_1 \in \mathbb T^n_{k-1}$, and $t_2 \in \mathcal O$.
Assume that $x_\ell t_2 \in \mathcal O$. Then we get the contradiction $t = t_1
(x_\ell t_2) \in \mathbb T^n_{<k} \cdot \mathcal O$. Thus
Definition~\ref{defn:orderIdealBorder} yields $x_\ell t_2 \in \partial \mathcal
O$, and the the first implication follows because of~$t = t_1 (x_\ell t_2)$.
For the converse implication, let $t' \in \mathbb T^n$ and $b \in \partial
\mathcal O$. Assume that $t' b \in \mathcal O$. Then
Definition~\ref{defn:orderIdeal} yields the contradiction $b \in \mathcal O$.
Thus we have $t' b \in \mathbb T^n \setminus \mathcal O$ and the claim follows.
\end{proof}

Having defined order ideals of~$\mathbb T^n$, we generalize this notion to order
module setting. We start with the definition of order modules of~$\mathbb T^n
\langle e_1, \ldots, e_r \rangle$, the border of order modules, and some
propositions of them. Our definitions and propositions are analogous versions
of the corresponding ones in \cite[Defn.~6.4.3/4]{KR2} and
\cite[Prop.~6.4.6]{KR2}.

\begin{defn}
\label{defn:orderModule}
Let $\mathcal O_1, \ldots, \mathcal O_r \subseteq \mathbb T^n$ be order ideals.
Then we call the set
\begin{align*}
\mathcal M = \mathcal O_1 \cdot e_1 \cup \cdots \cup \mathcal O_r \cdot e_r
\subseteq \mathbb T^n \langle e_r, \ldots, e_r \rangle
\end{align*}
an \emph{order module}.
\end{defn}

\begin{defn}
\label{defn:border}
Let $\mathcal M = \mathcal O_1 e_1 \cup \cdots \cup \mathcal O_r e_r$ with order
ideals $\mathcal O_1, \ldots, \mathcal O_r \subseteq \mathbb T^n$ be an order
module.
\begin{enumerate}
  \item We call the set
  \begin{align*}
  \partial^1 \mathcal M = \partial \mathcal M & = ((\mathbb T^n_1 \cdot \mathcal
  M) \cup \{ e_1, \ldots, e_r \}) \setminus \mathcal M\\
  & = \partial \mathcal O_1 \cdot e_1 \cup \cdots \cup \partial \mathcal O_r
  \cdot e_r \subseteq \mathbb T^n \langle e_1, \ldots, e_r \rangle
  \end{align*}
  the \emph{(first) border} of~$\mathcal M$. The \emph{(first) border closure}
  of~$\mathcal M$ is the set
  \begin{align*}
  \oline{\partial^1 \mathcal M} = \oline{\partial \mathcal M} & = \mathcal M
  \cup \partial \mathcal M\\
  & = \oline{\partial \mathcal O_1} \cdot e_1 \cup \cdots \cup \oline{\partial
  \mathcal O_r} \cdot e_r \subseteq \mathbb T^n \langle e_1, \ldots, e_r
  \rangle.
  \end{align*}
  \item For every $k \in \mathbb N \setminus \{ 0 \}$, we inductively define the
  \emph{$(k+1)^{\text{st}}$~border} of~$\mathcal M$ by
  \begin{align*}
  \partial^{k+1} \mathcal M & = \partial (\oline{\partial^k \mathcal M})\\
  & = \partial^{k+1} \mathcal O_1 \cdot e_1 \cup \cdots \cup \partial^{k+1}
  \mathcal O_r \cdot e_r \subseteq \mathbb T^n \langle e_1, \ldots, e_r \rangle,
  \end{align*}
  and the \emph{$(k+1)^{\text{st}}$~border closure} of~$\mathcal M$ by the rule
  \begin{align*}
  \oline{\partial^{k+1} \mathcal M} & = \oline{\partial^k \mathcal M} \cup
  \partial^{k+1} \mathcal M\\
  & = \oline{\partial^{k+1} \mathcal O_1} \cdot e_1 \cup \cdots \cup
  \oline{\partial^{k+1} \mathcal O_r} \cdot e_r \subseteq \mathbb T^n \langle
  e_1, \ldots, e_r \rangle.
  \end{align*}
  For convenience, we let
  $$\partial^0 \mathcal M = \oline{\partial^0 \mathcal M} = \mathcal M.$$
\end{enumerate}
\end{defn}

\begin{exmp}
\label{exmp:border}
Let $K$ be a field, and let
\begin{align*}
\mathcal O_1 & = \{ x, y, 1 \} \subseteq \mathbb T^2\\
\intertext{and}
\mathcal O_2 & = \{ x^2, x, 1 \} \subseteq \mathbb T^2.
\end{align*}
Then $\mathcal O_1$ and $\mathcal O_2$ are both order ideals with first borders
\begin{align*}
\partial \mathcal O_1 & = \{ x^2, xy, y^2 \} \subseteq \mathbb T^2\\
\intertext{and}
\partial \mathcal O_2 & = \{ x^3, x^2y, xy, y \} \subseteq \mathbb T^2,
\end{align*}
and second borders
\begin{align*}
\partial^2 \mathcal O_1 & = \{ x^3, x^2y, xy^2, y^3 \} \subseteq \mathbb T^2\\
\intertext{and}
\partial^2 \mathcal O_2 & = \{ x^4, x^3y, x^2y^2, xy^2, y^2 \} \subseteq
\mathbb T^2.
\end{align*}
Let $e_1 = (1,0) \in (K[x,y])^2$ and $e_2 = (0,1) \in (K[x,y])^2$. Then
\begin{align*}
\mathcal M = \{ x e_1, y e_1, e_1, x^2 e_2, x e_2, e_2 \} \subseteq \mathbb T^2
\langle e_1, e_2 \rangle
\end{align*}
is an order module with first border
\begin{align*}
\partial \mathcal M = \{ x^2 e_1, xy e_1, y^2 e_1, x^3 e_2, x^2y e_2, xy e_2, y
e_2 \} \subseteq \mathbb T^2 \langle e_1, e_2 \rangle
\end{align*}
and second border
\begin{align*}
\partial^2 \mathcal M = \{ x^3 e_1, x^2y e_1, xy^2 e_1, y^3 e_1, x^4 e_2, x^3y
e_2, x^2y^2 e_2, xy^2 e_2, y^2 e_2 \} \subseteq \mathbb T^2 \langle e_1, e_2
\rangle.
\end{align*}
\end{exmp}

\begin{prop}
\label{thm:border}
Let $\mathcal M = \mathcal O_1 e_1 \cup \cdots \cup \mathcal O_r e_r$ with order
ideals $\mathcal O_1, \ldots, \mathcal O_r \subseteq \mathbb T^n$ be an order
module.
\begin{enumerate}
  \item For every $k \in \mathbb N$, we have a disjoint union
  \begin{align*}
  \oline{\partial^k \mathcal M} = \bigcup_{i=0}^k \partial^i \mathcal M.
  \end{align*}
  \item We have a disjoint union
  \begin{align*}
  \mathbb T^n \langle e_1, \ldots, e_r \rangle = \bigcup_{i=0}^\infty
  \partial^i \mathcal M.
  \end{align*}
  \item For every $k \in \mathbb N \setminus \{ 0 \}$, we have
  \begin{align*}
  \partial^k \mathcal M = ((\mathbb T^n_k \cdot \mathcal M) \cup \mathbb
  T^n_{k-1} \langle e_1, \ldots, e_r \rangle) \setminus (\mathbb T^n_{<k} \cdot
  \mathcal M).
  \end{align*}
  \item Let $t e_k \in \mathbb T^n \langle e_1, \ldots, e_r \rangle$ be a term.
  Then there exists a factorization of the form $t e_k = t' b e_k$ with a term
  $t' \in \mathbb T^n$ and $b e_k \in \partial \mathcal M$ if and only if $t e_k
  \in \mathbb T^n \langle e_1, \ldots, e_r \rangle \setminus \mathcal M$.
\end{enumerate}
\end{prop}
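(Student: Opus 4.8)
The plan is to reduce each of the four claims to its counterpart for order ideals, Proposition~\ref{thm:orderIdealBorder}, by working componentwise with respect to the decomposition $\mathbb T^n \langle e_1, \ldots, e_r \rangle = \bigcup_{k=1}^r \mathbb T^n \cdot e_k$. The crucial input is already recorded in Definition~\ref{defn:border}: for every $i \in \mathbb N$ we have $\partial^i \mathcal M = \partial^i \mathcal O_1 \cdot e_1 \cup \cdots \cup \partial^i \mathcal O_r \cdot e_r$ and, likewise, $\oline{\partial^i \mathcal M} = \oline{\partial^i \mathcal O_1} \cdot e_1 \cup \cdots \cup \oline{\partial^i \mathcal O_r} \cdot e_r$. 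Moreover the sets $\mathbb T^n \cdot e_1, \ldots, \mathbb T^n \cdot e_r$ are pairwise disjoint, since $t e_k = t' e_{k'}$ forces $k = k'$ (and then $t = t'$); hence two subsets of $\mathbb T^n \langle e_1, \ldots, e_r \rangle$ are equal if and only if their intersections with each $\mathbb T^n \cdot e_k$ agree, and a union of subsets of $\mathbb T^n \langle e_1, \ldots, e_r \rangle$ is disjoint if and only if each of its $e_k$-components is disjoint.

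For claim~a), I would intersect the asserted identity with $\mathbb T^n \cdot e_k$ for each fixed $k \in \{1, \ldots, r\}$. By the componentwise formulas above this turns into $\oline{\partial^k \mathcal O_k} = \bigcup_{i=0}^k \partial^i \mathcal O_k$, which is Proposition~\ref{thm:orderIdealBorder}a), and the union on the right is disjoint because each $\bigcup_{i=0}^k \partial^i \mathcal O_k$ is disjoint by that same proposition while the $e_k$-components do not interact. (Alternatively one can copy verbatim the induction in the proof of Proposition~\ref{thm:orderIdealBorder}a), since it uses only the recursive definitions in Definition~\ref{defn:border} and the disjointness of distinct borders; in the final writeup I would pick whichever is shorter.) Claim~b) then follows: every term $t e_k \in \mathbb T^n \langle e_1, \ldots, e_r \rangle$ satisfies $t \in \mathbb T^n = \bigcup_{i=0}^\infty \partial^i \mathcal O_k$ by Proposition~\ref{thm:orderIdealBorder}b), hence $t e_k \in \bigcup_{i=0}^\infty \partial^i \mathcal M$, and disjointness is inherited from~a) together with the disjointness of the $e_k$-components.

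For claim~c), fix $k \in \{1, \ldots, r\}$ and intersect the right-hand side with $\mathbb T^n \cdot e_k$. Using $(\mathbb T^n_j \cdot \mathcal M) \cap (\mathbb T^n \cdot e_k) = (\mathbb T^n_j \cdot \mathcal O_k) \cdot e_k$ for all $j$ and $\mathbb T^n_{k-1} \langle e_1, \ldots, e_r \rangle \cap (\mathbb T^n \cdot e_k) = \mathbb T^n_{k-1} \cdot e_k$, the $e_k$-component of the right-hand side becomes $\bigl(((\mathbb T^n_k \cdot \mathcal O_k) \cup \mathbb T^n_{k-1}) \setminus (\mathbb T^n_{<k} \cdot \mathcal O_k)\bigr) \cdot e_k$, which equals $\partial^k \mathcal O_k \cdot e_k$ by the first displayed line of Proposition~\ref{thm:orderIdealBorder}c) --- note that this unified form holds whether or not $\mathcal O_k$ is empty, so no case distinction is needed here. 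Taking the union over $k$ and invoking the componentwise formula from Definition~\ref{defn:border} gives the claim. Finally, for claim~d) I would again pass to the $e_k$-component: a factorization $t e_k = t' b e_k$ with $b e_k \in \partial \mathcal M$ is the same as a factorization $t = t' b$ with $b \in \partial \mathcal O_k$ (since $\partial \mathcal M \cap (\mathbb T^n \cdot e_k) = \partial \mathcal O_k \cdot e_k$), and $t e_k \in \mathbb T^n \langle e_1, \ldots, e_r \rangle \setminus \mathcal M$ is the same as $t \in \mathbb T^n \setminus \mathcal O_k$; so the equivalence is exactly Proposition~\ref{thm:orderIdealBorder}d) applied to $\mathcal O_k$. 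I do not anticipate a genuine obstacle: the only thing to watch is the bookkeeping of intersecting the degree-graded sets $\mathbb T^n_j \cdot \mathcal M$ and $\mathbb T^n_{k-1} \langle e_1, \ldots, e_r \rangle$ with the component $\mathbb T^n \cdot e_k$, together with the observation that the unified formula in Proposition~\ref{thm:orderIdealBorder}c) spares us the empty/nonempty case split.
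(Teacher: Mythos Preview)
Your proposal is correct and follows exactly the paper's approach: reduce componentwise to Proposition~\ref{thm:orderIdealBorder} via the decomposition $\partial^i \mathcal M = \bigcup_{s} \partial^i \mathcal O_s \cdot e_s$ recorded in Definition~\ref{defn:border}. One cosmetic fix for the final write-up: in parts~a) and~c) the letter $k$ is already reserved for the border index in the statement, so your phrase ``intersect with $\mathbb T^n \cdot e_k$ for each fixed $k \in \{1,\ldots,r\}$'' and the resulting expression $\oline{\partial^k \mathcal O_k}$ conflate two different indices---use a fresh letter (say $s$) for the component.
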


\begin{proof}
For every $s \in \{ 1, \ldots, r \}$, we have $\{ (p_1, \ldots, p_r) \in
\mathcal M \mid p_s \neq 0 \} = \mathcal O_s \cdot e_s$ by
Definition~\ref{defn:orderModule}. Thus the claim immediately follows from
Proposition~\ref{thm:orderIdealBorder}.
\end{proof}

Since Proposition~\ref{thm:orderIdealBorder} is a natural generalization of
\cite[Prop.~6.4.6]{KR2}, which additionally allows empty order ideals, we can
define an $\mathcal O$-index for an order ideal~$\mathcal O$ just the way it
has been defined in \cite[Defn.~6.4.7]{KR2}.

\begin{defn}
\label{defn:orderIdealIndex}
Let $\mathcal O \subseteq \mathbb T^n$ be an order ideal.
\begin{enumerate}
  \item For every term $t \in \mathbb T^n$, the number $i \in \mathbb N$ such
  that $t \in \partial^i \mathcal O$, which is unique according to
  Proposition~\ref{thm:orderIdealBorder}, is called the \emph{$\mathcal
  O$-index} of~$t$ and it is denoted by~$\ind_{\mathcal O}(t)$.
  \item For a polynomial $p \in P \setminus \{ 0 \}$, we define the
  \emph{$\mathcal O$-index} of~$p$ by
  \begin{align*}
  \ind_{\mathcal O}(p) = \max \{ \ind_{\mathcal O}(t) \mid t \in \Supp(p) \}.
  \end{align*}
\end{enumerate}
\end{defn}

Now we can immediately deduce the same properties for the $\mathcal O$-index
of an order ideal~$\mathcal O$, as it has been done in \cite[Prop.~6.4.8]{KR2}.
Though the proof stays essentially the same when we also regard the empty set as
an order ideal, we will give the proof of the following proposition for the
convenience of the reader.

\begin{prop}
\label{thm:orderIdealIndex}
Let $\mathcal O \subseteq \mathbb T^n$ be an order ideal.
\begin{enumerate}
  \item For a term $t \in \mathbb T^n \setminus \mathcal O$, the number $i =
  \ind_{\mathcal O}(t)$ is the smallest natural number such that $t = t' b$
  with $t' \in \mathbb T^n_{i-1}$ and $b \in \partial \mathcal O$.
  \item Given a term $t \in \mathbb T^n$ and a term $t' \in \mathbb T^n$, we
  have
  \begin{align*}
  \ind_{\mathcal O}(t t') \leq \deg(t) + \ind_{\mathcal O}(t').
  \end{align*}
  \item For two polynomials $p, q \in P \setminus \{ 0 \}$ such that $p + q \neq
  0$, we have the inequality
  \begin{align*}
  \ind_{\mathcal O}(p + q) \leq \max \{ \ind_{\mathcal O}(p), \ind_{\mathcal
  O}(q) \}.
  \end{align*}
  \item For two polynomials $p, q \in P \setminus \{ 0 \}$, we have the
  inequality
  \begin{align*}
  \ind_{\mathcal O}(pq) \leq \deg(p) + \ind_{\mathcal O}(q).
  \end{align*}
\end{enumerate}
\end{prop}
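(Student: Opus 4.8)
The plan is to prove the four claims essentially by reducing everything to claim~a) and then to the combinatorial description of borders from Proposition~\ref{thm:orderIdealBorder}, mimicking the proof of \cite[Prop.~6.4.8]{KR2} while being careful about the empty order ideal.

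For claim~a), I would argue as follows. Let $t \in \mathbb T^n \setminus \mathcal O$ and set $i = \ind_{\mathcal O}(t)$, so $t \in \partial^i \mathcal O$ with $i \geq 1$. By Proposition~\ref{thm:orderIdealBorder}c), $\partial^i \mathcal O = ((\mathbb T^n_i \cdot \mathcal O) \cup \mathbb T^n_{i-1}) \setminus (\mathbb T^n_{<i} \cdot \mathcal O)$. I first show that $t$ admits a factorization $t = t' b$ with $t' \in \mathbb T^n_{i-1}$ and $b \in \partial \mathcal O$: this is exactly the argument already carried out in the proof of Proposition~\ref{thm:orderIdealBorder}d), where from $t = x_\ell t_1 t_2$ with $t_1 \in \mathbb T^n_{i-1}$, $t_2 \in \mathcal O$ one deduces $x_\ell t_2 \in \partial \mathcal O$ (the case $\mathcal O = \emptyset$ being handled separately, where $i=1$ forces $t \in \mathbb T^n_0 = \{1\}$ and $t = 1 \cdot 1$ with $1 \in \partial\emptyset$). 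It remains to show minimality: if $t = t'b$ with $b \in \partial \mathcal O$ and $t' \in \mathbb T^n_j$ for some $j < i-1$, then by Proposition~\ref{thm:orderIdealBorder}b) and c) we have $b \in \partial^1 \mathcal O \subseteq \oline{\partial^1 \mathcal O}$, hence $t = t'b \in \mathbb T^n_{\le j} \cdot \oline{\partial^1 \mathcal O}$; expanding $\oline{\partial^1 \mathcal O} = \mathcal O \cup \partial \mathcal O$ and using part~c) of Proposition~\ref{thm:orderIdealBorder} to bound the index of such a product, I get $\ind_{\mathcal O}(t) \le j+1 < i$, a contradiction. (Alternatively, and perhaps more cleanly, $b \in \partial\mathcal O$ means $b \notin \mathcal O$, and a direct induction on the factor $t'$ shows $\ind_{\mathcal O}(t'b) \le \deg(t') + 1$, so $\ind_{\mathcal O}(t) = i \le j+1$.) This last inequality is really the crux, and it is what I would regard as the main obstacle — getting the bookkeeping right so that a factor through the border costs exactly one unit of index, no more and no less.

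For claim~b), I would induct on $\deg(t)$. If $\deg(t) = 0$, then $t = 1$, and either $1 \in \mathcal O$, giving $\ind_{\mathcal O}(t') \le \deg(t) + \ind_{\mathcal O}(t') = \ind_{\mathcal O}(t t')$ trivially since $tt' = t'$, or $\mathcal O = \emptyset$, where $\ind_{\mathcal O}$ is just "degree plus one" on nonconstant terms (degree zero on $1$), and the inequality is immediate from Proposition~\ref{thm:orderIdealBorder}c). For the inductive step write $t = x_\ell \tilde t$ with $\deg(\tilde t) = \deg(t) - 1$; by induction $\ind_{\mathcal O}(\tilde t t') \le \deg(\tilde t) + \ind_{\mathcal O}(t')$, so it suffices to prove the single-variable estimate $\ind_{\mathcal O}(x_\ell s) \le 1 + \ind_{\mathcal O}(s)$ for every term $s$. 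This follows directly from the definition of the borders: if $s \in \partial^m \mathcal O \subseteq \oline{\partial^m \mathcal O}$ then $x_\ell s \in \mathbb T^n_1 \cdot \oline{\partial^m \mathcal O} \subseteq \oline{\partial^{m+1} \mathcal O} = \bigcup_{i=0}^{m+1} \partial^i \mathcal O$ by Definition~\ref{defn:orderIdealBorder} and Proposition~\ref{thm:orderIdealBorder}a), whence $\ind_{\mathcal O}(x_\ell s) \le m+1$.

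Claims~c) and~d) are then formal. For~c), write $i = \ind_{\mathcal O}(p+q)$ and pick $t \in \Supp(p+q)$ with $\ind_{\mathcal O}(t) = i$; since $\Supp(p+q) \subseteq \Supp(p) \cup \Supp(q)$, the term $t$ lies in $\Supp(p)$ or in $\Supp(q)$, and by Definition~\ref{defn:orderIdealIndex}b) we get $i = \ind_{\mathcal O}(t) \le \max\{\ind_{\mathcal O}(p), \ind_{\mathcal O}(q)\}$. For~d), note that every term in $\Supp(pq)$ is of the form $t_p t_q$ with $t_p \in \Supp(p)$ and $t_q \in \Supp(q)$; by claim~b) we have $\ind_{\mathcal O}(t_p t_q) \le \deg(t_p) + \ind_{\mathcal O}(t_q) \le \deg(p) + \ind_{\mathcal O}(q)$, and taking the maximum over all such terms via Definition~\ref{defn:orderIdealIndex}b) yields $\ind_{\mathcal O}(pq) \le \deg(p) + \ind_{\mathcal O}(q)$.
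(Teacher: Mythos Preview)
Your proof is correct and follows essentially the same approach as the paper, which derives a) from Proposition~\ref{thm:orderIdealBorder}, b) from a), and c), d) from the evident support inclusions; your more detailed treatment of b) via induction on $\deg(t)$ is a harmless variation of the paper's one-line ``b) follows immediately from a)''. Two minor slips that do not affect the argument: in the empty case of part~a) you only spell out the factorization for $i=1$ (the general case is simply $t' = t$, $b = 1$, with $\deg(t') = \deg(t) = i-1$), and $\ind_\emptyset(1) = 1$, not $0$, since $1 \in \partial^1\emptyset = \{1\}$ while $\partial^0\emptyset = \emptyset$.
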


\begin{proof}
Claim~a) is a direct consequence of Proposition~\ref{thm:orderIdealBorder}, and
claim~b) follows immediately from claim~a). Since $\Supp(p+q) \subseteq \Supp(p)
\cup \Supp(q)$, claim~c) follows immediately with
Definition~\ref{defn:orderIdealIndex}. At last, claim~d) follows from claim~b)
since $\Supp(pq) \subseteq \{ t t' \mid t \in \Supp(p), t'' \in \Supp(q) \}$.
\end{proof}

Since Proposition~\ref{thm:border} is an analogous version of
\cite[Prop.~6.4.6]{KR2} for an order module $\mathcal M \subseteq \mathbb T^n
\langle e_1, \ldots, e_r \rangle$, we can now define the $\mathcal
M$-index similar to the index of order ideals in
Definition~\ref{defn:orderIdealIndex}, and deduce some properties of it as it
has been done in Proposition~\ref{thm:orderIdealIndex}.

\begin{defn}
\label{defn:index}
Let $\mathcal M = \mathcal O_1 e_1 \cup \cdots \cup \mathcal O_r e_r$ with order
ideals $\mathcal O_1, \ldots, \mathcal O_r \subseteq \mathbb T^n$ be an order
module.
\begin{enumerate}
  \item For every term $t e_k \in \mathbb T^n \langle e_1, \ldots, e_r \rangle$,
  the number $i \in \mathbb N$ such that $t e_k \in \partial^i \mathcal M$,
  which is unique according to Proposition~\ref{thm:border}, is called the
  \emph{$\mathcal M$-index} of~$t e_k$ and is denoted by~$\ind_{\mathcal M}(t
  e_k)$.
  \item For a vector $\mathcal V = (p_1, \ldots, p_r) \in P^r \setminus \{ 0
  \}$, we define the \emph{$\mathcal M$-index} of~$\mathcal V$ by
  \begin{align*}
  \ind_{\mathcal M}(\mathcal V) = \max \{ \ind_{\mathcal M}(t e_k) \mid t e_k
  \in \Supp(\mathcal V) \}.
  \end{align*}
\end{enumerate}
\end{defn}

\begin{exmp}
\label{exmp:index}
We consider the order module
\begin{align*}
\mathcal M = \{ x e_1, y e_1, e_1, x^2 e_2, x e_2, e_2 \} \subseteq \mathbb T^2
\langle e_1, e_2 \rangle
\end{align*}
of Example~\ref{exmp:border}, again. Then we have
\begin{align*}
\ind_{\mathcal M}(x e_1) = 0
\end{align*}
and
\begin{align*}
\ind_{\mathcal M}(x^2y^2 e_2) = 2.
\end{align*}
Furthermore, we see that
\begin{align*}
\ind_{\mathcal M}(x e_1 + x^2 y^2 e_2) = \max \{ 0, 2 \} = 2.
\end{align*}
\end{exmp}

\begin{prop}
\label{thm:index}
Let $\mathcal M = \mathcal O_1 e_1 \cup \cdots \cup \mathcal O_r e_r$ with order
ideals $\mathcal O_1, \ldots, \mathcal O_r \subseteq \mathbb T^n$ be an order
module.
\begin{enumerate}
  \item For a term $t e_k \in \mathbb T^n \langle e_1, \ldots, e_r \rangle
  \setminus \mathcal M$, the number $i = \ind_{\mathcal M}(t e_k)$ is the
  smallest natural number such that $t = t' b$ with $t' \in \mathbb T^n_{i-1}$
  and $b \in \partial \mathcal O_k$.
  \item Given a term $t \in \mathbb T^n$ and a term $t' e_k \in \mathbb T^n
  \langle e_1, \ldots, e_r \rangle$, we have
  \begin{align*}
  \ind_{\mathcal M}(t t' e_k) \leq \deg(t) + \ind_{\mathcal M}(t' e_k).
  \end{align*}
  \item For two vectors $\mathcal V, \mathcal W \in P^r \setminus \{ 0 \}$ such
  that $\mathcal V + \mathcal W \neq 0$, we have the inequality
  \begin{align*}
  \ind_{\mathcal M}(\mathcal V + \mathcal W) \leq \max \{ \ind_{\mathcal
  M}(\mathcal V), \ind_{\mathcal M}(\mathcal W) \}.
  \end{align*}
  \item For a vector $\mathcal V \in P^r \setminus \{ 0 \}$ and a polynomial $p
  \in P \setminus \{ 0 \}$, we have the inequality
  \begin{align*}
  \ind_{\mathcal M}(p \mathcal V) \leq \deg(p) + \ind_{\mathcal M}(\mathcal V).
  \end{align*}
\end{enumerate}
\end{prop}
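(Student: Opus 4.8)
The plan is to reduce everything to the order-ideal case already settled in Proposition~\ref{thm:orderIdealIndex}, exactly as Proposition~\ref{thm:border} was reduced to Proposition~\ref{thm:orderIdealBorder}. The key observation is that for a term $t e_k$, membership in $\partial^i \mathcal M$ is, by Definition~\ref{defn:border}, the same as membership of $t$ in $\partial^i \mathcal O_k$; hence $\ind_{\mathcal M}(t e_k) = \ind_{\mathcal O_k}(t)$. This single identity is the bridge between the module index and the ideal index, and I would establish it first.

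For claim~a), given $t e_k \in \mathbb T^n \langle e_1, \ldots, e_r \rangle \setminus \mathcal M$, note that $t e_k \notin \mathcal M$ forces $t \notin \mathcal O_k$, so I apply Proposition~\ref{thm:orderIdealIndex}\,a) to the order ideal $\mathcal O_k$ and the term $t$: the number $\ind_{\mathcal O_k}(t) = \ind_{\mathcal M}(t e_k)$ is the smallest $i$ with $t = t' b$, $t' \in \mathbb T^n_{i-1}$, $b \in \partial \mathcal O_k$, which is precisely the assertion. For claim~b), I write $\ind_{\mathcal M}(t t' e_k) = \ind_{\mathcal O_k}(t t')$ and $\ind_{\mathcal M}(t' e_k) = \ind_{\mathcal O_k}(t')$, then invoke Proposition~\ref{thm:orderIdealIndex}\,b). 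For claim~c), I let $\mathcal V = (p_1, \ldots, p_r)$ and $\mathcal W = (q_1, \ldots, q_r)$; since $\Supp(\mathcal V + \mathcal W) \subseteq \Supp(\mathcal V) \cup \Supp(\mathcal W)$, every term $s e_k$ in $\Supp(\mathcal V + \mathcal W)$ lies in $\Supp(\mathcal V)$ or in $\Supp(\mathcal W)$, so its index is bounded by $\max\{\ind_{\mathcal M}(\mathcal V), \ind_{\mathcal M}(\mathcal W)\}$; taking the maximum over all such $s e_k$ gives the inequality. For claim~d), $\Supp(p \mathcal V) \subseteq \{ t s e_k \mid t \in \Supp(p),\ s e_k \in \Supp(\mathcal V) \}$, and claim~b) bounds each $\ind_{\mathcal M}(t s e_k)$ by $\deg(t) + \ind_{\mathcal M}(s e_k) \leq \deg(p) + \ind_{\mathcal M}(\mathcal V)$; take the maximum.

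There is no real obstacle here: the statement is a componentwise unpacking of the corresponding order-ideal proposition, and the only genuine content is the index identity $\ind_{\mathcal M}(t e_k) = \ind_{\mathcal O_k}(t)$, which is immediate from the second description of $\partial^i \mathcal M$ in Definition~\ref{defn:border} together with the uniqueness in Proposition~\ref{thm:border}. The mildest care needed is in claim~a), where one must first record that $t e_k \notin \mathcal M \Leftrightarrow t \notin \mathcal O_k$ so that Proposition~\ref{thm:orderIdealIndex}\,a) applies; in claims~c) and~d) one only needs the elementary support inclusions, which are the same ones used in the proof of Proposition~\ref{thm:orderIdealIndex}. I expect the whole proof to be three or four sentences long, essentially parallel to the proof of Proposition~\ref{thm:orderIdealIndex} with each term $t$ replaced by $t e_k$ and each order ideal $\mathcal O$ replaced by the appropriate $\mathcal O_k$.
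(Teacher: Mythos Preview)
Your proposal is correct and follows essentially the same approach as the paper: reduce everything componentwise to Proposition~\ref{thm:orderIdealIndex} via the identity $\ind_{\mathcal M}(t e_k) = \ind_{\mathcal O_k}(t)$. The paper's proof is even terser than you anticipate---it simply observes that the $s$-th component of $\mathcal M$ is $\mathcal O_s \cdot e_s$ and then invokes Definition~\ref{defn:index} and Proposition~\ref{thm:orderIdealIndex} in one sentence---but your more explicit unpacking is the same argument.
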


\begin{proof}
Let $s \in \{ 1, \ldots, r \}$. Then we have $\{ (p_1, \ldots, p_r) \in \mathcal
M \mid p_s \neq 0 \} = \mathcal O_s \cdot e_s$ by
Definition~\ref{defn:orderModule}. Thus the claim follows immediately from
Definition~\ref{defn:index} and Proposition~\ref{thm:orderIdealIndex}.
\end{proof}

Now we have all the ingredients to define module border bases in an analogous
way as border bases have been introduced in \cite[Defn.~6.4.10/13]{KR2}. Recall
that, for every $P$-module~$M$ (respectively $K$-vector space) and for every
$P$-submodule $U \subseteq M$ (respectively $K$-vector subspace),
\begin{align*}
\varepsilon_U: M \twoheadrightarrow M / U, \quad m \mapsto m + U
\end{align*}
denotes the canonical $P$-module epimorphism (respectively $K$-vector space
epimorphism).

\begin{defn}
\label{defn:moduleBB}
Let $\mathcal M = \mathcal O_1 e_1 \cup \cdots \cup \mathcal O_r e_r$ be an
order module where we let $\mathcal O_1, \ldots, \mathcal O_r \subseteq \mathbb
T^n$ be finite order ideals. We write $\mathcal M = \{ t_1 e_{\alpha_1}, \ldots,
t_\mu e_{\alpha_\mu} \}$ and $\partial \mathcal M = \{ b_1 e_{\beta_1}, \ldots,
b_\nu e_{\beta_\nu} \}$ with $\mu, \nu \in \mathbb N$, $t_i, b_j \in \mathbb
T^n$, and with $\alpha_i, \beta_j \in \{ 1, \ldots, r \}$ for all $i \in \{ 1,
\ldots, \mu \}$ and $j \in \{ 1, \ldots, \nu \}$.
\begin{enumerate}
  \item A set of vectors $\mathcal G = \{ \mathcal G_1, \ldots, \mathcal G_\nu
  \} \subseteq P^r$ is called an \emph{$\mathcal M$-module border prebasis} if
  the vectors have the form
  \begin{align*}
  \mathcal G_j = b_j e_{\beta_j} - \sum_{i=1}^\mu c_{ij} t_i e_{\alpha_i}
  \end{align*}
   with $c_{ij} \in K$ for all $i \in \{ 1, \ldots, \mu \}$ and $j \in \{ 1,
   \ldots, \nu \}$.
  \item Let $\mathcal G = \{ \mathcal G_1, \ldots, \mathcal G_\nu \} \subseteq
  P^r$ be an $\mathcal M$-module border prebasis and let $U \subseteq P^r$ be a
  $P$-submodule. We call~$\mathcal G$ an \emph{$\mathcal M$-module border basis}
  of~$U$ if $\mathcal G \subseteq U$, if the set
  \begin{align*}
  \varepsilon_{U}(\mathcal M) = \{ t_1 e_{\alpha_1} + U, \ldots, t_\mu
  e_{\alpha_\mu} + U \}
  \end{align*}
  is a $K$-vector space basis of $P^r / U$ and if $\# \varepsilon_{U}(\mathcal
  M) = \mu$.
\end{enumerate}
\end{defn}

For the remainder of this section, we let $\mathcal O_1, \ldots,
\mathcal O_r \subseteq \mathbb T^n$ be finite order ideals and
\begin{align*}
\mathcal M = \mathcal O_1 e_1 \cup \cdots \cup \mathcal O_r e_r = \{ t_1
e_{\alpha_1}, \ldots, t_\mu e_{\alpha_\mu} \}
\end{align*}
be an order module with border
\begin{align*}
\partial \mathcal M = \partial \mathcal O_1 e_1 \cup \cdots \cup \partial
\mathcal O_r e_r = \{ b_1 e_{\beta_1}, \ldots, b_\nu e_{\beta_\nu} \},
\end{align*}
where $\mu, \nu \in \mathbb N$, $t_i, b_j \in \mathbb T^n$ and $\alpha_i,
\beta_j \in \{ 1, \ldots, r \}$ for all $i \in \{ 1, \ldots, \mu \}$ and $j \in
\{ 1, \ldots, \nu \}$. Moreover, we let $\mathcal G = \{ \mathcal G_1, \ldots,
\mathcal G_\nu \} \subseteq P^r$ with
\begin{align*}
\mathcal G_j = b_j e_{\beta_j} - \sum_{i=1}^\mu c_{ij} t_i e_{\alpha_i},
\end{align*}
where $c_{ij} \in K$ for all $i \in \{ 1, \ldots, \mu \}$ and $j \in \{ 1,
\ldots, \nu \}$, be an $\mathcal M$-module border prebasis.

Next we generalize the Border Division Algorithm~\cite[Prop.~6.4.11]{KR2} to the
Module Border Division Algorithm and deduce some consequences of it similar to
the ones in \cite[Section~6.4.A]{KR2}.

\begin{algorithm}[H]
\caption{${\tt divAlg}(\mathcal V, \mathcal G)$}
\begin{algorithmic}[1]
\label{algo:divAlg}
  \REQUIRE $\mathcal V \in P^r$\\
  $\mathcal G = \{ \mathcal G_1, \ldots, \mathcal G_\nu \} \subseteq P^r$ is an
  $\mathcal M$-module border prebasis where we denote $\mathcal M = \{ t_1
  e_{\alpha_1}, \ldots, t_\mu e_{\alpha_\mu} \}$ and $\partial \mathcal M = \{
  b_1 e_{\beta_1}, \ldots, b_\nu e_{\beta_\nu} \}$ with $\mu, \nu \in \mathbb
  N$, $t_i, b_j \in \mathbb T^n$ and $\alpha_i, \beta_j \in \{ 1, \ldots, r \}$
  for $i \in \{ 1, \ldots, \mu \}$ and $j \in \{ 1, \ldots, \nu \}$.
  \STATE $(p_1, \ldots, p_\nu) \assign 0 \in P^\nu$\label{algo:divAlg-initStart}
  \STATE $(q_1, \ldots, q_r) \assign \mathcal V$\label{algo:divAlg-initEnd}
  \WHILE{$(q_1, \ldots, q_r) \neq 0$ and $\ind_{\mathcal M}((q_1, \ldots, q_r))
  > 0$}\label{algo:divAlg-while}
    \STATE \CHOOSE $t e_k \in \Supp((q_1, \ldots, q_r))$ with $\ind_{\mathcal
    M}(t e_k) = \ind_{\mathcal M}((q_1, \ldots,
    q_r))$.\label{algo:divAlg-choice}
    \STATE Determine the smallest index $j \in \{ 1, \ldots, \nu \}$ such that
    there exists a term $t' \in \mathbb T^n$ with $\deg(t') = \ind_{\mathcal
    M}((q_1, \ldots, q_r)) - 1$ and $t e_k = t' b_j
    e_{\beta_j}$.\label{algo:divAlg-factorize}
    \STATE Let $a \in K$ be the coefficient of $t e_k = t' b_j e_{\beta_j}$ in
    $(q_1, \ldots, q_r)$.\label{algo:divAlg-coeff}
    \STATE $p_j \assign p_j + a t'$\label{algo:divAlg-compensate}
    \STATE $(q_1, \ldots, q_r) \assign (q_1, \ldots, q_r) - a t' \mathcal
    G_j$\label{algo:divAlg-subtract}
  \ENDWHILE
  \IF{$(q_1, \ldots, q_r) = 0$}
    \RETURN $((p_1, \ldots, p_\nu), 0) \in P^\nu \times
    K^\mu$\label{algo:divAlg-trivialCase}
  \ENDIF
  \STATE Determine $c_1, \ldots, c_\mu \in K$ such that $(q_1, \ldots, q_r) =
  c_1 t_1 e_{\alpha_1} + \cdots + c_\mu t_\mu
  e_{\alpha_\mu}$.\label{algo:divAlg-ind0}
  \RETURN $((p_1, \ldots, p_\nu), (c_1, \ldots, c_\mu))$
\end{algorithmic}
\end{algorithm}

\begin{thm}[The Module Border Division Algorithm]
\label{thm:divAlg}
Let $\mathcal V \in P^r$. Then Algorithm~\ref{algo:divAlg} is actually an
algorithm, and the result
\begin{align*}
((p_1, \ldots, p_\nu), (c_1, \ldots, c_\mu)) \assign {\tt divAlg}(\mathcal
V, \mathcal G)
\end{align*}
of Algorithm~\ref{algo:divAlg} applied to the input data~$\mathcal V$
and~$\mathcal G$ satisfies the following conditions.
\begin{enumerate}
\renewcommand{\labelenumi}{\roman{enumi})}
  \item The result $((p_1, \ldots, p_\nu), (c_1, \ldots, c_\mu))$ is a tuple in
  $P^\nu \times K^\mu$ and it does not depend on the choice of the term~$t e_k$
  in line~\ref{algo:divAlg-choice}.
  \item We have
  \begin{align*}
  \mathcal V = p_1 \mathcal G_1 + \cdots + p_\nu \mathcal G_\nu + c_1 t_1
  e_{\alpha_1} + \cdots + c_\mu t_\mu e_{\alpha_\mu}.
  \end{align*}
  \item We have
  \begin{align*}
  \deg(p_j) \leq \ind_{\mathcal M}(\mathcal V) - 1
  \end{align*}
  for all $j \in \{ 1, \ldots, \nu \}$ with $p_j \neq 0$.
\end{enumerate}
\end{thm}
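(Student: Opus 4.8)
The plan is to mirror the proof of the Border Division Algorithm \cite[Prop.~6.4.11]{KR2}, working component-wise whenever possible via the decomposition $\mathcal M = \mathcal O_1 e_1 \cup \cdots \cup \mathcal O_r e_r$ and the index machinery from Proposition~\ref{thm:index}. First I would argue that the algorithm terminates. The key quantity is the pair $(\ind_{\mathcal M}((q_1,\ldots,q_r)),\, N)$, where $N$ is the number of terms $t e_k \in \Supp((q_1,\ldots,q_r))$ attaining the maximal index; this pair is compared in the lexicographic order on $\mathbb N^2$. In each pass through the loop we pick such a maximal-index term $te_k$, write $te_k = t' b_j e_{\beta_j}$ with $\deg(t') = \ind_{\mathcal M}((q_1,\ldots,q_r)) - 1$ (this factorization exists by Proposition~\ref{thm:index}a), and subtract $a t' \mathcal G_j$. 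Since $\mathcal G_j = b_j e_{\beta_j} - \sum_i c_{ij} t_i e_{\alpha_i}$, the subtraction cancels the coefficient $a$ of $te_k$, and the newly introduced terms are $a t' c_{ij} t_i e_{\alpha_i}$ with $t_i e_{\alpha_i} \in \mathcal M$; by Proposition~\ref{thm:index}b their index is at most $\deg(t') + 0 = \ind_{\mathcal M}((q_1,\ldots,q_r)) - 1$. Hence either the maximal index strictly drops, or it stays the same and $N$ strictly decreases (the terms of index exactly the old maximum are untouched except that $te_k$ is removed). Since $\mathbb N^2$ with lex order is well-founded, the loop halts; once it exits, either $(q_1,\ldots,q_r) = 0$ or it has index $0$, i.e.\ its support lies in $\mathcal M$, so line~\ref{algo:divAlg-ind0} can indeed find the $c_i$, and the output is a tuple in $P^\nu \times K^\mu$.

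Next I would prove the representation in~ii) as a loop invariant: at every point in the execution, $\mathcal V = \sum_{j=1}^\nu p_j \mathcal G_j + (q_1, \ldots, q_r)$. This holds trivially after the initialization (lines~\ref{algo:divAlg-initStart}--\ref{algo:divAlg-initEnd}), and it is preserved by the body since the update replaces $p_j$ by $p_j + a t'$ and simultaneously subtracts $a t' \mathcal G_j$ from $(q_1,\ldots,q_r)$. Upon exit, either $(q_1,\ldots,q_r) = 0$, in which case the claimed identity holds with all $c_i = 0$, or $(q_1,\ldots,q_r) = \sum_i c_i t_i e_{\alpha_i}$, giving~ii) directly. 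For~iii), I would track a second invariant: whenever $p_j \neq 0$, every term in $\Supp(p_j)$ has degree $\le \ind_{\mathcal M}(\mathcal V) - 1$. The crucial point is that the index of $(q_1,\ldots,q_r)$ never exceeds $\ind_{\mathcal M}(\mathcal V)$ during the run --- this follows from the termination analysis above (the maximal index is non-increasing) together with Proposition~\ref{thm:index}c applied to the invariant identity $(q_1,\ldots,q_r) = \mathcal V - \sum_j p_j \mathcal G_j$ at the start. Consequently, each term $t'$ added to $p_j$ in line~\ref{algo:divAlg-compensate} satisfies $\deg(t') = \ind_{\mathcal M}((q_1,\ldots,q_r)) - 1 \le \ind_{\mathcal M}(\mathcal V) - 1$, which gives the bound.

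Finally, for the independence of the result from the choice of $te_k$ in line~\ref{algo:divAlg-choice} (the remaining content of~i)), I would not attempt a direct diamond-lemma confluence argument on the nondeterministic branching; instead I would follow the standard trick used for border divisions: show that the output is characterized intrinsically, independently of the algorithm's internal choices. Concretely, I expect the output to be uniquely determined by the properties~ii) and~iii) together with the requirement $\sum_i c_i t_i e_{\alpha_i} \in \langle \mathcal M \rangle_K$ --- or, more robustly, by running the argument that \emph{any} two valid executions produce normal remainders whose difference lies in the submodule generated by $\mathcal G$ and has all its support bounded appropriately, forcing it to be zero by a degree/index induction. This is the step I expect to be the main obstacle, since making ``the output is forced'' precise requires carefully isolating which algebraic invariants pin it down; I would handle it by adapting verbatim the corresponding uniqueness argument from the proof of \cite[Prop.~6.4.11]{KR2}, which goes through in the module setting because every tool it uses (the disjoint border decomposition, subadditivity of the index, and the component-wise structure of $\mathcal M$ and $\partial\mathcal M$) has been established in Propositions~\ref{thm:border} and~\ref{thm:index}.
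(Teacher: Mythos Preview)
Your arguments for termination, for the loop invariant yielding ii), and for the degree bound iii) are correct and essentially match the paper's proof (the paper phrases termination slightly less formally, simply noting that each step replaces a maximal-index term by terms of strictly smaller index and that only finitely many terms have index below a fixed bound).

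The gap is in your treatment of independence in i). The two strategies you sketch---an intrinsic characterization of the output via properties ii) and iii), or showing that the difference of two normal remainders lies in $\langle \mathcal G\rangle \cap \langle \mathcal M\rangle_K$ and must therefore vanish---do \emph{not} work here, because $\mathcal G$ is only a module border \emph{prebasis}. For a prebasis one can have $\langle \mathcal G\rangle \cap \langle \mathcal M\rangle_K \neq \{0\}$ (cf.\ Corollary~\ref{thm:char} and Remark~\ref{rem:NF}), so nothing forces that difference to be zero; indeed the normal remainder genuinely depends on the ordering of the elements of $\mathcal G$, and uniqueness of $\NR_{\mathcal G}$ is a \emph{consequence} of the border-basis property, not a tool available at this stage.

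The actual argument in the paper (and in \cite[Prop.~6.4.11]{KR2}) is precisely the direct non-interference observation you said you would avoid: when a maximal-index term $te_k$ is reduced, the newly introduced terms all have strictly smaller index, so they cannot create, cancel, or alter any \emph{other} term of maximal index. Hence the reductions of the various maximal-index terms are completely independent of one another, and since for each such term the factorization in line~\ref{algo:divAlg-factorize} is deterministic (smallest $j$), the cumulative effect is the same regardless of the order in which those terms are processed. Iterating over the index levels gives independence of the final output. This is short and elementary---no diamond lemma and no border-basis hypothesis are needed.
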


\begin{proof}
Firstly, we show that every step of the procedure can be computed. Let~$m$
always denote the current value of $\ind_{\mathcal M}((q_1, \ldots, q_r))$
during the procedure. The existence of a term $t e_k \in \Supp((q_1, \ldots,
q_r))$ such that $\ind_{\mathcal M}(t e_k) = m$ in line~\ref{algo:divAlg-choice}
follows from Definition~\ref{defn:index}. We now take a closer look at
line~\ref{algo:divAlg-factorize}. Since the while-loop in
line~\ref{algo:divAlg-while} is executed, we have $\ind_{\mathcal M}(t e_k) >
0$, i.\,e.  $t e_k \in \mathbb T^n \langle e_1, \ldots, e_r \rangle \setminus
\mathcal M$ by Definition~\ref{defn:index}. By Proposition~\ref{thm:index} there
is a factorization $t e_k = t' b_j e_{\beta_j}$ with a term $t' \in \mathbb T^n$
where $\deg(t') = \ind_{\mathcal M}(t e_k) - 1 = m - 1$ and an index $j \in \{
1, \ldots, \nu \}$. At last, since the while-loop is already finished in
line~\ref{algo:divAlg-ind0}, we have $m = 0$ and thus $\Supp((q_1, \ldots, q_r))
\subseteq \mathcal M$ according to Definition~\ref{defn:index}.
Altogether, we see that every step of the procedure can actually be computed.

Secondly, we prove termination. We show that the while-loop starting in
line~\ref{algo:divAlg-while} is executed only finitely many times. Taking a
closer look at the subtraction in line~\ref{algo:divAlg-subtract}, we see that
we subtract the vector
\begin{align*}
a t' \mathcal G_j = a t' b_j e_{\beta_j} - a t' \sum_{i=1}^\mu c_{ij} t_i
e_{\alpha_i}
\end{align*}
from $(q_1, \ldots, q_r)$. By the choice of $j \in \{ 1, \ldots, \nu \}$ and $t'
\in \mathbb T^n_{m-1}$ in line~\ref{algo:divAlg-factorize}, and the choice of $a
\in K$ in line~\ref{algo:divAlg-coeff}, it follows that the term $t e_k = t' b_j
e_{\beta_j}$ with $\mathcal M$-index~$m$ is replaced by terms of the form $t'
t_i e_{\alpha_i} \in \oline{\partial^{m-1} \mathcal M}$ with $i \in \{ 1,
\ldots, \mu \}$, which have strictly smaller $\mathcal M$-index according to
Proposition~\ref{thm:index}. The procedure hence terminates after finitely many
steps because there are only finitely many terms of $\mathcal M$-index smaller
than or equal to a given term. Altogether, we see that the procedure is actually
an algorithm.

We go on with the proof of the correctness. To this end, we show that the
equation
\begin{align*}
\mathcal V = p_1 \mathcal G_1 + \cdots + p_\nu \mathcal G_\nu + (q_1, \ldots,
q_r)
\end{align*}
is an invariant of the while-loop in line~\ref{algo:divAlg-while}. Before the
first iteration of the while-loop, we have $p_1 = \cdots = p_\nu = 0$ and $(q_1,
\ldots, q_r) = \mathcal V$, i.\,e.\ the invariant is obviously fulfilled. We now
regard the changes of $(p_1, \ldots, p_\nu) \in P^\nu$ and $(q_1, \ldots, q_r)
\in P^r$ during one iteration of the while-loop. Let $(p_1, \ldots, p_\nu) \in
P^\nu$ and $(q_1, \ldots, q_r) \in P^r$ be such that the invariant holds,
and let $(p'_1, \ldots, p'_\nu) \in P^\nu$ and $(q'_1, \ldots, q'_r) \in P^r$ be
the values of $(p_1, \ldots, p_\nu)$ and $(q_1, \ldots, q_r)$ after one
iteration of the while-loop. The values of the vectors $(p_1, \ldots, p_\nu) \in
P^\nu$ and $(q_1, \ldots, q_r) \in P^r$ are only changed in
line~\ref{algo:divAlg-compensate} and line~\ref{algo:divAlg-subtract}. Thus we
have $p_j' = p_j + a t'$, $p'_i = p_i$ for all $i \in \{ 1, \ldots, \nu \}
\setminus \{ j \}$, and $(q'_1, \ldots, q'_r) = (q_1, \ldots, q_r) - a t'
\mathcal G_j$. This yields
\begin{align*}
\mathcal V & = p_1 \mathcal G_1 + \cdots + p_\nu \mathcal G_\nu + (q_1, \ldots,
q_r)\\
& = p'_1 \mathcal G_1 + \cdots + p'_{j-1} \mathcal G_{j-1} + (p'_j - at')
\mathcal G_j + p'_{j+1} \mathcal G_{j+1} + \cdots + p'_\nu \mathcal G_\nu\\
& \alignLongFormula + ((q'_1, \ldots, q'_r) + a t' \mathcal G_j)\\
& = p'_1 \mathcal G_1 + \cdots p'_\nu \mathcal G_\nu + (q'_1, \ldots, q'_r),
\end{align*}
i.\,e.\ the invariant is also satisfied after one iteration of the while-loop.
By induction over the number of iterations of the while-loop, we see that the
invariant is always satisfied.\\
As we have already seen in the proof of the termination, the term $t'$ in
line~\ref{algo:divAlg-factorize}---and thus also the polynomials $p_1, \ldots,
p_\nu$ at the end of the algorithm---always has at most the degree
$\ind_{\mathcal M}(\mathcal V) - 1$. If the algorithm terminates in
line~\ref{algo:divAlg-trivialCase}, we have
\begin{align*}
\mathcal V = p_1 \mathcal G_1 + \cdots + p_\nu \mathcal G_\nu + (q_1, \ldots,
q_r) = p_1 \mathcal G_1 + \cdots + p_\nu \mathcal G_\nu.
\end{align*}
If the algorithm terminates in line~\ref{algo:divAlg-ind0}, we have
\begin{align*}
\mathcal V & = p_1 \mathcal G_1 + \cdots + p_\nu \mathcal G_\nu + (q_1,
\ldots, q_r)\\
& = p_1 \mathcal G_1 + \cdots + p_\nu \mathcal G_\nu + c_1 t_1 e_{\alpha_1} +
\cdots + c_\mu t_\mu e_{\alpha_\mu}
\end{align*}
with $c_1, \ldots, c_\mu \in K$. In both cases, the algorithm computes a
representation of~$\mathcal V$ with the claimed properties.

Finally, we prove that the result does not depend on the choice of the term~$t
e_k$ in line~\ref{algo:divAlg-choice}. This fact follows from the observation
that~$t e_k$ is replaced by terms of strictly smaller $\mathcal M$-index during
every iteration of the while-loop in line~\ref{algo:divAlg-while}. Thus 
different choices of the term~$t e_k$ in line~\ref{algo:divAlg-choice} do not
interfere with one another. Altogether, we see that the final result, after all
those terms have been rewritten, is independent of the ordering in which they
are handled.
\end{proof}

Using the Module Border Division Algorithm~\ref{thm:divAlg}, we can define the
normal remainder of a vector in~$P^r$ with respect to a module border prebasis
in the usual way like it has been done for border prebases in
\cite[p.~427]{KR2}.

\begin{defn}
\label{defn:NR}
Let $\mathcal V \in P^r$. We apply the Module Border Division
Algorithm~\ref{thm:divAlg} to~$\mathcal V$ and~$\mathcal G$ to obtain a
representation
\begin{align*}
\mathcal V = p_1 \mathcal G_1 + \cdots + p_\nu \mathcal G_\nu + c_1 t_1
e_{\alpha_1} + \cdots + c_\mu t_\mu e_{\alpha_\mu}
\end{align*}
with $p_1, \ldots, p_\nu \in P$, $c_1, \ldots, c_\mu \in K$, and
\begin{align*}
\deg(p_j) \leq \ind_{\mathcal M}(\mathcal V) - 1
\end{align*}
for all $j \in \{ 1, \ldots, \nu \}$ where $p_j \neq 0$. Then the vector
\begin{align*}
\NR_{\mathcal G}(\mathcal V) = c_1 t_1 e_{\alpha_1} + \cdots + c_\mu t_\mu
e_{\alpha_\mu} \in \langle \mathcal M \rangle_K \subseteq P^r
\end{align*}
is called the \emph{normal remainder} of~$\mathcal V$ with respect
to~$\mathcal G$.
\end{defn}

\begin{exmp}
\label{exmp:divAlg}
We consider Example~\ref{exmp:border}, again, and write the order module
\begin{align*}
\mathcal M = \{ x e_1, y e_1, e_1, x^2 e_2, x e_2, e_2 \} = \{ t_1 e_{\alpha_1},
\ldots, t_6 e_{\alpha_6} \}
\end{align*}
and its border
\begin{align*}
\partial \mathcal M = \{ x^2 e_1, xy e_1, y^2e_1, x^3 e_2, x^2y e_2, xy e_2, y
e_2 \} = \{ b_1 e_{\beta_1}, \ldots, b_7 e_{\beta_7} \}.
\end{align*}
Then the set
$\mathcal G = \{ \mathcal G_1, \ldots, \mathcal G_7 \} \subseteq P^2$ with
\begin{align*}
\mathcal G_1 & = x^2 e_1 - y e_1 + e_2,\\
\mathcal G_2 & = xy e_1 - e_2,\\
\mathcal G_3 & = y^2 e_1 - x e_2,\\
\mathcal G_4 & = x^3 e_2 - e_1,\\
\mathcal G_5 & = x^2y e_2 - e_1 - e_2,\\
\mathcal G_6 & = xy e_2 + 3 e_1,\\
\mathcal G_7 & = y e_2 - x e_1 - y e_1 - e_1 - e_2
\end{align*}
is an $\mathcal M$-module border prebasis by Definition~\ref{defn:moduleBB}. We
consider the steps of the Module Border Division Algorithm~\ref{algo:divAlg}
applied to
\begin{align*}
\mathcal V = x^3 e_1 + xy e_1 + x^3y e_2 \in P^2
\end{align*}
and~$\mathcal G$ in detail.

The initialization process of the algorithm in the
lines~\ref{algo:divAlg-initStart}--\ref{algo:divAlg-initEnd} yields
\begin{align*}
(p_1, \ldots, p_7) = (0, 0, 0, 0, 0, 0, 0)
\end{align*}
and
\begin{align*}
(q_1, q_2) = (x^3 + xy, x^3y).
\end{align*}
Since
\begin{align*}
\ind_{\mathcal M}((q_1, q_2)) = \ind_{\mathcal M}(x^3 e_1) = \ind_{\mathcal
M}(x^3 y e_2) = 2 > 0,
\end{align*}
the while-loop in line~\ref{algo:divAlg-while} is executed. We choose~$x^3y e_2$
in line~\ref{algo:divAlg-choice}. Then we have $j = 4$ and the factorization
\begin{align*}
x^3y e_2 = y \cdot x^3 e_2 = y \cdot b_4 e_{\beta_4}
\end{align*}
in line~\ref{algo:divAlg-factorize}. After line~\ref{algo:divAlg-compensate}, we
have
\begin{align*}
(p_1, \ldots, p_7) = (0, 0, 0, y, 0, 0, 0),
\end{align*}
and after line~\ref{algo:divAlg-subtract}, we have
\begin{align*}
(q_1, q_2) = (x^3 + xy, x^3y) - y \cdot (-1, x^3) = (x^3 + xy + y, 0).
\end{align*}
Now the $\mathcal M$-index is
\begin{align*}
\ind_{\mathcal M}((q_1, q_2)) = \ind_{\mathcal M}(x^3 e_1) = 2 > 0,
\end{align*}
and we must choose~$x^3 e_1$ in line~\ref{algo:divAlg-choice}. Then we have $j
= 1$ and the factorization
\begin{align*}
x^3 e_1 = x \cdot x^2 e_1 = x \cdot b_1 e_{\beta_1}
\end{align*}
in line~\ref{algo:divAlg-factorize}. This yields
\begin{align*}
(p_1, \ldots, p_7) = (x, 0, 0, y, 0, 0, 0)
\end{align*}
after line~\ref{algo:divAlg-compensate}, and
\begin{align*}
(q_1, q_2) = (x^3 + xy + y, 0) - x \cdot (x^2 - y, 1) = (2xy + y, -x)
\end{align*}
after line~\ref{algo:divAlg-subtract}. Now the $\mathcal M$-index has decreased
to
\begin{align*}
\ind_{\mathcal M}((q_1, q_2)) = \ind_{\mathcal M}(xy e_1) = 1 > 0,
\end{align*}
and we must choose~$xy e_1$ in line~\ref{algo:divAlg-choice}. Then we have $j
= 2$ and the factorization
\begin{align*}
xy e_1 = 1 \cdot xy e_1 = 1 \cdot b_2 e_{\beta_2}
\end{align*}
in line~\ref{algo:divAlg-factorize}. This yields
\begin{align*}
(p_1, \ldots, p_7) = (x, 2, 0, y, 0, 0, 0)
\end{align*}
after line~\ref{algo:divAlg-compensate}, and
\begin{align*}
(q_1, q_2) = (2xy + y, -x) - 2 \cdot (xy, -1) = (y, -x + 2).
\end{align*}
after line~\ref{algo:divAlg-subtract}. After these iterations, we see that
\begin{align*}
(q_1, q_2) \neq 0
\end{align*}
and
\begin{align*}
\ind_{\mathcal M}((q_1, q_2)) = 0.
\end{align*}
Since
\begin{align*}
(q_1, q_2) = (y, -x + 2) = t_2 e_{\alpha_2} - t_5 e_{\alpha_5} + 2 t_6
e_{\alpha_6},
\end{align*}
the algorithm returns
\begin{align*}
((x, 2, 0, y, 0, 0, 0), (0, 1, 0, 0, -1, 2)) \in P^7 \times K^6
\end{align*}
in line~\ref{algo:divAlg-ind0}.

Moreover, this yields
\begin{align*}
\mathcal V = x^3 e_1 + xy e_1 + x^3y e_2 = x \mathcal G_1 + 2 \mathcal G_2 + y
\mathcal G_4 + \NR_{\mathcal G}(\mathcal V)
\end{align*}
where
\begin{align*}
\NR_{\mathcal G}(\mathcal V) = y e_1 - x e_2 + 2 e_2 \in \langle \mathcal M
\rangle_K.
\end{align*}
according to Theorem~\ref{thm:divAlg} and Definition~\ref{defn:NR}.
\end{exmp}

Now we use the Module Border Division Algorithm~\ref{thm:divAlg} to deduce some
propositions about module border bases. We start to prove that an $\mathcal
M$-module border basis $\mathcal G$ of a $P$-submodule $U \subseteq P^r$ is
indeed a basis, i.\,e.\ that $\langle \mathcal G \rangle = U$ holds. The proof
of this result follows exactly like the corresponding result for border bases in
\cite[Prop.~6.4.15]{KR2}.

\begin{cor}
\label{thm:genSetModBB}
Let $\mathcal G$ be an $\mathcal M$-module border basis of a $P$-submodule $U
\subseteq P^r$. Then $\langle \mathcal G \rangle = U$.
\end{cor}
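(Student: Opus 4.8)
The statement to prove is that an $\mathcal M$-module border basis $\mathcal G = \{\mathcal G_1, \ldots, \mathcal G_\nu\}$ of a $P$-submodule $U \subseteq P^r$ satisfies $\langle \mathcal G \rangle = U$. The inclusion $\langle \mathcal G \rangle \subseteq U$ is immediate from the definition, since $\mathcal G \subseteq U$ and $U$ is a $P$-submodule. So the entire content lies in proving $U \subseteq \langle \mathcal G \rangle$.

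The plan is to use the Module Border Division Algorithm (Theorem~\ref{thm:divAlg}). Let $\mathcal V \in U$ be arbitrary. Applying {\tt divAlg}$(\mathcal V, \mathcal G)$ yields a representation
\begin{align*}
\mathcal V = p_1 \mathcal G_1 + \cdots + p_\nu \mathcal G_\nu + c_1 t_1 e_{\alpha_1} + \cdots + c_\mu t_\mu e_{\alpha_\mu}
\end{align*}
with $p_j \in P$ and $c_i \in K$. Since $\mathcal V \in U$ and each $\mathcal G_j \in U$, the normal remainder $\NR_{\mathcal G}(\mathcal V) = c_1 t_1 e_{\alpha_1} + \cdots + c_\mu t_\mu e_{\alpha_\mu}$ also lies in $U$. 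Passing to the quotient via $\varepsilon_U$, we get $c_1 (t_1 e_{\alpha_1} + U) + \cdots + c_\mu (t_\mu e_{\alpha_\mu} + U) = 0$ in $P^r / U$. Now invoke the defining property of a module border basis: $\varepsilon_U(\mathcal M) = \{t_1 e_{\alpha_1} + U, \ldots, t_\mu e_{\alpha_\mu} + U\}$ is a $K$-vector space basis of $P^r/U$ and has exactly $\mu$ elements (so the listed residues are pairwise distinct and linearly independent). Linear independence forces $c_1 = \cdots = c_\mu = 0$, hence $\NR_{\mathcal G}(\mathcal V) = 0$ and $\mathcal V = p_1 \mathcal G_1 + \cdots + p_\nu \mathcal G_\nu \in \langle \mathcal G \rangle$.

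Since $\mathcal V \in U$ was arbitrary, this gives $U \subseteq \langle \mathcal G \rangle$, and combined with the trivial reverse inclusion we conclude $\langle \mathcal G \rangle = U$. There is no real obstacle here: the only subtle point is making sure the coefficients $c_i$ are genuinely a $K$-linear combination of a basis (which is exactly what the conditions "$\varepsilon_U(\mathcal M)$ is a $K$-basis of $P^r/U$" and "$\#\varepsilon_U(\mathcal M) = \mu$" guarantee — the cardinality condition rules out collisions among the $t_i e_{\alpha_i}$ having equal residues, which would otherwise spoil the independence argument). This mirrors verbatim the argument of \cite[Prop.~6.4.15]{KR2}, as the authors indicate.
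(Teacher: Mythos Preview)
Your proof is correct and follows essentially the same approach as the paper: both use the Module Border Division Algorithm to write an arbitrary $\mathcal V \in U$ as an element of $\langle \mathcal G \rangle$ plus a $K$-linear combination of the $t_i e_{\alpha_i}$, then pass to $P^r/U$ and invoke the $K$-linear independence of $\varepsilon_U(\mathcal M)$ to force all coefficients $c_i$ to vanish. Your explicit remark about the cardinality condition $\#\varepsilon_U(\mathcal M) = \mu$ ruling out collisions is a worthwhile clarification that the paper leaves implicit.
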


\begin{proof}
According to Definition~\ref{defn:moduleBB}, we have $\langle \mathcal G \rangle
\subseteq U$. For the converse implication, we let $\mathcal V \in U$. We apply
the Module Border Basis Algorithm~\ref{thm:divAlg} to~$\mathcal V$ and~$\mathcal
G$ to obtain a representation
\begin{align*}
\mathcal V = \mathcal W + c_1 t_1 e_{\alpha_1} + \cdots + c_\mu t_\mu
e_{\alpha_\mu}
\end{align*}
with $\mathcal W \in \langle \mathcal G \rangle \subseteq U$, and $c_1, \ldots,
c_\mu \in K$.
It follows that
\begin{align*}
U = \mathcal V + U = c_1 t_1 e_{\alpha_1} + \cdots + c_\mu t_\mu e_{\alpha_\mu}
+ U \in P^r / U.
\end{align*}
Since $\mathcal G$ is an $\mathcal M$-module border basis of $U$,
Definition~\ref{defn:moduleBB} yields
\begin{align*}
c_1 = \cdots = c_\mu = 0
\end{align*}
and thus $\mathcal V = \mathcal W \in \langle \mathcal G \rangle$.
\end{proof}

The proof of the next corollary follows the proof of \cite[Coro.~3.8]{SubBB}.

\begin{cor}
\label{thm:genSet}
We have $\langle \varepsilon_{\langle \mathcal G \rangle}(\mathcal M) \rangle_K
= P^r / \langle \mathcal G \rangle$. In particular, for every vector $\mathcal V
\in P^r$, the normal remainder $\NR_{\mathcal G}(\mathcal V)$ of~$\mathcal V$
with respect to~$\mathcal G$ is a representative of the residue class~$\mathcal
V + \langle \mathcal G \rangle$.
\end{cor}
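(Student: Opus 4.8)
The plan is to derive both assertions directly from the Module Border Division Algorithm (Theorem~\ref{thm:divAlg}). For the equality $\langle \varepsilon_{\langle \mathcal G \rangle}(\mathcal M) \rangle_K = P^r / \langle \mathcal G \rangle$, the inclusion ``$\subseteq$'' is immediate, since $\mathcal M \subseteq P^r$ and $\varepsilon_{\langle \mathcal G \rangle}$ takes values in $P^r / \langle \mathcal G \rangle$. For the reverse inclusion I would pick an arbitrary residue class in $P^r / \langle \mathcal G \rangle$ and write it as $\mathcal V + \langle \mathcal G \rangle$ for some $\mathcal V \in P^r$, which is possible because $\varepsilon_{\langle \mathcal G \rangle}$ is an epimorphism. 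Applying Theorem~\ref{thm:divAlg} to $\mathcal V$ and $\mathcal G$ yields a representation $\mathcal V = p_1 \mathcal G_1 + \cdots + p_\nu \mathcal G_\nu + c_1 t_1 e_{\alpha_1} + \cdots + c_\mu t_\mu e_{\alpha_\mu}$ with $p_1, \ldots, p_\nu \in P$ and $c_1, \ldots, c_\mu \in K$.

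The key step is then to pass to the quotient: since $\mathcal G_1, \ldots, \mathcal G_\nu \in \langle \mathcal G \rangle$, applying $\varepsilon_{\langle \mathcal G \rangle}$ to this identity kills the entire first sum (regardless of the polynomial coefficients $p_j$), so that $\varepsilon_{\langle \mathcal G \rangle}(\mathcal V) = c_1 \varepsilon_{\langle \mathcal G \rangle}(t_1 e_{\alpha_1}) + \cdots + c_\mu \varepsilon_{\langle \mathcal G \rangle}(t_\mu e_{\alpha_\mu})$, which lies in $\langle \varepsilon_{\langle \mathcal G \rangle}(\mathcal M) \rangle_K$ because the $c_i$ are scalars. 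This proves the equality.

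For the ``in particular'' statement, I would note that by Definition~\ref{defn:NR} the very same representation gives $\NR_{\mathcal G}(\mathcal V) = c_1 t_1 e_{\alpha_1} + \cdots + c_\mu t_\mu e_{\alpha_\mu}$, hence $\mathcal V - \NR_{\mathcal G}(\mathcal V) = p_1 \mathcal G_1 + \cdots + p_\nu \mathcal G_\nu \in \langle \mathcal G \rangle$, which says exactly that $\NR_{\mathcal G}(\mathcal V)$ represents the residue class $\mathcal V + \langle \mathcal G \rangle$.

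I do not expect a real obstacle here: this corollary is essentially bookkeeping built on the division algorithm. The only points requiring a moment's care are invoking surjectivity of $\varepsilon_{\langle \mathcal G \rangle}$ to reduce the claim about all of $P^r / \langle \mathcal G \rangle$ to a claim about arbitrary representatives $\mathcal V \in P^r$, and observing that although the coefficients $p_j$ in the division representation are polynomials rather than scalars, the membership $\mathcal G_j \in \langle \mathcal G \rangle$ makes this irrelevant after passing to the quotient.
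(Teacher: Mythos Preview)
Your proposal is correct and follows essentially the same approach as the paper: apply the Module Border Division Algorithm to an arbitrary $\mathcal V \in P^r$, then pass to the quotient to see that $\mathcal V + \langle \mathcal G \rangle = \NR_{\mathcal G}(\mathcal V) + \langle \mathcal G \rangle \in \langle \varepsilon_{\langle \mathcal G \rangle}(\mathcal M) \rangle_K$, with the other inclusion being trivial. The paper's version is slightly terser (it does not separately spell out the surjectivity of $\varepsilon_{\langle \mathcal G \rangle}$ or the irrelevance of the polynomial coefficients $p_j$), but the argument is the same.
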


\begin{proof}
Let $\mathcal V \in P^r$. We apply the Module Border Division
Algorithm~\ref{thm:divAlg} to~$\mathcal V$ and~$\mathcal G$ to obtain a
representation
\begin{align*}
\mathcal V = p_1 \mathcal G_1 + \cdots + p_\nu \mathcal G_\nu + \NR_{\mathcal
G}(\mathcal V)
\end{align*}
with $p_1, \ldots, p_\nu \in P$ and $\NR_{\mathcal G}(\mathcal V) \in \langle
\mathcal M \rangle_K$. Thus we have
\begin{align*}
\mathcal V + \langle \mathcal G \rangle = \NR_{\mathcal G}(\mathcal V) + \langle
\mathcal G \rangle \in \langle \varepsilon_{\langle \mathcal G \rangle}(\mathcal
M) \rangle_K.
\end{align*}
The other inclusion follows trivially because~$\mathcal M \subseteq P^r$.
\end{proof}

We are now able to give a first characterization of module border bases similar
to the characterization of border bases in \cite[Defn.~6.4.13]{KR2}.

\begin{cor}
\label{thm:char}
Let $U \subseteq P^r$ be a $P$-submodule with $\mathcal G \subseteq U$. Then the
following conditions are equivalent.
\begin{enumerate}
\renewcommand{\labelenumi}{\roman{enumi})}
  \item The $\mathcal M$-module border prebasis~$\mathcal G$ is an $\mathcal
  M$-module border basis of~$U$.
  \item We have $U \cap \langle \mathcal M \rangle_K = \{ 0 \}$.
  \item We have $P^r = U \oplus \langle \mathcal M \rangle_K$.
\end{enumerate}
\end{cor}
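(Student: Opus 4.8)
The plan is to prove the cycle of implications i) $\Rightarrow$ iii) $\Rightarrow$ ii) $\Rightarrow$ i), using the Module Border Division Algorithm (Theorem~\ref{thm:divAlg}) as the main workhorse, together with Corollary~\ref{thm:genSetModBB} and Corollary~\ref{thm:genSet}.

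\textbf{i) $\Rightarrow$ iii).} Assume $\mathcal G$ is an $\mathcal M$-module border basis of $U$. By Corollary~\ref{thm:genSetModBB} we have $\langle \mathcal G \rangle = U$, so Corollary~\ref{thm:genSet} gives $P^r / U = \langle \varepsilon_U(\mathcal M) \rangle_K$; equivalently $P^r = U + \langle \mathcal M \rangle_K$. It remains to see the sum is direct, i.e. $U \cap \langle \mathcal M \rangle_K = \{0\}$. Take $\mathcal V \in U \cap \langle \mathcal M \rangle_K$ and write $\mathcal V = c_1 t_1 e_{\alpha_1} + \cdots + c_\mu t_\mu e_{\alpha_\mu}$ with $c_i \in K$. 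Then in $P^r/U$ we get $\sum_i c_i (t_i e_{\alpha_i} + U) = \mathcal V + U = 0$. Since $\varepsilon_U(\mathcal M)$ is a $K$-basis of $P^r/U$ and has exactly $\mu$ distinct elements (part~b) of Definition~\ref{defn:moduleBB}), linear independence forces $c_1 = \cdots = c_\mu = 0$, hence $\mathcal V = 0$. This establishes $P^r = U \oplus \langle \mathcal M \rangle_K$.

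\textbf{iii) $\Rightarrow$ ii).} This is immediate: a direct sum decomposition $P^r = U \oplus \langle \mathcal M \rangle_K$ in particular means $U \cap \langle \mathcal M \rangle_K = \{0\}$.

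\textbf{ii) $\Rightarrow$ i).} Assume $U \cap \langle \mathcal M \rangle_K = \{0\}$. We already have $\mathcal G \subseteq U$ by hypothesis. We must show $\varepsilon_U(\mathcal M)$ is a $K$-basis of $P^r/U$ with $\#\varepsilon_U(\mathcal M) = \mu$. For spanning: given any $\mathcal V \in P^r$, apply Algorithm~\ref{algo:divAlg} to $\mathcal V$ and $\mathcal G$ to obtain $\mathcal V = \sum_{j} p_j \mathcal G_j + \NR_{\mathcal G}(\mathcal V)$ with $\NR_{\mathcal G}(\mathcal V) \in \langle \mathcal M \rangle_K$; since each $\mathcal G_j \in U$, we get $\mathcal V + U = \NR_{\mathcal G}(\mathcal V) + U$, so $\varepsilon_U(\mathcal M)$ spans $P^r/U$. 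For linear independence together with the cardinality claim: suppose $\sum_{i=1}^\mu c_i (t_i e_{\alpha_i} + U) = 0$ with $c_i \in K$. Then $\sum_i c_i t_i e_{\alpha_i} \in U \cap \langle \mathcal M \rangle_K = \{0\}$, and since the $t_i e_{\alpha_i}$ are pairwise distinct terms in $\mathbb T^n\langle e_1, \ldots, e_r\rangle$ they are $K$-linearly independent in $P^r$, forcing all $c_i = 0$. This shows the $\mu$ residue classes $t_i e_{\alpha_i} + U$ are pairwise distinct and $K$-linearly independent, hence form a basis, and $\#\varepsilon_U(\mathcal M) = \mu$. Therefore $\mathcal G$ is an $\mathcal M$-module border basis of $U$.

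The argument is essentially routine; the only point that needs a little care is keeping straight the distinction in Definition~\ref{defn:moduleBB} between $\varepsilon_U(\mathcal M)$ being a spanning/independent set and the cardinality condition $\#\varepsilon_U(\mathcal M) = \mu$ (which rules out collapsing of residue classes), and noticing that the injectivity $U \cap \langle \mathcal M\rangle_K = \{0\}$ handles both independence and the cardinality simultaneously because the terms $t_1 e_{\alpha_1}, \ldots, t_\mu e_{\alpha_\mu}$ are already $K$-independent in $P^r$. I expect no serious obstacle; the division algorithm does all the heavy lifting for the spanning direction, exactly as in \cite[Prop.~6.4.15]{KR2}.
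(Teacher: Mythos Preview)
Your proof is correct and uses essentially the same ingredients as the paper's proof: the Module Border Division Algorithm (via Corollary~\ref{thm:genSet} or directly) for the spanning assertions, and the $K$-linear independence of $\mathcal M$ in $P^r$ for the independence and cardinality claims. The only cosmetic difference is that the paper runs the cycle in the order i)~$\Rightarrow$~ii)~$\Rightarrow$~iii)~$\Rightarrow$~i) rather than your i)~$\Rightarrow$~iii)~$\Rightarrow$~ii)~$\Rightarrow$~i), and in your first step you invoke Corollaries~\ref{thm:genSetModBB} and~\ref{thm:genSet} as shortcuts where the paper applies the division algorithm directly; the underlying arguments are identical.
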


\begin{proof}
We start to prove that~i) implies~ii). Let $\mathcal V \in U \cap \langle
\mathcal M \rangle_K$. Then there exist $c_1, \ldots, c_\mu \in K$ such that
\begin{align*}
\mathcal V = c_1 t_1 e_{\alpha_1} + \cdots + c_\mu t_\mu e_{\alpha_\mu}.
\end{align*}
Modulo $U$, this yields
\begin{align*}
U = \mathcal V + U = c_1 t_1 e_{\alpha_1} + \cdots + c_\mu t_\mu e_{\alpha_\mu}
+ U \in P^r / U.
\end{align*}
As~$\mathcal G$ is an $\mathcal M$-module border basis of~$U$, it follows $c_1 =
\cdots = c_\mu = 0$ and thus $\mathcal V = 0$ by Definition~\ref{defn:moduleBB}.

Next we prove that~ii) implies~iii). As we have $U \cap \langle \mathcal M
\rangle_K = \{ 0 \}$, it suffices to prove that $P^r = U + \langle \mathcal M
\rangle_K$. Obviously we have $P^r \supseteq U + \langle \mathcal M \rangle_K$.
In order to prove the converse inclusion, we let $\mathcal V \in P^r$. Applying
the Module Border Division Algorithm~\ref{thm:divAlg} to~$\mathcal V$
and~$\mathcal G$, we obtain a representation
\begin{align*}
\mathcal V = p_1 \mathcal G_1 + \cdots + p_\nu \mathcal G_\nu + \NR_{\mathcal
G}(\mathcal V) \in \langle \mathcal G \rangle + \langle \mathcal M \rangle_K
\end{align*}
with $p_1, \ldots, p_\nu \in P$ and $\NR_{\mathcal G}(\mathcal V) \in \langle
\mathcal M \rangle_K$. The hypothesis $\mathcal G \subseteq U$ hence yields the
claim.

Finally, we prove that~iii) implies~i). Let $c_1, \ldots, c_\mu \in K$ be
coefficients such that
\begin{align*}
U = c_1 t_1 e_{\alpha_1} + \cdots + c_\mu t_\mu e_{\alpha_\mu} + U \in P^r / U.
\end{align*}
Then we have
\begin{align*}
c_1 t_1 e_{\alpha_1} + \cdots + c_\mu t_\mu e_{\alpha_\mu} \in U \cap \langle
\mathcal M \rangle_K
\end{align*}
which yields
\begin{align*}
c_1 t_1 e_{\alpha_1} + \cdots + c_\mu t_\mu e_{\alpha_\mu} = 0
\end{align*}
because of $P^r = U \oplus \langle \mathcal M \rangle_K$. As $\mathcal M$ is
$K$-linearly independent, it follows that $c_1 = \cdots = c_\mu = 0$, and we see
that $\varepsilon_U(\mathcal M) \subseteq P^r / U$ is $K$-linearly independent
and that $\# \varepsilon_U(\mathcal M) = \mu$. Moreover, every vector $\mathcal
V \in P^r$ can be written in the form $\mathcal V = \mathcal W + \NR_{\mathcal
G}(\mathcal V)$ where $\mathcal W \in \langle \mathcal G \rangle$ according to
Corollary~\ref{thm:genSet}. As $\mathcal G \subseteq U$, the claim now follows
by Definition~\ref{defn:moduleBB}.
\end{proof}

If~$\mathcal G$ is an $\mathcal M$-module border bases of a $P$-submodule $U
\subseteq P^r$, we can introduce a normal form with respect to a $P$-submodule
$U \subseteq P^r$ for every vector in $P^r$ similarly to the way it has been
done in \cite[Defn.~6.4.20]{KR2} and \cite[Prop.~6.4.19/21]{KR2} for border
bases.

\begin{lem}
\label{thm:NFunique}
Let $U \subseteq P^r$ be a $P$-submodule, and let $\mathcal G$ and $\mathcal G'$
be two $\mathcal M$-module border bases of~$U$. Then we have
\begin{align*}
\NR_{\mathcal G}(\mathcal V) = \NR_{\mathcal G'}(\mathcal V)
\end{align*}
for every vector~$\mathcal V \in P^r$.
\end{lem}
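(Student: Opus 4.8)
The statement asserts that the normal remainder is independent of which $\mathcal{M}$-module border basis of $U$ we use to compute it. The natural strategy is to show that both $\NR_{\mathcal G}(\mathcal V)$ and $\NR_{\mathcal G'}(\mathcal V)$ are representatives of the residue class $\mathcal V + U$ that lie in $\langle \mathcal M \rangle_K$, and that there is only one such representative. The uniqueness comes from Corollary~\ref{thm:char}: since $\mathcal G$ is an $\mathcal M$-module border basis of $U$, we have the direct sum decomposition $P^r = U \oplus \langle \mathcal M \rangle_K$, so the composite $K$-linear map $\langle \mathcal M \rangle_K \hookrightarrow P^r \twoheadrightarrow P^r / U$ is an isomorphism; in particular it is injective.

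First I would invoke Corollary~\ref{thm:genSet} (applied with the submodule $U$ in place of $\langle \mathcal G \rangle$; note that by Corollary~\ref{thm:genSetModBB} we have $\langle \mathcal G \rangle = U$ and $\langle \mathcal G' \rangle = U$, so this is legitimate) to conclude that for every $\mathcal V \in P^r$ both $\NR_{\mathcal G}(\mathcal V)$ and $\NR_{\mathcal G'}(\mathcal V)$ are representatives of $\mathcal V + U$. Concretely, the Module Border Division Algorithm~\ref{thm:divAlg} gives representations $\mathcal V = \sum_j p_j \mathcal G_j + \NR_{\mathcal G}(\mathcal V)$ and $\mathcal V = \sum_j p'_j \mathcal G'_j + \NR_{\mathcal G'}(\mathcal V)$ with $\sum_j p_j \mathcal G_j \in \langle \mathcal G \rangle = U$ and $\sum_j p'_j \mathcal G'_j \in \langle \mathcal G' \rangle = U$. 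Subtracting, we get
\begin{align*}
\NR_{\mathcal G}(\mathcal V) - \NR_{\mathcal G'}(\mathcal V) = \sum_{j=1}^\nu p'_j \mathcal G'_j - \sum_{j=1}^\nu p_j \mathcal G_j \in U.
\end{align*}
On the other hand, both normal remainders lie in $\langle \mathcal M \rangle_K$ by Definition~\ref{defn:NR}, hence so does their difference. Therefore $\NR_{\mathcal G}(\mathcal V) - \NR_{\mathcal G'}(\mathcal V) \in U \cap \langle \mathcal M \rangle_K$.

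Finally I would apply Corollary~\ref{thm:char}: since $\mathcal G$ is an $\mathcal M$-module border basis of $U$, condition~ii) of that corollary gives $U \cap \langle \mathcal M \rangle_K = \{ 0 \}$, so the difference vanishes and $\NR_{\mathcal G}(\mathcal V) = \NR_{\mathcal G'}(\mathcal V)$. There is no real obstacle here; the only point that needs a moment's care is making sure the hypotheses of Corollaries~\ref{thm:genSetModBB}, \ref{thm:genSet}, and~\ref{thm:char} are all met for \emph{both} $\mathcal G$ and $\mathcal G'$ — in particular that $\langle \mathcal G \rangle = U = \langle \mathcal G' \rangle$, which is exactly what Corollary~\ref{thm:genSetModBB} supplies — so that $U \cap \langle \mathcal M \rangle_K = \{0\}$ can be read off from the fact that either one of them is a border basis of $U$.
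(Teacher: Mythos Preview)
Your proof is correct and follows essentially the same route as the paper: apply the Module Border Division Algorithm with each of $\mathcal G$ and $\mathcal G'$, subtract, and observe that the difference of the two normal remainders lies in $U \cap \langle \mathcal M \rangle_K = \{0\}$ by Corollary~\ref{thm:char}. You are in fact slightly more explicit than the paper in invoking Corollary~\ref{thm:genSetModBB} to justify $\langle \mathcal G \rangle = U = \langle \mathcal G' \rangle$, which the paper uses tacitly.
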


\begin{proof}
Let $\mathcal V \in P^r$. We apply the Module Border Division
Algorithm~\ref{thm:divAlg} to~$\mathcal V$ and~$\mathcal G$, and to~$\mathcal V$
and~$\mathcal G'$ in order to obtain two representations
\begin{align*}
\mathcal V = \mathcal W + \NR_{\mathcal G}(\mathcal V) = \mathcal W' +
\NR_{\mathcal G'}(\mathcal V)
\end{align*}
with $\mathcal W, \mathcal W' \in \langle \mathcal G \rangle$ and $\NR_{\mathcal
G}(\mathcal V), \NR_{\mathcal G'}(\mathcal V) \in \langle \mathcal M \rangle_K$.
As~$\mathcal G$ and~$\mathcal G'$ are $\mathcal M$-module border bases of~$U$,
the claim follows since we have
\begin{align*}
\NR_{\mathcal G}(\mathcal V) - \NR_{\mathcal G'}(\mathcal V) = \mathcal W' -
\mathcal W \in \langle \mathcal G \rangle \cap \langle \mathcal M \rangle_K
\subseteq U \cap \langle \mathcal M \rangle_K = \{ 0 \}
\end{align*}
according to Definition~\ref{defn:moduleBB} and Corollary~\ref{thm:char}.
\end{proof}

\begin{rem}
\label{rem:NF}
Let $\mathcal V \in P^r$ be a vector. Similar to the situation of Gröbner bases
and border bases, the normal remainder of~$\mathcal V$ with respect to the
$\mathcal M$-module border prebasis~$\mathcal G$ is a representative of the
residue class $\mathcal V + \langle \mathcal G \rangle \in P^r / \langle
\mathcal G \rangle$ by Corollary~\ref{thm:genSet}. But the normal remainder
of~$\mathcal V$ with respect to~$\mathcal G$ depends on the particularly chosen
$\mathcal M$-module border prebasis~$\mathcal G$ and on the ordering of the
elements in~$\mathcal G$ by Definition~\ref{defn:NR}. But if~$\mathcal G$ is even
an $\mathcal M$-module border basis of~$\langle \mathcal G \rangle$,
Lemma~\ref{thm:NFunique} shows that the result is independent of the
particularly chosen $\mathcal M$-module border basis~$\mathcal G$, and of the
ordering of the elements in~$\mathcal G$. Thus the normal remainder defines a
normal form with respect to~$\langle \mathcal G \rangle$ in this situation. In
particular, we can also compute this normal form with the Module Border Division
Algorithm~\ref{thm:divAlg}.
\end{rem}

\begin{defn}
\label{defn:NF}
Let~$\mathcal G$ be an $\mathcal M$-module border basis of a $P$-submodule $U
\subseteq P^r$, and let $\mathcal V \in P^r$. Then we call the vector
\begin{align*}
\NF_{\mathcal M, U}(\mathcal V) = \NR_{\mathcal G}(\mathcal V) \in \langle
\mathcal M \rangle_K \subseteq P^r,
\end{align*}
which is unique according to Remark~\ref{rem:NF}, the \emph{normal form}
of~$\mathcal V$ with respect to~$\mathcal M$ and~$U$.
\end{defn}

\begin{prop}
Let~$\mathcal G$ be an $\mathcal M$-module border basis of a $P$-submodule $U
\subseteq P^r$.
\begin{enumerate}
  \item For all $\mathcal V \in P^r$, we have
  \begin{align*}
  \NF_{\mathcal M, U}(\mathcal V) = \NF_{\sigma, U}(\mathcal V)
  \end{align*}
  if there exists a term ordering~$\sigma$ on~$\mathbb T^n \langle e_1, \ldots,
  e_r \rangle$ such that~$\mathcal M = \mathcal O_\sigma(U)$.
  \item We have
  \begin{align*}
  \NF_{\mathcal M, U}(c \mathcal V + c' \mathcal V') = c \NF_{\mathcal M,
  U}(\mathcal V) + c' \NF_{\mathcal M, U}(\mathcal V')
  \end{align*}
  for all $c, c' \in K$ and all~$\mathcal V, \mathcal V' \in P^r$.
  \item We have
  \begin{align*}
  \NF_{\mathcal M, U}(\NF_{\mathcal M, U}(\mathcal V)) = \NF_{\mathcal M,
  U}(\mathcal V)
  \end{align*}
  for all~$\mathcal V \in P^r$.
  \item We have
  \begin{align*}
  \NF_{\mathcal M, U}(p \mathcal V) = \NF_{\mathcal M, U}(p \NF_{\mathcal M,
  U}(\mathcal V))
  \end{align*}
  for all $p \in P$ and all~$\mathcal V \in P^r$.
\end{enumerate}
\end{prop}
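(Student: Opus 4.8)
The four statements a)–d) are standard normal-form properties, and the strategy throughout is to reduce everything to the uniqueness result in Lemma~\ref{thm:NFunique} together with Corollary~\ref{thm:char}, which says $P^r = U \oplus \langle \mathcal M \rangle_K$, and the defining property $\NF_{\mathcal M, U}(\mathcal V) \equiv \mathcal V \pmod{U}$ from Corollary~\ref{thm:genSet}. The key observation that makes all of b)--d) routine is this: since $P^r = U \oplus \langle \mathcal M \rangle_K$, the map $\NF_{\mathcal M, U}$ is precisely the $K$-linear projection of $P^r$ onto the second summand $\langle \mathcal M \rangle_K$ along $U$. Indeed, for any $\mathcal V \in P^r$ we have $\mathcal V - \NF_{\mathcal M, U}(\mathcal V) \in \langle \mathcal G \rangle \subseteq U$ and $\NF_{\mathcal M, U}(\mathcal V) \in \langle \mathcal M \rangle_K$, so $\NF_{\mathcal M, U}(\mathcal V)$ is the unique element of $\langle \mathcal M \rangle_K$ congruent to $\mathcal V$ modulo $U$; this characterization is what I will use repeatedly.

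\textbf{Part a).} Here I would argue that if $\mathcal M = \mathcal O_\sigma(U)$ for some term ordering $\sigma$, then the $\sigma$-Gröbner-basis-style border basis of $U$ (which exists and has $\mathcal M$ as its residue basis, by the theory of \cite[Section~6.4]{KR2}) is in particular an $\mathcal M$-module border basis of $U$. By Lemma~\ref{thm:NFunique}, $\NR$ computed with respect to \emph{any} $\mathcal M$-module border basis of $U$ gives the same answer, so $\NF_{\mathcal M, U}(\mathcal V)$ agrees with the classical normal form $\NF_{\sigma, U}(\mathcal V)$, which by definition is the unique representative of $\mathcal V + U$ supported on $\mathcal O_\sigma(U) = \mathcal M$. (Alternatively, and more cleanly: both $\NF_{\mathcal M, U}(\mathcal V)$ and $\NF_{\sigma, U}(\mathcal V)$ lie in $\langle \mathcal M \rangle_K$ and are congruent to $\mathcal V$ mod $U$, hence equal by the direct-sum uniqueness.)

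\textbf{Parts b), c), d).} For b), write $\mathcal V = \mathcal W + \NF_{\mathcal M, U}(\mathcal V)$ and $\mathcal V' = \mathcal W' + \NF_{\mathcal M, U}(\mathcal V')$ with $\mathcal W, \mathcal W' \in U$; then $c\mathcal V + c'\mathcal V' = (c\mathcal W + c'\mathcal W') + (c\,\NF_{\mathcal M, U}(\mathcal V) + c'\,\NF_{\mathcal M, U}(\mathcal V'))$ is a decomposition of $c\mathcal V + c'\mathcal V'$ into a part in $U$ and a part in $\langle \mathcal M \rangle_K$, so by uniqueness of the normal form the second summand equals $\NF_{\mathcal M, U}(c\mathcal V + c'\mathcal V')$. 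For c), note $\NF_{\mathcal M, U}(\mathcal V) \in \langle \mathcal M \rangle_K$ is already its own representative mod $U$ in $\langle \mathcal M \rangle_K$, so applying $\NF_{\mathcal M, U}$ again changes nothing (formally: $\NF_{\mathcal M, U}(\mathcal V) - \NF_{\mathcal M, U}(\NF_{\mathcal M, U}(\mathcal V)) \in U \cap \langle \mathcal M \rangle_K = \{0\}$ by Corollary~\ref{thm:char}). For d), observe $p\mathcal V - p\,\NF_{\mathcal M, U}(\mathcal V) = p(\mathcal V - \NF_{\mathcal M, U}(\mathcal V)) \in p\cdot U \subseteq U$ since $U$ is a $P$-submodule; hence $p\mathcal V$ and $p\,\NF_{\mathcal M, U}(\mathcal V)$ are congruent mod $U$, and applying $\NF_{\mathcal M, U}$ to both (which respects congruence mod $U$, again by the direct-sum uniqueness) gives the claim.

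\textbf{Main obstacle.} None of b)--d) presents any real difficulty once the projection viewpoint is in place; the only part requiring genuine input from outside this excerpt is a), where I must invoke the existence of the $\sigma$-border basis of $U$ with residue basis $\mathcal O_\sigma(U)$ and the compatibility of its normal remainder with the classical $\NF_\sigma$. If that existence statement is not readily available in the form needed, the fallback is the purely formal argument: $\NF_{\sigma, U}(\mathcal V)$ is by construction the unique element of $\langle \mathcal O_\sigma(U)\rangle_K = \langle \mathcal M\rangle_K$ congruent to $\mathcal V$ mod $U$, and $\NF_{\mathcal M, U}(\mathcal V)$ has exactly the same two properties, so they coincide by Corollary~\ref{thm:char}. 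I would present a) using this formal argument to keep the proof self-contained.
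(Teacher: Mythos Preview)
Your proposal is correct and follows essentially the same approach as the paper: the paper's proof simply notes that both $\NF_{\mathcal M, U}(\mathcal V)$ and $\NF_{\sigma, U}(\mathcal V)$ are the unique vector in $\mathcal V + U$ with support contained in $\mathcal M = \mathcal O_\sigma(U)$, and then says ``the other claims follow from the same uniqueness.'' Your ``projection viewpoint'' and the fallback argument you give for a) are exactly this uniqueness argument spelled out in more detail.
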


\begin{proof}
Claim~a) follows because for all $\mathcal V \in P^r$, both $\NF_{\mathcal M,
U}(\mathcal V)$ and $\NF_{\sigma, U}(\mathcal V)$ are equal to the unique vector
in $\mathcal V + U \in P^r / U$ whose support is contained in $\mathcal M =
\mathcal O_\sigma(U)$ according to Definition~\ref{defn:NF} and
\cite[Defn.~2.4.8]{KR1}. The other claims follow from the same uniqueness.
\end{proof}

%
%

\section{Existence and Uniqueness of Module Border Bases}
\label{sect:existUnique}

In this section, we show the existence and uniqueness of a module border bases
of a given submodule $U \subseteq P^r$ like it has been done for border bases in
\cite[Prop.~6.4.17]{KR2}. Moreover, we proof that there is a correspondence of
the reduced $\sigma$-Gröbner bases of a $P$-submodule $U \subseteq P^r$ and the
$\mathcal O_\sigma(U)$-module border bases of~$U$ similar to
\cite[Prop.~6.4.18]{KR2}. Finally, we give a first \enquote{naive} algorithm for
the computation of module border bases that uses Gröbner bases techniques.

Like in the previous section, we let $\mathcal O_1, \ldots, \mathcal O_r
\subseteq \mathbb T^n$ be finite order ideals, and we let $\mathcal M = \mathcal
O_1 e_1 \cup \cdots \cup \mathcal O_r e_r = \{ t_1 e_{\alpha_1}, \ldots, t_\mu
e_{\alpha_\mu} \}$ be an order module with border $\partial \mathcal M = \{ b_1
e_{\beta_1}, \ldots, b_\nu e_{\beta_\nu} \}$, where $\mu, \nu \in \mathbb N$,
$t_i, b_j \in \mathbb T^n$ and $\alpha_i, \beta_j \in \{ 1, \ldots, r \}$ for
all $i \in \{ 1, \ldots, \mu \}$ and $j \in \{ 1, \ldots, \nu \}$. Furthermore,
we let $\mathcal G = \{ \mathcal G_1, \ldots, \mathcal G_\nu \} \subseteq P^r$
be an $\mathcal M$-module border prebasis, where we write $\mathcal G_j = b_j
e_{\beta_j} - \sum_{i=1}^\mu c_{ij} t_i e_{\alpha_i}$ with $c_{1j}, \ldots,
c_{\mu j} \in K$ for every $j \in \{ 1, \ldots, \nu \}$.

We start with the proof of the existence and uniqueness of module border bases.
To this end, we imitate the proof of~\cite[Prop.~6.4.17]{KR2}.

\begin{prop}[Existence and Uniqueness of Module Border Bases]
\label{thm:existUnique}
Let $U \subseteq P^r$ be a $P$-submodule. Moreover, we let
$\varepsilon_U(\mathcal M) \subseteq P^r / U$ be a $K$-vector space basis
of~$P^r / U$ with $\# \varepsilon_U(\mathcal M) = \mu$.
\begin{enumerate}
  \item There exists a unique $\mathcal M$-module border basis of~$U$.
  \item Let $\mathcal G$ be an $\mathcal M$-module border prebasis with
  $\mathcal G \subseteq U$. Then $\mathcal G$ is the $\mathcal M$-module border
  basis of~$U$.
  \item Let $K'$ be the field of definition of~$U$. Then the $\mathcal M$-module
  border basis of~$U$ is contained in $K'[x_1, \ldots, x_n]$.
\end{enumerate}
\end{prop}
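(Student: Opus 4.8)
The plan is to mirror the proof of the classical border basis existence and uniqueness result \cite[Prop.~6.4.17]{KR2}, using the characterizations already established in Section~\ref{sect:divAlg}, especially Corollary~\ref{thm:char}.

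For part~a), I would first establish \textbf{existence}. Since $\varepsilon_U(\mathcal M)$ is a $K$-basis of $P^r/U$ with $\#\varepsilon_U(\mathcal M) = \mu$, the residue classes $t_1 e_{\alpha_1} + U, \ldots, t_\mu e_{\alpha_\mu} + U$ are $K$-linearly independent and span $P^r/U$. Hence for each border term $b_j e_{\beta_j} \in \partial\mathcal M$ there exist unique coefficients $c_{1j}, \ldots, c_{\mu j} \in K$ with $b_j e_{\beta_j} + U = \sum_{i=1}^\mu c_{ij}(t_i e_{\alpha_i} + U)$ in $P^r/U$. Setting $\mathcal G_j = b_j e_{\beta_j} - \sum_{i=1}^\mu c_{ij} t_i e_{\alpha_i}$ then gives $\mathcal G_j \in U$, so $\mathcal G = \{\mathcal G_1, \ldots, \mathcal G_\nu\}$ is an $\mathcal M$-module border prebasis contained in $U$; by Definition~\ref{defn:moduleBB} it is an $\mathcal M$-module border basis of~$U$. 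For \textbf{uniqueness}, suppose $\mathcal G$ and $\mathcal G'$ are two $\mathcal M$-module border bases of~$U$, with components $\mathcal G_j = b_j e_{\beta_j} - \sum_i c_{ij} t_i e_{\alpha_i}$ and $\mathcal G_j' = b_j e_{\beta_j} - \sum_i c_{ij}' t_i e_{\alpha_i}$. Then $\mathcal G_j - \mathcal G_j' = \sum_i (c_{ij}' - c_{ij}) t_i e_{\alpha_i} \in U \cap \langle\mathcal M\rangle_K$, which is $\{0\}$ by Corollary~\ref{thm:char} (since condition~i) holds, so does~ii)). As $\mathcal M$ is $K$-linearly independent, all $c_{ij} = c_{ij}'$, so $\mathcal G = \mathcal G'$.

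Part~b) is then almost immediate: if $\mathcal G$ is an $\mathcal M$-module border prebasis with $\mathcal G \subseteq U$, then the hypothesis that $\varepsilon_U(\mathcal M)$ is a $K$-basis of $P^r/U$ with $\#\varepsilon_U(\mathcal M) = \mu$ together with $\mathcal G \subseteq U$ is exactly Definition~\ref{defn:moduleBB}.b), so $\mathcal G$ is an $\mathcal M$-module border basis of~$U$; by part~a) it is \emph{the} $\mathcal M$-module border basis.

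For part~c), the idea is the standard field-of-definition argument. Let $K'$ be the field of definition of~$U$, i.e.\ the smallest subfield of~$K$ such that $U$ is generated by vectors in $(K'[x_1,\ldots,x_n])^r$; equivalently, $U$ has a reduced $\sigma$-Gröbner basis with coefficients in~$K'$ for any term ordering~$\sigma$ (cf.\ \cite{KR1}). The $\mathcal M$-module border basis of~$U$ is obtained by solving, for each $j$, the linear system expressing $b_j e_{\beta_j}$ modulo~$U$ in terms of the basis $\varepsilon_U(\mathcal M)$; since $\varepsilon_U(\mathcal M)$ consists of residue classes of terms and the reduction of $b_j e_{\beta_j}$ modulo~$U$ can be computed by a $\sigma$-Gröbner basis of $U$ with coefficients in $K'$, the resulting coefficients $c_{ij}$ lie in~$K'$. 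Hence $\mathcal G_j \in (K'[x_1,\ldots,x_n])^r$ for all~$j$. I expect the main subtlety here to be pinning down the precise meaning and characterization of the field of definition as used in \cite{KR1} and checking that normal-form computation modulo~$U$ indeed stays within~$K'$ — the other two parts are routine given Corollary~\ref{thm:char}.
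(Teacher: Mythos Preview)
Your proofs of parts~a) and~b) are essentially identical to the paper's: construct each $\mathcal G_j$ by expanding $b_j e_{\beta_j} + U$ in the basis $\varepsilon_U(\mathcal M)$, verify Definition~\ref{defn:moduleBB} directly, and use Corollary~\ref{thm:char} for uniqueness.

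For part~c) you take a genuinely different route. The paper argues as follows: set $P' = K'[x_1,\ldots,x_n]$ and $U' = U \cap (P')^r$; by \cite[Lemma~2.4.16]{KR1} the submodules $U$ and $U'$ share the same reduced Gr\"obner basis, hence the same leading-term module, so by Macaulay's Basis Theorem $\dim_{K'}((P')^r/U') = \dim_K(P^r/U) = \mu$. Since $\mathcal M \subseteq (P')^r$ and its elements are $K'$-linearly independent modulo~$U'$, part~a) applied over~$K'$ yields an $\mathcal M$-module border basis $\mathcal G'$ of~$U'$ with entries in $(P')^r$. Then $\mathcal G' \subseteq U' \subseteq U$, so part~b) applied over~$K$ forces $\mathcal G'$ to be \emph{the} $\mathcal M$-module border basis of~$U$. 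Your approach instead argues directly that the coefficients $c_{ij}$ can be computed over~$K'$ via Gr\"obner reduction. This is workable but needs one more step you do not spell out: the Gr\"obner normal form $\NF_{\sigma,U}(b_j e_{\beta_j})$ lives in $\langle \mathcal O_\sigma(U)\rangle_{K'}$, not in $\langle \mathcal M\rangle_{K'}$, so you must also argue that the change-of-basis from $\varepsilon_U(\mathcal O_\sigma(U))$ to $\varepsilon_U(\mathcal M)$ has entries in~$K'$ (which it does, since each $\NF_{\sigma,U}(t_i e_{\alpha_i})$ is computed over~$K'$ and the resulting square matrix is invertible over~$K'$). The paper's approach is cleaner because it bootstraps from parts~a) and~b) and avoids this explicit linear-algebra detour; your approach is more concrete but requires the extra change-of-basis justification to be complete.
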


\begin{proof}
We start with the proof of claim~a). Let $j \in \{ 1, \ldots, \nu \}$. By
Proposition~\ref{thm:border} we see that $b_j e_{\beta_j} \notin \mathcal M$.
Thus there exist $c_{1j}, \ldots, c_{\mu j} \in K$ such that
\begin{align*}
b_j e_{\beta_j} + U = c_{1j} t_1 e_{\alpha_1} + \cdots + c_{\mu j} t_\mu
e_{\alpha_\mu} + U
\end{align*}
and this yields
\begin{align*}
\mathcal G_j = b_j e_{\beta_j} - \sum_{i=1}^\mu c_{ij} t_i e_{\alpha_i} \in U.
\end{align*}
Now the set $\mathcal G = \{ \mathcal G_1, \ldots, \mathcal G_\nu \} \subseteq
U$ is an $\mathcal M$-module border prebasis, and the assumptions yield that
$\mathcal G$ is an $\mathcal M$-module border basis of~$U$ by
Definition~\ref{defn:moduleBB}. It remains to prove the uniqueness. Let
$\mathcal G' = \{ \mathcal G'_1, \ldots, \mathcal G'_\nu \} \subseteq U$ be
another $\mathcal M$-module border basis of~$U$ where
\begin{align*}
\mathcal G'_j = b_j e_{\beta_j} - \sum_{i=1}^\mu c'_{ij} t_i e_{\alpha_i}
\end{align*}
with $c'_{ij} \in K$ for all $i \in \{ 1, \ldots, \mu \}$ and $j \in \{ 1,
\ldots, \nu \}$. Assume there exists an $i \in \{ 1, \ldots, \mu \}$ and a $j
\in \{ 1, \ldots, \nu \}$ such that $c_{ij} \neq c'_{ij}$. Then
Corollary~\ref{thm:char} yields the contradiction
\begin{align*}
0 \neq \mathcal G_j - \mathcal G'_j \in U \cap \langle \mathcal M \rangle_K = \{
0 \}
\end{align*}
and claim~a) follows.

We go on with the proof of~b). As $\varepsilon_U(\mathcal M)$ is a $K$-vector
space basis of $P^r / U$ such that $\# \varepsilon_U(\mathcal M) = \mu$, we see
that $\mathcal G$ is an $\mathcal M$-module border basis of $U$ by
Definition~\ref{defn:moduleBB}. The claim now follows with~a).

Finally, we prove claim~c). Let $P' = K'[x_1, \ldots, x_n]$ and $U' = U \cap
(P')^r$. Given a term ordering~$\sigma$ on~$\mathbb T^n \langle e_1, \ldots, e_r
\rangle$, the $P$-submodules $U \subseteq P^r$ and $U' \subseteq (P')^r$ have
the same reduced $\sigma$-Gröbner basis and
\begin{align*}
\LT_\sigma \{ U \} = \LT_\sigma \{ U' \}
\end{align*}
by \cite[Lemma~2.4.16]{KR1}. Hence we see that
\begin{align*}
\dim_K((P')^r / U') = \dim_K(P^r / U) = \# \mathcal M
\end{align*}
according to Macaulay's Basis Theorem~\cite[Thm.~1.5.7]{KR1} and
Definition~\ref{defn:moduleBB}. Moreover, the elements of $\mathcal M$ are
contained in $(P')^r$ and they are $K$-linearly independent modulo $U'
\subseteq U$. Thus it follows that $\varepsilon_{U'}(\mathcal M)$ is a
$K$-vector space basis of~$(P')^r / U'$ and $\# \varepsilon_{U'}(\mathcal M) =
\mu$. According to~a), there exists a unique $\mathcal M$-module border basis
$\mathcal G' \subseteq P^r$ of~$U'$. Since~$\mathcal G'$ is an $\mathcal
M$-module border prebasis with $\mathcal G' \subseteq U$, the claim follows
from~b).
\end{proof}

Next we show that there is a connection between certain module border bases and
reduced Gröbner bases like it has been done in \cite[Prop.~6.4.18]{KR2} for the
border bases case.

The set $\mathbb T^n \langle e_1, \ldots, e_r \rangle \setminus \mathcal M$ is
obviously a monomial submodule of $P^r$, i.\,e.\ it has a system of generators
consisting of terms by \cite[Defn.~1.3.7]{KR1}. Thus there exists a uniquely
determined minimal set of generators of this monomial submodule according to
\cite[Prop.~1.3.11]{KR1}. The elements between this minimal generating set play
an essential role in the connection of certain module border bases and reduced
Gröbner bases, and thus get a name.

\begin{defn}
\label{defn:corners}
Let $\mathcal M$ be an order module. We call the elements of the minimal
generating set of the monomial submodule $\mathbb T^n \langle e_1, \ldots, e_r
\rangle \setminus \mathcal M$ the \emph{corners} of~$\mathcal M$.
\end{defn}

\begin{prop}
\label{thm:corners}
Let $U \subseteq P^r$ be a $P$-submodule and let $\sigma$ be a term ordering
on~$\mathbb T^n \langle e_1, \ldots, e_r \rangle$. Then there exists a unique
$\mathcal O_{\sigma}(U)$-module border basis~$\mathcal G$ of~$U$, and the
reduced $\sigma$-Gröbner basis of~$U$ is the subset of~$\mathcal G$
corresponding to the corners of~$\mathcal O_{\sigma}(U)$.
\end{prop}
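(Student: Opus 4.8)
The plan is to reduce the existence and uniqueness assertion to Proposition~\ref{thm:existUnique} and then to locate the reduced $\sigma$-Gröbner basis inside the resulting module border basis by a leading-term argument, exactly in the spirit of \cite[Prop.~6.4.18]{KR2}.

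First I would check that $\mathcal M := \mathcal O_\sigma(U)$ is a finite order module. Since $\LT_\sigma \{ U \}$ is a monomial submodule of $P^r$, its complement $\mathcal M$ is closed under forming divisors componentwise; writing $\mathcal O_k = \{ t \in \mathbb T^n \mid t e_k \in \mathcal M \}$ gives order ideals with $\mathcal M = \mathcal O_1 e_1 \cup \cdots \cup \mathcal O_r e_r$, and these are finite because $U$ has finite $K$-codimension. By the module version of Macaulay's Basis Theorem (\cite[Thm.~1.5.7]{KR1}), $\varepsilon_U(\mathcal M)$ is a $K$-vector space basis of $P^r / U$, and as the terms in $\mathcal M$ are pairwise distinct we also get $\# \varepsilon_U(\mathcal M) = \# \mathcal M = \mu$. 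Proposition~\ref{thm:existUnique}~a) then yields the unique $\mathcal M$-module border basis $\mathcal G = \{ \mathcal G_1, \ldots, \mathcal G_\nu \}$ of~$U$, say $\mathcal G_j = b_j e_{\beta_j} - \sum_{i=1}^\mu c_{ij} t_i e_{\alpha_i}$ with $\partial \mathcal M = \{ b_1 e_{\beta_1}, \ldots, b_\nu e_{\beta_\nu} \}$.

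Next I would pin down the leading terms. For every $j$, the vector $\mathcal G_j \in U$ is nonzero, since its term $b_j e_{\beta_j}$ has coefficient $1$ and is distinct from all $t_i e_{\alpha_i} \in \mathcal M$ (the border $\partial \mathcal M$ being disjoint from $\mathcal M$ by Proposition~\ref{thm:border}); hence $\LT_\sigma(\mathcal G_j) \in \LT_\sigma \{ U \}$. But the remaining terms $t_i e_{\alpha_i}$ of $\mathcal G_j$ lie in $\mathcal M = \mathbb T^n \langle e_1, \ldots, e_r \rangle \setminus \LT_\sigma \{ U \}$, so $\LT_\sigma(\mathcal G_j) = b_j e_{\beta_j}$. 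By the last claim of Proposition~\ref{thm:border}, $\LT_\sigma \{ U \} = \mathbb T^n \langle e_1, \ldots, e_r \rangle \setminus \mathcal M$ is exactly the monomial submodule generated by $\partial \mathcal M$, so the leading terms of $\mathcal G$ generate $\LT_\sigma \{ U \}$; together with $\langle \mathcal G \rangle = U$ from Corollary~\ref{thm:genSetModBB} this shows that $\mathcal G$ is a $\sigma$-Gröbner basis of~$U$.

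Finally I would extract the reduced Gröbner basis. By Definition~\ref{defn:corners}, the corners of $\mathcal M$ are the uniquely determined minimal generators of the monomial submodule $\mathbb T^n \langle e_1, \ldots, e_r \rangle \setminus \mathcal M = \LT_\sigma \{ U \}$; since $\partial \mathcal M$ already generates this submodule, every corner occurs among $b_1 e_{\beta_1}, \ldots, b_\nu e_{\beta_\nu}$, and I set $\mathcal G'$ to be the subset of those $\mathcal G_j$ whose leading term $b_j e_{\beta_j}$ is a corner. Then $\mathcal G' \subseteq U$ is a $\sigma$-Gröbner basis of $U$ (its leading terms are precisely the corners, which generate $\LT_\sigma \{ U \}$), it is monic, and it is reduced: for $\mathcal G_j \in \mathcal G'$ the leading term $b_j e_{\beta_j}$ is a minimal generator, hence not a proper multiple of any other corner, while every non-leading term $t_i e_{\alpha_i}$ of $\mathcal G_j$ lies in $\mathcal M$ and thus outside $\LT_\sigma \{ U \}$, so it is divisible by no corner at all. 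By uniqueness of the reduced $\sigma$-Gröbner basis, $\mathcal G'$ is it, which is the claim. The only real obstacle is the bookkeeping of this last step: invoking Proposition~\ref{thm:border} correctly to see that $\partial \mathcal M$ generates the complement monomial submodule and that the corners form a subset of $\partial \mathcal M$, and matching this against the definition of a reduced $\sigma$-Gröbner basis for modules from \cite{KR1}.
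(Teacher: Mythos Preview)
Your argument is correct, but it runs in the opposite direction from the paper's. The paper starts from the reduced $\sigma$-Gröbner basis of $U$ (whose existence it takes for granted), observes that each of its elements has the shape $b e_k - \NF_{\sigma,U}(b e_k)$ with $b e_k$ a corner and $\NF_{\sigma,U}(b e_k) \in \langle \mathcal O_\sigma(U) \rangle_K$, and then invokes the uniqueness part of Proposition~\ref{thm:existUnique} to conclude that this element must coincide with the member of $\mathcal G$ attached to the same border term. You instead build the candidate from inside $\mathcal G$: you first show that each $\mathcal G_j$ has leading term $b_j e_{\beta_j}$, deduce that $\mathcal G$ is a $\sigma$-Gröbner basis, restrict to the corner subset $\mathcal G'$, and verify by hand that $\mathcal G'$ is monic, minimal, and reduced. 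Your route is a bit longer (the intermediate fact that all of $\mathcal G$ is a Gröbner basis is not needed in the paper), but it is self-contained and avoids appealing to the normal form $\NF_{\sigma,U}$; the paper's route is shorter because uniqueness does the matching for free. Either way, both proofs tacitly use that $\mathcal O_\sigma(U)$ is finite, i.e.\ that $U$ has finite $K$-codimension, which is required for Definition~\ref{defn:moduleBB} to apply.
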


\begin{proof}
By Macaulay's Basis Theorem~\cite[Thm.~1.5.7]{KR1}, the set
$\varepsilon_U(\mathcal O_\sigma(U))$ is a $K$-vector space basis of~$P^r / U$
and we have $\# \varepsilon_U(\mathcal O_\sigma(U)) = \# \mathcal O_\sigma(U)$.
Thus Proposition~\ref{thm:existUnique} implies the existence of a unique
$\mathcal O_\sigma(U)$-module border basis~$\mathcal G$ of~$U$.

In order to prove the second claim, we let $b e_k \in \LT_\sigma \{ U \}$ with
$k \in \{ 1, \ldots, r \}$ be a corner of~$\mathcal O_\sigma(U)$. The element of
the reduced $\sigma$-Gröbner basis of~$U$ with leading term~$b e_k$ has the form
$b e_k - \NF_{\sigma, U}(b e_k)$ where $\NF_{\sigma, U}(b e_k) \subseteq \langle
\mathcal O_\sigma(U) \rangle_K$ by \cite[Defn.~2.4.8]{KR1}. Since the $\mathcal
O_\sigma(U)$-module border basis~$\mathcal G$ of~$U$ is unique, this element of
the reduced $\sigma$-Gröbner basis coincides with the element in~$\mathcal G$
corresponding to~$b e_k$. According to the definition of the reduced
$\sigma$-Gröbner basis of~$U$, cf.\ \cite[Defn.~2.4.12]{KR1}, the corners of
$\mathcal O_\sigma(U)$ are exactly the leading terms of the elements of the
reduced $\sigma$-Gröbner basis of~$U$ and hence the claim follows.
\end{proof}

\begin{rem}
\label{rem:naiveAlgo}
The proof of Proposition~\ref{thm:corners} gives rise to an algorithm for the
computation of a module border basis of a given $P$-submodule $U \subseteq P^r$
with $\codim_K(U, P^r) < \infty$. Let $\sigma$ be any term ordering on $\mathbb
T^n \langle e_1, \ldots, e_r \rangle$. We first have to compute a
$\sigma$-Gröbner basis $\mathcal H$ of~$P^r / U$ to determine the order module
$\mathcal O_{\sigma}(U)$ with Macaulay's Basis Theorem~\cite[Thm.~1.5.7]{KR1}.
Then we have to compute its border
\begin{align*}
\partial \mathcal O_{\sigma}(U) = \{ b_1 e_{\beta_1}, \ldots, b_\nu
e_{\beta_\nu} \}.
\end{align*}
Using $\mathcal H$ and the Division Algorithm for Gröbner
Bases~\cite[Thm.~1.6.4]{KR1}, we then compute
\begin{align*}
\mathcal G_j = b_j e_{\beta_j} - \NF_{\sigma,U}(b_j e_{\beta_j}) = b_j
e_{\beta_j} - \NR_{\sigma, \mathcal H}(b_j e_{\beta_j}) \in U \subseteq P^r
\end{align*}
for all $j \in \{ 1, \ldots, \nu \}$ and see that $\mathcal G = \{ \mathcal
G_1, \ldots, \mathcal G_\nu \} \subseteq P^r$ is the $\mathcal
O_\sigma(U)$-module border basis of~$U$ according to
Proposition~\ref{thm:existUnique}.
\end{rem}

In section~\ref{sect:moduleBBAlg}, we give a more efficient algorithm for the
computation of module border bases which uses linear algebra, and which is an
analogous version of the Border Basis Algorithm in~\cite[Thm.~6.4.36]{KR2}.

%
%

\section{Characterizations of Module Border Bases}
\label{sect:chars}

In this section, we want to characterize module border bases in an analogous way
as border bases have been characterized in \cite[Section~6.4.B]{KR2} and in
\cite{CharBB}. In particular, we characterize module border bases via the
special generation property in Theorem~\ref{thm:specGen}, via border form
modules in Theorem~\ref{thm:BFMod}, via rewrite rules in
Theorem~\ref{thm:rewrite}, via commuting matrices in Theorem~\ref{thm:commMat},
and via liftings of border syzygies in Theorem~\ref{thm:liftings}. At last, we
proof Buchberger's Criterion for Module Border Bases~\ref{thm:buchbCrit}.
This is the main result of this section as it allows us to check easily,
whether a given module border prebasis is a module border bases, or not.

Like in the previous section, we let $\mathcal O_1, \ldots, \mathcal O_r
\subseteq \mathbb T^n$ be finite order ideals and we let $\mathcal M = \mathcal
O_1 e_1 \cup \cdots \cup \mathcal O_r e_r = \{ t_1 e_{\alpha_1}, \ldots, t_\mu
e_{\alpha_\mu} \}$ be an order module with border $\partial \mathcal M = \{ b_1
e_{\beta_1}, \ldots, b_\nu e_{\beta_\nu} \}$ where $\mu, \nu \in \mathbb N$,
$t_i, b_j \in \mathbb T^n$ and $\alpha_i, \beta_j \in \{ 1, \ldots, r \}$ for
all $i \in \{ 1, \ldots, \mu \}$ and $j \in \{ 1, \ldots, \nu \}$. Furthermore,
we let $\mathcal G = \{ \mathcal G_1, \ldots, \mathcal G_\nu \} \subseteq P^r$
be an $\mathcal M$-module border basis, where we write $\mathcal G_j = b_j
e_{\beta_j} - \sum_{i=1}^\mu c_{ij} t_i e_{\alpha_i}$ with $c_{1j}, \ldots,
c_{\mu j} \in K$ for every $j \in \{ 1, \ldots, \nu \}$.

%
%

We start to show the connection of module border bases and the special
generation property which has originally been proven for border bases in
\cite[Prop.~9]{CharBB}. Our proof follows the corresponding version for border
bases in~\cite[Prop.~6.4.23]{KR2}.

\begin{thm}[Module Border Bases and Special Generation]
\label{thm:specGen}
The $\mathcal M$-module border prebasis $\mathcal G$ is the $\mathcal M$-module
border basis of $\langle \mathcal G \rangle$ if and only if the following
equivalent conditions are satisfied.
\begin{enumerate}
\renewcommand{\labelenumi}{$A_{\arabic{enumi}})$}
  \item For every vector $\mathcal V \in \langle \mathcal G \rangle \setminus \{
  0 \}$, there exist polynomials $p_1, \ldots, p_\nu \in P$ such that
  \begin{align*}
  \mathcal V = p_1 \mathcal G_1 + \cdots + p_\nu \mathcal G_\nu
  \end{align*}
  and
  \begin{align*}
  \deg(p_j) \leq \ind_{\mathcal M}(\mathcal V) - 1
  \end{align*}
  for all $j \in \{ 1, \ldots, \nu \}$ with~$p_j \neq 0$.
  \item For every vector $\mathcal V \in \langle \mathcal G \rangle \setminus \{
  0 \}$, there exist polynomials $p_1, \ldots, p_\nu \in P$ such that
  \begin{align*}
  \mathcal V = p_1 \mathcal G_1 + \cdots + p_\nu \mathcal G_\nu
  \end{align*}
  and
  \begin{align*}
  \max \{ \deg(p_j) \mid j \in \{ 1, \ldots, \nu \}, p_j \neq 0 \} =
  \ind_{\mathcal M}(\mathcal V) - 1.
  \end{align*}
\end{enumerate}
\end{thm}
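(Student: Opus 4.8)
The plan is to prove the theorem as a cycle of implications. First I would establish that $A_1)$ and $A_2)$ are equivalent, then show that the module border basis property implies $A_1)$, and finally show that $A_2)$ (or $A_1)$) implies the module border basis property. Let me think about each piece.

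For the main border basis property $\Rightarrow A_1)$: this is essentially the content of the Module Border Division Algorithm (Theorem~\ref{thm:divAlg}). Given $\mathcal V \in \langle \mathcal G \rangle \setminus \{0\}$, apply the algorithm to get $\mathcal V = p_1 \mathcal G_1 + \cdots + p_\nu \mathcal G_\nu + \NR_{\mathcal G}(\mathcal V)$ with $\deg(p_j) \leq \ind_{\mathcal M}(\mathcal V) - 1$. Since $\mathcal G$ is the module border basis of $\langle \mathcal G \rangle$, and $\NR_{\mathcal G}(\mathcal V) \in \langle \mathcal M \rangle_K$ while $\mathcal V - \sum p_j \mathcal G_j \in \langle \mathcal G \rangle$, Corollary~\ref{thm:char} (namely $\langle\mathcal G\rangle \cap \langle \mathcal M \rangle_K = \{0\}$) forces $\NR_{\mathcal G}(\mathcal V) = 0$, giving $A_1)$.

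For $A_1) \Rightarrow$ border basis property: I would use Corollary~\ref{thm:char}, so it suffices to show $\langle \mathcal G \rangle \cap \langle \mathcal M \rangle_K = \{0\}$. Take $\mathcal V$ in this intersection, $\mathcal V \neq 0$. By $A_1)$ write $\mathcal V = \sum p_j \mathcal G_j$ with $\deg(p_j) \leq \ind_{\mathcal M}(\mathcal V) - 1$. But $\mathcal V \in \langle \mathcal M \rangle_K$ means $\Supp(\mathcal V) \subseteq \mathcal M$, so $\ind_{\mathcal M}(\mathcal V) = 0$, hence every $p_j = 0$ (degree $\leq -1$), so $\mathcal V = 0$ --- contradiction.

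For $A_1) \Leftrightarrow A_2)$: clearly $A_2) \Rightarrow A_1)$ since the max-degree equality implies the degree bound. For $A_1) \Rightarrow A_2)$, the representation from $A_1)$ already gives $\max \deg(p_j) \leq \ind_{\mathcal M}(\mathcal V) - 1$; I need the reverse inequality, i.e. that one cannot do better. I expect this to be the main obstacle. The idea: suppose $\mathcal V = \sum p_j \mathcal G_j$ with all $p_j$ of degree $\leq d - 1$ where $d < \ind_{\mathcal M}(\mathcal V)$. Each $\mathcal G_j = b_j e_{\beta_j} - \sum_i c_{ij} t_i e_{\alpha_i}$, and every term in $\Supp(t' \mathcal G_j)$ for $\deg(t') \leq d-1$ lies in $\oline{\partial^{d} \mathcal M}$ (using $b_j e_{\beta_j} \in \partial \mathcal M$ so $\ind_{\mathcal M}(t' b_j e_{\beta_j}) \leq \deg(t') + 1 \leq d$ by Proposition~\ref{thm:index}, and the $t_i e_{\alpha_i} \in \mathcal M$ so those terms have even smaller index). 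Hence $\ind_{\mathcal M}(\mathcal V) \leq d < \ind_{\mathcal M}(\mathcal V)$, contradiction. So the minimal achievable max-degree is exactly $\ind_{\mathcal M}(\mathcal V) - 1$, and since $A_1)$ produces a representation with max-degree $\leq \ind_{\mathcal M}(\mathcal V) - 1$, combining with the lower bound gives equality (note one must also rule out the degenerate possibility that the $A_1)$-representation has all $p_j = 0$, which cannot happen since $\mathcal V \neq 0$ and $\mathcal V \notin \langle \mathcal M\rangle_K$ when $\ind_{\mathcal M}(\mathcal V) \geq 1$; when $\ind_{\mathcal M}(\mathcal V) = 0$ we need to check $A_2)$ is vacuously handled or the statement is interpreted with $\mathcal V \notin \langle\mathcal G\rangle$ in that case --- actually if $\ind_{\mathcal M}(\mathcal V) = 0$ then $\mathcal V \in \langle \mathcal M\rangle_K \cap \langle\mathcal G\rangle = \{0\}$, contradicting $\mathcal V \neq 0$, so this case is empty). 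The delicate point to get right is exactly this support-tracking argument showing no lower-degree representation exists, which is where Proposition~\ref{thm:index} parts~b) and the disjoint-union structure of Proposition~\ref{thm:border} do the work.
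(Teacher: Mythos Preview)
Your proof is correct and follows essentially the same route as the paper: border basis $\Rightarrow A_1)$ via the Module Border Division Algorithm plus Corollary~\ref{thm:char}, $A_1)\Rightarrow A_2)$ via the index bounds of Proposition~\ref{thm:index}, and $A_2)$ (or $A_1)$) $\Rightarrow$ border basis via the intersection criterion of Corollary~\ref{thm:char}. One small remark: in your handling of the edge case $\ind_{\mathcal M}(\mathcal V)=0$ inside the implication $A_1)\Rightarrow A_2)$, you invoke $\langle\mathcal M\rangle_K\cap\langle\mathcal G\rangle=\{0\}$, which is the border basis property and hence circular at that point; it is cleaner to observe directly that $A_1)$ alone already forces $\deg(p_j)\leq -1$ and hence $\mathcal V=0$, so this case is vacuous under $A_1)$ without appealing to anything else.
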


\begin{proof}
In order to show that~$A_1)$ holds if~$\mathcal G$ is the $\mathcal M$-module
border basis of~$\langle \mathcal G \rangle$, we let $\mathcal V \in \langle
\mathcal G \rangle \setminus \{ 0 \}$. We apply the Module Border Division
Algorithm~\ref{thm:divAlg} to~$\mathcal V$ and~$\mathcal G$ to obtain a
representation
\begin{align*}
\mathcal V = p_1 \mathcal G_1 + \cdots + p_\nu \mathcal G_\nu + c_1 t_1
e_{\alpha_1} + \cdots + c_\mu t_\mu e_{\alpha_\mu}
\end{align*}
with $p_1, \ldots, p_\nu \in P$, $c_1, \ldots, c_\mu \in K$, and
\begin{align*}
\deg(p_j) \leq \ind_{\mathcal M}(\mathcal V) - 1
\end{align*}
for all $j \in \{ 1, \ldots, \nu \}$ with $p_j \neq 0$. As $\mathcal V \in
\langle \mathcal G \rangle$, this yields
\begin{align*}
\langle \mathcal G \rangle = \mathcal V + \langle \mathcal G \rangle = c_1 t_1
e_{\alpha_1} + \cdots + c_\mu t_\mu e_{\alpha_\mu} + \langle \mathcal G \rangle
\in P^r / \langle \mathcal G \rangle.
\end{align*}
Hence Definition~\ref{defn:moduleBB} yields $c_1 = \cdots = c_\mu = 0$
as~$\mathcal G$ is the $\mathcal M$-module border basis of~$\langle \mathcal G
\rangle$ and the claim follows.

Next we prove that~$A_1)$ implies~$A_2)$. Let $\mathcal V \in \langle \mathcal G
\rangle \setminus \{ 0 \}$ and let
\begin{align*}
\mathcal V = p_1 \mathcal G_1 + \cdots + p_\nu \mathcal G_\nu
\end{align*}
with $p_1, \ldots, p_\nu \in P$ and
\begin{align*}
\deg(p_j) \leq \ind_{\mathcal M}(\mathcal V) - 1
\end{align*}
for all $j \in \{ 1, \ldots, \nu \}$ where $p_j \neq 0$ like in~$A_1)$. If
\begin{align*}
\deg(p_j) < \ind_{\mathcal M}(\mathcal V) - 1
\end{align*}
for some $j \in \{ 1, \ldots, \nu \}$, Proposition~\ref{thm:index} yields
\begin{align*}
\ind_{\mathcal M}(p_j \mathcal G_j) \leq \deg(p_j) + \ind_{\mathcal M}(\mathcal
G_j) = \deg(p_j) + 1 < \ind_{\mathcal M}(\mathcal V).
\end{align*}
Moreover, Proposition~\ref{thm:index} also yields
\begin{align*}
\ind_{\mathcal M}(\mathcal V) \leq \max \{ \ind_{\mathcal M}(p_j \mathcal G_j)
\mid j \in \{ 1, \ldots, \nu \}, p_j \neq 0 \} \leq \ind_{\mathcal M}(\mathcal
V).
\end{align*}
Altogether, we see that there has to be at least one index $j \in \{ 1, \ldots,
\nu \}$ such that $p_j \neq 0$ and $\deg(p_j) = \ind_{\mathcal M}(\mathcal V) -
1$.

At last, we show that~$\mathcal G$ is the $\mathcal M$-module border basis
of~$\langle \mathcal G \rangle$ if~$A_2)$ holds. Let $\mathcal V \in \langle
\mathcal G \rangle \cap \langle \mathcal M \rangle_K$. Assume that $\mathcal V
\neq 0$. Then~$A_2)$ yields the existence of $p_1, \ldots, p_\nu \in P$ such
that
\begin{align*}
\mathcal V = p_1 \mathcal G_1 + \cdots + p_\nu \mathcal G_\nu
\end{align*}
and
\begin{align*}
\deg(p_j) \leq \ind_{\mathcal M}(\mathcal V) - 1 = -1
\end{align*}
for all $j \in \{ 1, \ldots, \nu \}$ where $p_j \neq 0$. Thus we have $p_1 =
\cdots = p_\nu = 0$ and this yields the contradiction $\mathcal V = 0$.
Altogether, we have $\langle \mathcal G \rangle \cap \langle \mathcal M
\rangle_K = \{ 0 \}$ and the claim follows with Corollary~\ref{thm:char}.
\end{proof}

%
%

Next we show the connection of module border bases and their border form
modules. The characterization of module border bases again is a straightforward
generalization of the concept of the border form of border bases like it has
originally been proven in \cite[Prop.~11]{CharBB}. We follow the corresponding
definition of the border form of a vector in \cite[Defn.~6.4.24]{KR2} and the
corresponding characterization in \cite[Prop.~6.4.25]{KR2}.

\begin{defn}
\phantomsection\label{defn:BF}
\begin{enumerate}
  \item Let $\mathcal V \in P^r \setminus \{ 0 \}$. We write
  \begin{align*}
  \mathcal V = a_1 u_1 e_{i_1} + \cdots + a_s u_s e_{i_s}
  \end{align*}
  with $a_1, \ldots, a_s \in K \setminus \{ 0 \}$, $u_1 e_{i_1}, \ldots, u_s
  e_{i_s} \in \mathbb T^n \langle e_1, \ldots, e_r \rangle$ and
  \begin{align*}
  \ind_{\mathcal M}(u_1 e_{i_1}) \geq \ind_{\mathcal M}(u_2 e_{i_2}) \geq \cdots
  \geq \ind_{\mathcal M}(u_s e_{i_s}).
  \end{align*}
  Then we call the vector
  \begin{align*}
  \BF_{\mathcal M}(\mathcal V) = \sum_{\substack{j \in \{ 1, \ldots, s \}\\
  \ind_{\mathcal M}(u_j e_{i_j}) = \ind_{\mathcal M}(\mathcal V)}} a_j u_j
  e_{i_j} \in P^r
  \end{align*}
  the \emph{border form} of~$\mathcal V$ with respect to~$\mathcal M$.
  \item Let $U \subseteq P^r$ be a $P$-submodule. Then we call the $P$-submodule
  \begin{align*}
  \BF_{\mathcal M}(U) = \langle \BF_{\mathcal M}(\mathcal V) \mid \mathcal V \in
  U \setminus \{ 0 \} \rangle \subseteq P^r
  \end{align*}
  the \emph{border form module} of~$U$ with respect to~$\mathcal M$.
\end{enumerate}
\end{defn}

\begin{thm}[Module Border Bases and Border Form Modules]
\label{thm:BFMod}
The $\mathcal M$-module border prebasis~$\mathcal G$ is the $\mathcal M$-module
border basis of~$\langle \mathcal G \rangle$ if and only if the following
equivalent conditions are satisfied.
\begin{enumerate}
\renewcommand{\labelenumi}{$B_{\arabic{enumi}})$}
  \item For every $\mathcal V \in \langle \mathcal G \rangle \setminus \{ 0 \}$,
  we have
  \begin{align*}
  \Supp(\BF_{\mathcal M}(\mathcal V)) \subseteq \mathbb T^n \langle e_1, \ldots,
  e_r \rangle \setminus \mathcal M.
  \end{align*}
  \item We have
  \begin{align*}
  \BF_{\mathcal M}(\langle \mathcal G \rangle) = \langle \BF_{\mathcal
  M}(\mathcal G_1), \ldots, \BF_{\mathcal M}(\mathcal G_\nu) \rangle = \langle
  b_1 e_{\beta_1}, \ldots, b_\nu e_{\beta_\nu} \rangle.
  \end{align*}
\end{enumerate}
\end{thm}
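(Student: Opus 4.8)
I would prove the theorem by a cycle of implications: first that the border basis property forces $B_1)$, then that $B_1)$ implies $B_2)$, and finally that $B_2)$ implies the border basis property. Before starting the cycle I would record two elementary observations. The first is that, since $\mathcal G_j = b_j e_{\beta_j} - \sum_{i=1}^\mu c_{ij} t_i e_{\alpha_i}$ with $\ind_{\mathcal M}(b_j e_{\beta_j}) = 1$ (because $b_j e_{\beta_j} \in \partial \mathcal M = \partial^1 \mathcal M$) and $\ind_{\mathcal M}(t_i e_{\alpha_i}) = 0$ (because $t_i e_{\alpha_i} \in \mathcal M = \partial^0 \mathcal M$), Definition~\ref{defn:BF} gives $\ind_{\mathcal M}(\mathcal G_j) = 1$ and hence $\BF_{\mathcal M}(\mathcal G_j) = b_j e_{\beta_j}$ for all $j$; this already yields the second equality in $B_2)$. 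The second observation, immediate from Proposition~\ref{thm:border}(d), is that for a vector $\mathcal W \in P^r$ one has $\Supp(\mathcal W) \subseteq \mathbb T^n \langle e_1, \ldots, e_r \rangle \setminus \mathcal M$ if and only if $\mathcal W \in \langle b_1 e_{\beta_1}, \ldots, b_\nu e_{\beta_\nu} \rangle$: for the forward direction write each support term as $t' b_j e_{\beta_j}$ using Proposition~\ref{thm:border}(d), and for the backward direction note that every term $t b_j e_{\beta_j}$ lies outside $\mathcal M$ by the same proposition, and cancellation in a $P$-linear combination only removes terms.

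For "$\mathcal G$ is the $\mathcal M$-module border basis of $\langle \mathcal G \rangle$ $\Rightarrow B_1)$", I would take $\mathcal V \in \langle \mathcal G \rangle \setminus \{ 0 \}$. By Corollary~\ref{thm:char} we have $\langle \mathcal G \rangle \cap \langle \mathcal M \rangle_K = \{ 0 \}$, so $\Supp(\mathcal V) \not\subseteq \mathcal M$ and therefore $\ind_{\mathcal M}(\mathcal V) \geq 1$. Every term in $\Supp(\BF_{\mathcal M}(\mathcal V))$ has $\mathcal M$-index exactly $\ind_{\mathcal M}(\mathcal V)$ by Definition~\ref{defn:BF}, hence lies in $\partial^{\ind_{\mathcal M}(\mathcal V)} \mathcal M$, which is disjoint from $\mathcal M$ by Proposition~\ref{thm:border}(a),(b); this is exactly $B_1)$. (Alternatively one could route this implication through condition $A_1)$ of Theorem~\ref{thm:specGen}, but the above is more direct.)

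For "$B_1) \Rightarrow B_2)$", the inclusion $\langle b_1 e_{\beta_1}, \ldots, b_\nu e_{\beta_\nu} \rangle = \langle \BF_{\mathcal M}(\mathcal G_1), \ldots, \BF_{\mathcal M}(\mathcal G_\nu) \rangle \subseteq \BF_{\mathcal M}(\langle \mathcal G \rangle)$ is immediate from Definition~\ref{defn:BF}(b) and the preliminary computation of $\BF_{\mathcal M}(\mathcal G_j)$. For the reverse inclusion it suffices to show $\BF_{\mathcal M}(\mathcal V) \in \langle b_1 e_{\beta_1}, \ldots, b_\nu e_{\beta_\nu} \rangle$ for each $\mathcal V \in \langle \mathcal G \rangle \setminus \{ 0 \}$; but $\Supp(\BF_{\mathcal M}(\mathcal V)) \subseteq \mathbb T^n \langle e_1, \ldots, e_r \rangle \setminus \mathcal M$ by $B_1)$, so the second preliminary observation applies.

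For "$B_2) \Rightarrow \mathcal G$ is the $\mathcal M$-module border basis of $\langle \mathcal G \rangle$", by Corollary~\ref{thm:char} it is enough to show $\langle \mathcal G \rangle \cap \langle \mathcal M \rangle_K = \{ 0 \}$. Suppose $\mathcal V$ is a nonzero element of this intersection. Then $\Supp(\mathcal V) \subseteq \mathcal M$, so $\ind_{\mathcal M}(\mathcal V) = 0$ and $\BF_{\mathcal M}(\mathcal V) = \mathcal V$ by Definition~\ref{defn:BF}; by $B_2)$ this gives $\mathcal V \in \langle b_1 e_{\beta_1}, \ldots, b_\nu e_{\beta_\nu} \rangle$, and the preliminary observation then forces $\emptyset \neq \Supp(\mathcal V) \subseteq \mathbb T^n \langle e_1, \ldots, e_r \rangle \setminus \mathcal M$, contradicting $\Supp(\mathcal V) \subseteq \mathcal M$. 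Hence the intersection is trivial and Corollary~\ref{thm:char} finishes the proof. The argument is essentially bookkeeping; the only place demanding care is keeping the two translations straight---between "support disjoint from $\mathcal M$" and "membership in $\langle b_1 e_{\beta_1}, \ldots, b_\nu e_{\beta_\nu} \rangle$", and between the $\mathcal M$-index of a vector and the position of its border-form terms---and invoking Proposition~\ref{thm:border}(d) in the correct direction each time.
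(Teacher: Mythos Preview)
Your proof is correct and follows essentially the same cycle of implications as the paper, using the same key tools (Corollary~\ref{thm:char} and Proposition~\ref{thm:border}). The only cosmetic differences are that you isolate the equivalence ``$\Supp(\mathcal W)\cap\mathcal M=\emptyset \Leftrightarrow \mathcal W\in\langle b_1 e_{\beta_1},\ldots,b_\nu e_{\beta_\nu}\rangle$'' as a preliminary observation and argue the first implication directly (showing $\ind_{\mathcal M}(\mathcal V)\geq 1$) rather than by contradiction, but the substance is identical.
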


\begin{proof}
We start with the proof that condition~$B_1)$ is satisfied if~$\mathcal G$ is
the $\mathcal M$-module border basis of~$\langle \mathcal G \rangle$. Let
$\mathcal V \in \langle \mathcal G \rangle \setminus \{ 0 \}$. Assume that
$\BF_{\mathcal M}(\mathcal V)$ contains a term of~$\mathcal M$ in its support.
Then we have $\ind_{\mathcal M}(\mathcal V) = 0$ and thus
\begin{align*}
\mathcal V = \BF_{\mathcal M}(\mathcal V) \subseteq \langle \mathcal M \rangle_K
\end{align*}
by Definition~\ref{defn:BF}. Now Corollary~\ref{thm:char} yields the
contradiction
\begin{align*}
\mathcal V \in \langle \mathcal G \rangle \cap \langle \mathcal M \rangle_K = \{
0 \}.
\end{align*}
Thus $\BF_{\mathcal M}(\mathcal V)$ does not contain a term of~$\mathcal M$ in
its support, and the claim follows.

Next we show that~$B_1)$ implies~$B_2)$. Since $\mathcal G_j \in \langle
\mathcal G \rangle \setminus \{ 0 \}$, we see that we have
\begin{align*}
\BF_{\mathcal M}(\mathcal G_j) = b_j e_{\beta_j} \in \BF_{\mathcal M}(\langle
\mathcal G \rangle)
\end{align*}
for every $j \in \{ 1, \ldots, \nu \}$. To prove the converse inclusion, we let
$\mathcal V \in \langle \mathcal G \rangle \setminus \{ 0 \}$. Then we have
\begin{align*}
\Supp(\BF_{\mathcal M}(\mathcal V)) \subseteq \mathbb T^n \langle
e_1, \ldots, e_r \rangle \setminus \mathcal M
\end{align*}
by~$B_1)$. Thus Definition~\ref{defn:BF} and Proposition~\ref{thm:border} yield
that every term in the support of~$\BF_{\mathcal M}(\mathcal V)$ is divisible by
a term in $\partial \mathcal M = \{ b_1 e_{\beta_1}, \ldots, b_\nu e_{\beta_\nu}
\}$. Thus we see that
\begin{align*}
\BF_{\mathcal M}(\mathcal V) \in \langle b_1 e_{\beta_1}, \ldots, b_\nu
e_{\beta_\nu} \rangle.
\end{align*}

Finally, we prove that~$\mathcal G$ is the $\mathcal M$-module border basis
of~$\langle \mathcal G \rangle$ if~$B_2)$ is satisfied. Assume that there exists
a vector $\mathcal V \in \langle \mathcal G \rangle \cap \langle \mathcal M
\rangle_K \setminus \{ 0 \}$. Then we have $\ind_{\mathcal M}(\mathcal V) = 0$
and as a consequence we see that
\begin{align*}
\BF_{\mathcal M}(\mathcal V) = \mathcal V \subseteq \langle \mathcal M
\rangle_K.
\end{align*}
Condition~$B_2)$ and Proposition~\ref{thm:border} now yield the contradiction
$\mathcal V = 0$. Altogether, the claim follows with Corollary~\ref{thm:char}.
\end{proof}

%
%

We now define a rewrite relation corresponding to~$\mathcal G$ and characterize
module border bases with it. For border bases, the analogous version has
originally been proven in \cite[Prop.~14]{CharBB}. Like for the previous
characterizations, our definition of rewrite rules and the first proposition
about it are straightforward generalizations of the corresponding ones for
border bases in \cite[p.~432]{KR2}, \cite[Remark~6.4.27]{KR2}, and
respectively in \cite[Prop.~2.2.2]{KR1}.

\begin{defn}
\phantomsection\label{defn:rewrite}
\begin{enumerate}
  \item Let $\mathcal V \in P^r$, and let $t e_k \in \Supp(\mathcal V)$ such
  that there exists a factorization
  \begin{align*}
  t e_k = t' b_j e_{\beta_j}
  \end{align*}
  with $t' \in \mathbb T^n$ and $j \in \{ 1, \ldots, \nu \}$. Let $c \in K$ be
  the coefficient of~$t e_k$ in~$\mathcal V$. Then the vector $\mathcal V - c t'
  \mathcal G_j \in P^r$ does not contain the term~$t e_k$ in its support
  anymore. We say that~\emph{$\mathcal V$ reduces to~$\mathcal V - c t' \mathcal
  G_j$ in one step} using the \emph{rewrite rule}~$\RedR{\mathcal G_j}$ defined
  by~$\mathcal G_j$ and write
  \begin{align*}
  \mathcal V \RedR{\mathcal G_j} \mathcal V - c t' \mathcal G_j.
  \end{align*}
  The passage from~$\mathcal V$ to~$\mathcal V - c t' \mathcal G_j$ is also
  called a \emph{reduction step} using~$\mathcal G_j$.
  \item The reflexive, transitive closure of the rewrite rules~$\RedR{\mathcal
  G_j}$ defined by~$\mathcal G_j$ for all $j \in \{ 1, \ldots, \nu \}$ is called
  the \emph{rewrite relation} associated to~$\mathcal G$ and is denoted
  by~$\RedR{\mathcal G}$.
  \item The equivalence relation generated by~$\RedR{\mathcal G}$ is denoted
  by~$\RedLR{\mathcal G}$.
\end{enumerate}
\end{defn}

\begin{prop}
\phantomsection\label{thm:rewriteProps}
\begin{enumerate}
  \item If $\mathcal V_1, \mathcal V_2 \in P^r$ satisfy $\mathcal V_1
  \RedR{\mathcal G} \mathcal V_2$, and if $c \in K$ and $t \in \mathbb T^n$,
  then we have $c t \mathcal V_1 \RedR{\mathcal G} c t \mathcal V_2$.
  \item If $\mathcal V_1, \mathcal V_2 \in P^r$ satisfy $\mathcal V_1
  \RedR{\mathcal G_j} \mathcal V_2$ for $j \in \{ 1, \ldots, \nu \}$, and if
  $\mathcal V_3 \in P^r$, then there exists a vector $\mathcal V_4 \in P^r$ such
  that $\mathcal V_1 + \mathcal V_3 \RedR{\mathcal G} \mathcal V_4$ and
  $\mathcal V_2 + \mathcal V_3 \RedR{\mathcal G} \mathcal V_4$.
  \item If $\mathcal V_1, \mathcal V_2, \mathcal V_3, \mathcal V_4 \in P^r$
  satisfy $\mathcal V_1 \RedLR{\mathcal G} \mathcal V_2$ and $\mathcal V_3
  \RedLR{\mathcal G} \mathcal V_4$, then we have $\mathcal V_1 + \mathcal V_3
  \RedLR{\mathcal G} \mathcal V_2 + \mathcal V_4$.
  \item If $\mathcal V_1, \mathcal V_2 \in P^r$ satisfy $\mathcal V_1
  \RedLR{\mathcal G} \mathcal V_2$, and if $p \in P$, then we have $p \mathcal
  V_1 \RedLR{\mathcal G} p \mathcal V_2$.
  \item For a vector $\mathcal V \in P^r$, we have $\mathcal V \RedLR{\mathcal
  G} 0$ if and only if~$\mathcal V \in \langle \mathcal G \rangle$.
  \item For vectors $\mathcal V_1, \mathcal V_2 \in P^r$, we have $\mathcal V_1
  \RedLR{\mathcal G} \mathcal V_2$ if and only if~$\mathcal V_1 - \mathcal V_2
  \in \langle \mathcal G \rangle$.
\end{enumerate}
\end{prop}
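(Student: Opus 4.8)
The six claims are the standard formal properties of the rewrite relation attached to a set of generators, and the plan is to prove them essentially in the listed order, since each one leans on the previous ones. Claims a) and b) are purely combinatorial statements about single reduction steps that should be checked directly from Definition~\ref{defn:rewrite}; claims c) and d) are then bootstrapped from b) by induction on the lengths of the reduction chains; and claims e) and f) are the semantic characterizations, where e) is the real content and f) is a one-line corollary of e) together with d).

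\textbf{Claims a)--d).} For a), if $\mathcal V_1 \RedR{\mathcal G_j} \mathcal V_2 = \mathcal V_1 - c t' \mathcal G_j$ removes the term $t e_k = t' b_j e_{\beta_j}$, then multiplying by $c'' t''$ (with $c'' \in K$, $t'' \in \mathbb T^n$) I get $c'' t'' \mathcal V_1 \RedR{\mathcal G_j} c'' t'' \mathcal V_1 - c c'' t'' t' \mathcal G_j = c'' t'' \mathcal V_2$, using that $t'' t e_k = (t'' t') b_j e_{\beta_j}$ and that the coefficient of $t'' t e_k$ in $c'' t'' \mathcal V_1$ is $c c''$; for a general reduction chain the claim follows by iterating this single-step observation (one has to note that if the coefficient $c$ is $0$ then $c'' t'' \mathcal V_1 = c'' t'' \mathcal V_2$ and there is nothing to do). For b), the subtle point is that adding $\mathcal V_3$ may reintroduce the term $t e_k$ or cancel it; but in all cases $\mathcal V_1 + \mathcal V_3$ and $\mathcal V_2 + \mathcal V_3 = (\mathcal V_1 + \mathcal V_3) - c t' \mathcal G_j$ differ by the single vector $c t' \mathcal G_j$, so I can reduce both to the common vector $\mathcal V_4 := (\mathcal V_1 + \mathcal V_3) - c' t' \mathcal G_j$, where $c'$ is the coefficient of $t e_k$ in $\mathcal V_1 + \mathcal V_3$: one step from $\mathcal V_1 + \mathcal V_3$ directly, and from $\mathcal V_2 + \mathcal V_3$ either zero steps (if $c' = c$) or one further step using $\RedR{\mathcal G_j}$ again. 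Claim c) then follows by writing $\mathcal V_1 \RedLR{\mathcal G} \mathcal V_2$ as a finite zig-zag of single steps, repeatedly invoking b) to push the perturbation $\mathcal V_3$ (and symmetrically $\mathcal V_4$) through, and using symmetry of $\RedLR{\mathcal G}$; and d) follows from c) by writing $p = \sum c_i t_i$ as a $K$-linear combination of terms, applying a) to each $t_i \mathcal V_1 \RedLR{\mathcal G} t_i \mathcal V_2$, and summing via c).

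\textbf{Claims e) and f).} For e), the ``if'' direction ($\mathcal V \RedLR{\mathcal G} 0 \Rightarrow \mathcal V \in \langle \mathcal G \rangle$) is immediate because every single reduction step alters a vector by an element of $\langle \mathcal G \rangle$, hence so does any finite zig-zag, so $\mathcal V - 0 \in \langle \mathcal G \rangle$. For the ``only if'' direction, given $\mathcal V = p_1 \mathcal G_1 + \cdots + p_\nu \mathcal G_\nu \in \langle \mathcal G \rangle$, I want $\mathcal V \RedLR{\mathcal G} 0$; the cleanest route is to observe $\mathcal G_j \RedR{\mathcal G_j} 0$ in one step (the leading term $b_j e_{\beta_j}$ of $\mathcal G_j$ has the trivial factorization $b_j e_{\beta_j} = 1 \cdot b_j e_{\beta_j}$ and coefficient $1$, so $\mathcal G_j - 1 \cdot \mathcal G_j = 0$), hence $p_j \mathcal G_j \RedLR{\mathcal G} 0$ by d), and then summing over $j$ via c) gives $\mathcal V \RedLR{\mathcal G} 0$. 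Finally f) is just e) applied to $\mathcal V_1 - \mathcal V_2$ after using c) to rewrite $\mathcal V_1 \RedLR{\mathcal G} \mathcal V_2$ as $\mathcal V_1 - \mathcal V_2 \RedLR{\mathcal G} 0$ (adding $-\mathcal V_2$ to both sides of $\mathcal V_1 \RedLR{\mathcal G} \mathcal V_2$ via c) with the trivial chain $-\mathcal V_2 \RedLR{\mathcal G} -\mathcal V_2$).

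\textbf{Main obstacle.} The only place that needs genuine care is claim b): one must handle the case distinctions cleanly (coefficient of $t e_k$ in $\mathcal V_3$ being zero, nonzero, or exactly $-c$, so that $t e_k$ survives, is reintroduced, or vanishes in the sum), and phrase the conclusion so that a \emph{single} common reduct $\mathcal V_4$ works for both vectors — this is the confluence-flavored lemma on which the transfer from $\RedR{\mathcal G}$ to the symmetric relation $\RedLR{\mathcal G}$ in c), and ultimately e), rests. Everything after that is bookkeeping with finite chains.
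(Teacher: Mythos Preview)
Your proposal is correct and follows essentially the same approach as the paper: parts a) and b) are verified directly from the definition of a single reduction step, c) and d) are obtained from b) (resp.\ a) and c)) by induction along the zig-zag chain, e) uses $\mathcal G_j \RedR{\mathcal G_j} 0$ together with d) and c), and f) is reduced to e) via c) with the trivial chain $-\mathcal V_2 \RedLR{\mathcal G} -\mathcal V_2$. The only cosmetic difference is in the bookkeeping for b): you track the coefficient $c'$ of $t e_k$ in $\mathcal V_1 + \mathcal V_3$, whereas the paper tracks the coefficient of that term in $\mathcal V_3$; the resulting case split and the common reduct $\mathcal V_4$ are the same up to this relabeling (note that your phrase ``one step from $\mathcal V_1 + \mathcal V_3$ directly'' should be read as ``at most one step'', since when $c' = 0$ the reflexive closure handles it).
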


\begin{proof}
Firstly, we prove~a). Let $c \in K$ and $t \in \mathbb T^n$. If $c = 0$, nothing
has to be shown. Thus we suppose $c \neq 0$. By induction it suffices to prove
the claim for a single reduction step using~$\mathcal G_j$ where $j \in \{ 1,
\ldots, \nu \}$. Let $\mathcal V_1, \mathcal V_2 \in P^r$ and $j \in \{ 1,
\ldots, \nu \}$ be such that $\mathcal V_1 \RedR{\mathcal G_j} \mathcal V_2$.
Then Definition~\ref{defn:rewrite} yields the existence of a term $\hat{t}
e_{\beta_j} \in \Supp(\mathcal V_1)$, a term $t' \in \mathbb T^n$, and a
coefficient $c' \in K$ such that $\mathcal V_2 = \mathcal V_1 - c' t' \mathcal
G_j$ does not contain the term $\hat{t} e_{\beta_j} = t' b_j e_{\beta_j}$ in its
support anymore. Therefore $c t \mathcal V_2 = c t \mathcal V_1 - c t c' t'
\mathcal G_j$ does also not contain the term $t \hat{t} e_{\beta_j} = t t' b_j
e_{\beta_j}$ in its support anymore, i.\,e.\ we have $c t \mathcal V_1
\RedR{\mathcal G_j} c t \mathcal V_2$ by Definition~\ref{defn:rewrite}.

Secondly, we show~b). We let $\mathcal V_1, \mathcal V_2, \mathcal V_3 \in P^r$
be such that $\mathcal V_1 \RedR{\mathcal G_j} \mathcal V_2$ for some $j \in \{
1, \ldots, \nu \}$. According to Definition~\ref{defn:rewrite}, there exist a
term $t b_j e_{\beta_j} \in \Supp(\mathcal V_1)$ with $t \in \mathbb T^n$, and a
coefficient $c \in K \setminus \{ 0 \}$ such that we have $\mathcal V_2 =
\mathcal V_1 - c t \mathcal G_j$ and such that~$\mathcal V_2$ does not contain
the term~$t b_j e_{\beta_j}$ in its support anymore. Let $c' \in K$ be the
coefficient of~$t b_j e_{\beta_j}$ in~$\mathcal V_3$. We distinguish two cases.
If $c' = -c$, we have
\begin{align*}
\mathcal V_1 + \mathcal V_3 = \mathcal V_2 + c t \mathcal G_j + \mathcal V_3 =
\mathcal V_2 + \mathcal V_3 - c' t \mathcal G_j
\end{align*}
and $\mathcal V_2 + \mathcal V_3 - c' t \mathcal G_j$ does not contain the
term~$t b_j e_{\beta_j}$ in its support anymore, i.\,e.\ we have $\mathcal V_1 +
\mathcal V_3 \RedR{\mathcal G_j} \mathcal V_2 + \mathcal V_3$. The claim now
follows with $\mathcal V_4 = \mathcal V_2 + \mathcal V_3$. If $c' \neq -c$, we
define
\begin{align*}
\mathcal V_4 = \mathcal V_1 + \mathcal V_3 - (c + c') t \mathcal G_j =
\mathcal V_2 + c t \mathcal G_j + \mathcal V_3 -(c + c') t \mathcal G_j =
\mathcal V_2 + \mathcal V_3 - c' t \mathcal G_j.
\end{align*}
Then we see that~$\mathcal V_4$ does not contain the term~$t b_j e_{\beta_j}$ in
its support anymore and thus the claim follows.

For the proof of~c), we let $\mathcal V_1, \ldots, \mathcal V_4 \in P^r$ be such
that $\mathcal V_1 \RedLR{\mathcal G} V_2$ and such that $\mathcal V_3
\RedLR{\mathcal G} \mathcal V_4$. Then Definition~\ref{defn:rewrite} yields the
existence of vectors $\mathcal V'_0, \ldots, \mathcal V'_k \in P^r$ satisfying
$\mathcal V'_0 = \mathcal V_1$, $\mathcal V'_k = \mathcal V_2$, and $\mathcal
V'_{\ell - 1} \RedR{\mathcal G_{i_\ell}} \mathcal V'_\ell$ or $\mathcal V'_{\ell
- 1} \RedL{\mathcal G_{i_\ell}} \mathcal V'_\ell$ where $i_\ell \in \{ 1,
\ldots, \nu \}$ for all $\ell \in \{ 1, \ldots, k \}$. According to~b), for all
$\ell \in \{ 1, \ldots, k \}$, there exist vectors $\tilde{\mathcal V}'_\ell \in
P^r$ satisfying
\begin{align*}
\mathcal V'_{\ell - 1} + \mathcal V_3 \RedR{\mathcal G} \tilde{\mathcal V}'_\ell
\RedL{\mathcal G} \mathcal V'_\ell + \mathcal V_3.
\end{align*}
Therefore, for all $\ell \in \{ 1, \ldots, k \}$, we have
\begin{align*}
\mathcal V'_{\ell - 1} + \mathcal V_3 \RedLR{\mathcal G} \mathcal V'_\ell +
\mathcal V_3
\end{align*}
and induction over $\ell \in \{ 1, \ldots, k \}$ yields
\begin{align*}
\mathcal V_1 + \mathcal V_3 \RedLR{\mathcal G} \mathcal V_2 + \mathcal V_3.
\end{align*}
by Definition~\ref{defn:rewrite}. An analogous construction yields the claim
\begin{align*}
\mathcal V_2 + \mathcal V_4 \RedLR{\mathcal G} \mathcal V_2 + \mathcal V_3
\RedLR{\mathcal G} \mathcal V_1 + \mathcal V_3.
\end{align*}

In order to show~d), let $\mathcal V_1, \mathcal V_2 \in P^r$ be such that
$\mathcal V_1 \RedLR{\mathcal G} \mathcal V_2$ and let
\begin{align*}
p = c_1 u_1 + \cdots + c_s u_s \in P
\end{align*}
be with coefficients $c_1, \ldots, c_s \in K$ and terms $u_1, \ldots, u_s \in
\mathbb T^n$. Then we have
\begin{align*}
c_i u_i \mathcal V_1 \RedLR{\mathcal G} c_i u_i \mathcal V_2
\end{align*}
for all $i \in \{ 1, \ldots, s \}$ by~a). Induction over $i \in \{ 1, \ldots, s
\}$ and~c) now yield the claim
\begin{align*}
p \mathcal V_1 = c_1 u_1 \mathcal V_1 + \cdots + c_s u_s \mathcal V_1
\RedLR{\mathcal G} c_1 u_1 \mathcal V_2 + \cdots + c_s u_s \mathcal V_2 = p
\mathcal V_2.
\end{align*}

We go on with the proof of the equivalence~e). Let $\mathcal V \in P^r$. If
$\mathcal V \RedLR{\mathcal G} 0$, we collect the monomials used in the various
reduction steps and get polynomials $p_1, \ldots, p_\nu \in P$ such that
\begin{align*}
\mathcal V = p_1 \mathcal G_1 + \cdots + p_\nu \mathcal G_\nu \in \langle
\mathcal G \rangle.
\end{align*}
For the converse implication, suppose that $\mathcal V \in \langle \mathcal G
\rangle$. Then there exist polynomials $p_1, \ldots, p_\nu \in P$ such that
\begin{align*}
\mathcal V = p_1 \mathcal G_1 + \cdots + p_\nu \mathcal G_\nu \in \langle
\mathcal G \rangle.
\end{align*}
Obviously, we have $\mathcal G_j \RedLR{\mathcal G} 0$ by
Definition~\ref{defn:rewrite} and thus~d) yields $p_j \mathcal G_j \RedLR{\mathcal
G} 0$ for all $j \in \{ 1, \ldots, \nu \}$.
Therefore, induction over $j \in \{ 1, \ldots, \nu \}$ together with claim~c)
shows
\begin{align*}
\mathcal V = p_1 \mathcal G_1 + \cdots + p_\nu \mathcal G_\nu \RedLR{\mathcal G}
0.
\end{align*}

Finally, we show the equivalence in~f). Let $\mathcal V_1, \mathcal V_2 \in
P^r$. We have $\mathcal V \RedLR{\mathcal G} \mathcal V$ for all $\mathcal V \in
P^r$ by Definition~\ref{defn:rewrite}. In particular, we see that $-\mathcal V_2
\RedLR{\mathcal G} -\mathcal V_2$. Thus the condition $\mathcal V_1
\RedLR{\mathcal G} \mathcal V_2$ is equivalent to the condition $\mathcal V_1 -
\mathcal V_2 \RedLR{\mathcal G} \mathcal V_2 - \mathcal V_2 = 0$ by~c). We now
see that the claim is a direct consequence of~e).
\end{proof}

Next we define irreducible vectors and confluent and Noetherian rewrite rules
associated to a module border prebasis in the usual way, cf.\
\cite[pp.~432--433]{KR2}. Using Proposition~\ref{thm:rewriteProps}, we can then
characterize module border bases via confluent rewrite rules as it has been done
for border bases in \cite[Prop.~6.4.28]{KR2}.

\begin{defn}
\phantomsection\label{defn:confluent}
\begin{enumerate}
  \item A vector $\mathcal V_1 \in P^r$ is called \emph{irreducible} with
  respect to the rewrite relation~$\RedR{\mathcal G}$ if there exist no $j \in
  \{ 1, \ldots, \nu \}$ and no $\mathcal V_2 \in P^r \setminus \{   \mathcal V_1
  \}$ such that $\mathcal V_1 \RedR{\mathcal G_j} \mathcal V_2$.
  \item The rewrite rule~$\RedR{\mathcal G}$ is called \emph{Noetherian} if
  there exists no infinitely descending chain
  \begin{align*}
  \mathcal V_0 \RedR{\mathcal G_{i_1}} \mathcal V_1 \RedR{\mathcal G_{i_2}}
  \mathcal V_2 \RedR{\mathcal G_{i_3}} \cdots
  \end{align*}
  with $i_j \in \{ 1, \ldots, \nu \}$, $\mathcal V_j \in P^r \setminus \{
  \mathcal V_{j-1} \}$ for all $j \in \mathbb N \setminus \{ 0 \}$, and
  $\mathcal V_0 \in P^r$.
  \item The rewrite rule~$\RedR{\mathcal G}$ is called \emph{confluent} if for
  all vectors $\mathcal V_1, \mathcal V_2, \mathcal V_3 \in P^r$ satisfying
  $\mathcal V_1 \RedR{\mathcal G} \mathcal V_2$ and $\mathcal V_1 \RedR{\mathcal
  G} \mathcal V_3$, there exists a vector $\mathcal V_4 \in P^r$ such that
  $\mathcal V_2 \RedR{\mathcal G} \mathcal V_4$ and $\mathcal V_3 \RedR{\mathcal
  G} \mathcal V_4$.
\end{enumerate}
\end{defn}

\begin{rem}
\phantomsection\label{rem:rewriteNoeth}
\begin{enumerate}
  \item For $r = 1$, module border bases coincide with the usual border bases.
  Thus \cite[Exmp.~6.4.26]{KR2} shows that the rewrite relation~$\RedR{\mathcal
  G}$ is not Noetherian, in general.
  \item A vector $\mathcal V \in P^r$ is irreducible with respect
  to~$\RedR{\mathcal G}$ if and only if $\mathcal V \in \langle \mathcal M
  \rangle_K$ according to Definition~\ref{defn:orderModule} and
  Definition~\ref{defn:confluent}.
  \item Considering the steps of the Module Border Division
  Algorithm~\ref{thm:divAlg} in detail, we see that it performs reduction steps
  using~$\mathcal G_j$ where $j \in \{ 1, \ldots, \nu \}$ to compute the normal
  remainder of a given vector. Thus for every $\mathcal V \in P^r$, we have
  $\mathcal V \RedR{\mathcal G} \NR_{\mathcal G}(\mathcal V)$. In particular,
  $\NR_{\mathcal G}(\mathcal V) \in \langle \mathcal M \rangle_K$ is irreducible
  with respect to~$\RedR{\mathcal G}$.
\end{enumerate}
\end{rem}

\begin{thm}[Module Border Bases and Rewrite Rules]
\label{thm:rewrite}
The $\mathcal M$-module border prebasis~$\mathcal G$ is the $\mathcal M$-module
border basis of~$\langle \mathcal G \rangle$ if and only if the following
equivalent conditions are satisfied.
\begin{enumerate}
\renewcommand{\labelenumi}{$C_{\arabic{enumi}})$}
  \item For all $\mathcal V \in P^r$, we have $\mathcal V \RedR{\mathcal G} 0$
  if and only if~$\mathcal V \in \langle \mathcal G \rangle$.
  \item If $\mathcal V \in \langle \mathcal G \rangle$ is irreducible with
  respect to~$\RedR{\mathcal G}$, then we have~$\mathcal V = 0$.
  \item For all $\mathcal V \in P^r$, there is a unique vector $\mathcal W \in
  P^r$ such that $\mathcal V \RedR{\mathcal G} \mathcal W$ and such
  that~$\mathcal W$ is irreducible with respect to~$\RedR{\mathcal G}$.
  \item The rewrite relation~$\RedR{\mathcal G}$ is confluent.
\end{enumerate}
\end{thm}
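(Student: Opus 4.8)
The plan is to prove the single chain of implications
\[
\text{(}\mathcal G\text{ is the }\mathcal M\text{-module border basis of }\langle\mathcal G\rangle\text{)}\ \Longrightarrow\ C_1)\ \Longrightarrow\ C_2)\ \Longrightarrow\ C_3)\ \Longrightarrow\ C_4)\ \Longrightarrow\ \text{(}\mathcal G\text{ is the }\mathcal M\text{-module border basis of }\langle\mathcal G\rangle\text{)},
\]
which at once yields that all five statements are equivalent. The workhorses are Remark~\ref{rem:rewriteNoeth}: part~c) says that every $\mathcal V\in P^r$ reduces $\mathcal V\RedR{\mathcal G}\NR_{\mathcal G}(\mathcal V)$ to the irreducible vector $\NR_{\mathcal G}(\mathcal V)\in\langle\mathcal M\rangle_K$, and part~b) says that the vectors irreducible with respect to $\RedR{\mathcal G}$ are exactly those in $\langle\mathcal M\rangle_K$. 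I also use Proposition~\ref{thm:rewriteProps}e) (which reads $\mathcal V\RedLR{\mathcal G}0\Leftrightarrow\mathcal V\in\langle\mathcal G\rangle$), together with Corollary~\ref{thm:char} and Corollary~\ref{thm:genSet}. Note that one direction of $C_1)$, namely $\mathcal V\RedR{\mathcal G}0\Rightarrow\mathcal V\in\langle\mathcal G\rangle$, holds unconditionally: collecting the monomials $c\,t'$ of the reduction steps exhibits $\mathcal V$ as a $P$-combination of $\mathcal G_1,\ldots,\mathcal G_\nu$ (this is the first half of the proof of Proposition~\ref{thm:rewriteProps}e)); so the content of $C_1)$ over Proposition~\ref{thm:rewriteProps}e) is only the strengthening of $\RedLR{\mathcal G}$ to $\RedR{\mathcal G}$.

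For the first implication, assume $\mathcal G$ is the $\mathcal M$-module border basis of $\langle\mathcal G\rangle$ and let $\mathcal V\in\langle\mathcal G\rangle$. By Corollary~\ref{thm:genSet}, $\NR_{\mathcal G}(\mathcal V)$ represents $\mathcal V+\langle\mathcal G\rangle=\langle\mathcal G\rangle$, hence $\NR_{\mathcal G}(\mathcal V)\in\langle\mathcal G\rangle\cap\langle\mathcal M\rangle_K=\{0\}$ by Corollary~\ref{thm:char}; therefore $\mathcal V\RedR{\mathcal G}\NR_{\mathcal G}(\mathcal V)=0$, and together with the unconditional converse this gives $C_1)$. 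For $C_1)\Rightarrow C_2)$: if $\mathcal V\in\langle\mathcal G\rangle$ is irreducible, then $\mathcal V\RedR{\mathcal G}0$ by $C_1)$, but a $\RedR{\mathcal G}$-reduction out of an irreducible vector is the trivial reflexive one, so $\mathcal V=0$.

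For $C_2)\Rightarrow C_3)$: existence of an irreducible reduct of any $\mathcal V$ is Remark~\ref{rem:rewriteNoeth}c); for uniqueness, if $\mathcal V\RedR{\mathcal G}\mathcal W_1$ and $\mathcal V\RedR{\mathcal G}\mathcal W_2$ with both $\mathcal W_i$ irreducible, then $\mathcal V-\mathcal W_1,\mathcal V-\mathcal W_2\in\langle\mathcal G\rangle$ (collecting reduction steps), so $\mathcal W_1-\mathcal W_2\in\langle\mathcal G\rangle$; moreover $\mathcal W_1-\mathcal W_2\in\langle\mathcal M\rangle_K$ is irreducible by Remark~\ref{rem:rewriteNoeth}b), whence $C_2)$ forces $\mathcal W_1=\mathcal W_2$. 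For $C_3)\Rightarrow C_4)$: given $\mathcal V_1\RedR{\mathcal G}\mathcal V_2$ and $\mathcal V_1\RedR{\mathcal G}\mathcal V_3$, reduce $\mathcal V_2$ and $\mathcal V_3$ further to irreducible vectors (Remark~\ref{rem:rewriteNoeth}c)); both are then irreducible reducts of $\mathcal V_1$, hence equal by $C_3)$, and this common vector is the desired $\mathcal V_4$.

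The last implication $C_4)\Rightarrow$ (MBB) is the delicate one. By Corollary~\ref{thm:char} it suffices to show $\langle\mathcal G\rangle\cap\langle\mathcal M\rangle_K=\{0\}$, so let $\mathcal V$ lie in this intersection. By Proposition~\ref{thm:rewriteProps}e) we have $\mathcal V\RedLR{\mathcal G}0$; since confluence of $\RedR{\mathcal G}$ is equivalent to its Church--Rosser property (cf.\ the rewrite-relation results of \cite[Section~2.2]{KR1}), there is a vector $\mathcal W$ with $\mathcal V\RedR{\mathcal G}\mathcal W$ and $0\RedR{\mathcal G}\mathcal W$; as $0$ is irreducible, $\mathcal W=0$, i.e.\ $\mathcal V\RedR{\mathcal G}0$. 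But $\mathcal V\in\langle\mathcal M\rangle_K$ is irreducible by Remark~\ref{rem:rewriteNoeth}b), so $\mathcal V=0$, which closes the cycle. The one point requiring genuine care is turning the two-sided chain witnessing $\mathcal V\in\langle\mathcal G\rangle$ into a one-sided reduction to $0$ under the mere hypothesis of confluence; if one prefers not to invoke Church--Rosser as a black box, one can instead establish $C_4)\Rightarrow C_2)$ by exactly the same computation (with the extra standing hypothesis $\mathcal V\in\langle\mathcal G\rangle$) and then reuse the short argument $C_2)\Rightarrow$ (MBB): any $\mathcal V\in\langle\mathcal G\rangle\cap\langle\mathcal M\rangle_K$ is irreducible and in $\langle\mathcal G\rangle$, hence $0$ by $C_2)$, so Corollary~\ref{thm:char} applies. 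Everything else in the proof is routine bookkeeping with the Module Border Division Algorithm and Proposition~\ref{thm:rewriteProps}.
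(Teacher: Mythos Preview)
Your proof is correct and follows essentially the same route as the paper. The paper organizes the implications as an inner cycle $C_1)\Rightarrow C_2)\Rightarrow C_3)\Rightarrow C_4)\Rightarrow C_1)$ together with the two bridges (MBB $\Rightarrow C_1)$) and ($C_2)\Rightarrow$ MBB), whereas you run a single cycle through MBB; the individual implications match almost verbatim. The one substantive presentational difference is the step from confluence back to ``$\mathcal V\in\langle\mathcal G\rangle\Rightarrow\mathcal V\RedR{\mathcal G}0$'': you invoke the equivalence of confluence and the Church--Rosser property as a black box, while the paper spells this out by taking a zigzag chain $\mathcal V=\mathcal V_0,\ldots,\mathcal V_k=0$ witnessing $\mathcal V\RedLR{\mathcal G}0$, picking the last backward step $\mathcal V_{s-1}\RedL{\mathcal G}\mathcal V_s$, using confluence on $\mathcal V_s\RedR{\mathcal G}\mathcal V_{s-1}$ and $\mathcal V_s\RedR{\mathcal G}0$ (and irreducibility of $0$) to conclude $\mathcal V_{s-1}\RedR{\mathcal G}0$, and inducting on the number of backward steps. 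That inline argument is exactly the standard proof of Church--Rosser from confluence, so there is no mathematical daylight between the two.
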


\begin{proof}
We start to show that~$C_1)$ implies~$C_2)$. Let $\mathcal V \in \langle
\mathcal G \rangle$ be irreducible with respect to~$\RedR{\mathcal G}$. As we
have $\mathcal V \RedR{\mathcal G} 0$ by~$C_1)$, $\mathcal V$ must be zero.

Next we show that~$C_2)$ implies~$C_3)$. Let $\mathcal V \in P^r$. According to
Remark~\ref{rem:rewriteNoeth}, $\NR_{\mathcal G}(\mathcal V)$ is a vector with
the claimed properties. In order to show the uniqueness, we let $\mathcal W \in
P^r$ be irreducible with respect to~$\RedR{\mathcal G}$ and satisfying $\mathcal
V \RedR{\mathcal G} \mathcal W$. Then we see that $\NR_{\mathcal G}(\mathcal V)
\RedLR{\mathcal G} \mathcal W$ by Definition~\ref{defn:rewrite} and it follows
$\NR_{\mathcal G}(\mathcal V) - \mathcal W \in \langle \mathcal G \rangle$
according to Proposition~\ref{thm:rewriteProps}. Additionally,
Remark~\ref{rem:rewriteNoeth} yields that $\NR_{\mathcal G}(\mathcal V) -
\mathcal W \in \langle \mathcal M \rangle_K$ is irreducible with respect
to~$\RedR{\mathcal G}$. Thus the claim follows from~$C_2)$.

In order to prove that~$C_3)$ implies~$C_4)$, we let $\mathcal V_1, \mathcal
V_2, \mathcal V_3 \in P^r$ be satisfying $\mathcal V_1 \RedR{\mathcal G}
\mathcal V_2$ and $\mathcal V_1 \RedR{\mathcal G} \mathcal V_3$. According to
Remark~\ref{rem:rewriteNoeth}, we see that
\begin{align*}
\mathcal V_1 \RedR{\mathcal G} \mathcal V_2 \RedR{\mathcal G} \NR_{\mathcal
G}(\mathcal V_2) \in \langle \mathcal M \rangle_K
\end{align*}
and
\begin{align*}
\mathcal V_1 \RedR{\mathcal G} \mathcal V_3 \RedR{\mathcal G} \NR_{\mathcal
G}(\mathcal V_3) \in \langle \mathcal M \rangle_K,
\end{align*}
and that both~$\NR_{\mathcal G}(\mathcal V_2)$ and~$\NR_{\mathcal G}(\mathcal
V_3)$ are irreducible with respect to the rewrite relation~$\RedR{\mathcal G}$.
Thus~$C_3)$ implies the equality $\NR_{\mathcal G}(\mathcal V_2) = \NR_{\mathcal
G}(\mathcal V_3)$ and the claim follows by Definition~\ref{defn:confluent}.

We go on with the proof that~$C_4)$ implies~$C_1)$. Let $\mathcal V \in P^r$ be
satisfying $\mathcal V \RedR{\mathcal G} 0$. Then
Proposition~\ref{thm:rewriteProps} yields $\mathcal V \in \langle \mathcal G
\rangle$. For the converse implication, we let $\mathcal V \in \langle \mathcal
G \rangle$. Then Proposition~\ref{thm:rewriteProps} yields $\mathcal V
\RedLR{\mathcal G} 0$. Let $\mathcal V_0, \ldots, \mathcal V_k \in P^r$ be
such that $\mathcal V_0 = \mathcal V$, $\mathcal V_k = 0$ and $\mathcal V_{\ell
- 1} \RedR{\mathcal G} \mathcal V_\ell$ or $\mathcal V_{\ell - 1} \RedL{\mathcal
G} \mathcal V_\ell$ for all $\ell \in \{ 1, \ldots, k \}$. If there exists no
index $\ell \in \{ 1, \ldots, k \}$ such that $\mathcal V_{\ell - 1}
\RedL{\mathcal G} \mathcal V_\ell$, the claim follows immediately with
Definition~\ref{defn:rewrite}. Thus suppose that $\mathcal V_{\ell - 1}
\RedL{\mathcal G} \mathcal V_\ell$ for some $\ell \in \{ 1, \ldots, k \}$. By
Definition~\ref{defn:rewrite}, we see that $\mathcal V_{k-1} \RedR{\mathcal G}
\mathcal V_k = 0$. Let $s \in \{ 1, \ldots, k-1 \}$ be maximal such that
$\mathcal V_{s-1} \RedL{\mathcal G} \mathcal V_s$. Then we have $\mathcal V_s
\RedR{\mathcal G} 0$, $\mathcal V_s \RedR{\mathcal G} \mathcal V_{s-1}$,
and~$C_4)$ yields $\mathcal V_{s-1} \RedR{\mathcal G} 0$. If we replace the
sequence $\mathcal V_0, \ldots, \mathcal V_{k-1}, 0$ with the shorter sequence
$\mathcal V_0, \ldots, \mathcal V_{s-1}, 0$, we see that the claim follows by
induction over the number of reduction steps $\mathcal V_{\ell - 1}
\RedL{\mathcal G} \mathcal V_\ell$ where~$\ell \in \{ 1, \ldots, k \}$.

Next we show that condition~$C_1)$ is satisfied if~$\mathcal G$ is the $\mathcal
M$-module border basis of~$\langle \mathcal G \rangle$. If a vector $\mathcal V
\in P^r$ satisfies $\mathcal V \RedR{\mathcal G} 0$, we have $\mathcal V \in
\langle \mathcal G \rangle$ by Proposition~\ref{thm:rewriteProps}. Conversely,
let $\mathcal V \in \langle \mathcal G \rangle$. Then it follows that $\mathcal
V \RedR{\mathcal G} \NR_{\mathcal G}(\mathcal V) \in \langle \mathcal M
\rangle_K$ by Remark~\ref{rem:rewriteNoeth}. Since $\mathcal V \in \langle
\mathcal G \rangle$, we also have $\NR_{\mathcal G}(\mathcal V) \in \langle
\mathcal G \rangle$ according to Definition~\ref{defn:rewrite}. Hence the claim
follows as Corollary~\ref{thm:char} yields~$\NR_{\mathcal G}(\mathcal V) \in
\langle \mathcal G \rangle \cap \langle \mathcal M \rangle_K = \{ 0 \}$.

Finally, we prove that~$\mathcal G$ is the $\mathcal M$-module border basis
of~$\langle \mathcal G \rangle$ if~$C_2)$ holds. Assume there exists a
$\mathcal V \in \langle \mathcal G \rangle \cap \langle \mathcal M \rangle_K
\setminus \{ 0 \}$. Then $\ind_{\mathcal M}(\mathcal V) = 0$ and
therefore~$\mathcal V$ is irreducible with respect to~$\RedR{\mathcal G}$
according to Remark~\ref{rem:rewriteNoeth}. Thus~$C_2)$ yields the contradiction
$\mathcal V = 0$ and the claim follows with Corollary~\ref{thm:char}.
\end{proof}

%
%

We go on with the characterization of module border bases via commuting
matrices. This characterization imitates the corresponding characterization for
border bases originally published in \cite[Prop.~16]{CharBB} and
\cite[pp.~13--15]{CharBB}. Our generalization to the module setting follows the
corresponding versions of border bases in \cite[Defn.~6.4.29]{KR2},
\cite[Thm.~6.4.30]{KR2}, and \cite[Prop.~6.4.32]{KR2}. To ease the notation, we
let
\begin{align*}
\delta_{ij} = \begin{cases}
1 & \text{if $i = j$}\\
0 & \text{if $i \neq j$}
\end{cases}
\end{align*}
denote the Kronecker delta and $\mathfrak I_\mu \in \Mat_\mu(K)$ denote the
identity matrix for the remainder of this section.

\begin{defn}
\label{defn:multMat}
Given $s \in \{ 1, \ldots, n \}$, we define the \emph{$s^{\text{th}}$~formal
multiplication matrix}
\begin{align*}
\mathfrak X_s = \big( \xi_{k \ell}^{(s)} \big)_{1 \leq k, \ell \leq \mu} \in
\Mat_\mu(K)
\end{align*}
of~$\mathcal G$ by
\begin{align*}
\xi_{k \ell}^{(s)} = \begin{cases}
\delta_{ki} & \text{if $x_s t_\ell e_{\alpha_\ell} = t_i e_{\alpha_i} \in
\mathcal M$,}\\
c_{kj} & \text{if $x_s t_\ell e_{\alpha_\ell} = b_j e_{\beta_j} \in \partial
\mathcal M$.}
\end{cases}
\end{align*}
\end{defn}

\begin{exmp}
\label{exmp:multMat}
We consider the $\mathcal M$-module border prebasis $\mathcal G \subseteq P^2$
from Example~\ref{exmp:divAlg}, again. Recall, that
\begin{align*}
\mathcal M = \{ x e_1, y e_1, e_1, x^2 e_2, x e_2, e_2 \} = \{ t_1
e_{\alpha_1}, \ldots, t_6 e_{\alpha_6} \},
\end{align*}
and
\begin{align*}
\mathcal G_1 & = x^2 e_1 - y e_1 + e_2,\\
\mathcal G_2 & = xy e_1 - e_2,\\
\mathcal G_3 & = y^2 e_1 - x e_2,\\
\mathcal G_4 & = x^3 e_2 - e_1,\\
\mathcal G_5 & = x^2y e_2 - e_1 - e_2,\\
\mathcal G_6 & = xy e_2 + 3 e_1,\\
\mathcal G_7 & = y e_2 - x e_1 - y e_1 - e_1 - e_2.
\end{align*}
Then the formal multiplication matrices $\mathfrak X, \mathfrak Y \in
\Mat_7(\mathbb Q)$ of~$\mathcal G$ are
\begin{align*}
\mathfrak X & = \begin{pmatrix}
0 & 0 & 1 & 0 & 0 & 0\\
1 & 0 & 0 & 0 & 0 & 0\\
0 & 0 & 0 & 1 & 0 & 0\\
0 & 0 & 0 & 0 & 1 & 0\\
0 & 0 & 0 & 0 & 0 & 1\\
-1 & 1 & 0 & 0 & 0 & 0
\end{pmatrix},
& \mathfrak Y & = \begin{pmatrix}
0 & 0 & 0 & 0 & 0 & 1\\
0 & 0 & 1 & 0 & 0 & 1\\
0 & 0 & 0 & 1 & -3 & 1\\
0 & 0 & 0 & 0 & 0 & 0\\
0 & 1 & 0 & 0 & 0 & 0\\
1 & 0 & 0 & 1 & 0 & 1
\end{pmatrix}.
\end{align*}
\end{exmp}

\begin{rem}
\label{rem:interprMultMat}
Similar to the interpretation of the formal multiplication matrices of a border
prebasis in \cite[Page~434]{KR2}, we can interpret the multiplication matrices
of the $\mathcal M$-module border prebases~$\mathcal G$ the following way: Let
$s \in \{ 1, \ldots, n \}$ and let $\mathfrak X_s \in \Mat_\mu(K)$ be the
$s^\text{th}$~formal multiplication matrix of~$\mathcal G$. We can identify
every vector
\begin{align*}
\mathcal V = c_1 t_1 e_{\alpha_1} + \cdots + c_\mu t_\mu e_{\alpha_\mu} \in
\langle \mathcal M \rangle_K \subseteq P^r
\end{align*}
with the corresponding column vector
\begin{align*}
(c_1, \ldots, c_\mu)^{\tr} \in K^\mu.
\end{align*}
Then the column vector
\begin{align*}
(c_1', \ldots, c_\mu') = \mathfrak X_s (c_1, \ldots, c_\mu)^{\tr} \in K^\mu
\end{align*}
corresponds to the vector
\begin{align*}
c_1' t_1 e_{\alpha_1} + \cdots + c_\mu' t_\mu e_{\alpha_\mu} = \NR_{\mathcal
G}(x_s \mathcal  V) \in \langle \mathcal M \rangle_K \subseteq P^r.
\end{align*}
In particular, we have
\begin{align*}
c_1' t_1 e_{\alpha_1} + \cdots + c_\mu' t_\mu e_{\alpha_\mu} + \langle \mathcal
G \rangle = x_s \mathcal V + \langle \mathcal G \rangle.
\end{align*}
\end{rem}

\begin{lem}
\label{thm:orderModuleModule}
Assume that $\mu \neq 0$, i.\,e.\ $\mathcal M \neq \emptyset$. Let $\mathfrak
X_1, \ldots, \mathfrak X_n \in \Mat_\mu(K)$ be the formal multiplication
matrices of~$\mathcal G$ and assume that $\mathfrak X_1, \ldots, \mathfrak X_n$
are pairwise commuting. Then the $K$-vector subspace~$\langle \mathcal M
\rangle_K \subseteq P^r$ is a $P$-module with scalar multiplication $\circ: P
\times \langle \mathcal M \rangle_K \to \langle \mathcal M \rangle_K$ defined by
\begin{align*}
p \circ (c_1 t_1 e_{\alpha_1} + \cdots + c_\mu t_\mu e_{\alpha_\mu}) = (t_1
e_{\alpha_1}, \ldots, t_\mu e_{\alpha_\mu}) p(\mathfrak X_1, \ldots, \mathfrak
X_n) (c_1, \ldots, c_\mu)^{\tr}
\end{align*}
for all $p \in P$ and all $c_1, \ldots, c_\mu \in K$. Moreover, the set $\{
e_1, \ldots, e_r \} \cap \mathcal M \subseteq \mathcal M$ generates $\langle
\mathcal M \rangle_K$ as a $P$-module.
\end{lem}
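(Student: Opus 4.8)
The plan is to reduce part~(a) to the elementary fact that evaluating a polynomial at pairwise commuting matrices defines a $K$-algebra homomorphism, and to prove part~(b) from the order-ideal property by a short induction on degree.

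First I would fix the coordinate isomorphism $\iota\colon \langle\mathcal M\rangle_K \to K^\mu$, $c_1 t_1 e_{\alpha_1} + \cdots + c_\mu t_\mu e_{\alpha_\mu} \mapsto (c_1,\dots,c_\mu)^{\tr}$, which is an isomorphism of $K$-vector spaces since $\mathcal M$ is $K$-linearly independent, and under which the product defined in the statement reads $\iota(p\circ\mathcal V) = p(\mathfrak X_1,\dots,\mathfrak X_n)\,\iota(\mathcal V)$. Since $\mathfrak X_1,\dots,\mathfrak X_n$ pairwise commute, they generate a commutative $K$-subalgebra of $\Mat_\mu(K)$, so by the universal property of the polynomial ring there is a unique $K$-algebra homomorphism $\Phi\colon P\to\Mat_\mu(K)$ with $\Phi(x_s)=\mathfrak X_s$ and $\Phi(1)=\mathfrak I_\mu$; in particular $p(\mathfrak X_1,\dots,\mathfrak X_n)$ does not depend on the chosen representation of $p$ as a $K$-linear combination of terms, so $\circ$ is well defined. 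The $P$-module axioms for $\circ$ then follow mechanically from the fact that $\Phi$ respects $1$, $+$, $\cdot$ and from matrix--vector arithmetic: $1\circ\mathcal V = \mathcal V$, $(p+q)\circ\mathcal V = p\circ\mathcal V + q\circ\mathcal V$, $(pq)\circ\mathcal V = p\circ(q\circ\mathcal V)$, and $p\circ(\mathcal V + \mathcal V') = p\circ\mathcal V + p\circ\mathcal V'$; and because $\Phi$ restricts to $c\mapsto c\,\mathfrak I_\mu$ on the constants, this $P$-module structure extends the given $K$-vector space structure on $\langle\mathcal M\rangle_K$.

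For part~(b), the key claim is that $t\circ e_k = t e_k$ for every term $t e_k\in\mathcal M$. I would prove this by induction on $\deg(t)$. If $\deg(t)=0$, then $t e_k = e_k$ and $1\circ e_k = e_k$. If $t = x_s t'$ with $\deg(t')<\deg(t)$, then $t'\in\mathcal O_k$ because $\mathcal O_k$ is closed under forming divisors, so $t' e_k\in\mathcal M$ and the inductive hypothesis gives $t'\circ e_k = t' e_k$; hence $t\circ e_k = x_s\circ(t'\circ e_k) = x_s\circ(t' e_k)$, and since $x_s t' e_k = t e_k\in\mathcal M$ we are in the first case of Definition~\ref{defn:multMat}, which yields $x_s\circ(t' e_k) = x_s t' e_k = t e_k$. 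Finally, for each $i\in\{1,\dots,\mu\}$ we have $1\in\mathcal O_{\alpha_i}$ (as $1$ divides $t_i\in\mathcal O_{\alpha_i}$), so $e_{\alpha_i}\in\{e_1,\dots,e_r\}\cap\mathcal M$, and the claim gives $t_i e_{\alpha_i} = t_i\circ e_{\alpha_i}$. Thus the $K$-basis $\{t_1 e_{\alpha_1},\dots,t_\mu e_{\alpha_\mu}\}$ of $\langle\mathcal M\rangle_K$ lies in the $P$-submodule generated by $\{e_1,\dots,e_r\}\cap\mathcal M$, so the latter generates $\langle\mathcal M\rangle_K$ as a $P$-module.

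The only genuinely delicate point is the verification in part~(a) that $p\mapsto p(\mathfrak X_1,\dots,\mathfrak X_n)$ is a ring homomorphism; this is exactly where, and why, the commutativity hypothesis is indispensable, both for well-definedness of $\circ$ and for the associativity axiom $(pq)\circ\mathcal V = p\circ(q\circ\mathcal V)$. Everything else, including all of part~(b), is routine once Definition~\ref{defn:multMat} is unwound.
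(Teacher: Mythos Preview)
Your proof is correct and follows essentially the same route as the paper's: both verify the $P$-module axioms using that $p\mapsto p(\mathfrak X_1,\dots,\mathfrak X_n)$ is a ring homomorphism when the $\mathfrak X_s$ commute, and both prove $t\circ e_k = t e_k$ for $t e_k\in\mathcal M$ by induction on $\deg(t)$, invoking the first case of Definition~\ref{defn:multMat} at the inductive step. The only cosmetic difference is that you package part~(a) via the universal property of $P$ and the coordinate isomorphism $\iota$, whereas the paper writes out each module axiom explicitly; the content is identical.
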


\begin{proof}
As~$\mathcal M$ is $K$-linearly independent, the map~$\circ$ is
well-defined. Moreover, the $K$-vector subspace $\langle \mathcal M \rangle_K
\subseteq P^r$ is obviously an additive group. Since the formal multiplication
matrices $\mathfrak X_1, \ldots, \mathfrak X_n \in \Mat_\mu(K)$ are pairwise
commuting, for all polynomials $p, q \in P$ and all coefficients $c_1, \ldots,
c_\mu, d_1, \ldots, d_\mu \in K$, we have
\begin{align*}
& 1 \circ (c_1 t_1 e_{\alpha_1} + \cdots + c_\mu t_\mu e_{\alpha_\mu})\\
& = (t_1 e_{\alpha_1}, \ldots, t_\mu e_{\alpha_\mu}) \mathfrak I_\mu (c_1,
\ldots, c_\mu)^{\tr}\\
& = c_1 t_1 e_{\alpha_1} + \cdots + c_\mu t_\mu e_{\alpha_\mu},
\end{align*}
and
\begin{align*}
& (pq) \circ (c_1 t_1 e_{\alpha_1} + \cdots + c_\mu t_\mu e_{\alpha_\mu})\\
& = (t_1 e_{\alpha_1}, \ldots, t_\mu e_{\alpha_\mu}) (pq)(\mathfrak X_1, \ldots,
\mathfrak X_n) (c_1, \ldots, c_\mu)^{\tr}\\
& = (t_1 e_{\alpha_1}, \ldots, t_\mu e_{\alpha_\mu}) p(\mathfrak X_1, \ldots,
\mathfrak X_n) q(\mathfrak X_1, \ldots, \mathfrak X_n) (c_1, \ldots,
c_\mu)^{\tr}\\
& = p \circ ((t_1 e_{\alpha_1}, \ldots, t_\mu e_{\alpha_\mu}) q(\mathfrak X_1,
\ldots, \mathfrak X_n) (c_1, \ldots, c_\mu)^{\tr})\\
& = p \circ (q \circ (c_1 t_1 e_{\alpha_1} + \cdots + c_\mu t_\mu
e_{\alpha_\mu})),
\end{align*}
and
\begin{align*}
& (p+q) \circ (c_1 t_1 e_{\alpha_1} + \cdots + c_\mu t_\mu e_{\alpha_\mu})\\
& = (t_1 e_{\alpha_1}, \ldots, t_\mu e_{\alpha_\mu}) (p+q)(\mathfrak X_1,
\ldots, \mathfrak X_n) (c_1, \ldots, c_\mu)^{\tr}\\
& = (t_1 e_{\alpha_1}, \ldots, t_\mu e_{\alpha_\mu}) (p(\mathfrak X_1,
\ldots, \mathfrak X_n) + q(\mathfrak X_1, \ldots, \mathfrak X_n)) (c_1, \ldots,
c_\mu)^{\tr}\\
& = (t_1 e_{\alpha_1}, \ldots, t_\mu e_{\alpha_\mu}) p(\mathfrak X_1, \ldots,
\mathfrak X_n) (c_1, \ldots, c_\mu)^{\tr}\\
& \alignLongFormula + (t_1 e_{\alpha_1}, \ldots, t_\mu e_{\alpha_\mu})
q(\mathfrak X_1, \ldots, \mathfrak X_n) (c_1, \ldots, c_\mu)^{\tr}\\
& = (p \circ (c_1 t_1 e_{\alpha_1} + \cdots + c_\mu t_\mu e_{\alpha_\mu})) + (q
\circ (c_1 t_1 e_{\alpha_1} + \cdots + c_\mu t_\mu e_{\alpha_\mu})),
\end{align*}
and
\begin{align*}
& p \circ ((c_1 t_1 e_{\alpha_1} + \cdots + c_\mu t_\mu e_{\alpha_\mu}) + (d_1
t_1 e_{\alpha_1} + \cdots + d_\mu t_\mu e_{\alpha_\mu}))\\
& = p \circ ((c_1 + d_1) t_1 e_{\alpha_1} + \cdots + (c_\mu + d_\mu) t_\mu
e_{\alpha_\mu})\\
& = (t_1 e_{\alpha_1}, \ldots, t_\mu e_{\alpha_\mu}) p(\mathfrak X_1, \ldots,
\mathfrak X_n) (c_1 + d_1, \ldots, c_\mu + d_\mu)^{\tr}\\
& = (t_1 e_{\alpha_1}, \ldots, t_\mu e_{\alpha_\mu}) p(\mathfrak X_1, \ldots,
\mathfrak X_n) ((c_1, \ldots, c_\mu)^{\tr} + (d_1, \ldots, d_\mu)^{\tr})\\
& = (t_1 e_{\alpha_1}, \ldots, t_\mu e_{\alpha_\mu}) p(\mathfrak X_1, \ldots,
\mathfrak X_n) (c_1, \ldots, c_\mu)^{\tr}\\
& \alignLongFormula + (t_1 e_{\alpha_1}, \ldots, t_\mu e_{\alpha_\mu})
p(\mathfrak X_1, \ldots, \mathfrak X_n) (d_1, \ldots, d_\mu)^{\tr}\\
& = (p \circ (c_1 t_1 e_{\alpha_1} + \cdots + c_\mu t_\mu e_{\alpha_\mu})) + (p
\circ (d_1 t_1 e_{\alpha_1} + \cdots + d_\mu t_\mu e_{\alpha_\mu})).
\end{align*}
Altogether, we see that that $(\langle \mathcal M \rangle_K, +, \circ)$ is
indeed a $P$-module.

It remains to prove that the $P$-module~$\langle \mathcal M \rangle_K$ is
generated by $\{ e_1, \ldots, e_r \} \cap \mathcal M$. W.\,l.\,o.\,g.\ we
suppose that we have $e_1, \ldots, e_\ell \in \mathcal M$ and $e_{\ell + 1},
\ldots, e_r \notin \mathcal M$ for some $\ell \in \{ 1, \ldots, r \}$, and that
$t_k e_{\alpha_k} = e_k$ for all $k \in \{ 1, \ldots, \ell \}$. Let $\{
\mathfrak e_1, \ldots, \mathfrak e_\mu \} \subseteq K^\mu$ be the canonical
$K$-vector space basis of~$K^\mu$, and let $k \in \{ 1, \ldots, \ell \}$. We
prove by induction on degree that  $t \circ e_k = t e_k$ for all $t \in \mathcal
O_k$. The induction starts with
\begin{align*}
1 \circ e_k & = (t_1 e_{\alpha_1}, \ldots, t_\mu e_{\alpha_\mu}) 1(\mathfrak
X_1, \ldots, \mathfrak X_n) \mathfrak e_k^{\tr}\\
& = (t_1 e_{\alpha_1}, \ldots, t_\mu e_{\alpha_\mu}) \mathfrak I_\mu \mathfrak
e_k^{\tr}\\
& = (t_1 e_{\alpha_1}, \ldots, t_\mu e_{\alpha_\mu}) \mathfrak e_k^{\tr}\\
& = t_k e_{\alpha_k}\\
& = e_k.
\end{align*}
For the induction step, suppose that $t \in \mathcal O_k$ with $\deg(t) > 0$.
Then there is a factorization
\begin{align*}
t e_k = t_i e_{\alpha_i} = x_s t_j e_{\alpha_j}
\end{align*}
with $i, j \in \{ 1, \ldots, \mu \}$ and $s \in \{ 1, \ldots, n \}$ by
Definition~\ref{defn:orderModule}. The induction hypothesis yields
\begin{align*}
t \circ e_k = (x_s t_j) \circ e_{\alpha_j} = x_s \circ (t_j \circ e_{\alpha_j})
= x_s \circ (t_j e_{\alpha_k}).
\end{align*}
Thus we see that
\begin{align*}
t \circ e_k & = x_s \circ (t_j e_{\alpha_j})\\
& = (t_1 e_{\alpha_1}, \ldots, t_\mu e_{\alpha_\mu}) x_s(\mathfrak X_1,
\ldots, \mathfrak X_n) \mathfrak e_j^{\tr}\\
& = (t_1 e_{\alpha_1}, \ldots, t_\mu e_{\alpha_\mu}) \mathfrak X_s \mathfrak
e_j^{\tr}\\
& = (t_1 e_{\alpha_1}, \ldots, t_\mu e_{\alpha_\mu}) \big( \xi_{1 j}^{(s)},
\ldots, \xi_{\mu j}^{(s)} \big)^{\tr}\\
& = \delta_{1 i} t_1 e_{\alpha_1} + \cdots + \delta_{\mu i} t_\mu
e_{\alpha_\mu}\\
& = t_i e_{\alpha_i}\\
& = t e_k,
\end{align*}
i.\,e.\ the above claim has been proven by induction. For every $c \in K$ and
every $i \in \{ 1, \ldots, \mu \}$, we also have
\begin{align*}
c \circ t_i e_{\alpha_i} & = (t_1 e_{\alpha_1}, \ldots, t_\mu e_{\alpha_\mu})
c(\mathfrak X_1, \ldots, \mathfrak X_n) \mathfrak e_i^{\tr}\\
& = (t_1 e_{\alpha_1}, \ldots, t_\mu e_{\alpha_\mu}) c \mathfrak I_\mu
\mathfrak e_i^{\tr}\\
& = (t_1 e_{\alpha_1}, \ldots, t_\mu e_{\alpha_\mu}) c \mathfrak e_i^{\tr}\\
& = c t_i e_{\alpha_i}.
\end{align*}
Altogether, since $\{ e_1, \ldots, e_r \} \cap \mathcal M = \{ e_1, \ldots,
e_\ell \}$, we see that
\begin{align*}
(c_1 t_1) \circ e_{\alpha_1} + \cdots + (c_\mu t_\mu) \circ e_{\alpha_\mu} =
c_1 t_1 e_{\alpha_1} + \cdots + c_\mu t_\mu e_{\alpha_\mu}
\end{align*}
for all $c_1, \ldots, c_\mu \in K$, i.\,e.\ the $P$-module $\langle \mathcal
M \rangle_K$ is generated by $\{ e_1, \ldots, e_r \} \cap \mathcal M$.
\end{proof}

\begin{thm}[Module Border Bases and Commuting Matrices]
\label{thm:commMat}
For all $s \in \{ 1, \ldots, n \}$, we define the map
\begin{align*}
\varrho_s: \{ 1, \ldots, \mu \} \to \mathbb N, \quad i \mapsto \begin{cases}
j & \text{if $x_s t_i e_{\alpha_i} = t_j e_{\alpha_j} \in \mathcal M$,}\\
k & \text{if $x_s t_i e_{\alpha_i} = b_k e_{\beta_k} \in \partial \mathcal M$.}
\end{cases}
\end{align*}
Then the $\mathcal M$-module border prebasis~$\mathcal G$ is the $\mathcal
M$-module border basis of~$\langle \mathcal G \rangle$ if and only if the
following equivalent conditions are satisfied.
\begin{enumerate}
\renewcommand{\labelenumi}{$D_{\arabic{enumi}})$}
  \item The formal multiplication matrices $\mathfrak X_1, \ldots, \mathfrak X_n
  \in \Mat_\mu(K)$ of~$\mathcal G$ are pairwise commuting.
  \item The following equations are satisfied for every $p \in \{ 1, \ldots, \mu
  \}$ and for every $s, u \in \{ 1, \ldots, n \}$ with $s \neq u$:
  \begin{enumerate}
  \renewcommand{\labelenumi}{(\arabic{enumi})}
    \item If $x_s t_i e_{\alpha_i} = t_j e_{\alpha_j}$, $x_u t_i e_{\alpha_i} =
    b_k e_{\beta_k}$, and $x_s b_k e_{\beta_k} = b_\ell e_{\beta_\ell}$ with
    indices $i, j \in \{ 1, \ldots, \mu \}$ and $k, \ell \in \{ 1, \ldots, \nu
    \}$, we have
    \begin{align*}
    \sum_{\substack{m \in \{ 1, \ldots, \mu \}\\ x_s t_m e_{\alpha_m} \in
    \mathcal M}} \delta_{p \varrho_s(m)} c_{mk} + \sum_{\substack{m \in \{ 1,
    \ldots, \mu \}\\ x_s t_m e_{\alpha_m} \in \partial \mathcal M}} c_{p
    \varrho_s(m)} c_{mk} = c_{p \ell}.
    \end{align*}
    \item If $x_s t_i e_{\alpha_i} = b_j e_{\beta_j}$ and $x_u t_i e_{\alpha_i}
    = b_k e_{\beta_k}$ with indices $i \in \{ 1, \ldots, \mu \}$ and $j, k \in
    \{ 1, \ldots, \nu \}$, we have
    \begin{align*}
    & \sum_{\substack{m \in \{ 1, \ldots, \mu \}\\ x_s t_m e_{\alpha_m} \in
    \mathcal M}} \delta_{p \varrho_s(m)} c_{mk} + \sum_{\substack{m \in \{ 1,
    \ldots, \mu \}\\ x_s t_m e_{\alpha_m} \in \partial \mathcal M}} c_{p
    \varrho_s(m)} c_{mk}\\
    & = \sum_{\substack{m \in \{ 1, \ldots, \mu \}\\ x_u t_m e_{\alpha_m} \in
    \mathcal M}} \delta_{p \varrho_u(m)} c_{mj} + \sum_{\substack{m \in \{ 1,
    \ldots, \mu \}\\ x_u t_m e_{\alpha_m} \in \partial \mathcal M}} c_{p
    \varrho_u(m)} c_{mj}.
    \end{align*}
  \end{enumerate}
\end{enumerate}
If the equivalent conditions are satisfied, for all $s \in \{ 1, \ldots, n \}$,
the formal multiplication matrix $\mathfrak X_s$ represents the multiplication
endomorphism of the $K$-vector space~$P^r / \langle \mathcal G \rangle$ defined
by $\mathcal V + \langle \mathcal G \rangle \mapsto x_s \mathcal V + \langle
\mathcal G \rangle$, where $\mathcal V \in P^r$, with respect to the $K$-vector
space basis~$\varepsilon_{\langle \mathcal G \rangle}(\mathcal M)$.
\end{thm}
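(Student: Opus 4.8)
The plan is to imitate the corresponding characterization of border bases in \cite[Thm.~6.4.30]{KR2} and \cite[Prop.~6.4.32]{KR2}, organizing the argument into three parts: (1) the equivalence $D_1)\Leftrightarrow D_2)$; (2) the implication ``if $\mathcal G$ is the $\mathcal M$-module border basis of $\langle\mathcal G\rangle$, then $D_1)$ holds'', which will simultaneously establish the final interpretation statement; and (3) the converse ``if $D_1)$ holds, then $\mathcal G$ is the $\mathcal M$-module border basis of $\langle\mathcal G\rangle$''. The degenerate case $\mu=0$ (so $\mathcal M=\emptyset$, the matrices are $0\times 0$ and commute trivially, $\partial\mathcal M=\{e_1,\ldots,e_r\}$ and $\langle\mathcal G\rangle=P^r$) is disposed of first; from then on assume $\mu\neq 0$.

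For (1) I would fix $p,i\in\{1,\ldots,\mu\}$ and $s\neq u$ and compare the $(p,i)$-entries of $\mathfrak X_s\mathfrak X_u$ and $\mathfrak X_u\mathfrak X_s$ by computing $\mathfrak X_s\mathfrak X_u\mathfrak e_i$ and $\mathfrak X_u\mathfrak X_s\mathfrak e_i$ as coordinate vectors, reading the columns of the formal multiplication matrices off Definition~\ref{defn:multMat}. A case distinction according to whether $x_s t_i e_{\alpha_i}$ and $x_u t_i e_{\alpha_i}$ lie in $\mathcal M$ or in $\partial\mathcal M$ then finishes it: if both lie in $\mathcal M$, both products are the column indexed by the common term $x_s x_u t_i e_{\alpha_i}=x_u(x_s t_i e_{\alpha_i})=x_s(x_u t_i e_{\alpha_i})$ and hence automatically agree; if exactly one lies in $\mathcal M$, the equality of the $(p,i)$-entries is precisely equation~(1) of $D_2)$, after possibly exchanging the roles of $s$ and $u$ (here one uses that $\mathcal M$ is an order module closed under divisors to see that the product $x_s b_k e_{\beta_k}$ occurring lies in $\partial\mathcal M$, i.e.\ equals some $b_\ell e_{\beta_\ell}$); and if both lie in $\partial\mathcal M$ it is precisely equation~(2). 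Since $\mathfrak X_s$ and $\mathfrak X_u$ commute iff their commutator annihilates every $\mathfrak e_i$, this proves $D_1)\Leftrightarrow D_2)$.

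For (2) assume $\mathcal G$ is the $\mathcal M$-module border basis of $\langle\mathcal G\rangle$. Then $\varepsilon_{\langle\mathcal G\rangle}(\mathcal M)$ is a $K$-vector space basis of $P^r/\langle\mathcal G\rangle$ with $\mu$ elements by Definition~\ref{defn:moduleBB}. By Corollary~\ref{thm:genSet} the vector $\NR_{\mathcal G}(x_s t_i e_{\alpha_i})\in\langle\mathcal M\rangle_K$ represents the class $x_s t_i e_{\alpha_i}+\langle\mathcal G\rangle$, and a short case check ($x_s t_i e_{\alpha_i}\in\mathcal M$ versus $x_s t_i e_{\alpha_i}\in\partial\mathcal M$) comparing with Definition~\ref{defn:multMat}, exactly as in Remark~\ref{rem:interprMultMat}, shows that the $i$-th column of $\mathfrak X_s$ is the coordinate vector of that class. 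Hence $\mathfrak X_s$ represents the multiplication endomorphism $\mathcal V+\langle\mathcal G\rangle\mapsto x_s\mathcal V+\langle\mathcal G\rangle$ of $P^r/\langle\mathcal G\rangle$ with respect to $\varepsilon_{\langle\mathcal G\rangle}(\mathcal M)$, which is the final assertion of the theorem; and since these endomorphisms pairwise commute (being induced by the commutative multiplication of $P$), so do the matrices $\mathfrak X_1,\ldots,\mathfrak X_n$, which is $D_1)$.

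The main obstacle is (3). Assume $D_1)$. By Lemma~\ref{thm:orderModuleModule}, $\langle\mathcal M\rangle_K$ carries a $P$-module structure $\circ$, generated as a $P$-module by $\{e_1,\ldots,e_r\}\cap\mathcal M$. After renumbering, assume $e_1,\ldots,e_\ell\in\mathcal M$ and $e_{\ell+1},\ldots,e_r\notin\mathcal M$; note that every $t_i e_{\alpha_i}\in\mathcal M$ has $\alpha_i\leq\ell$ (since $1\in\mathcal O_{\alpha_i}$), and that every $e_k$ with $k>\ell$ lies in $\partial\mathcal M$, say $e_k=b_{j(k)}e_{\beta_{j(k)}}$, by Definition~\ref{defn:border}. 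Since $P^r$ is free on $e_1,\ldots,e_r$, there is a unique $P$-linear map $\Phi\colon P^r\to(\langle\mathcal M\rangle_K,\circ)$ with $\Phi(e_k)=e_k$ for $k\leq\ell$ and $\Phi(e_k)=\sum_{m=1}^{\mu} c_{m,j(k)}\,t_m e_{\alpha_m}$ for $k>\ell$, and no consistency conditions have to be checked. I would then verify: (i) $\Phi$ restricts to the identity on $\langle\mathcal M\rangle_K$, which follows from $P$-linearity together with the identity $t\circ e_k=t e_k$ (for $k\leq\ell$ and $t\in\mathcal O_k$) established inside the proof of Lemma~\ref{thm:orderModuleModule}; (ii) $\mathcal G_j\in\ker\Phi$ for every $j$: by (i) and $P$-linearity $\Phi(\mathcal G_j)=\Phi(b_j e_{\beta_j})-\sum_m c_{mj}t_m e_{\alpha_m}$, and $\Phi(b_j e_{\beta_j})=\sum_m c_{mj}t_m e_{\alpha_m}$ by distinguishing whether $b_j e_{\beta_j}$ equals some $e_k$ (immediate from the definition of $\Phi$) or admits a factorization $b_j e_{\beta_j}=x_s t_i e_{\alpha_i}\in\partial\mathcal M$ (in which case $\Phi(b_j e_{\beta_j})=x_s\circ(t_i e_{\alpha_i})$, which by Definition~\ref{defn:multMat} and the description of $\circ$ is the $i$-th column of $\mathfrak X_s$, namely $\sum_m c_{mj}t_m e_{\alpha_m}$); hence $\langle\mathcal G\rangle\subseteq\ker\Phi$. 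Combining (i) and (ii), any $\mathcal V\in\langle\mathcal G\rangle\cap\langle\mathcal M\rangle_K$ satisfies $\mathcal V=\Phi(\mathcal V)=0$, so $\langle\mathcal G\rangle\cap\langle\mathcal M\rangle_K=\{0\}$, and Corollary~\ref{thm:char} shows $\mathcal G$ is the $\mathcal M$-module border basis of $\langle\mathcal G\rangle$. The delicate points are ensuring the factorization $b_j e_{\beta_j}=x_s t_i e_{\alpha_i}$ is available whenever $b_j e_{\beta_j}$ is not one of the $e_k$ (it is, since $\ind_{\mathcal M}(b_j e_{\beta_j})=1$ together with Proposition~\ref{thm:border} forces $b_j e_{\beta_j}\in(\mathbb T^n_1\cdot\mathcal M)\cup\{e_1,\ldots,e_r\}$) and keeping the bookkeeping of the $\circ$-action straight in step (ii).
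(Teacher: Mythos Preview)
Your proposal is correct and follows essentially the same route as the paper: the equivalence $D_1)\Leftrightarrow D_2)$ via the four-case analysis of the position of $x_s t_i e_{\alpha_i}$ and $x_u t_i e_{\alpha_i}$, the implication ``border basis $\Rightarrow D_1)$'' by identifying $\mathfrak X_s$ with the matrix of multiplication by $x_s$, and the converse via Lemma~\ref{thm:orderModuleModule} and a $P$-linear map $P^r\to(\langle\mathcal M\rangle_K,\circ)$ annihilating $\mathcal G$. The only notable difference is in the endgame of part~(3): the paper, after showing $\langle\mathcal G\rangle\subseteq\ker\varphi$, passes through the isomorphism $P^r/\ker\varphi\cong\langle\mathcal M\rangle_K$ and the induced epimorphism $P^r/\langle\mathcal G\rangle\twoheadrightarrow P^r/\ker\varphi$ to argue that $\varepsilon_{\langle\mathcal G\rangle}(\mathcal M)$ is a basis of cardinality~$\mu$; you instead observe directly that $\Phi$ is the identity on $\langle\mathcal M\rangle_K$, deduce $\langle\mathcal G\rangle\cap\langle\mathcal M\rangle_K=\{0\}$, and invoke Corollary~\ref{thm:char}. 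Your route is a bit shorter and avoids the quotient-module bookkeeping, but both arguments rest on exactly the same construction and the same verification that each $\mathcal G_j$ lies in the kernel.
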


\begin{proof}
If $\mu = 0$, i.\,e.\ if $\mathcal M = \emptyset$ and $\partial \mathcal M = \{
e_1, \ldots, e_r \} = \mathcal G$, the claim is obviously true. Thus suppose
that $\mu \neq 0$.

We start with the proof that condition~$D_1)$ is satisfied if~$\mathcal G$ is
the $\mathcal M$-module border basis of $\langle \mathcal G \rangle$, i.\,e.\
that $\varepsilon_{\langle \mathcal G \rangle}(\mathcal M)$ is a $K$-vector
space basis of $P^r / \langle \mathcal G \rangle$ and $\# \varepsilon_{\langle
\mathcal G \rangle}(\mathcal M) = \mu$ by Definition~\ref{defn:moduleBB}. Let
$s \in \{ 1, \ldots, n \}$. The formal multiplication matrix $\mathfrak X_s \in
\Mat_\mu(K)$ defines a $K$-vector space endomorphism~$\phi_s$ of $P^r / \langle
\mathcal G \rangle$ with respect to the $K$-vector space
basis~$\varepsilon_{\langle \mathcal G \rangle}(\mathcal M)$. We show that
$\phi_s(\mathcal V + \langle \mathcal G \rangle) = x_s \mathcal V + \langle
\mathcal G \rangle$ for all $\mathcal V \in P^r / \langle \mathcal G \rangle$,
i.\,e.\ $\phi_s$ is the $K$-vector space endomorphism corresponding to the
multiplication by~$x_s$. Consider the expansions
\begin{align*}
\phi_s(t_1 e_{\alpha_1} + \langle \mathcal G \rangle) & = \xi_{11}^{(s)} (t_1
e_{\alpha_1} + \langle \mathcal G \rangle) + \cdots + \xi_{\mu 1}^{(s)} (t_\mu
e_{\alpha_\mu} + \langle \mathcal G \rangle),\\
& \alignDotsSpace\vdots \\
\phi_s(t_\mu e_{\alpha_\mu} + \langle \mathcal G \rangle) & = \xi_{1 \mu}^{(s)}
(t_1 e_{\alpha_1} + \langle \mathcal G \rangle) + \cdots + \xi_{\mu \mu}^{(s)}
(t_\mu e_{\alpha_\mu} + \langle \mathcal G \rangle).
\end{align*}
Let $u \in \{ 1, \ldots, \mu \}$. Then we see that $x_s t_u e_{\alpha_u} \in
\oline{\partial \mathcal M}$ according to Definition~\ref{defn:border}. We now
distinguish two cases. If $x_s t_u e_{\alpha_u} = t_i e_{\alpha_i} \in \mathcal
M$ with $i \in \{ 1, \ldots, \mu \}$, Definition~\ref{defn:multMat} yields
\begin{align*}
\phi_s(t_u e_{\alpha_u} + \langle \mathcal G \rangle) = \delta_{1i} t_1
e_{\alpha_1} + \cdots + \delta_{\mu i} t_\mu e_{\alpha_\mu} + \langle \mathcal G
\rangle = t_i e_{\alpha_i} + \langle \mathcal G \rangle = x_s t_u e_{\alpha_u} +
\langle \mathcal G \rangle.
\end{align*}
If $x_s t_u e_{\alpha_u} = b_j e_{\beta_j} \in \partial \mathcal M$ with $j \in
\{ 1, \ldots, \nu \}$, Definition~\ref{defn:multMat} yields
\begin{align*}
\phi_s(t_u e_{\alpha_u} + \langle \mathcal G \rangle) = c_{1j} t_1 e_{\alpha_1}
+ \cdots + c_{\mu j} t_\mu e_{\alpha_\mu} + \langle \mathcal G \rangle = b_j
e_{\beta_j} + \langle \mathcal G \rangle = x_s t_u e_{\alpha_u} + \langle
\mathcal G \rangle.
\end{align*}
Therefore, we see that~$\phi_s$ is the multiplication by~$x_s$ for all $s \in \{
1, \ldots, n \}$. Since the multiplication in $P^r / \langle \mathcal G \rangle$
is commutative, and since the formal multiplication matrices $\mathfrak X_1,
\ldots, \mathfrak X_n$ represent the endomorphisms $\phi_1, \ldots, \phi_n$, it
follows that $\mathfrak X_k \mathfrak X_\ell = \mathfrak X_\ell \mathfrak X_k$
for all $k, \ell \in \{ 1, \ldots, n \}$, i.\,e.\ the formal multiplication
matrices $\mathfrak X_1, \ldots, \mathfrak X_n$ are pairwise commuting.

Next we show that~$\mathcal G$ is the $\mathcal M$-module border basis of
$\langle \mathcal G \rangle$ if~$D_1)$ holds. The maps
\begin{align*}
\{ e_1, \ldots, e_r \} \to P^r, \quad e_k \mapsto e_k
\end{align*}
and
\begin{align*}
\{ e_1, \ldots, e_r \} \to \langle \mathcal M \rangle_K, \quad e_k \mapsto
\begin{cases}
e_k & \text{if $e_k \in \mathcal M$}\\
\sum_{i=1}^\mu c_{ij} t_i e_{\alpha_i} & \text{if $e_k = e_{\beta_j} \in
\partial \mathcal M$}
\end{cases}
\end{align*}
induce the $P$-module epimorphism
\begin{align*}
\varphi: P^r \twoheadrightarrow \langle \mathcal M \rangle_K, \quad e_k \mapsto
\begin{cases}
e_k & \text{if $e_k \in \mathcal M$},\\
\sum_{i=1}^\mu c_{ij} t_i e_{\alpha_i} & \text{if $e_k = e_{\beta_j} \in
\partial \mathcal M$}
\end{cases}
\end{align*}
by the Universal Property of the Free Module as $\{ e_1, \ldots, e_r \} \cap
\mathcal M$ generates $\langle \mathcal M \rangle_K$ as a $P$-module according
to Lemma~\ref{thm:orderModuleModule}. Thus the Isomorphism Theorem induces the
$P$-module isomorphism
\begin{align*}
\oline{\varphi}: P^r / \ker(\varphi) \xrightarrow{\sim} \langle \mathcal M
\rangle_K, \quad e_k + \ker(\varphi) \mapsto
\begin{cases}
e_k & \text{if $e_k \in \mathcal M$},\\
\sum_{i=1}^\mu c_{ij} t_i e_{\alpha_i} & \text{if $e_k = e_{\beta_j} \in
\partial \mathcal M$.}
\end{cases}
\end{align*}
In particular, it follows that $\varepsilon_{\ker(\varphi)}(\mathcal M) =
\oline{\varphi}^{-1}(\mathcal M)$ is a $K$-vector space basis of $P^r /
\ker(\varphi)$ as~$\mathcal M$ is a $K$-vector space basis of~$\langle \mathcal
M \rangle_K$.\\
We now show that $\langle \mathcal G \rangle \subseteq \ker(\varphi)$. Let $\{
\mathfrak e_1, \ldots, \mathfrak e_\mu \} \subseteq K^\mu$ be the canonical
$K$-vector space basis of~$K^\mu$. W.\,l.\,o.\,g.\ we suppose that $e_1, \ldots,
e_\ell \in \mathcal M$ and $e_{\ell + 1}, \ldots, e_r \notin \mathcal M$ for
some $\ell \in \{ 1, \ldots, r \}$, and that $t_k e_{\alpha_k} = e_k$ for all $k
\in \{ 1, \ldots, \ell \}$. Furthermore, we let $j \in \{ 1, \ldots, \nu \}$.
We have to distinguish two cases.\\
For the first case, assume that $\mathcal O_{\beta_j} = \emptyset$. Then we have
$b_j = 1$ and $e_{\beta_j} \in \partial \mathcal M$ by
Definition~\ref{defn:border}. Hence
\begin{align*}
\varphi(\mathcal G_j) & = \varphi \left( b_j e_{\beta_j} - \sum_{i=1}^\mu c_{ij}
t_i e_{\alpha_i} \right)\\
& = b_j \circ \varphi(e_{\beta_i}) - \sum_{i=1}^\mu (c_{ij} t_i) \circ
\varphi(e_{\alpha_i})\\
& = 1 \circ \sum_{i=1}^\mu c_{ij} t_i e_{\alpha_i} - \sum_{i=1}^\mu c_{ij} \circ
(t_i \circ e_{\alpha_i})\\
& = \sum_{i=1}^\mu c_{ij} t_i e_{\alpha_i} - \sum_{i=1}^\mu (t_1 e_{\alpha_1},
\ldots, t_\mu e_{\alpha_\mu}) c_{ij} t_i(\mathfrak X_1, \ldots, \mathfrak
X_n) \mathfrak e_{\alpha_i}^{\tr}\\
& = \sum_{i=1}^\mu c_{ij} t_i e_{\alpha_i} - \sum_{i=1}^\mu (t_1 e_{\alpha_1},
\ldots, t_\mu e_{\alpha_\mu}) c_{ij} \mathfrak e_i^{\tr}\\
& = \sum_{i=1}^\mu c_{ij} t_i e_{\alpha_i} - \sum_{i=1}^\mu c_{ij} t_i
e_{\alpha_i}\\
& = 0.
\end{align*}
For the second case, assume that $\mathcal O_{\beta_j} \neq \emptyset$. Then
$\deg(b_j) \geq 1$ and $e_{\beta_j} \in \mathcal M$ by
Definition~\ref{defn:border}. Then there exist an $s \in \{ 1, \ldots, n \}$
and a $k \in \{ 1, \ldots, \mu \}$ such that $b_j e_{\beta_j} = x_s t_k
e_{\alpha_k}$ and we have $\beta_j = \alpha_k$ by Definition~\ref{defn:border}.
Thus we see that
\begin{align*}
\varphi(\mathcal G_j) & = \varphi \left( b_j e_{\beta_j} - \sum_{i=1}^\mu c_{ij}
t_i e_{\alpha_i} \right)\\
& = b_j \circ \varphi(e_{\beta_j}) - \sum_{i=1}^\mu (c_{ij} t_i) \circ
\varphi(e_{\alpha_i})\\
& = b_j \circ e_{\beta_j} - \sum_{i=1}^\mu c_{ij} \circ (t_i \circ
e_{\alpha_i})\\
& = (t_1 e_{\alpha_1}, \ldots, t_\mu e_{\alpha_\mu}) \left( b_j(\mathfrak X_1,
\ldots, \mathfrak X_n) \mathfrak e_{\beta_j}^{\tr} - \sum_{i=1}^\mu c_{ij}
t_i(\mathfrak X_1, \ldots, \mathfrak X_n) \mathfrak e_{\alpha_i}^{\tr} \right)\\
& = (t_1 e_{\alpha_1}, \ldots, t_\mu e_{\alpha_\mu}) \left( \mathfrak X_s
t_k(\mathfrak X_1, \ldots, \mathfrak X_n) \mathfrak e_{\alpha_k}^{\tr} -
\sum_{i=1}^\mu c_{ij} \mathfrak e_i^{\tr} \right)\\
& = (t_1 e_{\alpha_1}, \ldots, t_\mu e_{\alpha_\mu}) \left( \mathfrak X_s
\mathfrak e_k^{\tr} - \sum_{i=1}^\mu c_{ij} \mathfrak e_i^{\tr} \right)\\
& = (t_1 e_{\alpha_1}, \ldots, t_\mu e_{\alpha_\mu}) \left( \sum_{i=1}^\mu
\xi_{ij}^{(s)} \mathfrak e_i^{\tr} - \sum_{i=1}^\mu c_{ij} \mathfrak e_i^{\tr}
\right)\\
& = (t_1 e_{\alpha_1}, \ldots, t_\mu e_{\alpha_\mu}) \left( \sum_{i=1}^\mu
c_{ij} \mathfrak e_i^{\tr} - \sum_{i=1}^\mu c_{ij} \mathfrak e_i^{\tr} \right)\\
& = 0.
\end{align*}
Altogether, it follows that $\langle \mathcal G \rangle \subseteq
\ker(\varphi)$.\\
The Universal Property of the Residue Class Module now induces the $P$-module
epimorphism
\begin{align*}
\psi & : P^r / \langle \mathcal G \rangle \twoheadrightarrow P^r /
\ker(\varphi)\\
& \alignLongFormula e_k + \langle \mathcal G \rangle \mapsto \begin{cases}
e_k + \ker(\varphi) & \text{if $e_k \in \mathcal M$},\\
\sum_{i=1}^\mu c_{ij} t_i e_{\alpha_i} + \ker(\varphi) & \text{if $e_k =
e_{\beta_j} \in \partial \mathcal M$}.
\end{cases}
\end{align*}
Moreover, we have $\psi(\varepsilon_{\langle \mathcal G \rangle}(\mathcal M)) =
\varepsilon_{\ker(\varphi)}(\mathcal M)$. Since~$\varepsilon_{\langle \mathcal G
\rangle}(\mathcal M)$ also generates the $K$-vector space $P^r / \langle
\mathcal G \rangle$ by Corollary~\ref{thm:genSet} and
since~$\varepsilon_{\ker(\varphi)}(\mathcal M)$ is a $K$-vector space basis
of~$P^r / \ker(\varphi)$, we see that~$\varepsilon_{\langle \mathcal G
\rangle}(\mathcal M)$ is also a $K$-vector space basis of~$P^r / \langle
\mathcal G \rangle$.
In particular, we have
\begin{align*}
\mu \geq \# \varepsilon_{\langle \mathcal G \rangle}(\mathcal M) \geq \#
\varepsilon_{\ker(\varphi)}(\mathcal M) = \# \oline{\varphi}^{-1}(\mathcal M) =
\# \mathcal M = \mu.
\end{align*}
Altogether, it follows that $\# \varepsilon_{\mathcal G}(\mathcal M) = \mu$ and
Definition~\ref{defn:moduleBB} yields that~$\mathcal G$ is the $\mathcal
M$-module border basis of~$\langle \mathcal G \rangle$.

Finally, we show that~$D_1)$ is equivalent to~$D_2)$. Let $p, i \in \{ 1,
\ldots, \mu \}$, and let $s, u \in \{ 1, \ldots, n \}$ such that $s \neq u$. In
order to show this equivalence, we translate the commutativity condition
$\mathfrak e_p \mathfrak X_s \mathfrak X_u \mathfrak e_i^{\tr} = \mathfrak e_p
\mathfrak X_u \mathfrak X_s \mathfrak e_i^{\tr}$ back into the language of
$\langle \mathcal M \rangle_K$. As the resulting condition depends on the
position of $t_i e_{\beta_i}$ relative to the border of~$\mathcal M$, we
distinguish four cases.\\[1ex]
$\begin{array}{|c|c|}\hline t_k e_{\alpha_k} & t_\ell e_{\alpha_\ell}\\\hline
t_i e_{\alpha_i} & t_j e_{\alpha_j}\\\hline
\end{array} \quad \text{\textbf{First case:}} \quad x_s x_u t_i e_{\alpha_i}
\in \mathcal M$\\[1ex]
Since $\mathcal M$ is an order module, we also see that we have $x_s t_i
e_{\alpha_i}, x_u t_i e_{\alpha_i} \in \mathcal M$. Say, $x_s t_i e_{\alpha_i} =
t_j e_{\alpha_j}$, $x_u t_i e_{\alpha_i} = t_k e_{\alpha_k}$, and $x_s x_u t_i
e_{\alpha_i} = t_\ell e_{\alpha_\ell}$ where $j, k, \ell \in \{ 1, \ldots, \mu
\}$. Then we have
\begin{align*}
\mathfrak e_p \mathfrak X_s \mathfrak X_u \mathfrak e_i^{\tr} = \mathfrak e_p
\mathfrak X_s \mathfrak e_k^{\tr} = \xi_{pk}^{(s)} = \delta_{p \ell} =
\xi_{pj}^{(u)} = \mathfrak e_p \mathfrak X_u \mathfrak e_j^{\tr} = \mathfrak
e_p \mathfrak X_u \mathfrak X_s \mathfrak e_i^{\tr},
\end{align*}
i.\,e.\ the commutativity condition holds in this case by
Definition~\ref{defn:multMat}.\\[1ex]
$\begin{array}{|c|c|}\hline
t_k e_{\alpha_k} & b_\ell e_{\beta_\ell}\\\hline
t_i e_{\alpha_i} & t_j e_{\alpha_j}\\\hline
\end{array} \quad \text{\textbf{Second case:}} \quad x_s x_u t_i e_{\alpha_i}
\in \partial \mathcal M \quad \text{and} \quad x_s t_i e_{\alpha_i}, x_u t_i
e_{\alpha_i} \in \mathcal M$\\[1ex]
Say, $x_s t_i e_{\alpha_i} = t_j e_{\alpha_j}$, $x_u t_i e_{\alpha_i} = t_k
e_{\alpha_k}$, and $x_s x_u t_i e_{\alpha_i} = b_\ell e_{\beta_\ell}$ where $j,
k \in \{ 1, \ldots, \mu \}$ and $\ell \in \{ 1, \ldots, \nu \}$. Then we have
\begin{align*}
\mathfrak e_p \mathfrak X_s \mathfrak X_u \mathfrak e_i^{\tr} = \mathfrak e_p
\mathfrak X_s \mathfrak e_k^{\tr} = \xi_{pk}^{(s)} = c_{p \ell} = \xi_{pj}^{(u)}
= \mathfrak e_p \mathfrak X_u \mathfrak e_j^{\tr} = \mathfrak e_p \mathfrak X_u
\mathfrak X_s \mathfrak e_i^{\tr},
\end{align*}
i.\,e.\ the commutativity condition holds in this case by
Definition~\ref{defn:multMat}, again.\\[1ex]
$\begin{array}{|c|c|}\hline b_k e_{\beta_k} & b_\ell e_{\beta_\ell}\\\hline
t_i e_{\alpha_i} & t_j e_{\alpha_j}\\\hline
\end{array} \quad \text{\textbf{Third case:}} \quad x_s t_i e_{\alpha_i} \in
\mathcal M \quad \text{and} \quad x_u t_i e_{\alpha_i} \in \partial \mathcal
M$\\[1ex]
Since $\oline{\partial \mathcal M}$ and $\mathcal M$ are order modules, we see
that $x_s x_u t_i e_{\alpha_i} \in \partial \mathcal M$ by
Definition~\ref{defn:orderModule} and Definition~\ref{defn:border}. Say, $x_s t_i
e_{\alpha_i} = t_j e_{\alpha_j}$, $x_u t_i e_{\alpha_i} = b_k e_{\beta_k}$, and
$x_s x_u t_i e_{\alpha_i} = b_\ell e_{\beta_\ell}$ where $j \in \{ 1, \ldots,
\mu \}$ and $k, \ell \in \{ 1, \ldots, \nu \}$. Then we have
\begin{align*}
\mathfrak e_p \mathfrak X_s \mathfrak X_u \mathfrak e_i^{\tr} & = \mathfrak
e_p \mathfrak X_s (c_{1k}, \ldots, c_{\mu k})^{\tr}\\
& = \sum_{m=1}^\mu \xi_{pm}^{(s)} c_{mk}\\
& = \sum_{\substack{m \in \{ 1, \ldots, \mu \}\\ x_s t_m e_{\alpha_m} \in
\mathcal M}} \delta_{p \varrho_s(m)} c_{mk} + \sum_{\substack{m \in \{ 1,
\ldots, \mu \}\\ x_s t_m e_{\alpha_m} \in \partial \mathcal M}} c_{p
\varrho_s(m)} c_{mk},
\end{align*}
and
\begin{align*}
\mathfrak e_p \mathfrak X_u \mathfrak X_s \mathfrak e_i^{\tr} = \mathfrak e_p
\mathfrak X_u \mathfrak e_j^{\tr} = \xi_{pj}^{(u)} = c_{p \ell}
\end{align*}
by Definition~\ref{defn:multMat}. Thus the commutativity condition holds in this
case if and only if equation~$(1)$ is satisfied for~$s$, $u$ and~$p$.\\[1ex]
$\begin{array}{|c|c|}\hline
b_k e_{\beta_k} & *\\\hline
t_i e_{\alpha_i} & b_j e_{\beta_j}\\\hline
\end{array} \quad \text{\textbf{Fourth case:}} \quad x_s t_i e_{\alpha_i} \in
\partial \mathcal M \quad \text{and} \quad x_u t_i e_{\alpha_i} \in \partial
\mathcal M$\\[1ex]
Say, $x_s t_i e_{\alpha_i} = b_j e_{\beta_j}$ and $x_u t_i e_{\alpha_i} = b_k
e_{\beta_k}$ where $j, k \in \{ 1, \ldots, \nu \}$. Then we have
\begin{align*}
\mathfrak e_p \mathfrak X_s \mathfrak X_u \mathfrak e_i^{\tr} & = \mathfrak
e_p \mathfrak X_s (c_{1k}, \ldots, c_{\mu k})^{\tr}\\
& = \sum_{m=1}^\mu \xi_{pm}^{(s)} c_{mk}\\
& = \sum_{\substack{m \in \{ 1, \ldots, \mu \}\\ x_s t_m e_{\alpha_m} \in
\mathcal M}} \delta_{p \varrho_s(m)} c_{mk} + \sum_{\substack{m \in \{ 1,
\ldots, \mu \}\\ x_s t_m e_{\alpha_m} \in \partial \mathcal M}} c_{p
\varrho_s(m)} c_{mk},
\end{align*}
and
\begin{align*}
\mathfrak e_p \mathfrak X_u \mathfrak X_s \mathfrak e_i^{\tr} & = \mathfrak
e_p \mathfrak X_u (c_{1j}, \ldots, c_{\mu j})^{\tr}\\
& = \sum_{m=1}^\mu \xi_{pm}^{(u)} c_{mj}\\
& = \sum_{\substack{m \in \{ 1, \ldots, \mu \}\\ x_u t_m e_{\alpha_m} \in
\mathcal M}} \delta_{p \varrho_u(m)} c_{mj} + \sum_{\substack{m \in \{ 1,
\ldots, \mu \}\\ x_u t_m e_{\alpha_m} \in \partial \mathcal M}} c_{p
\varrho_u(m)} c_{mj}.
\end{align*}
by Definition~\ref{defn:multMat}. Thus the commutativity condition holds in this
case if and only if equation~$(2)$ is satisfied for~$s$, $u$ and~$p$.\\
Altogether, we have regarded all possible cases and have seen that
condition~$D_1)$ holds if and only if, the equations~$(1)$ and~$(2)$ are
satisfied for all $s, u \in \{ 1, \ldots, n \}$ such that $s \neq u$ and all $p
\in \{ 1, \ldots, \mu \}$, i.\,e.\ if and only if~$D_2)$ is satsfied.
\end{proof}

\begin{exmp}
\label{exmp:commMat}
We consider Example~\ref{exmp:multMat}, again. We have seen that the formal
multiplication matrices $\mathfrak X, \mathfrak Y \in \Mat_7(\mathbb Q)$
of~$\mathcal G$ are
\begin{align*}
\mathfrak X & = \begin{pmatrix}
0 & 0 & 1 & 0 & 0 & 0\\
1 & 0 & 0 & 0 & 0 & 0\\
0 & 0 & 0 & 1 & 0 & 0\\
0 & 0 & 0 & 0 & 1 & 0\\
0 & 0 & 0 & 0 & 0 & 1\\
-1 & 1 & 0 & 0 & 0 & 0
\end{pmatrix},
& \mathfrak Y & = \begin{pmatrix}
0 & 0 & 0 & 0 & 0 & 1\\
0 & 0 & 1 & 0 & 0 & 1\\
0 & 0 & 0 & 1 & -3 & 1\\
0 & 0 & 0 & 0 & 0 & 0\\
0 & 1 & 0 & 0 & 0 & 0\\
1 & 0 & 0 & 1 & 0 & 1
\end{pmatrix}.
\end{align*}
Since
\begin{align*}
\mathfrak X \cdot \mathfrak Y = \begin{pmatrix}
0 &  0 &  0 &  1 &  -3 &  1\\
0 &  0 &  0 &  0 &  0 &  1\\
0 &  0 &  0 &  0 &  0 &  0\\
0 &  1 &  0 &  0 &  0 &  0\\
1 &  0 &  0 &  1 &  0 &  1\\
0 &  0 &  1 &  0 &  0 &  0
\end{pmatrix} \neq \begin{pmatrix}
-1 &  1 &  0 &  0 &  0 &  0\\
-1 &  1 &  0 &  1 &  0 &  0\\
-1 &  1 &  0 &  0 &  1 &  -3\\
0 &  0 &  0 &  0 &  0 &  0\\
1 &  0 &  0 &  0 &  0 &  0\\
-1 &  1 &  1 &  0 &  1 &  0
\end{pmatrix} = \mathfrak Y \cdot \mathfrak X,
\end{align*}
condition~$D_1)$ of Theorem~\ref{thm:commMat} yields that~$\mathcal G$ is not
the $\mathcal M$-module border basis of~$\langle \mathcal G \rangle$.
\end{exmp}

%
%

The next characterization shows the connection of module border bases and the
existence of liftings of border syzygies. The definitions and proofs of this
characterization follow the corresponding ones for border bases
in~\cite[Section~5]{CharBB}.

We now investigate the connection of $\mathcal M$-module border bases and
special syzygy modules. Recall, that we call $(p_1, \ldots, p_s) \in P^s$ a
syzygy of $(\mathcal V_1, \ldots, \mathcal V_s) \in (P^r)^s$ if $p_1 \mathcal
V_1 + \cdots + p_s \mathcal V_s = 0$ holds. For a vector $(\mathcal V_1, \ldots,
\mathcal V_s) \in (P^r)^s$, the set of all syzygies of~$(\mathcal V_1, \ldots,
\mathcal V_s)$ is a $P$-submodule of~$P^s$ and is denoted by~$\Syz_P(\mathcal
V_1, \ldots, \mathcal V_s)$.

\begin{defn}
\label{defn:borderSyz}
A syzygy $(q_1, \ldots, q_\nu) \in \Syz_P(b_1 e_{\beta_1}, \ldots, b_\nu
e_{\beta_\nu}) \subseteq P^\nu$ is called a \emph{border syzygy} with respect
to~$\mathcal M$.
\end{defn}

We have defined module border bases over the free $P$-module $P^r$ with its
canonical $P$-module basis $\{ e_1, \ldots, e_r \} \subseteq P^r$. Since border
syzygies with respect to~$\mathcal M$ are vectors in the free
$P$-module~$P^\nu$, we must distinguish these two free modules. In order to make
this distinction easier, we let $\{ \varepsilon_1, \ldots, \varepsilon_\nu \}
\subseteq P^\nu$ denote the canonical $P$-module basis of~$P^\nu$. To shorten
notation, we let
\begin{align*}
\sigma_{ij} = \tfrac{\lcm(b_i, b_j)}{b_i} \varepsilon_i - \tfrac{\lcm(b_i,
b_j)}{b_j} \varepsilon_j
\end{align*}
for all $i, j \in \{ 1, \ldots, \nu \}$. Then $\sigma_{ij}$ is a fundamental
syzygies of $(b_1 e_{\beta_1}, \ldots, b_\nu e_{\beta_\nu})$ for all $i, j \in
\{ 1, \ldots, \nu \}$ such that $\beta_i = \beta_j$ and \cite[Thm.~2.3.7]{KR1}
yields that the set
\begin{align*}
\{ \sigma_{ij} \mid i, j \in \{ 1, \ldots, \nu \}, i < j, \beta_i =
\beta_j \}.
\end{align*}
is a generating system of the $P$-submodule $\Syz_P(b_1 e_{\beta_1}, \ldots,
b_\nu e_{\beta_\nu}) \subseteq P^r$. We now determine an even smaller generating
system. But before that, we have to generalize the concepts of neighbors and
neighbor syzygies from~\cite[Defn.~17/20]{CharBB}

\begin{defn}
\label{defn:neighbors}
Let $i, j \in \{ 1, \ldots, \nu \}$ with $i \neq j$.
\begin{enumerate}
  \item The border terms $b_i e_{\beta_i}, b_j e_{\beta_j} \in \partial \mathcal
  M$ are called \emph{next-door neighbors} with respect to~$\mathcal M$ if
  $\beta_i = \beta_j$ and if $b_i, b_j \in \partial \mathcal O_{\beta_i}$ are
  next-door neighbors with respect to~$\mathcal O_{\beta_i}$, i.\,e.\ if we
  have
  \begin{align*}
  x_k b_i e_{\beta_i} = b_j e_{\beta_j}
  \end{align*}
  for some $k \in \{ 1, \ldots, n \}$. In this case, the fundamental syzygy
  \begin{align*}
  \sigma_{ij} = x_k \varepsilon_i - \varepsilon_j \in \Syz_P(b_1 e_{\beta_1},
  \ldots, b_\nu e_{\beta_\nu})
  \end{align*}
  is called a \emph{next-door neighbor syzygy} with respect to~$\mathcal M$.
  \item The border terms $b_i e_{\beta_i}, b_j e_{\beta_j} \in \mathcal M$ are
  called \emph{across-the-street neighbors} with respect to~$\mathcal M$ if
  $\beta_i = \beta_j$ and if $b_i, b_j \in \partial \mathcal O_{\beta_i}$ are
  across-the-street neighbors with respect to~$\mathcal O_{\beta_i}$, i.\,e.\
  if we have
  \begin{align*}
  x_k b_i e_{\beta_i} = x_\ell b_j e_{\beta_j}
  \end{align*}
  for some $k, \ell \in \{ 1, \ldots, n \}$. In this case, the fundamental
  syzygy
  \begin{align*}
  \sigma_{ij} = x_k \varepsilon_i - x_\ell \varepsilon_j \in \Syz_P(b_1
  e_{\beta_1}, \ldots, b_\nu e_{\beta_\nu})
  \end{align*}
  is called an \emph{across-the-street neighbor syzygy} with respect
  to~$\mathcal M$.
  \item The border terms $b_i e_{\beta_i}, b_j e_{\beta_j} \in \mathcal M$ are
  called \emph{neighbors} with respect to~$\mathcal M$ if $\beta_i = \beta_j$
  and if $b_i, b_j \in \partial \mathcal O_{\beta_i}$ are neighbors with respect
  to~$\mathcal O_{\beta_i}$, i.\,e.\ if they are next-door or across-the-street
  neighbors with respect to~$\mathcal M$. In this case, the fundamental syzygy
  $\sigma_{ij} \in \Syz_P(b_1 e_{\beta_1}, \ldots, b_\nu e_{\beta_\nu})$ is
  called a \emph{neighbor syzygy} with respect to~$\mathcal M$.
\end{enumerate}
\end{defn}

\begin{exmp}
\label{exmp:neighbors}
We proceed with Example~\ref{exmp:commMat} and determine the neighbors with
respect to~$\mathcal M$. Recall, that the border of~$\mathcal M$ was
\begin{align*}
\partial \mathcal M = \{ x^2 e_1, xy e_1, y^2 e_1, x^3 e_2, x^2y e_2, xy e_2, y
e_2 \}.
\end{align*}
We see that $x \cdot xy e_2 = x^2y e_2$, $x \cdot y e_2 = xy e_2$, i.\,e.\ the
border terms $xy e_2$ and $x^2y e_2$ and the border terms $y e_2$ and $xy e_2$
are next-door neighbors with respect to~$\mathcal M$. Moreover, we have $y
\cdot x^2 e_1 = x \cdot xy e_1$, $y \cdot xy e_1 = x \cdot y^2 e_1$ and $y \cdot
x^3 e_2 = x \cdot x^2y e_2$, i.\,e.\ the border terms $x^2 e_1$ and $xy e_1$,
the border terms $xy e_1$ and $y^2 e_1$, and the border terms $x^3 e_2$ and
$x^2y e_2$ are across-the-street neighbors with respect to~$\mathcal M$.
Obviously, there are no further neighbors with respect to~$\mathcal M$.
\end{exmp}

Now we prove an analogous version of \cite[Prop.~21]{CharBB} for the border
syzyigies with respect to~$\mathcal M$.

\begin{prop}
\label{thm:borderSyzGen}
The set of all neighbor syzygies with respect to $\mathcal M$ generates the
$P$-submodule $\Syz_P(b_1 e_{\beta_1}, \ldots, b_\nu e_{\beta_\nu}) \subseteq
P^\nu$.
\end{prop}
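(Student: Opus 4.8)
The plan is to reduce the statement to the corresponding result for order ideals, namely \cite[Prop.~21]{CharBB}, by exploiting that the border $\partial\mathcal M$ splits over the components $e_1,\ldots,e_r$. For $s\in\{1,\ldots,r\}$ put $J_s=\{j\in\{1,\ldots,\nu\}\mid\beta_j=s\}$. By Definition~\ref{defn:border} we have $\{b_je_{\beta_j}\mid j\in J_s\}=\partial\mathcal O_s\cdot e_s$, hence $\{b_j\mid j\in J_s\}$ enumerates $\partial\mathcal O_s$. For a tuple $(q_1,\ldots,q_\nu)\in P^\nu$, grouping the sum $\sum_{j=1}^\nu q_jb_je_{\beta_j}$ by component and using that $\{e_1,\ldots,e_r\}$ is a $P$-basis of $P^r$ shows that $(q_1,\ldots,q_\nu)$ is a border syzygy with respect to $\mathcal M$ if and only if $\sum_{j\in J_s}q_jb_j=0$ in $P$ for every $s\in\{1,\ldots,r\}$. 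Thus, identifying $P^\nu=\bigoplus_{s=1}^rP^{J_s}$, we obtain $\Syz_P(b_1e_{\beta_1},\ldots,b_\nu e_{\beta_\nu})=\bigoplus_{s=1}^r\Syz_P((b_j)_{j\in J_s})$, where $\Syz_P((b_j)_{j\in J_s})$ is embedded in $P^\nu$ by extending by zero on the coordinates outside $J_s$.

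Next I would treat each summand separately. Fix $s\in\{1,\ldots,r\}$. If $\mathcal O_s=\emptyset$, then $\partial\mathcal O_s=\{1\}$, so $J_s$ is a singleton and $\Syz_P((b_j)_{j\in J_s})=\{0\}$, which is generated by the empty set of neighbor syzygies. If $\mathcal O_s\neq\emptyset$, then $(b_j)_{j\in J_s}$ is an enumeration of the border of the order ideal $\mathcal O_s$, and \cite[Prop.~21]{CharBB} yields that the neighbor syzygies of $\partial\mathcal O_s$ with respect to $\mathcal O_s$ generate $\Syz_P((b_j)_{j\in J_s})$. By Definition~\ref{defn:neighbors}, and since $\sigma_{ij}$ only involves the terms $b_i,b_j\in\mathbb T^n$ (so that next-door and across-the-street neighbours with respect to $\mathcal M$ restrict, on the $s$-th component, exactly to the corresponding notions for $\mathcal O_s$), these are precisely the fundamental syzygies $\sigma_{ij}$ with $i,j\in J_s$ that are neighbor syzygies with respect to $\mathcal M$.

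To conclude, I would use that a generating system of a direct sum is obtained as the union of generating systems of its summands: taking the union over $s\in\{1,\ldots,r\}$ of the neighbor syzygies obtained above gives exactly the set of all neighbor syzygies with respect to $\mathcal M$, and therefore this set generates $\Syz_P(b_1e_{\beta_1},\ldots,b_\nu e_{\beta_\nu})$.

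The only genuinely substantial step is the one delegated to \cite[Prop.~21]{CharBB}: showing that, inside a single component, every fundamental syzygy $\sigma_{ij}$ lies in the $P$-submodule generated by next-door and across-the-street neighbor syzygies. Should a self-contained argument be preferred, one would start from the generating set $\{\sigma_{ij}\mid i<j,\ \beta_i=\beta_j\}$ of \cite[Thm.~2.3.7]{KR1} already recalled above, restrict to a fixed component, and prove by induction on the length of a chain of neighbours in $\partial\mathcal O_s$ connecting $b_i$ to $b_j$ that each such $\sigma_{ij}$ is a combination of neighbor syzygies; this chaining induction is the technical heart of the proof, but it is exactly the ideal-case computation and carries over verbatim.
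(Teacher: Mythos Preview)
Your proposal is correct and follows essentially the same approach as the paper: both reduce to the single-component case by exploiting that any fundamental syzygy $\sigma_{ij}$ with $\beta_i=\beta_j$ lives entirely in $\partial\mathcal O_{\beta_i}$, and then invoke \cite[Prop.~21]{CharBB} for the order-ideal case. The only organizational difference is that you set up an explicit direct-sum decomposition $\Syz_P(b_1e_{\beta_1},\ldots,b_\nu e_{\beta_\nu})=\bigoplus_s\Syz_P((b_j)_{j\in J_s})$ and treat the empty $\mathcal O_s$ separately, whereas the paper starts from the generating set $\{\sigma_{ij}\mid i<j,\ \beta_i=\beta_j\}$ of \cite[Thm.~2.3.7]{KR1} and shows each generator individually lies in the span of neighbor syzygies; these are equivalent packagings of the same reduction.
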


\begin{proof}
As previously mentioned, \cite[Thm.~2.3.7]{KR1} yields that the $P$-submodule
$\Syz_P(b_1 e_{\beta_1}, \ldots, b_\nu e_{\beta_\nu}) \subseteq P^\nu$ is
generated by the set
\begin{align*}
\{ \sigma_{ij} \mid i, j \in \{ 1, \ldots, \nu \}, i < j, \beta_i = \beta_j \}.
\end{align*}
Let $i, j \in \{ 1, \ldots, \nu \}$ be with $i < j$ and $\beta_i = \beta_j$. We
prove that the fundamental syzygy $\sigma_{ij} \in \Syz_P(b_1 e_{\beta_1},
\ldots, b_\nu e_{\beta_\nu})$ is a $P$-linear combination of neighbor syzygies
with respect to~$\mathcal M$. Let $b_{ij} = \tfrac{\lcm(b_i, b_j)}{b_i}$ and
$b_{ji} = \tfrac{\lcm(b_i, b_j)}{b_j}$. Since $\beta_i = \beta_j$, we see that
$b_i, b_j \in \partial \mathcal O_{\beta_i}$ and that $\sigma_{ij} = b_{ij}
\varepsilon_i - b_{ji} \varepsilon_j$ is a syzygy of $(b_1 e_{\beta_1}, \ldots,
b_\nu e_{\beta_\nu})$ if and only if~$\sigma_{ij}$ is a syzygy of~$(b_1, \ldots,
b_\nu)$. Moreover, \cite[Prop.~21]{CharBB} yields that~$\sigma_{ij}$ is a
$P$-linear combination of fundamental syzygies~$\sigma_{k \ell}$ such that $k,
\ell \in \{ 1, \ldots, \nu \}$, $b_k, b_\ell \in \partial \mathcal O_{\beta_i}$,
i.\,e.\ $\beta_k = \beta_\ell = \beta_i$, and such that $b_k, b_\ell \in
\partial \mathcal O_{\beta_i}$ are neighbors with respect to~$\mathcal
O_{\beta_i}$. Furthermore, we see that two border terms $b_k, b_\ell \in
\partial \mathcal O_{\beta_i}$ with $k, \ell \in \{ 1, \ldots, \nu \}$ are
neighbors with respect to~$\mathcal O_{\beta_i}$ if and only if $b_k
e_{\beta_i}$ and $b_\ell e_{\beta_i}$ are neighbors with respect to~$\mathcal
M$ by Definition~\ref{defn:neighbors}. Altogether, it follows that the
fundamental syzygy~$\sigma_{ij}$ is also a $P$-linear combination of neighbor
syzygies~$\sigma_{k \ell}$ such that $k, \ell \in \{ 1, \ldots, \nu \}$ and
$b_k e_{\beta_k}$ is a neighbor of $b_\ell e_{\beta_\ell}$ with respect
to~$\mathcal M$.
\end{proof}

We are now able to define liftings of border syzygies with respect to~$\mathcal
M$  similar to \cite[Defn.~22]{CharBB}. Later, we use liftings to characterize
module border basis similar to the corresponding characterization for border
bases in~\cite[Prop.~25]{CharBB}.

\begin{defn}
\label{defn:lifting}
Let $(p_1, \ldots, p_\nu) \in \Syz_P(b_1 e_{\beta_1}, \ldots, b_\nu
e_{\beta_\nu})$ be a border syzygy with respect to~$\mathcal M$. Then we call a
syzygy
\begin{align*}
(P_1, \ldots, P_\nu) \in \Syz_P(\mathcal G_1, \ldots, \mathcal G_\nu)
\end{align*}
a \emph{lifting} of~$(p_1, \ldots, p_\nu)$, if one of the following conditions
holds for
\begin{align*}
\mathcal V = p_1 \mathcal G_1 + \cdots + p_\nu \mathcal G_\nu.
\end{align*}
\begin{enumerate}
\renewcommand{\labelenumi}{\arabic{enumi}.}
  \item We have $\mathcal V = 0$ and
  \begin{align*}
  (P_1, \ldots, P_\nu) = (p_1, \ldots, p_\nu).
  \end{align*}
  \item We have $\mathcal V \neq 0$ and
  \begin{align*}
  \deg(P_j - p_j) \leq \ind_{\mathcal M}(\mathcal V) - 1
  \end{align*}
  for all $j \in \{ 1, \ldots, \nu \}$ such that $P_j - p_j \neq 0$.
\end{enumerate}
In this situation, we also say that the border syzygy~$(p_1, \ldots, p_\nu)$
with respect to~$\mathcal M$ \emph{lifts} to the syzygy~$(P_1, \ldots, P_\nu)$
of~$(\mathcal G_1, \ldots, \mathcal G_\nu)$.
\end{defn}

\begin{thm}[Module Border Bases and Liftings of Border Syzygies]
\label{thm:liftings}
The $\mathcal M$-module border prebasis~$\mathcal G$ is the $\mathcal M$-module
border basis of~$\langle \mathcal G \rangle$ if and only if the following
equivalent conditions are satisfied.
\begin{enumerate}
\renewcommand{\labelenumi}{$E_{\arabic{enumi}})$}
  \item Every border syzygy with respect to~$\mathcal M$ lifts to a syzygy
  of~$(\mathcal G_1, \ldots, \mathcal G_\nu)$.
  \item Every neighbor syzygy with respect to~$\mathcal M$ lifts to a syzygy
  of~$(\mathcal G_1, \ldots, \mathcal G_\nu)$.
\end{enumerate}
\end{thm}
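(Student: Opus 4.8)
The plan is to prove the cycle of implications: if~$\mathcal G$ is the $\mathcal M$-module border basis of~$\langle \mathcal G \rangle$, then~$E_1)$ holds; $E_1)$ trivially implies~$E_2)$; and~$E_2)$ implies that~$\mathcal G$ is the $\mathcal M$-module border basis of~$\langle \mathcal G \rangle$. Throughout I would use the earlier characterizations, above all Theorem~\ref{thm:specGen} (special generation), Corollary~\ref{thm:char}, Proposition~\ref{thm:index}, and Proposition~\ref{thm:borderSyzGen}. The structure mirrors the border basis case in~\cite[Prop.~25]{CharBB}.

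For the first implication, let $(p_1, \ldots, p_\nu) \in \Syz_P(b_1 e_{\beta_1}, \ldots, b_\nu e_{\beta_\nu})$ be a border syzygy and set $\mathcal V = p_1 \mathcal G_1 + \cdots + p_\nu \mathcal G_\nu$. If $\mathcal V = 0$, then $(p_1, \ldots, p_\nu)$ is itself a syzygy of~$(\mathcal G_1, \ldots, \mathcal G_\nu)$ and hence a lifting of itself by case~1 of Definition~\ref{defn:lifting}. If $\mathcal V \neq 0$, then $\mathcal V \in \langle \mathcal G \rangle \setminus \{ 0 \}$, so condition~$A_1)$ of Theorem~\ref{thm:specGen} yields $p_1', \ldots, p_\nu' \in P$ with $\mathcal V = p_1' \mathcal G_1 + \cdots + p_\nu' \mathcal G_\nu$ and $\deg(p_j') \leq \ind_{\mathcal M}(\mathcal V) - 1$ whenever $p_j' \neq 0$. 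Then $(p_1 - p_1', \ldots, p_\nu - p_\nu') \in \Syz_P(\mathcal G_1, \ldots, \mathcal G_\nu)$, and for every~$j$ with $(p_j - p_j') - p_j = - p_j' \neq 0$ we have $\deg(- p_j') \leq \ind_{\mathcal M}(\mathcal V) - 1$, so this syzygy is a lifting of~$(p_1, \ldots, p_\nu)$ by case~2 of Definition~\ref{defn:lifting}. The implication $E_1) \Rightarrow E_2)$ needs no argument, since by Definition~\ref{defn:neighbors} every neighbor syzygy lies in $\Syz_P(b_1 e_{\beta_1}, \ldots, b_\nu e_{\beta_\nu})$ and is therefore a border syzygy.

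For the remaining implication $E_2) \Rightarrow$ (module border basis), by Corollary~\ref{thm:char} it suffices to prove $\langle \mathcal G \rangle \cap \langle \mathcal M \rangle_K = \{ 0 \}$, and I would deduce this by verifying the special generation property~$A_1)$ of Theorem~\ref{thm:specGen}. So I take $\mathcal V \in \langle \mathcal G \rangle \setminus \{ 0 \}$ with an arbitrary representation $\mathcal V = p_1 \mathcal G_1 + \cdots + p_\nu \mathcal G_\nu$ and argue by induction on a suitable complexity measure of the representation---for instance the lexicographically ordered pair $(d, N)$ with $d = \max \{ \deg(p_j) \mid p_j \neq 0 \}$ and~$N$ the number of indices~$j$ attaining this maximum, or a measure built from the $\mathcal M$-indices of the terms $t' b_j e_{\beta_j}$ occurring in $\sum_j p_j b_j e_{\beta_j}$. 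If $d \leq \ind_{\mathcal M}(\mathcal V) - 1$, there is nothing to show. Otherwise $\ind_{\mathcal M}(\mathcal V) \leq d$; the terms of $\sum_j p_j ( b_j e_{\beta_j} - \sum_i c_{ij} t_i e_{\alpha_i} )$ of highest possible $\mathcal M$-index that are forced to cancel (those coming from $t' b_j e_{\beta_j}$ with $t' \in \Supp(p_j)$, $\deg(t') = d$, of $\mathcal M$-index $d + 1$, using Proposition~\ref{thm:index}) yield a nonzero homogeneous syzygy of~$(b_1 e_{\beta_1}, \ldots, b_\nu e_{\beta_\nu})$ carried by the degree-$d$ leading parts of the~$p_j$. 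By Proposition~\ref{thm:borderSyzGen} this border syzygy is a $P$-linear combination of neighbor syzygies, each of which, by~$E_2)$, lifts to a syzygy of~$(\mathcal G_1, \ldots, \mathcal G_\nu)$; subtracting the matching $P$-linear combination of these liftings from $(p_1, \ldots, p_\nu)$ leaves~$\mathcal V$ unchanged while strictly decreasing the complexity measure. The induction then produces a representation satisfying~$A_1)$.

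The main obstacle is precisely the degree--index bookkeeping in this last step. Unlike in the graded (Gröbner) setting, the $\mathcal M$-index of a term is not its total degree, because the border of an order module can ``wrap around''; consequently the degree-$d$ leading forms of the~$p_j$ need not satisfy $\sum_j \mathrm{LF}_d(p_j)\, b_j e_{\beta_j} = 0$, and one has to isolate exactly the part of the representation of $\mathcal M$-index $d + 1$ (and, in the delicate subcase where no such part exists, descend on the index rather than the degree). It then has to be checked that subtracting the lifted combination of neighbor syzygies (i) introduces no term of degree exceeding~$d$ and (ii) genuinely reduces the chosen complexity measure, both of which hinge on the sharp estimates of Proposition~\ref{thm:index}---in particular $\ind_{\mathcal M}(t' \mathcal G_j) \leq \deg(t') + 1$---together with the fact, built into Definition~\ref{defn:lifting}, that a lifting of a neighbor syzygy differs from it only in degrees bounded by $\ind_{\mathcal M}(\,\cdot\,) - 1$.
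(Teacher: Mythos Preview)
Your argument for (border basis) $\Rightarrow E_1)$ and for $E_1) \Rightarrow E_2)$ is correct and matches the paper's.

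For $E_2) \Rightarrow$ (border basis) your route differs from the paper's, and the difficulty you flag is genuine and left unresolved in your sketch. The paper does \emph{not} try to establish~$A_1)$ by descent on representations; instead it verifies condition~$D_2)$ of Theorem~\ref{thm:commMat}. For each neighbor pair (next-door or across-the-street) the corresponding combination $x_s \mathcal G_k - \mathcal G_\ell$ or $x_s \mathcal G_k - x_u \mathcal G_j$ has $\mathcal M$-index at most~$1$, so by Definition~\ref{defn:lifting} any lifting of the neighbor syzygy $\sigma_{k\ell}$ differs from it only by constants, i.e.\ has the shape $\sigma_{k\ell} + \sum_w d_w \varepsilon_w$ with $d_w \in K$. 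Expanding $0 = \sum_j (\text{lifting})_j\, \mathcal G_j$, the coefficients of the border terms $b_w e_{\beta_w}$ pin down the~$d_w$, and then the coefficients of each $t_p e_{\alpha_p}$ give precisely equations~(1) and~(2) of~$D_2)$. This is a finite, purely linear coefficient comparison and sidesteps all degree/index bookkeeping.

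Your Schreyer-style rewriting may be salvageable, but not with the measure $(d,N)$ you propose: as you yourself note, a degree-$d$ term $t'$ of $p_j$ can satisfy $\ind_{\mathcal M}(t' b_j e_{\beta_j}) \leq d$, so after stripping only the index-$(d{+}1)$ contributions and subtracting the lifted neighbor-syzygy combination (which, via Proposition~\ref{thm:borderSyzGen} and Definition~\ref{defn:lifting}, adds only corrections of degree $\leq d-1$), the new coefficients can still have degree~$d$ and $(d,N)$ need not drop. A measure built purely from $\mathcal M$-indices faces the opposite problem: it does not obviously control $\deg(p_j)$, which is what~$A_1)$ actually demands. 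The paper's route via commuting matrices is both shorter and avoids this obstacle entirely.
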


\begin{proof}
We start to show that condition~$E_1)$ is satisfied if~$\mathcal G$ is the
$\mathcal M$-module border basis of~$\langle \mathcal G \rangle$. Let
\begin{align*}
(p_1, \ldots, p_\nu) \in \Syz_P(b_1 e_{\beta_1}, \ldots, b_\nu e_{\beta_\nu})
\end{align*}
be a border syzygy with respect to~$\mathcal M$ and let
\begin{align*}
\mathcal V = p_1 \mathcal G_1 + \cdots + p_\nu \mathcal G_\nu.
\end{align*}
If $\mathcal V = 0$, we see that $(p_1, \ldots, p_\nu)$ is a lifting of $(p_1,
\ldots, p_\nu)$ by Definition~\ref{defn:lifting}. Thus we suppose that $\mathcal
V \neq 0$. Since $\mathcal V \in \langle \mathcal G \rangle \setminus \{ 0 \}$,
condition~$A_1)$ of Theorem~\ref{thm:specGen} yields a representation
\begin{align*}
\mathcal V = p_1 \mathcal G_1 + \cdots + p_\nu \mathcal G_\nu = q_1 \mathcal
G_1 + \cdots + q_\nu \mathcal G_\nu
\end{align*}
with $q_1, \ldots, q_\nu \in P$ and
\begin{align*}
\deg(q_j) \leq \ind_{\mathcal M}(\mathcal V) - 1
\end{align*}
for all $j \in \{ 1, \ldots, \nu \}$ such that $q_j \neq 0$. Let
\begin{align*}
(P_1, \ldots, P_\nu) = (p_1, \ldots, p_\nu) - (q_1, \ldots, q_\nu).
\end{align*}
Then~$(P_1, \ldots, P_\nu)$ is a syzygy of~$(\mathcal G_1, \ldots, \mathcal
G_\nu)$ by construction. Moreover, we have
\begin{align*}
\deg(P_j - p_j) = \deg(-q_j) \leq \ind_{\mathcal M}(\mathcal V) - 1
\end{align*}
for all $j \in \{ 1, \ldots, \nu \}$ with $P_j - p_j \neq 0$, i.\,e.\ $(p_1,
\ldots, p_\nu)$ lifts to~$(P_1, \ldots, P_\nu)$ by
Definition~\ref{defn:lifting}.

Since~$E_1)$ logically implies~$E_2)$ according to
Definitions~\ref{defn:borderSyz} and Definition~\ref{defn:neighbors}, it remains
to prove that~$\mathcal G$ is the $\mathcal M$-module border basis of~$\langle
\mathcal G \rangle$ if~$E_2)$ holds. For all $s \in \{ 1, \ldots, n \}$, we let
\begin{align*}
\varrho_s: \{ 1, \ldots, \mu \} \to \mathbb N, \quad i \mapsto \begin{cases}
j & \text{if $x_s t_i e_{\alpha_i} = t_j e_{\alpha_j} \in \mathcal M$,}\\
k & \text{if $x_s t_i e_{\alpha_i} = b_k e_{\beta_k} \in \partial \mathcal M$.}
\end{cases}
\end{align*}
be like in Theorem~\ref{thm:commMat}. We have to distinguish two cases.\\
Given next-door neighbors $b_k, b_\ell \in \partial \mathcal M$ with respect
to~$\mathcal M$, say
\begin{align*}
x_s b_k e_{\beta_k} = b_\ell e_{\beta_\ell}
\end{align*}
where $s \in \{ 1, \ldots, n \}$, $k, \ell \in \{ 1, \ldots, \nu \}$, and $k
\neq \ell$, the corresponding next-door neighbor syzygy with respect
to~$\mathcal M$ is of the form
\begin{align*}
\sigma_{k \ell} = x_s \varepsilon_k - \varepsilon_\ell \in \Syz_P(b_1
e_{\beta_1}, \ldots, b_\nu e_{\beta_\nu})
\end{align*}
by Definition~\ref{defn:neighbors}. If $x_s \mathcal G_k - \mathcal G_\ell \neq
0$, Definition~\ref{defn:lifting} and Proposition~\ref{thm:index} yield
\begin{align*}
\ind_{\mathcal M}(x_s \mathcal G_k - \mathcal G_\ell) = 1.
\end{align*}
Let~$\lambda_{k \ell}$ be a lifting of~$\sigma_{k \ell}$. Then there exist
$d_1, \ldots, d_\nu \in K$ such that
\begin{align*}
\lambda_{k \ell} = x_s \varepsilon_k - \varepsilon_\ell + \sum_{w=1}^\nu d_w
\varepsilon_w
\end{align*}
and~$\lambda_{k \ell}$ is also a syzygy of $(\mathcal G_1, \ldots, \mathcal
G_\nu)$ by Definition~\ref{defn:lifting}. Thus we have
\begin{align*}
0 & = x_s \mathcal G_k - \mathcal G_\ell + \sum_{w=1}^\nu d_w \mathcal G_w\\
& = x_s \left( b_k e_{\beta_k} - \sum_{m=1}^\mu c_{mk} t_m e_{\alpha_m}
\right) - \left( b_\ell e_{\beta_\ell} - \sum_{m=1}^\mu c_{m \ell} t_m
e_{\alpha_m} \right)\\
& \alignLongFormula +\sum_{w=1}^\nu d_w \left( b_w e_{\beta_w} - \sum_{m=1}^\mu
c_{mw} t_m e_{\alpha_m} \right)\\
& = -\sum_{m=1}^\mu c_{mk} (x_s t_m e_{\alpha_m}) + \sum_{m=1}^\mu c_{m \ell}
t_m e_{\alpha_m}\\
& \alignLongFormula +\sum_{w=1}^\nu d_w b_w e_{\beta_w} - \sum_{w=1}^\nu d_w
\sum_{m=1}^\mu c_{mw} t_m e_{\alpha_m}\\
& = -\sum_{\substack{m \in \{ 1, \ldots, \mu \}\\ x_s t_m e_{\alpha_m} \in
\mathcal M}} c_{mk} t_{\varrho_s(m)} e_{\alpha_{\varrho_s(m)}} -
\sum_{\substack{m \in \{ 1, \ldots, \mu \}\\ x_s t_m e_{\alpha_m} \in \partial
\mathcal M}} c_{mk} b_{\varrho_s(m)} e_{\beta_{\varrho_s(m)}}\\
& \alignLongFormula +\sum_{m=1}^\mu c_{m \ell} t_m e_{\alpha_m} + \sum_{w=1}^\nu
d_w b_w e_{\beta_w} - \sum_{w=1}^\nu d_w \sum_{m=1}^\mu c_{mw} t_m e_{\alpha_m}
\end{align*}
As~$\partial \mathcal M$ is $K$-linearly independent, comparison of the
coefficients of~$b_w e_{\beta_w}$ for all $w \in \{ 1, \ldots, \nu \}$ yields
\begin{align*}
d_w = \begin{cases}
c_{mk} & \text{if $b_w e_{\beta_w} \in x_s \mathcal M$,}\\
0 & \text{if $b_w e_{\beta_w} \notin x_s \mathcal M$.}
\end{cases}
\end{align*}
As $\mathcal M$ is $K$-linearly independent, comparison of the coefficients
of~$t_p e_{\alpha_p}$ for all $p \in \{ 1, \ldots, \mu \}$ yields
\begin{align*}
0 & = -\sum_{\substack{m \in \{ 1, \ldots, \mu \}\\ x_s t_m e_{\alpha_m} \in
\mathcal M}} \delta_{p \varrho_s(m)} c_{mk} + c_{p \ell} - \sum_{w=1}^\nu d_w
c_{pw}\\
& = -\sum_{\substack{m \in \{ 1, \ldots, \mu \}\\ x_s t_m e_{\alpha_m} \in
\mathcal M}} \delta_{p \varrho_s(m)} c_{mk} + c_{p \ell} - \sum_{\substack{w \in
\{ 1, \ldots, \nu \}\\ b_w e_{\beta_w} \in x_s \mathcal M}} c_{mk} c_{pw}\\
& = -\sum_{\substack{m \in \{ 1, \ldots, \mu \}\\ x_s t_m e_{\alpha_m} \in
\mathcal M}} \delta_{p \varrho_s(m)} c_{mk} + c_{p \ell} - \sum_{\substack{m \in
\{ 1, \ldots, \mu \}\\ x_s t_m e_{\alpha_m} \in \partial \mathcal M}} c_{mk}
c_{p \varrho_s(m)},
\end{align*}
i.\,e.\ the equations~$(1)$ of condition~$D_2)$ of Theorem~\ref{thm:commMat} are
satisfied.\\
Given across-the-street neighbors $b_k, b_\ell \in \partial \mathcal M$ with
respect to~$\mathcal M$, say
\begin{align*}
x_s b_k e_{\beta_k} = x_u b_j e_{\beta_j}
\end{align*}
with $s, u \in \{ 1, \ldots, n \}$, $k, j \in \{ 1, \ldots, \nu \}$, and $k \neq
j$, the corresponding across-the-street neighbor syzygy with respect
to~$\mathcal M$ is of the form
\begin{align*}
\sigma_{kj} = x_s \varepsilon_k - x_u \varepsilon_j \in \Syz_P(b_1 e_{\beta_1},
\ldots, b_\nu e_{\beta_\nu})
\end{align*}
by Definition~\ref{defn:neighbors}. If $x_s \mathcal G_k - x_u \mathcal G_j \neq
0$, Definition~\ref{defn:lifting} and Proposition~\ref{thm:index} yield
\begin{align*}
\ind_{\mathcal M}(x_s \mathcal G_k - x_u \mathcal G_j) = 1.
\end{align*}
Let~$\lambda_{kj}$ be a lifting of~$\sigma_{kj}$. Then there exist $d_1,
\ldots, d_\nu \in K$ such that
\begin{align*}
\lambda_{kj} = x_s \varepsilon_k - x_u \varepsilon_j + \sum_{w=1}^\nu d_w
\varepsilon_w,
\end{align*}
and such that $\lambda_{kj}$ is also a syzygy of $(\mathcal G_1, \ldots,
\mathcal G_\nu)$ by to Definition~\ref{defn:lifting}. Thus we have
\begin{align*}
0 & = x_s \mathcal G_k - x_u \mathcal G_j + \sum_{w=1}^\nu d_w \mathcal G_w\\
& = x_s \left( b_k e_{\beta_k} - \sum_{m=1}^\mu c_{mk} t_m e_{\alpha_m}
\right) - x_u \left( b_j e_{\beta_j} - \sum_{m=1}^\mu c_{mj} t_m e_{\alpha_m}
\right)\\
& \alignLongFormula +\sum_{w=1}^\nu d_w \left( b_w e_{\beta_w} - \sum_{m=1}^\mu
c_{mw} t_m e_{\alpha_m} \right)\\
& = -\sum_{m=1}^\mu c_{mk} (x_s t_m e_{\alpha_m}) + \sum_{m=1}^\mu c_{mj}
(x_u t_m e_{\alpha_m})\\
& \alignLongFormula +\sum_{w=1}^\nu d_w b_w e_{\beta_w} - \sum_{w=1}^\nu d_w
\sum_{m=1}^\mu c_{mw} t_m e_{\alpha_m}\\
& = -\sum_{\substack{m \in \{ 1, \ldots, \mu \}\\ x_s t_m e_{\alpha_m} \in
\mathcal M}} c_{mk} t_{\varrho_s(m)} e_{\alpha_{\varrho_s(m)}} -
\sum_{\substack{m \in \{ 1, \ldots, \mu \}\\ x_s t_m e_{\alpha_m} \in \partial
\mathcal M}} c_{mk} b_{\varrho_s(m)} e_{\beta_{\varrho_s(m)}}\\
& \alignLongFormula +\sum_{\substack{m \in \{ 1, \ldots, \mu \}\\ x_u t_m
e_{\alpha_m} \in \mathcal M}} c_{mj} t_{\varrho_u(m)} e_{\alpha_{\varrho_u(m)}} +
\sum_{\substack{m \in \{ 1, \ldots, \mu \}\\ x_u t_m e_{\alpha_m} \in \partial
\mathcal M}} c_{mj} b_{\varrho_u(m)} e_{\beta_{\varrho_u(m)}}\\
& \alignLongFormula +\sum_{w=1}^\nu d_w b_w e_{\beta_w} - \sum_{w=1}^\nu d_w
\sum_{m=1}^\mu c_{mw} t_m e_{\alpha_m}\\
\end{align*}
As~$\partial \mathcal M$ is $K$-linearly independent, comparison of the
coefficients of~$b_w e_{\beta_w}$ for all $w \in \{ 1, \ldots, \nu \}$ yields
\begin{align*}
d_w = \begin{cases}
c_{mk} - c_{mj} & \text{if $b_w e_{\beta_w} \in x_s \mathcal M \cap x_u
\mathcal M$,}\\
c_{mk} & \text{if $b_w e_{\beta_w} \in x_s \mathcal M \setminus x_u \mathcal
M$,}\\
-c_{mj} & \text{if $b_w e_{\beta_w} \in x_u \mathcal M \setminus x_s \mathcal
M$,}\\
0 & \text{if $b_w e_{\beta_w} \notin x_s \mathcal M \cup x_u \mathcal M$.}
\end{cases}
\end{align*}
As~$\mathcal M$ is $K$-linearly independent, comparison of the coefficients
of~$t_p e_{\alpha_p}$ for all $p \in \{ 1, \ldots, \mu \}$ yields
\begin{align*}
0 & = -\sum_{\substack{m \in \{ 1, \ldots, \mu \}\\ x_s t_m e_{\alpha_m} \in
\mathcal M}} \delta_{p \varrho_s(m)} c_{mk} + \sum_{\substack{m \in \{ 1,
\ldots, \mu \}\\ x_u t_m e_{\alpha_m} \in \mathcal M}} \delta_{p \varrho_u(m)}
c_{mj} - \sum_{w=1}^\nu d_w c_{pw}\\
& = -\sum_{\substack{m \in \{ 1, \ldots, \mu \}\\ x_s t_m e_{\alpha_m} \in
\mathcal M}} \delta_{p \varrho_s(m)} c_{mk} + \sum_{\substack{m \in \{ 1,
\ldots, \mu \}\\ x_u t_m e_{\alpha_m} \in \mathcal M}} \delta_{p \varrho_u(m)}
c_{mj}\\
& \alignLongFormula -\sum_{\substack{w \in \{ 1, \ldots, \nu \}\\ b_w
e_{\beta_w} \in x_s \mathcal M}} c_{mk} c_{pw} + \sum_{\substack{w \in \{ 1,
\ldots, \nu \}\\ b_w e_{\beta_w} \in x_u \mathcal M}} c_{mj} c_{pw}\\
& = -\sum_{\substack{m \in \{ 1, \ldots, \mu \}\\ x_s t_m e_{\alpha_m} \in
\mathcal M}} \delta_{p \varrho_s(m)} c_{mk} + \sum_{\substack{m \in \{ 1,
\ldots, \mu \}\\ x_u t_m e_{\alpha_m} \in \mathcal M}} \delta_{p \varrho_u(m)}
c_{mj}\\
& \alignLongFormula -\sum_{\substack{m \in \{ 1, \ldots, \mu \}\\ x_s t_m
e_{\alpha_m} \in \partial \mathcal M}} c_{mk} c_{p \varrho_s(m)} +
\sum_{\substack{m \in \{ 1, \ldots, \mu \}\\ x_u t_m e_{\alpha_m} \in \partial
\mathcal M}} c_{mj} c_{p \varrho_u(m)},
\end{align*}
i.\,e.\ the equations~$(2)$ of condition~$D_2)$ of Theorem~\ref{thm:commMat} are
satisfied.\\
Altogether, we see that condition~$D_2)$ of Theorem~\ref{thm:commMat} is
satisfied and thus~$\mathcal G$ is the $\mathcal M$-module border basis
of~$\langle \mathcal G \rangle$.
\end{proof}

%
%

At last, we use all previous characterizations above to prove Buchberger's
Criterion for Module Border Bases similar to Buchberger's Criterion for Border
Basis~\cite[Prop.~6.4.34]{KR2}. It allows us to easily determine, whether a
given module border prebasis is a module border basis, or not. But before that,
we have to define the $\SV$-vector of two border vectors in~$\mathcal G$ in the
usual way, cf.\ \cite[Page~438]{KR2}.

\begin{defn}
\label{defn:SVector}
Let $i, j \in \{ 1, \ldots, \nu \}$. Then we call the vector
\begin{align*}
\SV(\mathcal G_i, \mathcal G_j) = \tfrac{\lcm(b_i, b_j)}{b_i} \mathcal G_i -
\tfrac{\lcm(b_i, b_j)}{b_j} \mathcal G_j \in \langle \mathcal G \rangle
\subseteq P^r
\end{align*}
the \emph{$\SV$-vector} of~$\mathcal G_i$ and~$\mathcal G_j$.
\end{defn}

\begin{thm}[Buchberger's Criterion for Module Border Bases]
\label{thm:buchbCrit}
The $\mathcal M$-module border prebasis~$\mathcal G$ is the $\mathcal M$-module
border basis of~$\langle \mathcal G \rangle$ if and only if the following
equivalent conditions are satisfied.
\begin{enumerate}
\renewcommand{\labelenumi}{$F_{\arabic{enumi}})$}
  \item We have $\NR_{\mathcal G}(\SV(\mathcal G_i, \mathcal G_j)) = 0$ for all
  $i, j \in \{ 1, \ldots, \nu \}$.
  \item We have $\NR_{\mathcal G}(\SV(\mathcal G_i, \mathcal G_j)) = 0$ for all
  $i, j \in \{ 1, \ldots, \nu \}$ such that the border terms $b_i e_{\beta_i},
  b_j e_{\beta_j} \in \partial \mathcal M$ are neighbors with respect
  to~$\mathcal M$.
\end{enumerate}
\end{thm}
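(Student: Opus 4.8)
The plan is to prove the cycle of implications: $\mathcal G$ is the $\mathcal M$-module border basis of $\langle \mathcal G \rangle$ implies $F_1)$, $F_1)$ implies $F_2)$, and $F_2)$ implies that $\mathcal G$ is the $\mathcal M$-module border basis of $\langle \mathcal G \rangle$. This yields all of the asserted equivalences at once. The non-trivial ingredients will be Corollary~\ref{thm:char}, Corollary~\ref{thm:genSet}, the Module Border Division Algorithm~\ref{thm:divAlg}, and above all the characterization of module border bases via liftings of border syzygies in Theorem~\ref{thm:liftings}.

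For the first implication, suppose $\mathcal G$ is the $\mathcal M$-module border basis of $\langle \mathcal G \rangle$ and fix $i, j \in \{1, \ldots, \nu\}$. By Definition~\ref{defn:SVector} we have $\SV(\mathcal G_i, \mathcal G_j) \in \langle \mathcal G \rangle$, so Corollary~\ref{thm:genSet} tells us that $\NR_{\mathcal G}(\SV(\mathcal G_i, \mathcal G_j))$ is a representative of the residue class $\SV(\mathcal G_i, \mathcal G_j) + \langle \mathcal G \rangle = \langle \mathcal G \rangle$ and therefore lies in $\langle \mathcal G \rangle$. Since $\NR_{\mathcal G}(\SV(\mathcal G_i, \mathcal G_j)) \in \langle \mathcal M \rangle_K$ by Definition~\ref{defn:NR}, Corollary~\ref{thm:char} forces $\NR_{\mathcal G}(\SV(\mathcal G_i, \mathcal G_j)) \in \langle \mathcal G \rangle \cap \langle \mathcal M \rangle_K = \{0\}$, which is condition $F_1)$. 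The implication from $F_1)$ to $F_2)$ is trivial, as $F_2)$ only demands the same vanishing for a subset of the pairs of indices.

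It remains to deduce from $F_2)$ that $\mathcal G$ is the $\mathcal M$-module border basis of $\langle \mathcal G \rangle$, and the strategy here is to verify condition $E_2)$ of Theorem~\ref{thm:liftings}, namely that every neighbor syzygy with respect to~$\mathcal M$ lifts to a syzygy of~$(\mathcal G_1, \ldots, \mathcal G_\nu)$. Let $b_k e_{\beta_k}, b_\ell e_{\beta_\ell} \in \partial \mathcal M$ be neighbors with respect to~$\mathcal M$; then $\beta_k = \beta_\ell$ and the corresponding neighbor syzygy is $\sigma_{k\ell} = \tfrac{\lcm(b_k, b_\ell)}{b_k} \varepsilon_k - \tfrac{\lcm(b_k, b_\ell)}{b_\ell} \varepsilon_\ell$, whose evaluation at~$(\mathcal G_1, \ldots, \mathcal G_\nu)$ is precisely $\SV(\mathcal G_k, \mathcal G_\ell)$ by Definition~\ref{defn:SVector}. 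If $\SV(\mathcal G_k, \mathcal G_\ell) = 0$, then $\sigma_{k\ell}$ is itself a syzygy of~$(\mathcal G_1, \ldots, \mathcal G_\nu)$ and hence a lifting of itself by Definition~\ref{defn:lifting}. If $\SV(\mathcal G_k, \mathcal G_\ell) \neq 0$, then $F_2)$ together with the Module Border Division Algorithm~\ref{thm:divAlg} applied to $\SV(\mathcal G_k, \mathcal G_\ell)$ yields polynomials $q_1, \ldots, q_\nu \in P$ with $\SV(\mathcal G_k, \mathcal G_\ell) = q_1 \mathcal G_1 + \cdots + q_\nu \mathcal G_\nu$ and $\deg(q_w) \leq \ind_{\mathcal M}(\SV(\mathcal G_k, \mathcal G_\ell)) - 1$ for all $w$ with $q_w \neq 0$. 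Then $(P_1, \ldots, P_\nu) = \sigma_{k\ell} - (q_1, \ldots, q_\nu)$ is a syzygy of~$(\mathcal G_1, \ldots, \mathcal G_\nu)$ satisfying $\deg(P_w - (\sigma_{k\ell})_w) = \deg(q_w) \leq \ind_{\mathcal M}(\SV(\mathcal G_k, \mathcal G_\ell)) - 1$ whenever $P_w - (\sigma_{k\ell})_w \neq 0$, so $\sigma_{k\ell}$ lifts by Definition~\ref{defn:lifting}. Thus $E_2)$ holds and Theorem~\ref{thm:liftings} completes the argument.

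There is no single hard step: the proof is a bookkeeping reduction to Theorem~\ref{thm:liftings}. The point requiring the most care is the observation that the $\SV$-vector $\SV(\mathcal G_k, \mathcal G_\ell)$ is exactly the image of the neighbor syzygy $\sigma_{k\ell}$ under evaluation at~$(\mathcal G_1, \ldots, \mathcal G_\nu)$ --- which rests on $\beta_k = \beta_\ell$ and on the fact that neighbor syzygies are the fundamental syzygies $\sigma_{k\ell}$ of Definition~\ref{defn:neighbors} --- and the matching of the degree bound produced by the division algorithm with the degree bound in the definition of a lifting. Once these are noted, the equivalences follow immediately.
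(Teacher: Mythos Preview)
Your proof is correct and follows essentially the same route as the paper's: the implication from border basis to $F_1)$ via Corollary~\ref{thm:char}, the trivial step $F_1)\Rightarrow F_2)$, and the step $F_2)\Rightarrow$ border basis by constructing liftings of neighbor syzygies from the division-algorithm representation and invoking condition~$E_2)$ of Theorem~\ref{thm:liftings}. The only cosmetic difference is that the paper treats next-door and across-the-street neighbors in two separate cases, whereas you handle both at once via the uniform formula $\sigma_{k\ell}=\tfrac{\lcm(b_k,b_\ell)}{b_k}\varepsilon_k-\tfrac{\lcm(b_k,b_\ell)}{b_\ell}\varepsilon_\ell$; since for neighbors this formula indeed specializes to the neighbor syzygies of Definition~\ref{defn:neighbors}, your compression is legitimate.
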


\begin{proof}
We start to show that condition~$F_1)$ is satisfied if~$\mathcal G$ is the
$\mathcal M$-module border basis of~$\langle \mathcal G \rangle$. Let $i, j \in
\{ 1, \ldots, \nu \}$. If we have $\SV(\mathcal G_i, \mathcal G_j) = 0$, we
trivially get $\NR_{\mathcal G}(\SV(\mathcal G_i, \mathcal G_j)) = 0$ by
Definition~\ref{defn:NR}. Thus suppose that $\SV(\mathcal G_i, \mathcal G_j)
\neq 0$. We apply the Module Border Division Algorithm~\ref{thm:divAlg}
to~$\SV(\mathcal G_i, \mathcal G_j)$ and~$\mathcal G$ to obtain a representation
\begin{align*}
\SV(\mathcal G_i, \mathcal G_j) = \mathcal V + \NR_{\mathcal G}(\SV(\mathcal
G_i, \mathcal G_j)) \in \langle \mathcal G \rangle
\end{align*}
with $\mathcal V \in \langle \mathcal G \rangle$ and $\NR_{\mathcal
G}(\SV(\mathcal G_i, \mathcal G_j)) \in \langle \mathcal M \rangle_K$. Then we
have
\begin{align*}
\NR_{\mathcal G}(\SV(\mathcal G_i, \mathcal G_j)) = \SV(\mathcal G_i, \mathcal
G_j) - \mathcal V \in \langle \mathcal G \rangle \cap \langle \mathcal M
\rangle_K = \{ 0 \}
\end{align*}
by Corollary~\ref{thm:char}.

Since~$F_2)$ is a logical consequence of~$F_1)$ by
Definition~\ref{defn:neighbors}, it remains to prove that~$\mathcal G$ is the
$\mathcal M$-module border basis of~$\langle \mathcal G \rangle$ if~$F_2)$
holds. Let $b_i e_{\beta_i}, b_j e_{\beta_j} \in \partial \mathcal M$ be
next-door neighbors with respect to~$\mathcal M$, i.\,e.\ $i \neq j$ and
\begin{align*}
x_s b_i e_{\beta_i} = b_j e_{\beta_j}
\end{align*}
for some $s \in \{ 1, \ldots, n \}$ by Definition~\ref{defn:neighbors}. Let
\begin{align*}
\sigma_{ij} = x_s \varepsilon_i - \varepsilon_j \in \Syz_P(b_1 e_{\beta_1},
\ldots, b_\nu e_{\beta_\nu})
\end{align*}
be the corresponding next-door neighbor syzygy with respect to~$\mathcal M$. If
we have $\SV(\mathcal G_i, \mathcal G_j) = 0$, we see that~$\sigma_{ij}$ is a
lifting of~$\sigma_{ij}$ according to Definition~\ref{defn:lifting}. Thus
suppose that $\SV(\mathcal G_i, \mathcal G_j) \neq 0$. Since $\NR_{\mathcal
G}(\SV(\mathcal G_i, \mathcal G_j)) = 0$ according to~$F_2)$, the Module Border
Division Algorithm~\ref{thm:divAlg} applied to the $\SV$-vector~$\SV(\mathcal
G_i, \mathcal G_j)$ and~$\mathcal G$ yields a representation
\begin{align*}
\SV(\mathcal G_i, \mathcal G_j) = p_1 \mathcal G_1 + \cdots + p_\nu \mathcal
G_\nu + \NR_{\mathcal G}(\SV(\mathcal G_i, \mathcal G_j)) = p_1 \mathcal G_1 +
\cdots + p_\nu \mathcal G_\nu
\end{align*}
with polynomials $p_1, \ldots, p_\nu \in P$ such that
\begin{align*}
\deg(p_\ell) \leq \ind_{\mathcal M}(\SV(\mathcal G_i, \mathcal G_j)) - 1
\end{align*}
for all $\ell \in \{ 1, \ldots, \nu \}$ with $p_\ell \neq 0$. We now prove that
$(P_1, \ldots, P_\nu) \in P^\nu$ defined by $P_i = x_s - p_i$, $P_j = 1 - p_j$,
and $P_\ell = -p_\ell$ for all $\ell \in \{ 1, \ldots, \nu \} \setminus \{ i, j
\}$ is a lifting of~$\sigma_{ij}$. By construction, we see that
\begin{align*}
(P_1, \ldots, P_\nu) \in \Syz_P(\mathcal G_1, \ldots, \mathcal G_\nu).
\end{align*}
Moreover, we have
\begin{align*}
\deg(P_i - x_s) & = \deg(-p_i) \leq \ind_{\mathcal M}(\SV(\mathcal G_i, \mathcal
G_j)) - 1\\
\intertext{if $P_i - x_s = -p_i \neq 0$,}
\deg(P_j - 1) & = \deg(-p_j) \leq \ind_{\mathcal M}(\SV(\mathcal G_i, \mathcal
G_j)) - 1\\
\intertext{if $P_j - 1 = -p_j \neq 0$, and}
\deg(P_\ell - 0) & = \deg(-p_\ell) \leq \ind_{\mathcal M}(\SV(\mathcal G_i,
\mathcal G_j)) - 1
\end{align*}
for all $\ell \in \{ 1, \ldots, \nu \} \setminus \{ i, j \}$ with $P_\ell - 0 =
-p_\ell \neq 0$. Hence $(P_1, \ldots, P_\nu)$ is a lifting of the next-door
neighbor syzygy~$\sigma_{ij}$ with respect to~$\mathcal M$ by
Definition~\ref{defn:lifting}.\\
Now suppose that $b_i e_{\beta_i}, b_j e_{\beta_j} \in \partial \mathcal M$ are
across-the-street neighbors with respect to~$\mathcal M$, i.\,e.\ $i \neq j$
and
\begin{align*}
x_s b_i e_{\beta_i} = x_u b_j e_{\beta_j}
\end{align*}
for some $s, u \in \{ 1, \ldots, n \}$ by Definition~\ref{defn:neighbors}. Let
\begin{align*}
\sigma_{ij} = x_s \varepsilon_i - x_u \varepsilon_j \in \Syz_P(b_1 e_{\beta_1},
\ldots, b_\nu e_{\beta_\nu})
\end{align*}
be the corresponding across-the-street neighbor syzygy with respect to~$\mathcal
M$. If $\SV(\mathcal G_i, \mathcal G_j) = 0$, we see that~$\sigma_{ij}$ is a
lifting of~$\sigma_{ij}$ according to Definition~\ref{defn:lifting}. Thus
suppose that $\SV(\mathcal G_i, \mathcal G_j) \neq 0$. Since $\NR_{\mathcal
G}(\SV(\mathcal G_i, \mathcal G_j)) = 0$ according to~$F_2)$, the Module Border
Division Algorithm~\ref{thm:divAlg} applied to the $\SV$-vector~$\SV(\mathcal
G_i, \mathcal G_j)$ and~$\mathcal G$ yields a representation
\begin{align*}
\SV(\mathcal G_i, \mathcal G_j) = p_1 \mathcal G_1 + \cdots + p_\nu \mathcal
G_\nu + \NR_{\mathcal G}(\SV(\mathcal G_i, \mathcal G_j)) = p_1 \mathcal G_1 +
\cdots + p_\nu \mathcal G_\nu
\end{align*}
with polynomials $p_1, \ldots, p_\nu \in P$ such that
\begin{align*}
\deg(p_\ell) \leq \ind_{\mathcal M}(\SV(\mathcal G_i, \mathcal G_j)) - 1
\end{align*}
for all $\ell \in \{ 1, \ldots, \nu \}$ with $p_\ell \neq 0$. We now prove that
$(P_1, \ldots, P_\nu) \in P^\nu$ defined by $P_i = x_s - p_i$, $P_j = x_u -
p_j$, and $P_\ell = -p_\ell$ for all $\ell \in \{ 1, \ldots, \nu \} \setminus \{
i, j \}$ is a lifting of~$\sigma_{ij}$. By construction, we see that
\begin{align*}
(P_1, \ldots, P_\nu) \in \Syz_P(\mathcal G_1, \ldots, \mathcal G_\nu).
\end{align*}
Moreover, we have
\begin{align*}
\deg(P_i - x_s) & = \deg(-p_i) \leq \ind_{\mathcal M}(\SV(\mathcal G_i, \mathcal
G_j)) - 1\\
\intertext{if $P_i - x_s = -p_i \neq 0$,}
\deg(P_j - x_u) & = \deg(-p_j) \leq \ind_{\mathcal M}(\SV(\mathcal G_i, \mathcal
G_j)) - 1\\
\intertext{if $P_j - x_u = -p_j \neq 0$, and}
\deg(P_\ell - 0) & = \deg(-p_\ell) \leq \ind_{\mathcal M}(\SV(\mathcal G_i,
\mathcal G_j)) - 1
\end{align*}
for all $\ell \in \{ 1, \ldots, \nu \} \setminus \{ i, j \}$ with $P_\ell - 0 =
-p_\ell \neq 0$. Hence the vector~$(P_1, \ldots, P_\nu)$ is a lifting of the
across-the-street neighbor syzygy~$\sigma_{ij}$ with respect to~$\mathcal M$ by
Definition~\ref{defn:lifting}.\\
Altogether, we have proven that every neighbor syzygy with respect to~$\mathcal
M$ lifts to a syzygy of~$(\mathcal G_1, \ldots, \mathcal G_\nu)$. Therefore,
condition~$E_2)$ of Theorem~\ref{thm:liftings} yields that~$\mathcal G$ is the
$\mathcal M$-module border basis.
\end{proof}

\begin{exmp}
\label{exmp:buchbCrit}
We consider Example~\ref{exmp:neighbors}, again. Recall, that the $\mathcal
M$-module border prebasis was of the form $\mathcal G = \{
\mathcal G_1, \ldots, \mathcal G_7 \} \subseteq P^2$ with
\begin{align*}
\mathcal G_1 & = x^2 e_1 - y e_1 + e_2,\\
\mathcal G_2 & = xy e_1 - e_2,\\
\mathcal G_3 & = y^2 e_1 - x e_2,\\
\mathcal G_4 & = x^3 e_2 - e_1,\\
\mathcal G_5 & = x^2y e_2 - e_1 - e_2,\\
\mathcal G_6 & = xy e_2 + 3 e_1,\\
\mathcal G_7 & = y e_2 - x e_1 - y e_1 - e_1 - e_2,
\end{align*}
and that $b_1 e_{\beta_1} = x^2 e_1$ and $b_2 e_{\beta_2} = xy e_1$ are
across-the-street neighbors with respect to~$\mathcal M$. We have already seen
in Example~\ref{exmp:commMat} that~$\mathcal G$ is not the $\mathcal M$-module
border basis of~$\langle \mathcal G \rangle$. Since
\begin{align*}
\NR_{\mathcal G}(\SV(\mathcal G_1, \mathcal G_2)) = \NR_{\mathcal G}(y
\mathcal G_1 - x \mathcal G_2) = x e_1 + y e_1 + e_1 + e_2 \neq 0,
\end{align*}
condition~$F_2)$ of Buchberger's Criterion for Module Border
Bases~\ref{thm:buchbCrit} also yields that~$\mathcal G$ is not the $\mathcal
M$-module border basis of~$\langle \mathcal G \rangle$.
\end{exmp}

%
%

\section{The Module Border Basis Algorithm}
\label{sect:moduleBBAlg}

In this final section of Part~\ref{part:freeMod}, we determine the Module
Border Basis Algorithm~\ref{thm:moduleBBAlg} that allows us to compute module
border bases of arbitrary $P$-submodules of~$P^r$ with finite $K$-codimension
in~$P^r$ with linear algebra techniques. The algorithm was originally developed
in~\cite[Prop.~18]{CompBB}. Our algorithm is a straightforward adaption of the
Border Basis Algorithm as described in~\cite[Thm.~6.4.36]{KR2}.

But first of all, we have to describe what we mean by reducing a vector against
a matrix as it was described in~\cite[p.~392]{KR2}.

\begin{defn}
\label{defn:reduceVM}
Let $r, s \in \mathbb N$.
\begin{enumerate}
  \item Let $(c_1, \ldots, c_s) \in K^s$, $(d_1, \ldots, d_s) \in K^s \setminus
  \{ 0 \}$, and let $i \in \{ 1, \ldots, s \}$ be minimal such that $d_i \neq
  0$. Then the $i^\text{th}$~component of
  \begin{align*}
  (c_1, \ldots, c_s) - \tfrac{c_i}{d_i} (d_1, \ldots, d_s) \in K^s
  \end{align*}
  is~$0$. If $c_i \neq 0$ in this situation, we say that $(d_1, \ldots, d_s)$
  is a \emph{reduceer} of~$(c_1, \ldots, c_s)$.
  \item Let $v \in K^s$ and $\mathfrak M \in \Mat_{r,s}(K)$. We say that~$v$ can
  be \emph{reduced} against~$\mathfrak M$, if there is a row vector $w \in K^s$
  of~$\mathfrak M$ such that~$w$ is a reducer of~$v$.
\end{enumerate}
\end{defn}

Before we generalize the Border Basis Algorithm~\cite[Thm.~6.4.36]{KR2} to the
module setting, we have to generalize the auxiliary
algorithm~\cite[Lemma~6.4.35]{KR2}.

\begin{algorithm}[H]
\caption{${\tt computeOrderModule}(d, \{ \mathcal V_1, \ldots, \mathcal
V_\varrho \}, \sigma)$}
\begin{algorithmic}[1]
\label{algo:computeOM}
  \REQUIRE $d \in \mathbb N$,\\
  $\varrho \in \mathbb N \setminus \{ 0 \}$ and $\{ \mathcal V_1, \ldots,
  \mathcal V_\varrho \} \subseteq \langle \mathbb T^n_{\leq d} \langle e_1,
  \ldots, e_r \rangle \rangle_K \setminus \{ 0 \}$,\\
  $V \assign \langle \mathcal V_1, \ldots, \mathcal V_\varrho \rangle_K
  \subseteq \langle \mathbb T^n_{\leq d} \langle e_1, \ldots, e_r \rangle
  \rangle_K$ is a $K$-vector subspace such that 
  \begin{align*}
  (V + x_1 V + \cdots + x_n V) \cap \langle \mathbb T^n_{\leq d} \langle e_1,
  \ldots, e_r \rangle \rangle_K = V,
  \end{align*}
  $\sigma$~is a degree compatible term ordering on~$\mathbb T^n$
  \STATE $L \assign \mathbb T^n_{\leq d} \langle e_1, \ldots, e_r \rangle$
  \STATE Let $\ell_1, \ldots, \ell_s \in \mathbb T^n$ be terms and $u_1, \ldots,
  u_s \in \{ 1, \ldots, r \}$ be indices such that $L = \{ \ell_1 e_{u_1},
  \ldots, \ell_s e_{u_s} \}$ and  $\ell_1 e_{u_1} >_{\sigma\Pos} \ell_2 e_{u_2}
  >_{\sigma\Pos} \cdots >_{\sigma\Pos} \ell_s
  e_{u_s}$.\label{algo:computeOM-terms}
  \STATE Determine a $K$-vector space basis $\{ \mathcal B_1, \ldots, \mathcal
  B_k \} \subseteq \langle L \rangle_K$ of~$V$.\label{algo:computeOM-basis}
  \FOR{$i = 1, \ldots, k$}
    \STATE Determine $a_{i1}, \ldots, a_{is} \in K$ such that $\mathcal B_i =
    a_{i1} \ell_1 e_{u_1} + \cdots + a_{is} \ell_s
    e_{u_s}$.\label{algo:computeOM-coords}
  \ENDFOR
  \STATE $\mathfrak V \assign (a_{ij})_{1 \leq i \leq k, 1 \leq j \leq s} \in
  \Mat_{k,s}(K)$.\label{algo:computeOM-initV}
  \STATE Compute a row echolon form $\mathfrak W \in \Mat_{k,s}(K)$
  of~$\mathfrak V$ using row operations.\label{algo:computeOM-rowEcho}
  \STATE Let $\mathcal M \subseteq L$ be the set of terms in~$L$ corresponding
  to the pivot-free columns of~$\mathfrak W$, i.\,e.\ the columns of~$\mathfrak
  W$ in which no row of~$\mathfrak W$ has its first non-zero
  entry.\label{algo:computeOM-OM}
  \RETURN $\mathcal M$
\end{algorithmic}
\end{algorithm}

\begin{lem}
\label{thm:moduleBBAlgLemma}
Let $d \in \mathbb N$ and let $L = \mathbb T^n_{\leq d} \langle e_1, \ldots, e_r
\rangle$. Moreover, let $\varrho \in \mathbb N \setminus \{ 0 \}$ and $V =
\langle \mathcal V_1, \ldots, \mathcal V_\varrho \rangle \subseteq L$ with $\{
\mathcal V_1, \ldots, \mathcal V_\varrho \} \subseteq \langle L \rangle_K
\setminus \{ 0 \}$ be a $K$-vector subspace such that
\begin{align*}
(V + x_1 V + \cdots + x_n V) \cap \langle L \rangle_K = V.
\end{align*}
Let $\sigma$ be a degree compatible term ordering on~$\mathbb T^n$. Then
Algorithm~\ref{algo:computeOM} is actually an algorithm and the result
\begin{align*}
\mathcal M \assign {\tt computeOrderModule}(d, \{ \mathcal V_1, \ldots, \mathcal
V_\varrho \}, \sigma)
\end{align*}
of Algorithm~\ref{algo:computeOM} applied to the input data $d$, $\{ \mathcal
V_1, \ldots, \mathcal V_\varrho \}$, and $\sigma$ satisfies the following
conditions.
\begin{enumerate}
\renewcommand{\labelenumi}{\roman{enumi})}
  \item The set $\mathcal M \subseteq L$ is an order module.
  \item The set $\varepsilon_V(\mathcal M)$ is a $K$-vector space basis of
  $\langle L \rangle_K / V$.
  \item We have $\# \varepsilon_V(\mathcal M) = \# \mathcal M$.
\end{enumerate}
\end{lem}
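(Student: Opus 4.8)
The plan is to first observe that the procedure is an algorithm — this is routine — and then to identify the output set as $\mathcal M = L \setminus \LT_\tau\{V\}$, where $\tau := \sigma\Pos$ is the module term ordering on $\mathbb T^n\langle e_1,\ldots,e_r\rangle$ used in line~\ref{algo:computeOM-terms}; from that identity all three assertions follow. For the first point: $L = \mathbb T^n_{\leq d}\langle e_1,\ldots,e_r\rangle$ is finite, so it can be enumerated and $\tau$-sorted, and lines~\ref{algo:computeOM-basis}--\ref{algo:computeOM-rowEcho} are finite linear algebra computations over $K$ (a $K$-basis of the finitely generated space $V\subseteq\langle L\rangle_K$, coordinates with respect to $L$, a row echelon form). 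Throughout I will use that $\sigma$ being degree compatible makes $\tau$ degree compatible: for $\mathcal W\in P^r\setminus\{0\}$ the term $\LT_\tau(\mathcal W)$ has degree $\max\{\deg(t)\mid te_i\in\Supp(\mathcal W)\}$, and $\LT_\tau(t'\mathcal W)=t'\LT_\tau(\mathcal W)$ for every $t'\in\mathbb T^n$.

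To get $\mathcal M = L\setminus\LT_\tau\{V\}$: the rows of the echelon matrix $\mathfrak W$ of line~\ref{algo:computeOM-rowEcho}, read back through the $\tau$-decreasing list $\ell_1e_{u_1},\ldots,\ell_se_{u_s}$ of $L$, are coordinate vectors of $K$-linear combinations of the chosen basis $\mathcal B_1,\ldots,\mathcal B_k$ of $V$, hence of elements of $V$; since those $k$ vectors are independent, $\mathfrak W$ has exactly $k=\dim_K V$ nonzero rows, with pivots in $k$ distinct columns. Because the columns are $\tau$-decreasing, the pivot term of a nonzero row is the $\tau$-leading term of the associated vector of $V$, so the set of pivot terms is contained in $\LT_\tau\{V\}$; and since $\#\LT_\tau\{V\}=\dim_K V$ (the standard fact that elements of $V$ with distinct leading terms are independent and, by successive leading-term cancellation, span $V$), the pivot terms are exactly $\LT_\tau\{V\}$. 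Hence the pivot-free columns, which by line~\ref{algo:computeOM-OM} define $\mathcal M$, are exactly $L\setminus\LT_\tau\{V\}$, and in particular $\#\mathcal M=\#L-\dim_K V=\dim_K(\langle L\rangle_K/V)$.

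Claims~(ii) and~(iii) then follow together. Given $\mathcal W\in\langle L\rangle_K$, repeatedly subtracting the appropriate $K$-multiple of the echelon row whose pivot is the $\tau$-largest element of $\Supp(\mathcal W)\cap\LT_\tau\{V\}$ strictly decreases that largest element; as $L$ is finite this terminates with a vector of $\mathcal W+V$ supported on $\mathcal M$, so $\varepsilon_V(\mathcal M)$ spans $\langle L\rangle_K/V$. And $\varepsilon_V(\mathcal M)$ is $K$-linearly independent: a nontrivial relation would yield a nonzero $\mathcal Z\in V$ with $\Supp(\mathcal Z)\subseteq\mathcal M$, forcing $\LT_\tau(\mathcal Z)\in\mathcal M\cap\LT_\tau\{V\}=\emptyset$. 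Thus $\varepsilon_V(\mathcal M)$ is a $K$-basis of $\langle L\rangle_K/V$ and $\#\varepsilon_V(\mathcal M)=\#\mathcal M$.

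The main work is claim~(i), which is also the step I expect to be the crux. Write $\mathcal M=\mathcal O_1e_1\cup\cdots\cup\mathcal O_re_r$ with $\mathcal O_k=\{t\in\mathbb T^n\mid te_k\in\mathcal M\}$; I must show each $\mathcal O_k$ is closed under forming divisors. Let $te_k\in\mathcal M$, let $c$ divide $t$, write $t=bc$ with $b\in\mathbb T^n$, and assume for contradiction that $ce_k\notin\mathcal M$, i.e.\ $ce_k\in\LT_\tau\{V\}$; fix $\mathcal W\in V\setminus\{0\}$ with $\LT_\tau(\mathcal W)=ce_k$. Degree compatibility gives degree at most $\deg(c)$ for every term of $\Supp(\mathcal W)$, hence at most $\deg(b)+\deg(c)=\deg(t)\leq d$ for every term of $\Supp(b\mathcal W)$, so $b\mathcal W\in\langle L\rangle_K$. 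An induction on $\deg(b)$ shows $b\mathcal W\in V$: for $\deg(b)=0$ it is trivial, and if $b=x_ib'$ then $b'\mathcal W\in\langle L\rangle_K$ as well, so $b'\mathcal W\in V$ by induction, whence $b\mathcal W=x_i(b'\mathcal W)\in(V+x_1V+\cdots+x_nV)\cap\langle L\rangle_K=V$ by the stability hypothesis. But then $\LT_\tau(b\mathcal W)=b\LT_\tau(\mathcal W)=bce_k=te_k\in\LT_\tau\{V\}=L\setminus\mathcal M$, contradicting $te_k\in\mathcal M$. Therefore each $\mathcal O_k$ is an order ideal — allowing the empty one, which Definition~\ref{defn:orderIdeal} permits and which is exactly the case $V=\langle L\rangle_K$ — and $\mathcal M$ is an order module. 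The essential difficulty here is that this argument is the only place the hypothesis $(V+x_1V+\cdots+x_nV)\cap\langle L\rangle_K=V$ is genuinely used, and it must be coupled with degree compatibility of $\sigma$ precisely to keep the intermediate products $b'\mathcal W$ inside $\langle L\rangle_K$ so that the hypothesis can be applied.
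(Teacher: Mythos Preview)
Your proof is correct and follows essentially the same approach as the paper: both use the row echelon form to relate pivot columns to leading terms of elements of $V$, reduce against $\mathfrak W$ for spanning, and invoke the stability hypothesis $(V+x_1V+\cdots+x_nV)\cap\langle L\rangle_K=V$ via induction on degree to prove the order-module property. The one expository difference is that you make the identification $\mathcal M=L\setminus\LT_\tau\{V\}$ explicit up front and then run claim~(i) as a proof by contradiction on a divisor, whereas the paper argues (i) directly (showing that multiples of non-$\mathcal M$ terms stay out of $\mathcal M$) without naming that set-theoretic identity; but these are contrapositives of one another and the underlying mechanism is identical.
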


\begin{proof}
Firstly, we show that the procedure is actually an algorithm. Since the
operations in line~\ref{algo:computeOM-basis}, line~\ref{algo:computeOM-coords},
and line~\ref{algo:computeOM-rowEcho} can be computed with linear algebra
techniques, and since the procedure is obviously finite, the procedure is an
algorithm.

Secondly, we show the correctness of the algorithm. We start with the proof that
the set $\varepsilon_V(\mathcal M)$ is a $K$-vector space basis of $\langle L
\rangle_K / V$ and $\# \varepsilon_V(\mathcal M) = \# \mathcal M$. Let
\begin{align*}
\mathcal M = \{ \ell_{j_1} e_{u_{j_1}}, \ldots, \ell_{j_w} e_{u_{j_w}} \}
\end{align*}
with $j_1, \ldots, j_w \in \{ 1, \ldots, s \}$, and let $c_1, \ldots, c_w \in K$
be such that
\begin{align*}
\mathcal V = c_1 \ell_{j_1} e_{u_{j_1}} + \cdots + c_w \ell_{j_w} e_{u_{j_w}}
\in V.
\end{align*}
Let $\{ \mathfrak e_1, \ldots, \mathfrak e_s \}$ denote the canonical $K$-vector
space basis of~$K^s$.
Then the corresponding vector
\begin{align*}
c_1 \mathfrak e_{j_1} + \cdots + c_v \mathfrak e_{j_w} \in K^s
\end{align*}
of~$\mathcal V$ has all its non-zero entries in the columns corresponding
to~$\mathcal M$ by line~\ref{algo:computeOM-OM}. As~$\mathfrak W$ is in row
echolon form according to line~\ref{algo:computeOM-rowEcho}, this vector cannot
be further reduced against~$\mathfrak W$ by Definition~\ref{defn:reduceVM}.
Since the rows of~$\mathfrak V$ correspond to the $K$-vector space basis $\{
\mathcal B_1, \ldots, \mathcal B_k \}$ by the construction in
line~\ref{algo:computeOM-coords}, and since~$\mathfrak W$ corresponds to these
basis elements by line~\ref{algo:computeOM-basis} and
line~\ref{algo:computeOM-rowEcho}, and since we also have $\mathcal V \in V$, it
follows that $\mathcal V = 0$. Thus we get $c_1 = \cdots = c_w = 0$, i.\,e.\
$\varepsilon_V(\mathcal M)$ is $K$-linearly independent. In particular, we get
$\# \varepsilon_V(\mathcal M) = \# \mathcal M$. Let
\begin{align*}
\mathcal W = d_1 \ell_1 e_{u_1} + \cdots + d_s \ell_s e_{u_s} \in \langle L
\rangle_K
\end{align*}
with $d_1, \ldots, d_s \in K$. Then the corresponding vector
\begin{align*}
d_1 \mathfrak e_1 + \cdots + d_s \mathfrak e_s \in K^s
\end{align*}
can be reduced against~$\mathfrak W$ to obtain a vector
\begin{align*}
d'_1 \mathfrak e_{j_1} + \cdots + d'_w \mathfrak e_{j_w} \in K^s
\end{align*}
with $d'_1, \ldots, d'_w \in K$ by Definition~\ref{defn:reduceVM}. Let
\begin{align*}
\mathcal W' = d'_1 \ell_{j_1} e_{u_{j_1}} + \cdots + d'_w \ell_{j_w} e_{u_{j_w}}
\in \langle \mathcal M \rangle_K
\end{align*}
be the corresponding element in~$\langle \mathcal M \rangle_K$. Then the
considerations above show that
\begin{align*}
\mathcal W + V = \mathcal W' + V
\end{align*}
and hence~$\varepsilon_V(\mathcal M)$ is also a generating set of the $K$-vector
space~$\langle L \rangle_K / V$. Altogether, we have proven
that~$\varepsilon_V(\mathcal M)$ is a $K$-vector space basis of~$\langle L
\rangle_K / V$.\\
Finally, we prove that~$\mathcal M$ is an order module. Let $\ell_i e_{u_i} \in
L \setminus \mathcal M$ with $i \in \{ 1, \ldots, s \}$ and $t \in \mathbb T^n$
be such that $t \ell_i e_{u_i} \in L$. The set~$\mathcal M$ is an order module
by Definition~\ref{defn:orderModule} if we show that $t \ell_i e_{u_i} \in L
\setminus \mathcal M$. As $\ell_i e_{u_i} \in L \setminus \mathcal M$, one row
of~$\mathfrak W$ has the form
\begin{align*}
(0, \ldots, 0, c_i, \ldots, c_s) \in K^s
\end{align*}
where $c_i \neq 0$ according to the construction of~$\mathcal M$ in
line~\ref{algo:computeOM-OM}. The corresponding vector in $\langle L \rangle_K$
is
\begin{align*}
\mathcal V = c_i \ell_i e_{u_i} + \cdots + c_s \ell_s e_{u_s} \in \langle L
\rangle_K.
\end{align*}
Moreover, line~\ref{algo:computeOM-terms} yields that $\ell_i e_{u_i} =
\LT_{\sigma\Pos}(\mathcal V)$. Hence we see that
\begin{align*}
t \ell_i e_{u_i} = \LT_{\sigma\Pos}(t \mathcal V),
\end{align*}
and the degree compatibility of~$\sigma$ yields $\Supp(t \mathcal V) \subseteq
L$. Since every line of the matrix~$\mathfrak V$ corresponds to a vector in~$V$
by line~\ref{algo:computeOM-coords}, and since~$\mathfrak W$ is constructed
from~$\mathfrak V$ using row operation by line~\ref{algo:computeOM-rowEcho}, we
see that $\mathcal V \in V$. Hence the hypothesis
\begin{align*}
(V + x_1 V + \cdots + x_n V) \cap \langle L \rangle_K = V
\end{align*}
and induction on the degree of~$t$ also yield $t \mathcal V \in V$. Thus we see
that the vector in~$K^s$ corresponding to~$t \mathcal V$ can be reduced
against~$\mathfrak W$ to zero by Definition~\ref{defn:reduceVM}. In particular,
we have to reduce the entry of this vector that corresponds to~$t \ell_i
e_{u_i}$, i.\,e.\ there has to be one row in $\mathfrak W$ which has its first
non-zero entry in the column that corresponds to the term~$t \ell_i e_{u_i}$.
Altogether, line~\ref{algo:computeOM-OM} yields $t \ell_i e_{u_i} \in L
\setminus \mathcal M$ and the claim follows.
\end{proof}

We now have all ingredients to generalize the Border Bases
Algorithm as described in~\cite[Thm.~6.4.36]{KR2} to the module setting.

\begin{algorithm}[H]
\caption{${\tt moduleBB}(\{ \mathcal B_1, \ldots, \mathcal B_k \}, \sigma)$}
\begin{algorithmic}[1]
\label{algo:moduleBB}
  \REQUIRE $k \in \mathbb N$ and $\{ \mathcal B_1, \ldots, \mathcal B_k \}
  \subseteq P^r \setminus \{ 0 \}$,\\
  $\codim_K(\langle \mathcal B_1, \ldots, \mathcal B_k \rangle, P^r) <
  \infty$,\\
  $\sigma$ is a degree compatible term ordering on~$\mathbb T^n$
  \IF{$k = 0$}
    \RETURN $(\emptyset, \emptyset)$\label{algo:moduleBB-trivial}
  \ENDIF
  \STATE $V \assign \langle \mathcal B_1, \ldots, \mathcal B_k
  \rangle_K$\label{algo:moduleBB-initV}
  \STATE $d \assign \max \{ \deg(t) \mid \ell \in \{ 1, \ldots, k \}, t e_u \in
  \Supp(\mathcal B_\ell) \}$\label{algo:moduleBB-initD}
  \REPEAT\label{algo:moduleBB-repeat}
    \STATE $L \assign \mathbb T^n_{\leq d} \langle e_1, \ldots, e_r \rangle$
    \STATE $V' \assign (V + x_1 V + \cdots + x_n V) \cap \langle L
    \rangle_K$\label{algo:moduleBB-initV'}
    \WHILE{$V \neq V'$}\label{algo:moduleBB-while}
      \STATE $V \assign V'$\label{algo:moduleBB-renewV}
      \STATE $V' \assign (V + x_1 V + \cdots + x_n V) \cap \langle L
      \rangle_K$\label{algo:moduleBB-enlargeV'}
    \ENDWHILE
    \STATE Compute $\varrho \in \mathbb N$ and $\{ \mathcal V_1, \ldots,
    \mathcal V_\varrho \} \subseteq \langle L \rangle_K \setminus \{ 0 \}$ with
    $V = \langle \mathcal V_1, \ldots, \mathcal V_\varrho
    \rangle_K$.\label{algo:moduleBB-gensV}
    \STATE $\mathcal M \assign {\tt computeOrderIdeal}(d, \{ \mathcal V_1,
    \ldots, \mathcal V_\varrho \}, \sigma)$\label{algo:moduleBB-OM}
    \STATE $d \assign d+1$\label{algo:moduleBB-incD}
  \UNTIL{$\partial \mathcal M \subseteq L$}
  \STATE Let $t_1, \ldots, t_\mu \in \mathbb T^n$ be terms and $\alpha_1,
  \ldots, \alpha_\mu \in \{ 1, \ldots, r \}$ be indices such that $\mathcal M =
  \{ t_1 e_{\alpha_1}, \ldots, t_\mu e_{\alpha_\mu} \}$.
  \STATE Let $b_1, \ldots, b_\nu \in \mathbb T^n$ be terms and $\beta_1, \ldots,
  \beta_\nu \in \{ 1, \ldots, r \}$ be indices such that $\partial \mathcal M =
  \{ b_1 e_{\beta_1}, \ldots, b_\nu e_{\beta_\nu} \}$.
  \STATE $\mathcal G \assign \emptyset$
    \FOR{$j = 1, \ldots, \nu$}\label{algo:moduleBB-for}
      \STATE Determine $c_{1j}, \ldots, c_{\mu j} \in K$ such that $b_j
      e_{\beta_j} + V = \sum_{i=1}^\mu c_{ij} t_i e_{\alpha_i} +
      V$.\label{algo:moduleBB-findG}
      \STATE $\mathcal G \assign \mathcal G \cup \{ b_j e_{\beta_j} -
      \sum_{i=1}^\mu c_{ij} t_i e_{\alpha_i} \}$\label{algo:moduleBB-setG}
    \ENDFOR
  \RETURN $(\mathcal M, \mathcal G)$
\end{algorithmic}
\end{algorithm}

\begin{thm}[The Module Border Basis Algorithm]
\label{thm:moduleBBAlg}
Let $k \in \mathbb N$, let $U = \langle \mathcal B_1, \ldots, \mathcal B_k
\rangle \subseteq P^r$ with vectors $\{ \mathcal B_1, \ldots, \mathcal B_k \}
\subseteq P^r \setminus \{ 0 \}$ be a $P$-submodule such that $\codim_K(U, P^r)
< \infty$, and let~$\sigma$ be a degree compatible term ordering on~$\mathbb
T^n$. Then Algorithm~\ref{algo:moduleBB} is actually an algorithm and the result
\begin{align*}
(\mathcal M, \mathcal G) \assign {\tt moduleBB}(\{ \mathcal B_1, \ldots,
\mathcal B_k \}, \sigma)
\end{align*}
of Algorithm~\ref{algo:moduleBB} applied to the input data~$\{ \mathcal B_1,
\ldots, \mathcal B_k \}$ and~$\sigma$ satisfies the following conditions.
\begin{enumerate}
\renewcommand{\labelenumi}{\roman{enumi})}
  \item The set $\mathcal M \subseteq \mathbb T^n \langle e_1, \ldots, e_r
  \rangle$ is an order module.
  \item The set $\mathcal G \subseteq P^r$ is the $\mathcal M$-module border
  basis of~$U$.
\end{enumerate}
\end{thm}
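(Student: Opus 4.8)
The plan is to mirror the proof of the Border Basis Algorithm \cite[Thm.~6.4.36]{KR2} (which goes back to \cite[Prop.~18]{CompBB}), with Lemma~\ref{thm:moduleBBAlgLemma} playing the role of \cite[Lemma~6.4.35]{KR2} and with $\mathbb T^n$, order ideals, and ideals systematically replaced by $\mathbb T^n \langle e_1, \ldots, e_r \rangle$, order modules, and $P$-submodules. The case $k = 0$ is degenerate (the hypothesis $\codim_K(U, P^r) < \infty$ forces $r = 0$) and is disposed of in line~\ref{algo:moduleBB-trivial}, so assume $k \geq 1$, and write $\sigma\Pos$ for the degree-compatible module term ordering on $\mathbb T^n \langle e_1, \ldots, e_r \rangle$ induced by $\sigma$. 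First I would record the invariant $V \subseteq U$ (each $\mathcal B_\ell \in U$, the set $U$ is a $P$-submodule, and the updates of $V$ only adjoin elements of $U$), together with the monotonicity of $V$ under increase of $d$. The inner while-loop in lines~\ref{algo:moduleBB-while}--\ref{algo:moduleBB-enlargeV'} builds a strictly ascending chain of $K$-subspaces of the finite-dimensional space $\langle L \rangle_K$, hence stops; at exit $V = (V + x_1 V + \cdots + x_n V) \cap \langle L \rangle_K$, which is exactly the hypothesis of Lemma~\ref{thm:moduleBBAlgLemma}, so the call in line~\ref{algo:moduleBB-OM} is legitimate and returns an order module $\mathcal M \subseteq L$ with $\varepsilon_V(\mathcal M)$ a $K$-basis of $\langle L \rangle_K / V$ and $\# \varepsilon_V(\mathcal M) = \# \mathcal M$.

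Next I would settle termination of the repeat-loop. As in \cite[Thm.~6.4.36]{KR2}, pick $d^\ast$ larger than every degree occurring in the reduced $\sigma\Pos$-Gröbner basis of $U$ and than the degrees of the (finitely many) terms in $\mathcal O_{\sigma\Pos}(U)$ and in $\partial \mathcal O_{\sigma\Pos}(U)$. For $d \geq d^\ast$ the stabilized $V$ equals $U \cap \langle L \rangle_K$: the inclusion $\subseteq$ is the invariant above, and $\supseteq$ holds because any vector of $U$ of degree $\leq d$ reduces to $0$ modulo that Gröbner basis using only multiples $t g$ of Gröbner-basis elements with $\deg(t g) \leq d$ — here the degree-compatibility of $\sigma$ is used essentially — and every such $t g$ lies in the iterated saturation once $d \geq d^\ast$. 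Then Macaulay's Basis Theorem \cite[Thm.~1.5.7]{KR1} together with Lemma~\ref{thm:moduleBBAlgLemma} forces $\mathcal M = \mathcal O_{\sigma\Pos}(U)$, a finite set, so $\partial \mathcal M \subseteq L$ and the loop halts.

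The key point, which I would lift from \cite[Thm.~6.4.36]{KR2} and transcribe componentwise to order modules, is that whenever the loop reaches $\partial \mathcal M \subseteq L$ one already has $V = U \cap \langle L \rangle_K$, hence $\mathcal M = \mathcal O_{\sigma\Pos}(U)$. One proves $U \cap \langle L \rangle_K \subseteq V$ by induction on the $\sigma\Pos$-leading term: if $\LT_{\sigma\Pos}(\mathcal W) \in \LT_{\sigma\Pos}\{V\}$, subtract a suitable $K$-multiple of a row of the echelon form produced inside Lemma~\ref{thm:moduleBBAlgLemma} to lower the leading term and apply induction; if $\LT_{\sigma\Pos}(\mathcal W) \in \mathcal M$, use that $\partial \mathcal M \subseteq L$ forces $\max \{ \deg(t e_k) \mid t e_k \in \mathcal M \} \leq d - 1$, so that all terms produced when rewriting $\mathcal W$ by the relations defining $\mathcal G$ stay in $\langle L \rangle_K$, and the stabilization identity $(V + x_1 V + \cdots + x_n V) \cap \langle L \rangle_K = V$ then forces $\mathcal W \in V$ by descent on the $\mathcal M$-index (the case $\mathcal M = \emptyset$ being trivial). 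Granting this, the theorem follows at once: $\mathcal M$ is an order module by Lemma~\ref{thm:moduleBBAlgLemma}; $\mathcal G$ is an $\mathcal M$-module border prebasis by lines~\ref{algo:moduleBB-findG}--\ref{algo:moduleBB-setG}, and each $\mathcal G_j = b_j e_{\beta_j} - \sum_i c_{ij} t_i e_{\alpha_i} \in V \subseteq U$, so $\mathcal G \subseteq U$; and since $\mathcal M \subseteq L$ and $V = U \cap \langle L \rangle_K$, every $\mathcal V \in U \cap \langle \mathcal M \rangle_K$ lies in $V \cap \langle \mathcal M \rangle_K = \{ 0 \}$ (the last equality because $\varepsilon_V(\mathcal M)$ is a $K$-basis of $\langle L \rangle_K / V$), whence $U \cap \langle \mathcal M \rangle_K = \{ 0 \}$ and Corollary~\ref{thm:char} gives that $\mathcal G$ is the $\mathcal M$-module border basis of $U$. (Alternatively one can verify condition $F_2)$ of Buchberger's Criterion~\ref{thm:buchbCrit}: rewriting the $\SV$-vector of a neighbor pair via its defining relation leaves a vector supported on $x_s \mathcal M \cup x_u \mathcal M$, hence of degree $\leq d$, which the stabilization identity puts in $V$ and which the Module Border Division Algorithm~\ref{thm:divAlg} then reduces to $0$; together with the dimension count $\# \mathcal M = \codim_K(U, P^r)$ this also yields $\langle \mathcal G \rangle = U$.)

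The main obstacle is exactly the assertion of the preceding paragraph — that the repeat-loop terminates and that at termination $V = U \cap \langle L \rangle_K$, equivalently $\mathcal M = \mathcal O_{\sigma\Pos}(U)$. This is not formal bookkeeping; it is the substance of the Border Basis Algorithm's correctness and is precisely where degree-compatibility of $\sigma$ genuinely enters. Everything else is routine once Lemma~\ref{thm:moduleBBAlgLemma} is in hand, so the real work is the faithful transcription of \cite[Thm.~6.4.36]{KR2} from $\mathbb T^n$ and order ideals to $\mathbb T^n \langle e_1, \ldots, e_r \rangle$ and order modules.
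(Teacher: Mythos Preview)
Your termination argument and use of Lemma~\ref{thm:moduleBBAlgLemma} are correct, but the correctness argument has a gap precisely where you flag the ``main obstacle.'' You claim that whenever the loop exits with $\partial \mathcal M \subseteq L$ one already has $V = U \cap \langle L \rangle_K$, and sketch a proof by induction on $\LT_{\sigma\Pos}(\mathcal W)$. Case~1 (leading term a pivot) is fine. Case~2 ($\LT_{\sigma\Pos}(\mathcal W) \in \mathcal M$) is not: that leading term is never touched by any rewriting with~$\mathcal G$, so the division algorithm leaves a remainder $\NR_{\mathcal G}(\mathcal W) \in \langle \mathcal M \rangle_K$ whose leading term is still $\LT_{\sigma\Pos}(\mathcal W)$; you can only conclude $\NR_{\mathcal G}(\mathcal W) \in U \cap \langle \mathcal M \rangle_K$, whereas what you actually know is $V \cap \langle \mathcal M \rangle_K = \{0\}$, not $U \cap \langle \mathcal M \rangle_K = \{0\}$. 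The ``descent on the $\mathcal M$-index'' does not break this circularity. Your parenthetical alternative inherits the same defect: the dimension count $\#\mathcal M = \codim_K(U, P^r)$ is equivalent to $\langle \mathcal G \rangle = U$ and is exactly what remains to be shown.

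The paper (following \cite[Thm.~6.4.36]{KR2}) never asserts $V = U \cap \langle L \rangle_K$ during the proof. Instead it (i)~reads off from the echelon form that $\LT_{\sigma\Pos}(\mathcal G_j) = b_j e_{\beta_j}$ for every~$j$; (ii)~verifies condition~$F_2)$ of Theorem~\ref{thm:buchbCrit} essentially as you sketch, so $\mathcal G$ is the $\mathcal M$-module border basis of~$\langle \mathcal G \rangle$; and (iii)~proves $U \subseteq \langle \mathcal G \rangle$ by applying the Module Border Division Algorithm to each \emph{generator}~$\mathcal B_w$. The point of~(iii) is that $\mathcal B_w \in V$ is known from the outset; because $\LT_{\sigma\Pos}(\mathcal G_j) = b_j e_{\beta_j}$ and $\sigma$ is degree compatible, every subtraction $t \mathcal G_j$ occurring in the division satisfies $t \mathcal G_j \in \langle L \rangle_K$, hence $t \mathcal G_j \in V$ by the stabilization identity, and therefore $\NR_{\mathcal G}(\mathcal B_w) \in V \cap \langle \mathcal M \rangle_K = \{0\}$. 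This is your Case~2 reasoning applied to a vector already known to lie in~$V$, where it succeeds; applied to an arbitrary $\mathcal W \in U \cap \langle L \rangle_K$, it does not. (A posteriori your claims $V = U \cap \langle L \rangle_K$ and $\mathcal M = \mathcal O_{\sigma\Pos}(U)$ are true --- but as consequences of the theorem, not as steps toward it.)
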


\begin{proof}
Firstly, we prove that every step of the procedure can be computed. The maximum
in line~\ref{algo:moduleBB-initD} can be computed as obviously $k \neq 0$ in
this situation. In particular, it follows that $\dim_K(V) \geq 1$ in
line~\ref{algo:moduleBB-initV}. We can compute the intersection of $K$-vector
spaces for the computation of~$V'$ in line~\ref{algo:moduleBB-initV'} and
line~\ref{algo:moduleBB-enlargeV'} with linear algebra techniques. In
line~\ref{algo:moduleBB-OM}, the while-loop in line~\ref{algo:moduleBB-while}
has already been finished. In this situation, the construction of~$V$ in
line~\ref{algo:moduleBB-initV} and during the while-loop in
line~\ref{algo:moduleBB-while} yields $\varrho = \dim_K(V) \geq 1$ and
\begin{align*}
V = V' = (V + x_1 V + \cdots + x_n V) \cap \langle L \rangle_K,
\end{align*}
after the while-loop. In other words, the input data $d$, $\{ \mathcal V_1,
\ldots, \mathcal V_\varrho \}$, and $\sigma$ in line~\ref{algo:moduleBB-gensV}
satisfy the requirements of Algorithm~\ref{algo:computeOM}. Thus we can compute
an order module $\mathcal M \subseteq \langle L \rangle_K$ such that
$\varepsilon_V(\mathcal M)$ is a $K$-vector space basis of $\langle L \rangle_K
/ V$ and such that $\# \varepsilon_V(\mathcal M) = \# \mathcal M$ according to
Lemma~\ref{thm:moduleBBAlgLemma} in line~\ref{algo:moduleBB-OM}. Moreover, the
repeat-until-loop in line~\ref{algo:moduleBB-repeat} only stops if $\partial
\mathcal M \subseteq L$. Thus we can compute the coefficients $c_{1j}, \ldots,
c_{\mu j} \in K$ for all $j \in \{ 1, \ldots, \nu \}$ in
line~\ref{algo:moduleBB-findG} with linear algebra techniques, too. All the
other steps of the procedure can be trivially computed.\\
Secondly, we show that the procedure stops after a finite amount of time. We
start to show that the while-loop in line~\ref{algo:moduleBB-while} eventually
terminates. By the construction of~$V$ and~$V'$ in
line~\ref{algo:moduleBB-renewV} and line~\ref{algo:moduleBB-enlargeV'}, we see
that we always have $V \subseteq V' \subseteq \langle L \rangle_K$. Assume that
$V \neq V'$ in this situation, i.\,e.\ the while-loop in
line~\ref{algo:moduleBB-while} is executed at least one time. For every $i
\in \mathbb N$, we let $V'_i$ be the $K$-vector subspace $V' \subseteq \langle
L \rangle_K$ after the $i^{\text{th}}$~iteration of the while-loop. Since we
have
\begin{align*}
\# L = \# \mathbb T^n_{\leq d} \langle e_1, \ldots, e_r \rangle < \infty,
\end{align*}
i.\,e.\ $\dim_K(\langle L \rangle_K) < \infty$, the chain
\begin{align*}
V'_0 \subseteq V'_1 \subseteq V'_2 \subseteq \cdots
\end{align*}
must eventually get stationary. In this situation, we have $V'_i = V'_{i-1}$ for
some $i \in \mathbb N \setminus \{ 0 \}$ and therefore $V = V'$ in
line~\ref{algo:moduleBB-while}. Thus the while-loop terminates after the
$i^{\text{th}}$~iteration.\\
Thirdly, we prove that the repeat-until-loop in line~\ref{algo:moduleBB-repeat}
stops after a finite amount of time. Let $\mathcal H = \{ \mathcal H_1, \ldots,
\mathcal H_v \} \subseteq P^r$ with $v \in \mathbb N$ be the reduced $\sigma
\Pos$-Gröbner basis of~$U$. For all $j \in \{ 1, \ldots, v \}$, there is a
representation
\begin{align*}
\mathcal H_j = p_{j1} \mathcal B_1 + \cdots + p_{jk} \mathcal B_k
\end{align*}
with $p_{j1}, \ldots, p_{jk} \in P$. Let
\begin{align*}
d' = \max \{ \deg(t) \mid j \in \{ 1, \ldots, v \}, l \in \{ 1, \ldots, k \}, t
e_u \in \Supp(p_{j \ell} \mathcal B_\ell) \}.
\end{align*}
This maximum exists as $k > 0$ in this situation as we have seen above. Then we
have $\mathcal H \subseteq \langle L \rangle_K$ after the while-loop in the case
that $d = d'$. Now suppose that we are in the situation that $d = d'$ during the
repeat-until-loop. Let~$\mathcal M$ be the result of
Algorithm~\ref{algo:computeOM} computed in line~\ref{algo:moduleBB-OM}.
Then~$\mathcal M$ is an order module such that~$\varepsilon_V(\mathcal M)$ is a
$K$-vector space basis of~$\langle L \rangle_K / V$ and $\#
\varepsilon_V(\mathcal M) = \# \mathcal M$ by Lemma~\ref{thm:moduleBBAlgLemma}.
Let
\begin{align*}
L = \{ \ell_1 e_{u_1}, \ldots, \ell_s e_{u_s} \}
\end{align*}
with $\ell_1, \ldots, \ell_s \in \mathbb T^n$, $u_1, \ldots, u_s \in \{ 1,
\ldots, r \}$, and
\begin{align*}
\ell_1 e_{u_1} >_{\sigma \Pos} \cdots >_{\sigma \Pos} \ell_s e_{u_s}
\end{align*}
be like in line~\ref{algo:computeOM-terms} of Algorithm~\ref{algo:computeOM}
during the computation of~$\mathcal M$ by Algorithm~\ref{algo:computeOM} in
line~\ref{algo:moduleBB-OM}. Furthermore, we let $\mathfrak W \in \Mat_{m,s}(K)$
with $m \in \mathbb N$ be the matrix in row echolon form used during the
computation of $\mathcal M$ by Algorithm~\ref{algo:computeOM} in
line~\ref{algo:moduleBB-OM}. Let $j \in \{ 1, \ldots, v \}$. We write
\begin{align*}
\mathcal H_j = c_1 \ell_1 e_{u_1} + \cdots + c_s \ell_s e_{u_s}
\end{align*}
with $c_1, \ldots, c_s \in K$. Then the vector
\begin{align*}
(c_1, \ldots, c_s) \in K^s
\end{align*}
corresponds to~$\mathcal H_j$. Let
\begin{align*}
\LT_{\sigma \Pos}(\mathcal H_j) = \ell_w e_{u_w}
\end{align*}
with $w \in \{ 1, \ldots, s \}$. Then it follows that
\begin{align*}
(c_1, \ldots, c_s) = (0, \ldots, 0, 1, c_{w+1}, \ldots, c_s).
\end{align*}
by \cite[Defn.~2.4.12]{KR1}. Since $\mathcal H_j \in V$, we see that there
exists a vector in~$K^s$ corresponding to a vector in~$V$ which has its first
non-zero entry in the $w^{\text{th}}$~column of~$\mathfrak W$, namely the vector
\begin{align*}
(0, \ldots, 0, 1, c_{w+1}, \ldots, c_s) \in K^s
\end{align*}
corresponding to~$\mathcal H_j$. Thus the construction of~$\mathcal M$ in
line~\ref{algo:computeOM-OM} of Algorithm~\ref{algo:computeOM} yields that
\begin{align*}
\LT_{\sigma \Pos}(\mathcal H_j) \notin \mathcal M,
\end{align*}
and it follows that
\begin{align*}
\mathcal M \subseteq \mathcal O_{\sigma \Pos}(U).
\end{align*}
Thus
\begin{align*}
\partial \mathcal M \subseteq \oline{\partial \mathcal M} \subseteq
\oline{\partial \mathcal O_{\sigma \Pos}(U)} \subseteq L
\end{align*}
as $\mathcal H \subseteq \langle L \rangle_K$ and hence the repeat-until-loop
terminates in the case that $d = d'$.\\
Altogether, we see that the procedure is actually an algorithm.

It remains to prove the correctness. We have to distinguish two cases. For the
first case, we suppose that the algorithm stops in
line~\ref{algo:moduleBB-trivial}. Then we see that $\langle \mathcal B_1,
\ldots, \mathcal B_k \rangle = \{ 0 \}$ and the assumption $\codim_K(\{ 0 \},
P^r) < \infty$ yields $r = 0$. Thus~$\emptyset$ is obviously the $\emptyset$-module
border basis of~$\langle \mathcal B_1, \ldots, \mathcal B_k \rangle = \{ 0 \}$
by Definition~\ref{defn:moduleBB}. For the second case, we now suppose that $k
\neq 0$. As~$\mathcal M$ is computed in line~\ref{algo:moduleBB-OM} with the use
of Algorithm~\ref{algo:computeOM}, Lemma~\ref{thm:moduleBBAlgLemma} yields that
$\mathcal M$ is an order module.\\
For all $j \in \{ 1, \ldots, \nu \}$, we let
\begin{align*}
\mathcal G_j = b_j e_{\beta_j} - \sum_{i=1}^\nu c_{ij} t_i e_{\alpha_i} \in P^r
\end{align*}
with $c_{1j}, \ldots, c_{\mu j} \in K$ be like in line~\ref{algo:moduleBB-setG}.
Then $\mathcal G = \{ \mathcal G_1, \ldots, \mathcal G_\nu \}$ is an $\mathcal
M$-module border prebasis. We now show that $\LT_{\sigma \Pos}(\mathcal
G_j) = b_j e_{\beta_j}$ for all $j \in \{ 1, \ldots, \nu \}$. Let $j \in \{ 1,
\ldots, \nu \}$. Consider $\mathfrak W \in \Mat_{m,s}(K)$ and $\ell_1 e_{u_1},
\ldots, \ell_s e_{u_s} \in L$ like above, again. We write
\begin{align*}
\mathcal G_j = c_1 \ell_1 e_{u_1} + \cdots + c_s \ell_s e_{u_s}
\end{align*}
with $c_1, \ldots, c_s \in K$. Then the vector
\begin{align*}
(c_1, \ldots, c_s) \in K^s
\end{align*}
corresponding to~$\mathcal G_j$ represents a $K$-linear dependency of the terms
in~$L$. Assume that
\begin{align*}
\LT_{\sigma \Pos}(\mathcal H_j) = \ell_w e_{u_w} \neq b_j e_{\beta_j}
\end{align*}
with $w \in \{ 1, \ldots, s \}$. As~$\sigma$ is degree compatible, $(c_1,
\ldots, c_s) \in K^s$ is a vector which has its first non-zero entry in the
column corresponding to the term $\ell_w e_{u_w} \in \mathcal M$. But this is a
contradiction to the construction of~$\mathcal M$ in
line~\ref{algo:computeOM-OM} of Algorithm~\ref{algo:computeOM}, i.\,e.\ we have
\begin{align*}
\LT_{\sigma \Pos}(\mathcal G_j) = b_j e_{\beta_j}.
\end{align*}
We go on with the proof that the normal remainders of the $\SV$-vectors of all
neighbors with respect to~$\mathcal M$ vanish. Let $b_i e_{\beta_i}, b_j
e_{\beta_j} \in \partial \mathcal M$ with $i, j \in \{ 1, \ldots, \nu \}$ and $i
\neq j$ be neighbors with respect to~$\mathcal M$. We have already shown that
we have $\partial \mathcal M \subseteq L$ at the end of the algorithm, i.\,e.\
we have $\oline{\partial \mathcal M} \subseteq L$. Using the Module Border
Division Algorithm~\ref{thm:divAlg} applied to~$\SV(\mathcal G_i, \mathcal G_j)$
and~$\mathcal G$, we can compute $c_1, \ldots, c_\nu \in K$ such that
\begin{align*}
\NR_{\mathcal G}(\SV(\mathcal G_i, \mathcal G_j)) = \SV(\mathcal G_i, \mathcal
G_j) - \sum_{w=1}^\nu c_w \mathcal G_w \in \langle \mathcal M
\rangle_K.
\end{align*}
Since we have also already seen that $\mathcal G \subseteq V$, i.\,e.\ we have
$\SV(\mathcal G_i, \mathcal G_j) \in V$ as $b_i e_{\beta_i}, b_j e_{\beta_j}
\in \partial \mathcal M$ are neighbors, and $\varepsilon_V(\mathcal M)$ is
a $K$-vector space basis of~$\langle L \rangle_K / V$, it follows that
\begin{align*}
V = \SV(\mathcal G_i, \mathcal G_j) + V = \NR_{\mathcal G}(\SV(\mathcal G_i,
\mathcal G_j)) + V.
\end{align*}
In particular, we see
that
\begin{align*}
\NR_{\mathcal G}(\SV(\mathcal G_i, \mathcal G_j)) \in V \cap \langle \mathcal M
\rangle_K = \{ 0 \}.
\end{align*}
Altogether, we see that condition~$F_2)$ of Buchberger's Criterion for Module
Border Bases~\ref{thm:buchbCrit} is satisfied, i.\,e.\ $\mathcal G$ is the
$\mathcal M$-module border basis of~$\langle \mathcal G \rangle$.\\
Therefore, the claim follows if we show that~$\mathcal G$ generates~$U$. For
every $j \in \{ 1, \ldots, \nu \}$, we have already seen that $\mathcal G_j \in
V \subseteq U$, i.\,e.\ we have $\langle \mathcal G \rangle \subseteq U$. For
the converse inclusion, we let $w \in \{ 1, \ldots, k \}$. We apply the
Module Border Division Algorithm~\ref{thm:divAlg} to~$\mathcal B_w$
and~$\mathcal G$ to obtain a representation
\begin{align*}
\mathcal B_w = \mathcal V + \NR_{\mathcal G}(\mathcal B_w)
\end{align*}
with $\mathcal V \in \langle \mathcal G \rangle$ and $\NR_{\mathcal G}(\mathcal
B_w) \in \langle \mathcal M \rangle_K$. During the Module Border Division
Algorithm~\ref{thm:divAlg}, we always subtract multiples of the form~$t \mathcal
G_j$ with a term $t \in \mathbb T^n$ and $j \in \{ 1, \ldots, \nu \}$ to
eliminate the term $t b_j e_{\beta_j}$. Since~$\sigma$ is a degree compatible
term ordering on~$\mathbb T^n$ and since we have $b_j e_{\beta_j} = \LT_{\sigma
\Pos}(\mathcal G_j)$, it follows that all the vectors that are used for these
reductions satisfy $t \mathcal G_j \in \langle L \rangle_K$.
Thus we have
\begin{align*}
\mathcal V \in V \subseteq \langle L \rangle_K
\end{align*}
because $\mathcal G \subseteq V$. Altogether, we see that
\begin{align*}
V = \mathcal B_w + V = \NR_{\mathcal G}(\mathcal B_w) + V.
\end{align*}
Since $\varepsilon_V(\mathcal M)$ is a $K$-vector space basis of $\langle L
\rangle_K / V$ by Lemma~\ref{thm:moduleBBAlgLemma}, it follows that
\begin{align*}
\NR_{\mathcal G}(\mathcal B_w) \in V \cap \langle \mathcal M \rangle_K = \{ 0
\}.
\end{align*}
Thus it also follows that
\begin{align*}
\mathcal B_w = \mathcal V + \NR_{\mathcal G}(\mathcal B_w) = \mathcal V
\in \langle \mathcal G \rangle,
\end{align*}
Therefore, we see that
\begin{align*}
U = \langle \mathcal B_1, \ldots, \mathcal B_k \rangle \subseteq \langle
\mathcal G \rangle.
\end{align*}
Altogether, we have proven that~$\mathcal G$ is the $\mathcal M$-border basis
of~$U$.
\end{proof}

\begin{exmp}
\label{exmp:moduleBBAlgo}
Let $K = \mathbb Q$, $P = \mathbb Q[x,y]$, and $\sigma = \DegRevLex$.
Furthermore, we let
\begin{align*}
U & = \langle \mathcal B_1, \ldots, \mathcal B_5 \rangle\\
& = \langle (-2, 3x-1), (3x+4, 2), (0, y-1), (y-1, 0), (x+y+1, -x+y) \rangle.
\end{align*}
Since we have $x e_2, x e_1, y e_2, y e_1 \in \LT_{\sigma\Pos}(U)$, we
immediately see that
\begin{align*}
\codim_K(U, P^2) = \#\mathcal O_{\sigma\Pos}(U) \leq \# \{ e_1, e_2 \} = 2 <
\infty
\end{align*}
with Macaulay's Basis Theorem~\cite[Thm.~1.5.7]{KR1}. Thus the requirements of
the Module Basis Algorithm~\ref{algo:moduleBB} are satisfied.

We consider the steps of the Module Border Basis Algorithm~\ref{algo:moduleBB}
applied to the input data~$\{ \mathcal B_1, \ldots, \mathcal B_5 \}$
and~$\sigma$ in detail.
We initialize
\begin{align*}
V = \langle (-2, 3x-1), (3x+4, 2), (0, y-1), (y-1, 0), (x+y+1, -x+y) \rangle_K
\end{align*}
in line~\ref{algo:moduleBB-initV} and thus have $d=1$ and
\begin{align*}
L = \mathbb T^2_{\leq 1} \langle e_1, e_2 \rangle = \{ x e_1 x e_2, y e_1, y
e_2, e_1 e_2 \}
\end{align*}
in line~\ref{algo:moduleBB-initD}. Moreover, we compute that
\begin{align*}
V' = (V + xV + yV) \cap \langle L \rangle_K = V
\end{align*}
in line~\ref{algo:moduleBB-initV'}. Thus the while-loop in
line~\ref{algo:moduleBB-while} does not need to be executed and we have $\{
\mathcal V_1, \ldots, \mathcal V_5 \} = \{ \mathcal B_1, \ldots, \mathcal B_5
\}$ in line~\ref{algo:moduleBB-gensV}.

Next we consider the computation of~$\mathcal M$ in line~\ref{algo:moduleBB-OM}
with Algorithm~\ref{algo:computeOM} applied to the input data~$1$, $\{ \mathcal
B_1, \ldots, \mathcal B_5 \}$, and~$\sigma$. We order the terms in~$L$ according
to~$\sigma\Pos$ descendingly and compute the matrix
\begin{align*}
\mathfrak V = \begin{pmatrix}
0 & 3 & 0 & 0 & -2 & -1\\
3 & 0 & 0 & 0 & 4 & 2\\
0 & 0 & 0 & 1 & 0 & -1\\
0 & 0 & 1 & 0 & -1 & 0\\
1 & -1 & 1 & 1 & 1 & 0
\end{pmatrix} \in \Mat_{5,6}(\mathbb Q)
\end{align*}
like in line~\ref{algo:computeOM-initV} of Algorithm~\ref{algo:computeOM}. The
(reduced) row echolon form needed in line~\ref{algo:computeOM-rowEcho} of
Algorithm~\ref{algo:computeOM} is then
\begin{align*}
\mathfrak W = \begin{pmatrix}
1 & 0 & 0 & 0 & \tfrac{4}{3} & \tfrac{2}{3}\\
0 & 1 & 0 & 0 & -\tfrac{2}{3} & -\tfrac{1}{3}\\
0 & 0 & 1 & 0 & -1 & 0\\
0 & 0 & 0 & 1 & 0 & -1\\
0 & 0 & 0 & 0 & 0 & 0
\end{pmatrix} \in \Mat_{5,6}(\mathbb Q),
\end{align*}
i.\,e.\ we get the order module
\begin{align*}
\mathcal M = \{ e_1, e_2 \} \subseteq \mathbb T^2 \langle e_1, e_2 \rangle
\end{align*}
after line~\ref{algo:moduleBB-OM} of Algorithm~\ref{algo:moduleBB}.

As the border of~$\mathcal M$ satisfies
\begin{align*}
\partial \mathcal M = \{ x e_1, x e_2, y e_1, y e_2 \} \subseteq L,
\end{align*}
we stop the computation of the repeat-until-loop in
line~\ref{algo:moduleBB-repeat}. We then proceed with the computation of the
for-loop in line~\ref{algo:moduleBB-for} and get the set $\mathcal G = \{
\mathcal G_1, \ldots, \mathcal G_4 \} \subseteq P^2$ with
\begin{align*}
\mathcal G_1 & = x e_1 + \tfrac{4}{3} e_1 + \tfrac{2}{3} e_2,\\
\mathcal G_2 & = x e_2 - \tfrac{2}{3} e_1 - \tfrac{1}{3} e_2,\\
\mathcal G_3 & = y e_1 - e_1,\\
\mathcal G_4 & = y e_2 - e_2.
\end{align*}
According to Theorem~\ref{thm:moduleBBAlg}, $\mathcal M$ is an order module
and~$\mathcal G$ is the $\mathcal M$-module border basis of~$U$. In particular,
since the set of all corners of~$\mathcal M$ is $\{ x e_1, x e_2, y e_1, y e_2
\}$ by Definition~\ref{defn:corners}, $\mathcal G$ is also the reduced
$\sigma$-Gröbner basis of~$U$ according to Proposition~\ref{thm:corners}.
\end{exmp}

\begin{rem}\label{rem:needEmptyOrderModule}
In contrast to the theory of border bases in \cite[Section~6.4]{KR2} or
\cite{CharBB}, we explicitly had allowed that order ideals in~$\mathbb T^n$ are
empty by Definition~\ref{defn:orderIdeal}. The reason is as follows:
Let~$\sigma$ be a degree compatible term ordering on~$\mathbb T^n$, and let $k
\in \mathbb N \setminus \{ 0 \}$ and $\{ \mathcal B_1, \ldots, \mathcal B_k \}
\subseteq P^r \setminus \{ 0 \}$ be vectors such that $U = \langle \mathcal
B_1, \ldots, \mathcal B_k \rangle \subseteq P^r$ is a $P$-submodule with
$\codim_K(U, P^r) < \infty$. Moreover, assume that $r \geq 2$ and $e_1 - e_2 \in
U$, i.\,e.\ $U$ contains a $K$-linear dependency of the elements $\{ e_1,
\ldots, e_r \}$. Then the result~$\mathcal M$ in line~\ref{algo:moduleBB-OM} of
the Module Border Bases Algorithm~\ref{thm:moduleBBAlg} applied to~$\{ \mathcal
B_1, \ldots, \mathcal B_k \}$ and~$\sigma$ does not contain all the elements of
$\{ e_1, \ldots, e_r \}$, namely the above $K$-linear dependency yields $e_1
\notin \mathcal M$ as $e_1 >_{\sigma \Pos} e_2$. By allowing empty order ideals
in Definition~\ref{defn:orderIdeal}, this fact does not vanish, but the result
of the algorithm is still an order module according to
Definition~\ref{defn:orderModule}.
\end{rem}

%
%

\part{Module Border Bases of Quotient Modules}
\label{part:quotMod}

In Part~\ref{part:freeMod}, we have defined module border basis of
$P$-submodules~$U \subseteq P^r$ with finite $K$-codimension and $r \in \mathbb
N$. In Part~\ref{part:quotMod}, we extend this theory to module border bases of
$P$-submodules~$U^S \subseteq P^r / S$ of quotient modules of~$P^r$ modulo an
arbitrary given $P$-submodule $S \subseteq P^r$ with finite $K$-codimension
and~$r \in \mathbb N$.

In Section~\ref{sect:quotModuleBB}, we introduce the general concept of quotient
module border bases. We start to generalize order modules to order quotient
modules in Definition~\ref{defn:orderModule-quot}. In
Definition~\ref{defn:moduleBB-quot}, we introduce quotient module border bases
as a straightforward generalization of module border bases. Given a quotient
module border basis, we then construct characterizing module border prebases
in Definition~\ref{defn:charModuleBB}. The central result of this section,
Theorem~\ref{thm:char-quot}, then allows us to characterize quotient module
border bases with the use of these characterizing module border prebases. As a
consequence of this, we will determine an algorithm for the computation of
quotient module border bases in Corollary~\ref{thm:quotModuleBBAlg}. Moreover,
we deduce characterizations of quotient module border bases similar to the
characterizations of module border bases in Section~\ref{sect:chars}. In
particular, we characterize quotient module border bases via the special
generation property in Corollary~\ref{thm:specGen-quot}, via border form modules
in Corollary~\ref{thm:BFMod-quot}, via rewrite rules in
Corollary~\ref{thm:rewrite-quot}, via commuting matrices in
Corollary~\ref{thm:commMat-quot}, via liftings of border syzygies in
Corollary~\ref{thm:liftings-quot}, and via Buchberger's Criterion for Quotient
Module Border Bases in Corollary~\ref{thm:buchbCrit-quot}.

In Section~\ref{sect:subBB}, we apply the notion of quotient module border bases
to subideal border bases, which have been introduced in \cite{SubBB}. To this
end, we then see in Proposition~\ref{thm:subBBmoduleBB} that every subideal
border basis is isomorphic as $P$-module to a quotient module border basis. We
will use this isomorphy to deduce an algorithm for the computation of subideal
border bases in Corollary~\ref{thm:subBBmoduleBB} and to characterize subideal
border bases in Remark~\ref{rem:char-subBB}.

%
%

\section{Quotient Module Border Bases}
\label{sect:quotModuleBB}

In this section, we define module border bases of arbitrary quotient modules of
free $P$-modules of finite rank over the polynomial ring~$P$. To this end, we
always let $r \in \mathbb N$ and $S \subseteq P^r$ be a $P$-submodule. Moreover,
we let $\{ e_1, \ldots, e_r \}$ be the canonical $P$-module basis of the free
$P$-module~$P^r$. Recall, for every $P$-module~$M$ (respectively $K$-vector
space) and for every $P$-submodule $U \subseteq M$ (respectively $K$-vector
subspace), we let
\begin{align*}
\varepsilon_U: M \twoheadrightarrow M / U, \quad m \mapsto m + U
\end{align*}
be the canonical $P$-module epimorphism ($K$-vector space epimorphism).

We start with the generalization of order modules defined in
Definition~\ref{defn:orderModule} and their border in
Definition~\ref{defn:border}.

\begin{defn}
\label{defn:orderModule-quot}
Let $\mathcal M = \mathcal O_1 e_1 \cup \cdots \cup \mathcal O_r e_r$ with order
ideals $\mathcal O_1, \ldots, \mathcal O_r \subseteq \mathbb T^n$ be an order
module.
\begin{enumerate}
  \item We call the set
  \begin{align*}
  \varepsilon_S(\mathcal M) = \mathcal O_1 \cdot (e_1 + S) \cup \cdots \cup
  \mathcal O_r \cdot (e_r + S) \subseteq P^r / S
  \end{align*}
  an~\emph{order quotient module}.
  \item The set
  \begin{align*}
  \partial \varepsilon_S(\mathcal M) = \varepsilon_S(\partial \mathcal M) =
  \partial \mathcal O_1 \cdot (e_1 + S) \cup \cdots \cup \partial \mathcal O_r
  \cdot (e_r + S) \subseteq P^r / S
  \end{align*}
  is called the \emph{(first) border} of~$\varepsilon_S(\mathcal M)$.
\end{enumerate}
\end{defn}

\begin{exmp}
\label{exmp:orderModule-quot}
Let $K = \mathbb Q$, $P = \mathbb Q[x,y]$, let $\{ e_1, e_2 \}$ be the canonical
$P$-module basis of~$P^2$, and let
\begin{align*}
S = \langle x e_1 - y e_2 \rangle \subseteq P^2.
\end{align*}
Additionally, we let $\mathcal O_1 = \{ x, y, 1 \} \subseteq \mathbb T^2$ and
$\mathcal O_2 = \{ x, 1 \} \subseteq \mathbb T^n$. Then both~$\mathcal O_1$
and~$\mathcal O_2$ are order ideals by Definition~\ref{defn:orderIdeal} and
\begin{align*}
\mathcal M^S & = \mathcal O_1 \cdot (e_1 + S) \cup \mathcal O_2 \cdot (e_2 +
S)\\
& = \{ x e_1 + S, y e_1 + S, e_1 + S, x e_2 + S, e_2 + S \}
\end{align*}
is an order quotient module with border
\begin{align*}
\partial \mathcal M^S & = \partial \mathcal O_1 \cdot (e_1 + S) \cup \partial
\mathcal O_2 \cdot (e_ 2 + S)\\
& = \{ x^2, xy, y^2 \} \cdot (e_1 + S) \cup \{ x^2, xy, y \} \cdot (e_2 + S)\\
& = \{ xy e_1 + S, y^2 e_1 + S, x^2 e_2 + S, xy e_2 + S, y e_2 + S \}
\end{align*}
according to Definition~\ref{defn:orderModule-quot} as $x^2 e_1 + S = xy e_2 +
S$. In particular, we also see that
\begin{align*}
x e_1 + S = y e_2 + S \in \mathcal M^S \cap \partial \mathcal M^S.
\end{align*}
\end{exmp}

Similar to the definition of order modules in Definition~\ref{defn:orderModule},
we now define quotient module border prebases to be the images of module border
prebases under the canonical $P$-module epimosphism~$\varepsilon_S$.

\begin{defn}
\label{defn:moduleBB-quot}
Let $\mathcal M = \mathcal O_1 e_1 \cup \cdots \cup \mathcal O_r e_r$ be an
order module with finite order ideals $\mathcal O_1, \ldots, \mathcal O_r
\subseteq \mathbb T^n$. We write $\mathcal M = \{ t_1 e_{\alpha_1}, \ldots,
t_\mu e_{\alpha_\mu} \}$ and the border $\partial \mathcal M = \{ b_1
e_{\beta_1}, \ldots, b_\nu e_{\beta_\nu} \}$ with $\mu, \nu \in \mathbb N$,
$t_i, b_j \in \mathbb T^n$, and $\alpha_i, \beta_j \in \{ 1, \ldots, r \}$ for
all $i \in \{ 1, \ldots, \mu \}$ and $j \in \{ 1, \ldots, \nu \}$. Moreover, we
let $\mathcal G = \{ \mathcal G_1, \ldots, \mathcal G_\nu \} \subseteq P^r$ be
an $\mathcal M$-module border prebasis with $\mathcal G_j = b_j e_{\beta_j} -
\sum_{i=1}^\mu c_{ij} t_i e_{\alpha_i}$ where $c_{1j}, \ldots, c_{\mu j} \in K$
for all $j \in \{ 1, \ldots, \nu \}$.
\begin{enumerate}
  \item We call the set $\varepsilon_S(\mathcal G) = \{ \varepsilon_S(\mathcal
  G_1), \ldots, \varepsilon_S(\mathcal G_\nu) \} \subseteq P^r / S$ where
  \begin{align*}
  \varepsilon_S(\mathcal G_j) = \mathcal G_j + S = b_j e_{\beta_j} -
  \sum_{i=1}^\mu c_{ij} t_i e_{\alpha_i} + S
  \end{align*}
  an \emph{$\varepsilon_S(\mathcal M)$-quotient module border prebasis}.
  \item Let $U^S \subseteq P^r / S$ be a $P$-submodule. The
  $\varepsilon_S(\mathcal M)$-quotient module border prebasis
  $\varepsilon_S(\mathcal G) \subseteq P^r / S$ is called an
  \emph{$\varepsilon_S(\mathcal M)$-quotient module border basis} of~$U^S$ if
  $\varepsilon_S(\mathcal G) \subseteq U^S$, if the set
  \begin{align*}
  (\varepsilon_{U^S} \circ \varepsilon_S)(\mathcal M) = \{ (t_1 e_{\alpha_1} +
  S) + U^S, \ldots, (t_\mu e_{\alpha_\mu} + S) + U^S \}
  \end{align*}
  is a $K$-vector space basis of $(P^r / S) / U^S$, and if $\#
  (\varepsilon_{U^S} \circ \varepsilon_S)(\mathcal M) = \mu$.
\end{enumerate}
\end{defn}

\begin{rem}
\label{rem:problems-quot}
One might think that all the definitions and propositions about module border
bases in Part~\ref{part:freeMod} can be generalized to quotient module border
bases in a straightforward way. Unfortunately, the situation is more complicated
than expected. We have already seen one big difference concerning module border
bases and quotient module border bases in Example~\ref{exmp:orderModule-quot}.
Namely, we have seen that it can happen that the order quotient module and its
border have some elements in common, i.\,e.\ the straightforward, analogous
version of Proposition~\ref{thm:border} is wrong. Since most of the
propositions in Part~\ref{part:freeMod} are based on this proposition, the
theory of quotient module border bases needs much more care in the definitions
and proofs. Moreover, we see that quotient module border bases are indeed a real
extension of module border bases---meaning that the theory cannot be copied from
the usual module border bases in a straightforward way---and thus also of the
usual border bases.
\end{rem}

As we have seen in Remark~\ref{rem:problems-quot}, we cannot proof analogous
versions of every proposition in Part~\ref{part:freeMod}. Fortunately, we can
reuse some of the concepts, e.\,g.\ we can prove the uniqueness of quotient
border bases---if there exists one---similar to the proof in
Proposition~\ref{thm:existUnique}.

\begin{prop}[Uniqueness of Quotient Module Border Bases]
\label{thm:unique-quot}
Let $\mathcal M^S \subseteq P^r / S$ be a finite order quotient module, $U^S
\subseteq P^r / S$ be a $P$-submodule, and let $\mathcal G^S, \mathcal G'^S
\subseteq P^r / S$ be two $\mathcal M^S$-quotient module border bases of~$U^S$.
Then we have $\mathcal G^S = \mathcal G'^S$.
\end{prop}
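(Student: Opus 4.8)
The plan is to mimic the uniqueness argument in the proof of Proposition~\ref{thm:existUnique}, with the appeal there to $U \cap \langle \mathcal M \rangle_K = \{ 0 \}$ replaced by the $K$-linear independence of $(\varepsilon_{U^S} \circ \varepsilon_S)(\mathcal M)$ that is part of Definition~\ref{defn:moduleBB-quot}. First I would unwind the definitions. By Definition~\ref{defn:orderModule-quot} I write $\mathcal M^S = \varepsilon_S(\mathcal M)$ for a finite order module $\mathcal M = \mathcal O_1 e_1 \cup \cdots \cup \mathcal O_r e_r = \{ t_1 e_{\alpha_1}, \ldots, t_\mu e_{\alpha_\mu} \}$ with border $\partial \mathcal M = \{ b_1 e_{\beta_1}, \ldots, b_\nu e_{\beta_\nu} \}$, and by Definition~\ref{defn:moduleBB-quot} I write $\mathcal G^S = \{ \mathcal G^S_1, \ldots, \mathcal G^S_\nu \}$ and $\mathcal G'^S = \{ \mathcal G'^S_1, \ldots, \mathcal G'^S_\nu \}$ with
\begin{align*}
\mathcal G^S_j = b_j e_{\beta_j} - \sum_{i=1}^\mu c_{ij} t_i e_{\alpha_i} + S
\quad \text{and} \quad
\mathcal G'^S_j = b_j e_{\beta_j} - \sum_{i=1}^\mu c'_{ij} t_i e_{\alpha_i} + S
\end{align*}
for suitable coefficients $c_{ij}, c'_{ij} \in K$. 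Since $\mathcal G^S$ and $\mathcal G'^S$ are built from the same order module $\mathcal M$, their elements are indexed by the same border $\partial \mathcal M$, so it suffices to prove that $c_{ij} = c'_{ij}$ for all $i \in \{ 1, \ldots, \mu \}$ and $j \in \{ 1, \ldots, \nu \}$.

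Next I would fix $j$ and subtract the two representations of the $j$-th generator. Because $\mathcal G^S \subseteq U^S$ and $\mathcal G'^S \subseteq U^S$ by Definition~\ref{defn:moduleBB-quot}, the difference lies in $U^S$:
\begin{align*}
\mathcal G^S_j - \mathcal G'^S_j = \sum_{i=1}^\mu (c'_{ij} - c_{ij}) t_i e_{\alpha_i} + S \in U^S.
\end{align*}
Applying the epimorphism $\varepsilon_{U^S}$ then yields the relation
\begin{align*}
\sum_{i=1}^\mu (c'_{ij} - c_{ij}) \bigl( (t_i e_{\alpha_i} + S) + U^S \bigr) = 0
\end{align*}
in $(P^r / S) / U^S$.

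Finally I would invoke the defining properties of an $\mathcal M^S$-quotient module border basis: by Definition~\ref{defn:moduleBB-quot} the set $(\varepsilon_{U^S} \circ \varepsilon_S)(\mathcal M) = \{ (t_1 e_{\alpha_1} + S) + U^S, \ldots, (t_\mu e_{\alpha_\mu} + S) + U^S \}$ is a $K$-vector space basis of $(P^r / S) / U^S$ and, moreover, has cardinality~$\mu$. The cardinality condition forces these $\mu$ listed residues to be pairwise distinct, and then the basis property makes them $K$-linearly independent; hence the displayed relation gives $c'_{ij} = c_{ij}$ for every~$i$, and as $j$ was arbitrary I conclude $\mathcal G^S_j = \mathcal G'^S_j$ for all $j$, that is, $\mathcal G^S = \mathcal G'^S$. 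I do not expect a genuine obstacle here: unlike the analogue of Proposition~\ref{thm:border}, which fails in the quotient setting (cf.\ Remark~\ref{rem:problems-quot}), uniqueness needs only this definitional linear independence, so the only real care required is the bookkeeping through the two successive quotients---first modulo~$S$, then modulo~$U^S$.
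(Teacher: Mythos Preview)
Your proposal is correct and follows essentially the same approach as the paper: both arguments fix $j$, form the difference $\mathcal G^S_j - \mathcal G'^S_j \in U^S$, and then use the $K$-linear independence of $(\varepsilon_{U^S} \circ \varepsilon_S)(\mathcal M)$ together with the cardinality condition $\#(\varepsilon_{U^S} \circ \varepsilon_S)(\mathcal M) = \mu$ from Definition~\ref{defn:moduleBB-quot} to force $c_{ij} = c'_{ij}$. The only cosmetic difference is that the paper phrases this as a proof by contradiction (assume some $c_{ij} \neq c'_{ij}$ and exhibit a non-trivial dependency), whereas you argue directly.
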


\begin{proof}
We write the order quotient module $\mathcal M^S = \{ t_1 e_{\alpha_1} + S,
\ldots, t_\mu e_{\alpha_\mu} + S \}$ and its border $\partial \mathcal M^S = \{
b_1 e_{\beta_1} + S, \ldots, b_\nu e_{\beta_\nu} + S \}$ with $\mu, \nu \in
\mathbb N$, terms $t_i, b_j \in \mathbb T^n$, and $\alpha_i, \beta_j \in \{ 1,
\ldots, r \}$ for all $i \in \{ 1, \ldots, \mu \}$ and $j \in \{ 1, \ldots, \nu
\}$. Then the $\mathcal M^S$-quotient module border bases~$\mathcal G^S$
and~$\mathcal G'^S$ are of the form $\mathcal G^S = \{ \mathcal G_1^S, \ldots,
\mathcal G_\nu^S \} \subseteq P^r / S$ with $\mathcal G_j^S = b_j e_{\beta_j} -
\sum_{i=1}^\mu c_{ij} t_i e_{\alpha_i} + S$ where $c_{1j}, \ldots, c_{\mu j} \in
K$ for all $j \in \{ 1, \ldots, \nu \}$, and of the form $\mathcal G'^S = \{
\mathcal G_1'^S, \ldots, \mathcal G_\nu'^S \} \subseteq P^r / S$ with $\mathcal
G_j'^S = b_j e_{\beta_j} - \sum_{i=1}^\mu c'_{ij} t_i e_{\alpha_i} + S$ where
$c'_{1j}, \ldots, c'_{\mu j} \in K$ for all $j \in \{ 1, \ldots, \nu \}$ by
Definition~\ref{defn:moduleBB-quot}. Assume that $c_{ij} \neq c'_{ij}$ for some
$i \in \{ 1, \ldots, \mu \}$ and $j \in \{ 1, \ldots, \nu \}$. Then we have the
non-trivial $K$-linear dependency
\begin{align*}
U^S & = \mathcal G_j^S - \mathcal G_j'^S + U^S\\
& = \left( b_j e_{\beta_j} - \sum_{k=1}^\mu c_{kj} t_k e_{\alpha_k} - b_j
e_{\beta_j} + \sum_{k=1}^\mu c'_{kj} t_k e_{\alpha_k} + S \right) + U^S\\
& = \left( \sum_{k=1}^\mu (-c_{kj} + c'_{kj}) t_k e_{\alpha_k} + S \right) + U^S
\end{align*}
in contradiction to Definition~\ref{defn:moduleBB-quot} and the claim follows.
\end{proof}

For the remainder of this section, we let
\begin{align*}
\mathcal M^S = \{ t_1 e_{\alpha_1} + S, \ldots, t_\mu e_{\alpha_\mu} + S \}
\subseteq P^r / S
\end{align*}
be a finite order quotient module with border
\begin{align*}
\partial \mathcal M^S = \{ b_1 e_{\beta_1} + S, \ldots, b_\nu e_{\beta_\nu} + S
\} \subseteq P^r / S
\end{align*}
where $\mu, \nu \in \mathbb N$, $t_i, b_j \in \mathbb T^n$, and $\alpha_i,
\beta_j \in \{ 1, \ldots, r \}$ for all $i \in \{ 1, \ldots, \mu \}$ and $j \in
\{ 1, \ldots, \nu \}$. Moreover, we let $\mathcal G^S = \{ \mathcal G_1^S,
\ldots, \mathcal G_\nu^S \} \subseteq P^r / S$ with
\begin{align*}
\mathcal G_j^S = b_j e_{\beta_j} - \sum_{i=1}^\mu c_{ij} t_i e_{\alpha_i} + S,
\end{align*}
where $c_{ij} \in K$ for all $i \in \{ 1, \ldots, \mu \}$ and $j \in \{ 1,
\ldots, \nu \}$, be an $\mathcal M^S$-quotient module border prebasis .

We have already seen in Definition~\ref{defn:moduleBB-quot} that quotient module
border bases are very similar to module border bases, cf.\
Definition~\ref{defn:moduleBB}. We now want to study their similarity
in more detail. In order to do this, we regard special order modules that
correspond to~$\mathcal M^S$ in Definition~\ref{defn:charOrderModule} and
construct special module border prebasis $\mathcal G \subseteq P^r$ that
correspond to~$\mathcal G^S$ in Definition~\ref{defn:charModuleBB}. If there
exists such a corresponding module border prebasis~$\mathcal G \subseteq P^r$,
we can reduce the question whether~$\mathcal G^S$ is the $\mathcal
M^S$-quotient module border bases of~$\langle \mathcal G^S \rangle$, or not, in
Theorem~\ref{thm:char-quot} to the question whether $\mathcal G$ is the module
border basis of~$\langle \mathcal G \rangle$ and $S \subseteq \langle \mathcal
G \rangle$. This result is the core of the theory of quotient module border
bases as it implicitly allows us to reuse many results of
Part~\ref{part:freeMod}. In particular, we use this theorem to compute and
characterize quotient module border bases at the end of this section.

\begin{defn}
\label{defn:charOrderModule}
We call an order module $\mathcal M \subseteq \mathbb T^n \langle e_1, \ldots,
e_r \rangle$ an \emph{order module characterizing~$\mathcal M^S$} if
$\varepsilon_S(\mathcal M) = \mathcal M^S$ and if the
restriction~$\restrict{\varepsilon_S}{\mathcal M}$ of~$\varepsilon_S$
to~$\mathcal M$ is injective.
\end{defn}

\begin{defn}
\label{defn:charModuleBB}
Let $\mathcal M \subseteq \mathbb T^n \langle e_1, \ldots, e_r \rangle$ be an
order module characterizing~$\mathcal M^S$. By choosing suitable representatives
of the elements in $\mathcal M^S \subseteq P^r / S$, we can w.\,l.\,o.\,g.\
assume that
\begin{align*}
\mathcal M = \{ t_1 e_{\alpha_1}, \ldots, t_\mu e_{\alpha_\mu} \} \subseteq
\mathbb T^n \langle e_1, \ldots, e_r \rangle
\end{align*}
as $\varepsilon_S(\mathcal M) = \mathcal M^S$ and
as~$\restrict{\varepsilon_S}{\mathcal M}$ is injective by
Definition~\ref{defn:charOrderModule}. The border of~$\mathcal M$ then has the
form
\begin{align*}
\partial \mathcal M = \{ b_1 e_{\beta_1}, \ldots, b_\nu e_{\beta_\nu}, b_{\nu +
1} e_{\beta_{\nu + 1}}, \ldots, b_\omega e_{\beta_\omega} \} \subseteq \mathbb
T^n \langle e_1, \ldots, e_r \rangle
\end{align*}
with an $\omega \in \mathbb N$ such that $\omega \geq \nu$, $b_{\nu + 1},
\ldots, b_{\omega} \in \mathbb T^n$, and $\beta_{\nu + 1}, \ldots, \beta_\omega
\in \{ 1, \ldots, r \}$. For all $j \in \{ 1, \ldots, \nu \}$, we define
\begin{align*}
\mathcal G_j = b_j e_{\beta_j} - \sum_{i=1}^\mu c_{ij} t_i e_{\alpha_i} \in P^r.
\end{align*}
For all $j \in \{ \nu+1, \ldots, \omega \}$, there exists a unique index $k \in
\{ 1, \ldots, \nu \}$ such that
\begin{align*}
b_j e_{\beta_j} + S = \varepsilon_S(b_j e_{\beta_j}) = \varepsilon_S(b_k
e_{\beta_k}) = b_k e_{\beta_k} + S
\end{align*}
according to Definition~\ref{defn:orderModule-quot} and we define
\begin{align*}
\mathcal G_j = b_j e_{\beta_j} - \sum_{i=1}^\nu c_{ik} t_i e_{\alpha_i} \in P^r.
\end{align*}
We call the set $\mathcal G = \{ \mathcal G_1, \ldots, \mathcal G_\omega \}
\subseteq P^r$ the \emph{$\mathcal M$-module border prebasis
characterizing~$\mathcal G^S$}.
\end{defn}

\begin{exmp}
\label{exmp:charModuleBB}
We consider Example~\ref{exmp:orderModule-quot}, again. Recall, that
\begin{align*}
S = \langle x e_1 - y e_2 \rangle \subseteq P^2,
\end{align*}
that the order quotient module has the form
\begin{align*}
\mathcal M^S & = \{ x, y, 1 \} \cdot (e_1 + S) \cup \{ x, 1 \} \cdot (e_2 + S)\\
& = \{ x e_1 + S, y e_1 + S, e_1 + S, x e_2 + S, e_2 + S \},
\end{align*}
and that its border is
\begin{align*}
\partial \mathcal M^S & = \{ x^2, xy, y^2 \} \cdot (e_1 + S) \cup \{ x^2, xy, y
\} \cdot (e_2 + S)\\
& = \{ xy e_1 + S, y^2 e_1 + S, x^2 e_2 + S, xy e_2 + S, y e_2 + S \}.
\end{align*}
Then the set $\mathcal G^S = \{ \mathcal G_1^S, \ldots, \mathcal G_5^S \}
\subseteq P^2 / S$ with
\begin{align*}
\mathcal G_1^S & = xy e_1 - y e_1 + S\\
\mathcal G_2^S & = y^2 e_1 - e_1 - e_2 + S\\
\mathcal G_3^S & = x^2 e_2 + e_1 + S\\
\mathcal G_4^S & = xy e_2 - x e_1 + e_2 + S\\
\mathcal G_5^S & = y e_2 - e_2 + S
\end{align*}
is an $\mathcal M^S$-quotient module border prebasis according to
Definition~\ref{defn:moduleBB-quot}.\\
Moreover, we immediately see that the set
\begin{align*}
\mathcal M & = \{ x, y, 1 \} \cdot e_1 \cup \{ x, 1 \} \cdot e_2\\
& = \{ x e_1, y e_1, e_1, x e_2, e_2 \} \subseteq \mathbb T^n \langle e_1, e_2
\rangle
\end{align*}
is an order module characterizing~$\mathcal M^S$ by
Definition~\ref{defn:charOrderModule}. The border of~$\mathcal M$ is
\begin{align*}
\partial \mathcal M & = \{ x^2, xy, y^2 \} \cdot e_1 \cup \{ x^2, xy, y \}
\cdot e_2\\
& = \{ xy e_1, y^2 e_1, x^2 e_2, xy e_2, y e_2, x^2 e_1 \} \subseteq \mathbb
T^2 \langle e_1, e_2 \rangle.
\end{align*}
Furthermore, the set $\mathcal G = \{ \mathcal G_1, \ldots, \mathcal G_6 \}
\subseteq P^2$ with
\begin{align*}
\mathcal G_1 & = xy e_1 - y e_1\\
\mathcal G_2 & = y^2 e_1 - e_1 - e_2\\
\mathcal G_3 & = x^2 e_2 + e_1\\
\mathcal G_4 & = xy e_2 - x e_1 + e_2\\
\mathcal G_5 & = y e_2 - e_2\\
\mathcal G_6 & = x^2 e_1 - x e_1 + e_2
\end{align*}
is the $\mathcal M$-module border prebasis characterizing~$\mathcal G^S$
according to Definition~\ref{defn:charModuleBB} as
\begin{eqnarray*}
x^2 e_1 + S = xy e_2 + S.
\end{eqnarray*}
Note that
\begin{align*}
\# \mathcal G^S = 5 < 6 =  \# \mathcal G
\end{align*}
and that the construction yields
\begin{align*}
\varepsilon_S(\mathcal G_6) = x^2 e_1 - x e_1 + e_2 + S = xy e_2 - x e_1 + e_2 +
S = \varepsilon_S(\mathcal G_4) = \mathcal G_4^S.
\end{align*}
\end{exmp}

After the following auxiliary lemma, we are ready to prove our main result,
namely the characterization of quotient module border bases via characterizing
module border bases.

\begin{lem}
\label{thm:charModuleBB}
Let $\mathcal B_1, \ldots, \mathcal B_k \in P^r$ and let $\mathcal B_\ell^S =
\varepsilon_S(\mathcal B_\ell) = \mathcal B_\ell + S \subseteq P^r / S$ be for
every index $\ell \in \{ 1, \ldots, k \}$. Furthermore, we let $U = \langle
\mathcal B_1, \ldots, \mathcal B_k \rangle \subseteq P^r$ and $U^S = \langle
\mathcal B_1^S, \ldots, \mathcal B_k^S \rangle \subseteq P^r / S$.\\
In addition, suppose there exists an order module $\mathcal M \subseteq \mathbb
T^n \langle e_1, \ldots, e_r \rangle$ characterizing~$\mathcal M^S$ and let
$\mathcal G \subseteq P^r$ be the $\mathcal M$-module border prebasis
characterizing~$\mathcal G^S$.
\begin{enumerate}
  \item We have $\varepsilon_S(U) = U^S$. In particular, $\varepsilon_S(\langle
  \mathcal G \rangle) = \langle \mathcal G^S \rangle$.
  \item We have $\varepsilon_S^{-1}(U^S) = U + S$. In particular,
  $\varepsilon_S^{-1}(\langle \mathcal G^S \rangle) = \langle \mathcal G \rangle
  + S$.
  \item The set $\mathcal G$ is an $\mathcal M$-module border prebasis.
\end{enumerate}
\end{lem}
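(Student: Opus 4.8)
The plan is to prove the three claims in order, since each one builds on the previous. For claim~a), the inclusion $\varepsilon_S(U) \subseteq U^S$ is immediate because $\varepsilon_S$ is a $P$-module homomorphism sending each generator $\mathcal B_\ell$ to $\mathcal B_\ell^S$; conversely, any element of $U^S$ is a $P$-linear combination $\sum p_\ell \mathcal B_\ell^S = \sum p_\ell \varepsilon_S(\mathcal B_\ell) = \varepsilon_S(\sum p_\ell \mathcal B_\ell)$, which lies in $\varepsilon_S(U)$. The ``in particular'' statement follows by applying this to the generating set $\mathcal G$ of $\langle \mathcal G \rangle$ together with the observation that $\varepsilon_S(\mathcal G) = \mathcal G^S$; this last equality needs a short argument, because $\mathcal G$ has $\omega \geq \nu$ elements while $\mathcal G^S$ has $\nu$ elements. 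Specifically, by the construction in Definition~\ref{defn:charModuleBB}, for $j \in \{1, \ldots, \nu\}$ we have $\varepsilon_S(\mathcal G_j) = \mathcal G_j^S$ directly, and for $j \in \{\nu+1, \ldots, \omega\}$ the defining relation $b_j e_{\beta_j} + S = b_k e_{\beta_k} + S$ forces $\varepsilon_S(\mathcal G_j) = b_k e_{\beta_k} - \sum_{i=1}^\nu c_{ik} t_i e_{\alpha_i} + S = \mathcal G_k^S$, so $\varepsilon_S(\mathcal G) = \{\mathcal G_1^S, \ldots, \mathcal G_\nu^S\} = \mathcal G^S$.

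For claim~b), I would use the standard fact that for a surjective $P$-module homomorphism $\varepsilon_S \colon P^r \twoheadrightarrow P^r/S$ with kernel $S$ and any submodule $U^S \subseteq P^r/S$, the preimage satisfies $\varepsilon_S^{-1}(U^S) = \varepsilon_S^{-1}(\varepsilon_S(U)) = U + \ker \varepsilon_S = U + S$, where the middle equality uses claim~a). I would spell this out: $\varepsilon_S^{-1}(\varepsilon_S(U)) \supseteq U + S$ is clear since $\varepsilon_S(U+S) = \varepsilon_S(U)$, and for the reverse, if $v \in \varepsilon_S^{-1}(\varepsilon_S(U))$ then $v + S = u + S$ for some $u \in U$, whence $v - u \in S$ and $v = u + (v-u) \in U + S$. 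The ``in particular'' is the specialization $U = \langle \mathcal G \rangle$, $U^S = \langle \mathcal G^S \rangle$ combined with claim~a).

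For claim~c), the task is to verify that $\mathcal G = \{\mathcal G_1, \ldots, \mathcal G_\omega\}$ satisfies the defining shape of an $\mathcal M$-module border prebasis in Definition~\ref{defn:moduleBB}, i.e.\ that each $\mathcal G_j$ has the form $b_j e_{\beta_j} - \sum_{i=1}^\mu d_{ij} t_i e_{\alpha_i}$ with coefficients $d_{ij} \in K$. For $j \in \{1, \ldots, \nu\}$ this is literally the defining formula. For $j \in \{\nu+1, \ldots, \omega\}$, the defining formula is $\mathcal G_j = b_j e_{\beta_j} - \sum_{i=1}^\nu c_{ik} t_i e_{\alpha_i}$, which is already of the required form after padding with zero coefficients $d_{ij} = 0$ for the indices not appearing; and since $k \in \{1, \ldots, \nu\}$, each $c_{ik}$ is a scalar in $K$ by hypothesis on $\mathcal G^S$. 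The only genuine point to check is that $b_1 e_{\beta_1}, \ldots, b_\omega e_{\beta_\omega}$ is exactly the border $\partial \mathcal M$, which is part of the setup of Definition~\ref{defn:charModuleBB}, and that $\mathcal M$ is a finite order module, which holds by Definition~\ref{defn:charOrderModule}; so the claim is essentially unwinding the definitions.

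I do not expect any serious obstacle here: all three claims are formal consequences of the definitions of $\varepsilon_S$, of characterizing order modules, and of characterizing module border prebases, plus the elementary preimage identity for quotient maps. The only mild subtlety is bookkeeping the two ranges of indices $\{1, \ldots, \nu\}$ versus $\{\nu+1, \ldots, \omega\}$ in claims~a) and~c) and making sure the identification $\varepsilon_S(\mathcal G_j) = \mathcal G_k^S$ for $j > \nu$ is correctly justified from the defining relation $b_j e_{\beta_j} + S = b_k e_{\beta_k} + S$; once that is handled, everything else is routine.
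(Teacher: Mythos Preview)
Your proposal is correct and follows essentially the same approach as the paper: both argue claim~a) directly from the homomorphism property of $\varepsilon_S$, handle the ``in particular'' by noting that $\varepsilon_S(\mathcal G_j) = \mathcal G_k^S$ for $j > \nu$ via the defining relation $b_j e_{\beta_j} + S = b_k e_{\beta_k} + S$, prove claim~b) via the standard preimage identity $\varepsilon_S^{-1}(\varepsilon_S(U)) = U + \ker \varepsilon_S$ spelled out with elements, and dispatch claim~c) by unwinding Definitions~\ref{defn:charOrderModule} and~\ref{defn:charModuleBB}. If anything, your write-up is slightly more explicit than the paper's in verifying the shape of $\mathcal G_j$ for $j > \nu$, but the logical content is the same.
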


\begin{proof}
The definitions of~$U$ and~$U^S$ immediately yield the first claim of~a). Thus
we go on with the proof of the first claim of~b). For the first inclusion, we
let
\begin{align*}
\mathcal V = p_1 \mathcal B_1 + \cdots + p_k \mathcal B_k + \mathcal S \in U + S
\end{align*}
with polynomials $p_1, \ldots, p_k \in P$ and $\mathcal S \in S$. Then we see
that
\begin{align*}
\varepsilon_S(\mathcal V) & = p_1 \varepsilon_S(\mathcal B_1) + \cdots + p_k
\varepsilon_S(\mathcal B_k) + \varepsilon_S(\mathcal S)\\
& = p_1 \mathcal B_1^S + \cdots + p_k \mathcal B_k^S \in U^S.
\end{align*}
For the converse inclusion, we let $\mathcal W \in \varepsilon_S^{-1}(U^S)
\subseteq P^r$. Then there exist polynomials $q_1, \ldots, q_k \in P$ such that
\begin{align*}
\varepsilon_S(\mathcal W) = q_1 \mathcal B_1^S + \cdots + q_k \mathcal B_k^S =
\varepsilon_S(q_1 \mathcal B_1 + \cdots + q_k \mathcal B_k).
\end{align*}
Thus we have
\begin{align*}
\mathcal W - (q_1 \mathcal B_1 + \cdots + q_k \mathcal B_k) \in
\ker(\varepsilon_S) = S
\end{align*}
and this yields~$\mathcal W \in U + S$.

Let $\mathcal G = \{ \mathcal G_1, \ldots, \mathcal G_\omega \} \subseteq P^r$
with $\omega \geq \nu$ be written like in Definition~\ref{defn:charModuleBB}.
Then the second claims of~a) and~b) follow by the construction of~$\mathcal G$
according to Definition~\ref{defn:charModuleBB} and the above proofs as for all
$j \in \{ \nu + 1, \ldots, \omega \}$, there exists a $k \in \{ 1, \ldots, \nu
\}$ such that $\varepsilon_S(\mathcal G_j) = \mathcal G_k^S$.

Finally, the construction of~$\mathcal M$ in
Definition~\ref{defn:charOrderModule} and~$\mathcal G$ in
Definition~\ref{defn:charModuleBB} yield that~$\mathcal M$ is an order module
and that~$\mathcal G$ is an $\mathcal M$-module border prebasis.
\end{proof}

We now use Lemma~\ref{thm:charModuleBB} to proof the main result of this
section, namely the characterization of quotient module border bases via
characterizing module border bases.

\begin{thm}[Characterization of Quotient Module Border Bases]
\label{thm:char-quot}
Suppose that there exists an order module $\mathcal M \subseteq \mathbb T^n
\langle e_1, \ldots, e_r \rangle$ characterizing~$\mathcal M^S$ and let
$\mathcal G \subseteq P^r$ be the $\mathcal M$-module border prebasis
characterizing~$\mathcal G^S \subseteq P^r / S$. Then the following conditions
are equivalent.
\begin{enumerate}
\renewcommand{\labelenumi}{\roman{enumi})}
  \item The $\mathcal M^S$-quotient module border prebasis~$\mathcal G^S$ is the
  $\mathcal M^S$-quotient module border basis of~$\langle \mathcal G^S \rangle$.
  \item The $\mathcal M$-module border prebasis~$\mathcal G$ is the $\mathcal
  M$-module border basis of~$\langle \mathcal G \rangle$ and we have~$S
  \subseteq \langle \mathcal G \rangle$.
\end{enumerate}
\end{thm}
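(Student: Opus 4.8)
The plan is to prove the equivalence by transferring the defining conditions of a (quotient) module border basis back and forth along the canonical epimorphism $\varepsilon_S$, using Lemma~\ref{thm:charModuleBB} as the bridge. Throughout I will write $\mathcal G = \{\mathcal G_1,\ldots,\mathcal G_\omega\}$ for the characterizing module border prebasis as in Definition~\ref{defn:charModuleBB}, and recall from that definition that $\varepsilon_S(\mathcal G_j) = \mathcal G^S_{k(j)}$ for $j \in \{\nu+1,\ldots,\omega\}$ for a suitable index $k(j) \in \{1,\ldots,\nu\}$, while $\varepsilon_S(\mathcal G_j) = \mathcal G^S_j$ for $j \le \nu$. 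In particular $\varepsilon_S(\langle\mathcal G\rangle) = \langle\mathcal G^S\rangle$ and $\varepsilon_S^{-1}(\langle\mathcal G^S\rangle) = \langle\mathcal G\rangle + S$ by Lemma~\ref{thm:charModuleBB}a) and~b).

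For the direction ii)$\Rightarrow$i): assume $\mathcal G$ is the $\mathcal M$-module border basis of $\langle\mathcal G\rangle$ and $S \subseteq \langle\mathcal G\rangle$. By Corollary~\ref{thm:char} this means $P^r = \langle\mathcal G\rangle \oplus \langle\mathcal M\rangle_K$. I want to deduce $(P^r/S)/\langle\mathcal G^S\rangle \cong P^r/\langle\mathcal G\rangle$ as $K$-vector spaces in a way that identifies $(\varepsilon_{\langle\mathcal G^S\rangle}\circ\varepsilon_S)(\mathcal M)$ with $\varepsilon_{\langle\mathcal G\rangle}(\mathcal M)$. Since $S \subseteq \langle\mathcal G\rangle$, the third isomorphism theorem gives $(P^r/S)/(\langle\mathcal G\rangle/S) \cong P^r/\langle\mathcal G\rangle$; and $\langle\mathcal G^S\rangle = \varepsilon_S(\langle\mathcal G\rangle) = \langle\mathcal G\rangle/S$ by Lemma~\ref{thm:charModuleBB}a). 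Under this isomorphism the residue class of $t_i e_{\alpha_i} + S$ corresponds to the residue class of $t_i e_{\alpha_i}$. Because $\varepsilon_{\langle\mathcal G\rangle}(\mathcal M)$ is a $K$-basis of $P^r/\langle\mathcal G\rangle$ with $\mu$ elements (Definition~\ref{defn:moduleBB}), it follows that $(\varepsilon_{\langle\mathcal G^S\rangle}\circ\varepsilon_S)(\mathcal M)$ is a $K$-basis of $(P^r/S)/\langle\mathcal G^S\rangle$ with $\mu$ elements. Finally $\varepsilon_S(\mathcal G) \subseteq \varepsilon_S(\langle\mathcal G\rangle) = \langle\mathcal G^S\rangle$, so by Definition~\ref{defn:moduleBB-quot} the prebasis $\mathcal G^S$ is the $\mathcal M^S$-quotient module border basis of $\langle\mathcal G^S\rangle$.

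For the direction i)$\Rightarrow$ii): assume $\mathcal G^S$ is the $\mathcal M^S$-quotient module border basis of $\langle\mathcal G^S\rangle$. First I show $S \subseteq \langle\mathcal G\rangle$. Take $\mathcal S \in S$; then $\varepsilon_S(\mathcal S) = 0 \in \langle\mathcal G^S\rangle$, so $\mathcal S \in \varepsilon_S^{-1}(\langle\mathcal G^S\rangle) = \langle\mathcal G\rangle + S$ by Lemma~\ref{thm:charModuleBB}b) — but that only gives $\mathcal S \in \langle\mathcal G\rangle + S$, which is automatic. The real argument must use the basis property: I claim $\langle\mathcal G\rangle \cap \langle\mathcal M\rangle_K = \{0\}$ and $S \subseteq \langle\mathcal G\rangle$ together, and I extract both from the fact that $(\varepsilon_{U^S}\circ\varepsilon_S)(\mathcal M)$ is $K$-linearly independent in $(P^r/S)/\langle\mathcal G^S\rangle$ with $U^S = \langle\mathcal G^S\rangle$. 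Concretely, suppose $\mathcal V = \sum c_i t_i e_{\alpha_i} \in (\langle\mathcal G\rangle + S) \cap \langle\mathcal M\rangle_K$ with $c_i \in K$. Then $\varepsilon_S(\mathcal V) \in \langle\mathcal G^S\rangle$, so $\sum c_i (t_i e_{\alpha_i} + S) + \langle\mathcal G^S\rangle = 0$ in $(P^r/S)/\langle\mathcal G^S\rangle$, forcing $c_1 = \cdots = c_\mu = 0$ and hence $\mathcal V \in S$; since also $\mathcal V \in \langle\mathcal M\rangle_K$ I then need $\langle\mathcal M\rangle_K \cap S$ — and here is where the injectivity of $\restrict{\varepsilon_S}{\mathcal M}$ from Definition~\ref{defn:charOrderModule} enters: it gives $\langle\mathcal M\rangle_K \cap S = \{0\}$, so $\mathcal V = 0$. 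This shows $(\langle\mathcal G\rangle + S) \cap \langle\mathcal M\rangle_K = \{0\}$. Next, applying the Module Border Division Algorithm~\ref{thm:divAlg} (Corollary~\ref{thm:genSet}) gives $P^r = \langle\mathcal G\rangle + \langle\mathcal M\rangle_K$, hence $P^r = (\langle\mathcal G\rangle + S) + \langle\mathcal M\rangle_K$ with trivial intersection, i.e. $P^r = (\langle\mathcal G\rangle + S) \oplus \langle\mathcal M\rangle_K$. Counting $K$-dimensions modulo $S$ and using that $(\varepsilon_{\langle\mathcal G^S\rangle}\circ\varepsilon_S)(\mathcal M)$ has exactly $\mu = \#\mathcal M$ elements forces $\langle\mathcal G\rangle + S = \langle\mathcal G\rangle$, that is $S \subseteq \langle\mathcal G\rangle$; and then $\langle\mathcal G\rangle \cap \langle\mathcal M\rangle_K = \{0\}$, so by Corollary~\ref{thm:char} $\mathcal G$ is the $\mathcal M$-module border basis of $\langle\mathcal G\rangle$.

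The main obstacle I anticipate is bookkeeping around the "extra" border elements $b_{\nu+1}e_{\beta_{\nu+1}},\ldots,b_\omega e_{\beta_\omega}$ of $\mathcal M$ that collapse under $\varepsilon_S$: I must make sure that the corresponding $\mathcal G_j$ ($j > \nu$) genuinely lie in $\langle\mathcal G^S\rangle$'s preimage in a way compatible with both directions, and that the cardinality count $\#\mathcal G = \omega$ versus $\#\mathcal G^S = \nu$ does not corrupt the dimension arguments — the key point being that what is counted in Definition~\ref{defn:moduleBB} and Definition~\ref{defn:moduleBB-quot} is always $\#\mathcal M = \#\mathcal M^S = \mu$, not the size of the border or the prebasis. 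The other delicate point is the precise use of injectivity of $\restrict{\varepsilon_S}{\mathcal M}$ to get $S \cap \langle\mathcal M\rangle_K = \{0\}$; I would prove this as a small preliminary observation (a nonzero element of $S \cap \langle\mathcal M\rangle_K$ would be a nontrivial $K$-linear dependence among the $t_i e_{\alpha_i}$ modulo $S$, contradicting that $\restrict{\varepsilon_S}{\mathcal M}$ is injective and hence that $\varepsilon_S(\mathcal M) = \mathcal M^S$ is $K$-linearly independent — which it is, being part of the order quotient module on which the basis conditions are imposed).
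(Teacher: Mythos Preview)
Your proof is correct. For ii)$\Rightarrow$i) you take exactly the paper's route via the isomorphism theorem. For i)$\Rightarrow$ii) you diverge: the paper first proves $S \subseteq \langle\mathcal G\rangle$ by contradiction, picking $\mathcal S \in S \setminus \langle\mathcal G\rangle$, applying the Module Border Division Algorithm (Theorem~\ref{thm:divAlg}) to get a nonzero remainder in $\langle\mathcal M\rangle_K$, and pushing this through $\varepsilon_S$ to contradict the linear independence in $(P^r/S)/\langle\mathcal G^S\rangle$; only then does it invoke the isomorphism $P^r/\langle\mathcal G\rangle \cong (P^r/S)/\langle\mathcal G^S\rangle$ to handle the border-basis property. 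You instead work entirely through the direct-sum characterization of Corollary~\ref{thm:char}: from hypothesis~i) you obtain $(\langle\mathcal G\rangle + S) \cap \langle\mathcal M\rangle_K = \{0\}$ directly, hence also $\langle\mathcal G\rangle \cap \langle\mathcal M\rangle_K = \{0\}$, so $\mathcal G$ is already a border basis of $\langle\mathcal G\rangle$; then $\langle\mathcal G\rangle$ and $\langle\mathcal G\rangle + S$ are both direct complements of $\langle\mathcal M\rangle_K$ in $P^r$, with one contained in the other, forcing equality and thus $S \subseteq \langle\mathcal G\rangle$. This is a slightly more structural route that avoids invoking the division algorithm explicitly in this direction. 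Two expository points to clean up: once you have $c_1 = \cdots = c_\mu = 0$ you get $\mathcal V = 0$ immediately (the $t_ie_{\alpha_i}$ are distinct terms in $P^r$), so the detour through $S \cap \langle\mathcal M\rangle_K = \{0\}$ is superfluous; and your ``counting $K$-dimensions modulo $S$'' should be phrased as a codimension or common-complement argument, since the ambient spaces are infinite-dimensional.
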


\begin{proof}
Let $\mathcal M \subseteq \mathbb T^n \langle e_1, \ldots, e_r \rangle$ and
$\mathcal G = \{ \mathcal G_1, \ldots, \mathcal G_\omega \} \subseteq P^r$ with
$\omega \geq \nu$ be written like in Definition~\ref{defn:charModuleBB}. In
particular, i.\,e.\ $\mathcal M = \{ t_1 e_{\alpha_1}, \ldots, t_\mu
e_{\alpha_\mu} \}$.

Firstly, we prove that $S \subseteq \langle \mathcal G \rangle$ if~$\mathcal
G^S$ is the $\mathcal M^S$-quotient module border basis of~$\langle \mathcal G^S
\rangle$. Suppose that~$\mathcal G^S$ is the $\mathcal M^S$-quotient module
border basis of~$\langle \mathcal G^S \rangle$. Assume that $S \not\subseteq
\langle \mathcal G \rangle$. Let $\mathcal S \in S \setminus \langle \mathcal G
\rangle$. We apply the Module Border Division Algorithm~\ref{thm:divAlg}
to~$\mathcal S$ and~$\mathcal G$ to obtain a representation
\begin{align*}
\mathcal S = p_1 \mathcal G_1 + \cdots + p_\omega \mathcal G_\omega + c_1 t_1
e_{\alpha_1} + \cdots + c_\mu t_\mu e_{\alpha_\mu}
\end{align*}
with $p_1, \ldots, p_\omega \in P$ and $c_1, \ldots, c_\mu \in K$. Since
$\mathcal S \notin \langle \mathcal G \rangle$, we see that
\begin{align*}
c_1 t_1 e_{\alpha_1} + \cdots + c_\mu t_\mu e_{\alpha_\mu} \neq 0
\end{align*}
and thus there exists an index $i \in \{ 1, \ldots, \mu \}$ such that $c_i \neq
0$. Moreover, Lemma~\ref{thm:charModuleBB} yields
\begin{align*}
\langle \mathcal G^S \rangle & = \varepsilon_S(\mathcal S) + \langle \mathcal
G^S \rangle\\
& = \varepsilon_S(p_1 \mathcal G_1 + \cdots + p_\omega \mathcal G_\omega +
c_1 t_1 e_{\alpha_1} + \cdots + c_\mu t_\mu e_{\alpha_\mu}) + \langle \mathcal
G^S \rangle\\
& = (p_1 \varepsilon_S(\mathcal G_1) + \cdots + p_\omega \varepsilon_S(\mathcal
G_\omega) + (c_1 t_1 e_{\alpha_1} + \cdots + c_\mu t_\mu e_{\alpha_\mu}) + S) +
\langle \mathcal G^S \rangle\\
& = (c_1 t_1 e_{\alpha_1} + \cdots + c_\mu t_\mu e_{\alpha_\mu} + S) + \langle
\mathcal G^S \rangle.
\end{align*}
As $\varepsilon_S(\mathcal M) = \mathcal M^S$
and~$\restrict{\varepsilon_S}{\mathcal M}$ is injective according to
Definition~\ref{defn:charOrderModule} and as $c_i \neq 0$, it follows that
$\varepsilon_{\langle \mathcal G^S \rangle}(\mathcal M^S) \subseteq (P^r / S) /
\langle \mathcal G^S \rangle$ is $K$-linearly dependent in contradiction to
Definition~\ref{defn:moduleBB-quot}. Altogether, we have proven that $S
\subseteq \langle \mathcal G \rangle$.

Secondly, we prove the claimed equivalence. Suppose that $S \subseteq \langle
\mathcal G \rangle$. Then Lemma~\ref{thm:charModuleBB} yields
$\varepsilon_S^{-1}(\langle \mathcal G^S \rangle) = \langle \mathcal G \rangle$
and hence~$\varepsilon_S$ induces a $P$-module isomorphism
\begin{align*}
P^r / \langle \mathcal G \rangle = P^r / \varepsilon_S^{-1}(\langle \mathcal
G^S \rangle) \cong \varepsilon_S(P^r) / \langle \mathcal G^S \rangle = (P^r / S)
/ \langle \mathcal G^S \rangle
\end{align*}
according to the Second Noether Isomorphism Theorem. As $\varepsilon_S(\mathcal
M) = \mathcal M^S$ and as~$\restrict{\varepsilon_S}{\mathcal M}$ is injective by
Definition~\ref{defn:charOrderModule}, it follows that~$\mathcal G$ is the
$\mathcal M$-module border basis of~$\langle \mathcal G \rangle$ if and only
if~$\mathcal G^S$ is the $\mathcal M^S$-quotient module border basis
of~$\langle \mathcal G^S \rangle$ by Definition~\ref{defn:moduleBB} and
Definition~\ref{defn:moduleBB-quot}.
\end{proof}

\begin{exmp}
\label{exmp:char-quot}
Let $K = \mathbb Q$ and $P = \mathbb Q[x,y]$. Furthermore, we let
\begin{align*}
S = \langle (x+y+1, -x+y) \rangle \subseteq P^2
\end{align*}
be a $P$-submodule,
\begin{align*}
\varepsilon_S: P^2 \twoheadrightarrow P^2 / S, \quad \mathcal V \mapsto \mathcal
V + S,
\end{align*}
be the canonical $P$-module epimorphism, and $\{ e_1, e_2 \}$ be the
canonical $P$-module basis of~$P^2$. Consider the order module
\begin{align*}
\mathcal M = \{ e_1, e_2 \} \subseteq \mathbb T^2 \langle e_1, e_2 \rangle,
\end{align*}
and the $\mathcal M$-module border prebasis $\mathcal G = \{ \mathcal G_1,
\ldots, \mathcal G_4 \} \subseteq P^2$ with
\begin{align*}
\mathcal G_1 & = x e_1 + \tfrac{4}{3} e_1 + \tfrac{2}{3} e_2,\\
\mathcal G_2 & = x e_2 - \tfrac{2}{3} e_1 - \tfrac{1}{3} e_2,\\
\mathcal G_3 & = y e_1 - e_1,\\
\mathcal G_4 & = y e_2 - e_2.
\end{align*}
In Example~\ref{exmp:moduleBBAlgo}, we have shown that~$\mathcal G$ is the
$\mathcal M$-module border basis of
\begin{align*}
U = \langle (x+y+1, -x+y), (-2, 3x-1), (3x+4, 2), (0, y-1), (y-1, 0) \rangle
\subseteq P^2.
\end{align*}
In particular, we hence have $U = \langle \mathcal G \rangle$ by
Corollary~\ref{thm:genSetModBB}. Let
\begin{align*}
\mathcal M^S = \varepsilon_S(\mathcal M) = \{ e_1 + S, e_2 + S \}
\end{align*}
and $\mathcal G^S = \{ \mathcal G_1^S, \ldots, \mathcal G_4^S \} =
\varepsilon_S(\mathcal G) \subseteq P^2 / S$ with
\begin{align*}
\mathcal G_1^S & = \varepsilon_S(\mathcal G_1) = x e_1 + \tfrac{4}{3} e_1 +
\tfrac{2}{3} e_2 + S,\\
\mathcal G_2^S & = \varepsilon_S(\mathcal G_2) = x e_2 - \tfrac{2}{3} e_1 -
\tfrac{1}{3} e_2 + S,\\
\mathcal G_3^S & = \varepsilon_S(\mathcal G_3) = y e_1 - e_1 + S,\\
\mathcal G_4^S & = \varepsilon_S(\mathcal G_4) = y e_2 - e_2 + S.
\end{align*}
Then we see that~$\mathcal M$ is an order module characterizing the
order quotient module~$\mathcal M^S$ by Definition~\ref{defn:charOrderModule}
and~$\mathcal G$ is the $\mathcal M$-module border prebasis characterizing
the~$\mathcal M$-quotient module border prebasis~$\mathcal G^S$ by
Definition~\ref{defn:charModuleBB}. As we also have
\begin{align*}
S = \langle (x+y+1, -x+y) \rangle \subseteq \langle \mathcal G \rangle,
\end{align*}
and as~$\mathcal G$ is the $\mathcal M$-module border basis of~$\langle
\mathcal G \rangle$, Theorem~\ref{thm:char-quot} yields that~$\mathcal G^S$ is
the $\mathcal M^S$-quotient module border basis of
\begin{align*}
\langle \mathcal G^S \rangle & = \langle \varepsilon_S(\mathcal G) \rangle\\
& = \langle (-2, 3x-1) + S, (3x+4, 2) + S, (0, y-1) + S, (y-1, 0) + S \rangle
\subseteq P^2 / S.
\end{align*}
\end{exmp}

\begin{rem}
\label{rem:charOrderModule}
The assumption in Theorem~\ref{thm:char-quot} that an order module~$\mathcal
M$ characterizing~$\mathcal M^S$ exists, is crucial as the following example
shows.

Let $P = \mathbb Q[x,y]$, $\{ e_1, e_2 \}$ be the canonical $P$-module basis
of~$P^2$, and
\begin{align*}
S = \langle x e_1 - y e_2 \rangle \subseteq P^2.
\end{align*}
We consider the order quotient module
\begin{align*}
\mathcal M^S & = \{ x^2, x, 1\} \cdot (e_1 + S) \cup \{ y^2, y, 1 \} \cdot (e_2
+ S)\\
& = \{ x^2 e_1 + S, x e_1 + S, e_1 + S, y^2 e_2 + S, y e_2 + S, e_2 + S \}\\
& = \{ e_1 + S, y^2 e_2 + S, y e_2 + S, e_2 + S, xy e_2 + S \}\\
& = \{ 1 \} \cdot (e_1 + S) \cup \{ xy, y^2, y, 1 \} \cdot (e_2 + S) \subseteq
P^2 / S.
\end{align*}
We see that $\# \mathcal M^S = 5$ as
\begin{align*}
x e_1 + S = y e_2 + S \in \mathcal M^S,
\end{align*}
and that
\begin{align*}
x^2 e_1 + S = xy e_2 + S \in \mathcal M^S.
\end{align*}
Assume that there exists an order module $\mathcal M \subseteq \mathbb T^2
\langle e_1, e_2 \rangle$ characterizing~$\mathcal M^S$. Then we have $\#
\mathcal M^S = 5$, $y^2 e_2 \in \mathcal M$, and $x^2 e_1 \in \mathcal M$ or
$xy e_2 \in \mathcal M$. If $x^2 e_1 \in \mathcal M$,
Definition~\ref{defn:charOrderModule} and Definition~\ref{defn:orderModule}
yield the contradiction
\begin{align*}
\# \mathcal M \geq \#(\{ x^2, x, 1 \} \cdot e_1 \cup \{ y^2, y, 1 \} \cdot e_2)
= 6 > 5 = \# \mathcal M^S.
\end{align*}
If $xy e_2 \in \mathcal M$, Definition~\ref{defn:orderModule} yields that $x
e_2 \in \mathcal M$ and hence we get the contradiction $\varepsilon_S(x e_2) = x
e_2 + S \in \mathcal M^S$ by Definition~\ref{defn:charOrderModule}.\\
Thus there exists no order module characterizing~$\mathcal M^S$ by
Definition~\ref{defn:charOrderModule}. In particular, we see that we cannot use
Theorem~\ref{thm:char-quot} for every $\mathcal M^S$-quotient module border
basis.
\end{rem}

\begin{rem}
\label{rem:indexDivAlg-quot}
Although we have seen in Remark~\ref{rem:problems-quot} that we cannot reuse the
the results of module border bases in Part~\ref{part:freeMod} in a
straightforward way, we can identify quotient module border bases with their
characterizing module border bases---if they exist---according to
Theorem~\ref{thm:char-quot}. This allows us to define similar concept like the
Module Border Division Algorithm~\ref{thm:divAlg}---and thus the normal
remainder---for quotient module border bases the following way.

Suppose that there exists an order module $\mathcal M \subseteq \mathbb T^n
\langle e_1, \ldots, e_r \rangle$ characterizing~$\mathcal M^S$. Firstly, we
have to compute an order module~$\mathcal M$ characterizing~$\mathcal M^S$ like
in Definition~\ref{defn:charOrderModule} and the $\mathcal M$-module border
prebasis~$\mathcal G$ characterizing~$\mathcal G^S$ like in
Definition~\ref{defn:charModuleBB}. Secondly, we have to determine one
representative $\mathcal V \in P^r$ of a given element $\mathcal V^S \in P^r /
S$ and apply the Module Border Division Algorithm~\ref{thm:divAlg} to~$\mathcal
V$ and~$\mathcal G$ to obtain a representation
\begin{align*}
\mathcal V = p_1 \mathcal G_1 + \cdots + p_\omega \mathcal G_\omega + c_1 t_1
e_{\alpha_1} + \cdots + c_\mu t_\mu e_{\alpha_\mu}
\end{align*}
with $p_1, \ldots, p_\omega \in P$ and $c_1, \ldots, c_\mu \in K$. Finally, we
have to apply~$\varepsilon_S$ to this result and get a similar representation of
$\mathcal V^S \in P^r / S$, namely
\begin{align*}
\mathcal V^S & = \varepsilon_S(\mathcal V)\\
& = p_1 \varepsilon_S(\mathcal G_1) + \cdots + p_\omega \varepsilon_S(\mathcal
G_\omega) + (c_1 t_1 e_{\alpha_1} + \cdots + c_\mu t_\mu e_{\alpha_\mu}) + S\\
& = q_1 \mathcal G_1^S + \cdots + q_\nu \mathcal G_\nu^S + (c_1 t_1 e_{\alpha_1}
+ \cdots + c_\mu t_\mu e_{\alpha_\mu}) + S \in P^r / S
\end{align*}
with polynomials $q_1, \ldots, q_\nu \in P$, cf.\
Lemma~\ref{thm:charModuleBB} and Definition~\ref{defn:charModuleBB}. We can then
define the element
\begin{align*}
c_1 t_1 e_{\alpha_1} + \cdots + c_\mu t_\mu e_{\alpha_\mu} + S \in P^r / S,
\end{align*}
which is a representative of the residue class $\mathcal V^S + \langle \mathcal
G^S \rangle \in (P^r / S) / \langle \mathcal G^S \rangle$, to be the normal
remainder of $\mathcal V^S \in P^r / S$ with respect to~$\mathcal G^S$
and~$\mathcal V$. In particular, we are then able to generalize
Corollary~\ref{thm:genSetModBB} and Corollary~\ref{thm:genSet} to quotient
module border bases using this construction.

Many other concepts can be defined for quotient module border bases the same
way, e.\,g.\ an $\mathcal M^S$-index like in Definition~\ref{defn:index} or the
special generation property in Theorem~\ref{thm:specGen}. But note that the
result of the last step, namely applying~$\varepsilon_S$ to the result in~$P^r$,
can lead to inconsistencies if we do not distinguish between different
representatives of the given residue class, as the following example shows.\\
We consider Example~\ref{exmp:charModuleBB}, again. Recall, that we have had
\begin{align*}
x e_1 + S = y e_2 + S \in \mathcal M^S \cap \partial \mathcal M^S.
\end{align*}
Then the above construction assigns $x e_1 + S$ the $\mathcal M^S$-index
$\ind_{\mathcal M}(x e_1) = 0$, whereas the same residue class $y e_2 + S$ is
also assigned the $\mathcal M^S$-index $\ind_{\mathcal M}(y e_2) = 1$.

Altogether, we see that we can reuse the concepts of Part~\ref{part:freeMod} but
we sometimes must not define these concepts for a given residue class in~$P^r/S$
itself, but only for a specific representative of it in~$P^r$.
\end{rem}

We now use Theorem~\ref{thm:char-quot} to determine an algorithm that computes a
quotient module border basis of an arbitrary $P$-submodule $U^S \subseteq P^r
/ S$ with finite $K$-codimension in~$P^r / S$. The idea of this algorithm is,
that we first compute an $\mathcal M$-module border basis~$\mathcal G \subseteq
P^r$ of the $P$-submodule $\varepsilon_S^{-1}(U^S) \subseteq P^r$ and then
apply~$\varepsilon_S$ to the result to derive the $\varepsilon_S(\mathcal
M)$-quotient module border prebasis $\varepsilon_S(\mathcal G)$, again. Then we
see that~$\mathcal M$ is an order module characterizing~$\varepsilon_S(\mathcal
M)$ and that~$\mathcal G$ is the $\varepsilon_S(\mathcal M)$-module border
prebasis characterizing~$\varepsilon_S(\mathcal G)$, and hence the claim follows
with Theorem~\ref{thm:char-quot}.

\begin{algorithm}[H]
\caption{${\tt quotModuleBB}(\{ \mathcal B_1, \ldots, \mathcal B_k \}, \{
\mathcal S_1, \ldots, \mathcal S_\ell \}, \sigma)$} \begin{algorithmic}[1]
\label{algo:quotModuleBB}
  \REQUIRE $k \in \mathbb N$ and $\{ \mathcal B_1, \ldots, \mathcal B_k \}
  \subseteq P^r \setminus \{ 0 \}$,\\
  $\ell \in \mathbb N$ and $\{ \mathcal S_1, \ldots, \mathcal S_\ell \}
  \subseteq P^r \setminus \{ 0 \}$,\\
  $\codim_K(\langle \mathcal B_1, \ldots, \mathcal B_k \rangle + \langle
  \mathcal S_1, \ldots, \mathcal S_\ell \rangle, P^r) <
  \infty$,\\
  $\sigma$ is a degree compatible term ordering on~$\mathbb T^n$
  \STATE $S \assign \langle \mathcal S_1, \ldots, \mathcal S_\ell
  \rangle$\label{algo:quotModuleBB-initS}
  \STATE Let $\varepsilon_S: P^r \twoheadrightarrow P^r /S, ~e_k \mapsto e_k +
  S$.\label{algo:quotModuleBB-initEpsilon}
  \STATE $(\mathcal M, \mathcal G) \assign {\tt moduleBB}(\{ \mathcal B_1,
  \ldots, \mathcal B_k, \mathcal S_1, \ldots, \mathcal S_\ell \},
  \sigma)$\label{algo:quotModuleBB-corrModuleBB}
  \STATE $\mathcal M^S \assign \varepsilon_S(\mathcal
  M)$\label{algo:quotModuleBB-orderQuotModule}
  \STATE $\mathcal G^S \assign \varepsilon_S(\mathcal
  G)$\label{algo:quotModuleBB-quotModuleBB}
  \RETURN $(\mathcal M^S, \mathcal G^S)$
\end{algorithmic}
\end{algorithm}

\begin{cor}[The Quotient Module Border Basis Algorithm]
\label{thm:quotModuleBBAlg}
Let $\ell \in \mathbb N$ and $S = \langle \mathcal S_1, \ldots, \mathcal
S_\ell \rangle \subseteq P^r$ with $\{ \mathcal S_1, \ldots, \mathcal S_\ell \}
\subseteq P^r \setminus \{ 0 \}$ be a $P$-submodule. Let $k \in \mathbb N$ and
$U^S = \langle \mathcal B_1 + S, \ldots, \mathcal B_k + S \rangle \subseteq P^r
/ S$ with $\{ \mathcal B_1, \ldots, \mathcal B_k \} \subseteq P^r
\setminus \{ 0 \}$ be a $P$-submodule such that $\codim_K(U^S, P^r / S) <
\infty$. Moreover, let $\sigma$ be a degree compatible term ordering on~$\mathbb
T^n$. Then Algorithm~\ref{algo:quotModuleBB} is actually an algorithm and the
result
\begin{align*}
(\mathcal M^S, \mathcal G^S) \assign {\tt quotModuleBB}(\{ \mathcal B_1, \ldots,
\mathcal B_k \}, \{ \mathcal S_1, \ldots, \mathcal S_\ell \}, \sigma)
\end{align*}
of Algorithm~\ref{algo:quotModuleBB} applied to the input data~$\{ \mathcal B_1,
\ldots, \mathcal B_k \}$, $\{ \mathcal S_1, \ldots, \mathcal S_\ell \}$,
and~$\sigma$ satisfies the following conditions.
\begin{enumerate}
\renewcommand{\labelenumi}{\roman{enumi})}
  \item The set $\mathcal M^S \subseteq P^r / S$ is an order quotient module.
  \item The set $\mathcal G^S \subseteq P^r / S$ is the $\mathcal M^S$-quotient
  module border basis of~$U^S$.
\end{enumerate}
\end{cor}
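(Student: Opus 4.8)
The plan is to reduce the statement to the Module Border Basis Algorithm (Theorem~\ref{thm:moduleBBAlg}) together with the characterization of quotient module border bases via characterizing module border bases (Theorem~\ref{thm:char-quot}). Throughout, write $U = \langle \mathcal B_1, \ldots, \mathcal B_k \rangle + S \subseteq P^r$, which is exactly the $P$-submodule generated by the combined input $\{ \mathcal B_1, \ldots, \mathcal B_k, \mathcal S_1, \ldots, \mathcal S_\ell \}$ that ${\tt moduleBB}$ receives in line~\ref{algo:quotModuleBB-corrModuleBB}. By Lemma~\ref{thm:charModuleBB} one then has $\varepsilon_S^{-1}(U^S) = U$ and, since $\varepsilon_S$ is surjective, $\varepsilon_S(U) = U^S$.

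First I would verify that this call meets the hypotheses of Theorem~\ref{thm:moduleBBAlg}: the combined vectors lie in $P^r \setminus \{ 0 \}$ and $\sigma$ is degree compatible by assumption, and $\codim_K(U, P^r) < \infty$ because $S \subseteq U$ yields, by the Second Noether Isomorphism Theorem, a $K$-vector space isomorphism $P^r / U = P^r / \varepsilon_S^{-1}(U^S) \cong (P^r / S) / U^S$, whose right-hand side is finite-dimensional by hypothesis. Hence ${\tt moduleBB}$ is an algorithm, and it returns an order module $\mathcal M \subseteq \mathbb T^n \langle e_1, \ldots, e_r \rangle$ together with the $\mathcal M$-module border basis $\mathcal G$ of $U$; the two applications of $\varepsilon_S$ in lines~\ref{algo:quotModuleBB-orderQuotModule} and~\ref{algo:quotModuleBB-quotModuleBB} are trivially computable, so Algorithm~\ref{algo:quotModuleBB} is an algorithm. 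Claim~i) is then immediate from Definition~\ref{defn:orderModule-quot}, since $\mathcal M^S = \varepsilon_S(\mathcal M)$ is the image under $\varepsilon_S$ of an order module.

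For claim~ii) I would apply Theorem~\ref{thm:char-quot} to the order module $\mathcal M$ and the prebasis $\mathcal G$. Since $\langle \mathcal G \rangle = U$ by Corollary~\ref{thm:genSetModBB}, Corollary~\ref{thm:char} gives $P^r = U \oplus \langle \mathcal M \rangle_K$, and together with $S \subseteq U$ this forces $S \cap \langle \mathcal M \rangle_K = \{ 0 \}$; consequently distinct terms of $\mathcal M$ have distinct images under $\varepsilon_S$ (their difference would otherwise be a nonzero element of $S \cap \langle \mathcal M \rangle_K$), so $\restrict{\varepsilon_S}{\mathcal M}$ is injective and $\mathcal M$ characterizes $\mathcal M^S$ in the sense of Definition~\ref{defn:charOrderModule}. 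The step I expect to be the main obstacle is to verify that the $\mathcal G$ produced by ${\tt moduleBB}$ is, after a suitable reindexing of $\partial \mathcal M$, precisely the $\mathcal M$-module border prebasis characterizing $\mathcal G^S := \varepsilon_S(\mathcal G)$ as constructed in Definition~\ref{defn:charModuleBB}; the crucial point is that whenever two border terms $b_j e_{\beta_j}, b_k e_{\beta_k} \in \partial \mathcal M$ satisfy $\varepsilon_S(b_j e_{\beta_j}) = \varepsilon_S(b_k e_{\beta_k})$, the coefficient vectors of $\mathcal G_j = b_j e_{\beta_j} - \sum_{i=1}^\mu c_{ij} t_i e_{\alpha_i}$ and $\mathcal G_k = b_k e_{\beta_k} - \sum_{i=1}^\mu c_{ik} t_i e_{\alpha_i}$ agree, because $b_j e_{\beta_j} - b_k e_{\beta_k} \in S \subseteq \langle \mathcal G \rangle$ and $\mathcal G_j, \mathcal G_k \in \langle \mathcal G \rangle$ together give $\sum_{i=1}^\mu (c_{ij} - c_{ik}) t_i e_{\alpha_i} \in U \cap \langle \mathcal M \rangle_K = \{ 0 \}$; the remaining matching of indices and coefficients with Definition~\ref{defn:charModuleBB} is routine bookkeeping. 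Once this identification is in place, Theorem~\ref{thm:char-quot} applies: $\mathcal G$ is the $\mathcal M$-module border basis of $\langle \mathcal G \rangle$ and $S \subseteq \langle \mathcal G \rangle$, so condition~ii) of that theorem holds and hence $\mathcal G^S$ is the $\mathcal M^S$-quotient module border basis of $\langle \mathcal G^S \rangle$. Finally, Lemma~\ref{thm:charModuleBB} gives $\langle \mathcal G^S \rangle = \varepsilon_S(\langle \mathcal G \rangle) = \varepsilon_S(U) = U^S$, which yields claim~ii).
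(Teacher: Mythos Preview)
Your proposal is correct and follows essentially the same approach as the paper: reduce to the Module Border Basis Algorithm via the Second Noether Isomorphism $P^r/U \cong (P^r/S)/U^S$, verify that $\mathcal M$ characterizes $\mathcal M^S$ using $U \cap \langle \mathcal M \rangle_K = \{0\}$ from Corollary~\ref{thm:char}, check that $\mathcal G$ is the characterizing prebasis by the same coefficient-comparison argument, and conclude with Theorem~\ref{thm:char-quot} and Lemma~\ref{thm:charModuleBB}. The paper's proof is organized identically, with only cosmetic differences in how the injectivity of $\restrict{\varepsilon_S}{\mathcal M}$ is phrased.
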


\begin{proof}
Let $U = \langle \mathcal B_1, \ldots, \mathcal B_k \rangle + S \subseteq
P^r$.

We start to prove that the procedure is actually an algorithm. First, we check
that the requirements of the algorithm are satisfied, i.\,e.\ if $\codim_K(U,
P^r) < \infty$ holds. Since Lemma~\ref{thm:charModuleBB} yields that
\begin{align*}
\varepsilon_S^{-1}(U^S) = \langle \mathcal B_1, \ldots, \mathcal B_k \rangle + S
= U,
\end{align*}
we have
\begin{align*}
P^r / U = P^r / \varepsilon_S^{-1}(U^S) \cong \varepsilon_S(P^r) / U^S = (P^r /
S) / U^S
\end{align*}
by the Second Noether Isomorphism Theorem. In particular, we see that
\begin{align*}
\codim_K(U, P^r) = \codim_K(U^S, P^r / S) < \infty,
\end{align*}
and the requirements of the algorithm are satisfied. Additionally, since we
have $U = \langle \mathcal B_1, \ldots, \mathcal B_k, \mathcal S_1, \ldots,
\mathcal S_\ell \rangle$, it follows that
line~\ref{algo:quotModuleBB-corrModuleBB} can be computed in a finite amount of
time by Theorem~\ref{thm:moduleBBAlg}. As all the other steps can obviously be
computed in a finite amount of time, we see that the procedure is actually an
algorithm.

Next we show the correctness. According to the Module Border Basis
Algorithm~\ref{thm:moduleBBAlg}, we compute an order module $\mathcal M
\subseteq \mathbb T^n \langle e_1, \ldots, e_r \rangle$ such that $\mathcal G
\subseteq P^r$ is the $\mathcal M$-module border basis of~$U$ in
line~\ref{algo:quotModuleBB-corrModuleBB}. In particular, $\mathcal M^S =
\varepsilon_S(\mathcal M)$, which is computed in
line~\ref{algo:quotModuleBB-orderQuotModule}, is an order quotient module by
Definition~\ref{defn:orderModule-quot} and $\mathcal G^S =
\varepsilon_S(\mathcal G)$, which is computed in
line~\ref{algo:quotModuleBB-quotModuleBB}, is an $\mathcal M^S$-quotient module
border prebasis by Definition~\ref{defn:moduleBB-quot}.\\
We now prove that~$\mathcal M$ is an order module characterizing~$\mathcal
M^S$. As~$U$ is generated by~$\mathcal G$ according to
Corollary~\ref{thm:genSetModBB}, Lemma~\ref{thm:charModuleBB} shows that
\begin{align*}
U^S = \varepsilon_S(U) = \varepsilon_S(\langle \mathcal G \rangle) = \langle
\varepsilon_S(\mathcal G) \rangle = \langle \mathcal G^S \rangle.
\end{align*}
We write $\mathcal M = \{ t_1 e_{\alpha_1}, \ldots, t_\mu e_{\alpha_\mu} \}$
with $\mu \in \mathbb N$, terms $t_1, \ldots, t_\mu \in \mathbb T^n$, and
indices $\alpha_1, \ldots, \alpha_\mu \in \{ 1, \ldots, r \}$. Let $i, j \in \{
1, \ldots, \mu \}$ be such that
\begin{align*}
t_i e_{\alpha_i} + S = \varepsilon_S(t_i e_{\alpha_i}) = \varepsilon_S(t_j
e_{\alpha_j}) = t_j e_{\alpha_j} + S.
\end{align*}
According to the definition of~$U$ and the considerations above, we then have
\begin{align*}
t_i e_{\alpha_i} - t_j e_{\alpha_j} \in S \cap \langle \mathcal M \rangle_K
\subseteq U \cap \langle \mathcal M \rangle_K.
\end{align*}
As~$\mathcal G$ is the $\mathcal M$-module border basis of~$U$,
Corollary~\ref{thm:char} yields $t_i e_{\alpha_i} = t_j e_{\alpha_j}$. Hence it
follows that the restriction~$\restrict{\varepsilon_S}{\mathcal M}$ is injective
and thus~$\mathcal M$ is an order module characterizing~$\mathcal M^S =
\varepsilon_S(\mathcal M)$ by Definition~\ref{defn:charOrderModule}.\\
If we show that~$\mathcal G \subseteq P^r$ is the $\mathcal M$-module border
prebasis characterizing~$\mathcal G^S \subseteq P^r / S$,
Theorem~\ref{thm:char-quot} yields that $\mathcal G^S$ is the $\mathcal
M^S$-quotient module border basis of~$U^S$. We write $\partial \mathcal M = \{
b_1 e_{\beta_1}, \ldots, b_\omega e_{\beta_\omega} \}$ with $\omega \in \mathbb
N$, terms $b_1, \ldots, b_\omega \in \mathbb T^n$ and indices $\beta_1, \ldots,
\beta_\omega \in \{ 1, \ldots, r \}$. Additionally, we write $\mathcal G = \{
\mathcal G_1, \ldots, \mathcal G_\omega \}$ with
\begin{align*}
\mathcal G_j = b_j e_{\beta_j} - \sum_{i=1}^\mu c_{ij} t_i e_{\alpha_i}
\end{align*}
where $c_{1j}, \ldots,
c_{\mu j} \in K$ for all $j \in \{ 1, \ldots, \omega \}$. Let $v, w \in \{
1, \ldots, \nu \}$ be such that
\begin{align*}
b_v e_{\beta_v} + S = b_w e_{\beta_w} + S.
\end{align*}
Then we have
\begin{align*}
b_v e_{\beta_v} - b_w e_{\beta_w} \in S \subseteq U.
\end{align*}
Since
\begin{align*}
\mathcal G_v - \mathcal G_w = b_v e_{\beta_v} - b_w e_{\beta_w} -
\sum_{i=1}^\mu (c_{iv} - c_{iw}) t_i e_{\alpha_i} \in \langle \mathcal G
\rangle = U,
\end{align*}
it follows that
\begin{align*}
U = \sum_{i=1}^\mu (-c_{iv} + c_{iw}) t_i e_{\alpha_i} + U.
\end{align*}
As~$\mathcal G$ is the $\mathcal M$-module border basis of~$U$, this equation
yields $c_{iv} = c_{iw}$ for all $i \in \{ 1, \ldots, \mu \}$ by
Definition~\ref{defn:moduleBB}. Altogether, Definition~\ref{defn:charModuleBB}
yields that~$\mathcal G$ is the $\mathcal M$-module border prebasis
characterizing~$\mathcal G^S$. In particular, $\mathcal G^S$ is the $\mathcal
M^S$-quotient module border basis of~$\langle \mathcal G^S \rangle = U^S$ by
Theorem~\ref{thm:char-quot}.
\end{proof}

\begin{exmp}
\label{exmp:quotModuleBBAlgo}
Let $K = \mathbb Q$, $P = \mathbb Q[x,y]$, $\sigma = \DegRevLex$, and $\{ e_1,
e_2 \}$ be the canonical $P$-module basis of $P^2$. Moreover, we let
\begin{align*}
S = \langle \mathcal S_1 \rangle = \langle (x + y + 1, -x + y) \rangle \subseteq
P^2
\end{align*}
and
\begin{align*}
U^S = \langle \mathcal B_1 + S, \ldots, \mathcal B_4 + S \rangle \subseteq P^2 /
S
\end{align*}
with
\begin{align*}
\{ \mathcal B_1, \ldots, \mathcal B_4 \} = \{ (-2, 3x - 1), (3x + 4, 2 e_2), (0,
y - 1), (y - 1, 0) \}.
\end{align*}
Like in Example~\ref{exmp:moduleBBAlgo}, we have
\begin{align*}
\codim_K(\langle \mathcal B_1, \ldots, \mathcal B_4 \rangle + \langle \mathcal
S_1 \rangle, P^r) =
\codim_K(\langle \mathcal B_1, \ldots, \mathcal B_4, \mathcal S_1 \rangle, P^r)
< \infty,
\end{align*}
i.\,e.\ the requirements of the Quotient Module Border Bases
Algorithm~\ref{algo:quotModuleBB} are satisfied.

We now take a closer look at the steps of Algorithm~\ref{algo:quotModuleBB}
applied to~$\{ \mathcal B_1, \ldots, \mathcal B_4 \}$, $\{ \mathcal S_1 \}$,
and~$\sigma$. We see that the construction of~$S$ in
line~\ref{algo:quotModuleBB-initS} and of~$\varepsilon_S$ in
line~\ref{algo:quotModuleBB-initEpsilon} coincides with our definitions of~$S$
and~$\varepsilon_S$. In line~\ref{algo:quotModuleBB-corrModuleBB}, we apply the
Module Border Basis Algorithm~\ref{algo:moduleBB} to~$\{ \mathcal B_1, \ldots,
\mathcal B_4, \mathcal S_1 \}$ and~$\sigma$. As we have already seen in
Example~\ref{exmp:moduleBBAlgo}, the result of Algorithm~\ref{thm:moduleBBAlg}
applied to this input data is the order module
\begin{align*}
\mathcal M = \{ e_1, e_2 \} \subseteq \mathbb T^2 \langle e_1, e_2 \rangle
\end{align*}
and the $\mathcal M$-module border basis $\mathcal G = \{ \mathcal G_1, \ldots,
\mathcal G_4 \} \subseteq P^2$ with
\begin{align*}
\mathcal G_1 & = x e_1 + \tfrac{4}{3} e_1 + \tfrac{2}{3} e_2,\\
\mathcal G_2 & = x e_2 - \tfrac{2}{3} e_1 - \tfrac{1}{3} e_2,\\
\mathcal G_3 & = y e_1 - e_1,\\
\mathcal G_4 & = y e_2 - e_2
\end{align*}
of~$\langle \mathcal B_1, \ldots, \mathcal B_4, \mathcal S_1 \rangle$.
Line~\ref{algo:quotModuleBB-orderQuotModule} now yields the order quotient
module
\begin{align*}
\mathcal M^S = \varepsilon_S(\mathcal M) = \{ e_1 + S, e_2 + S \} \subseteq
P^2 / S
\end{align*}
and the $\mathcal M^S$-quotient module border prebasis $\mathcal G^S =
\varepsilon_S(\mathcal G) = \{ \mathcal G_1^S, \ldots, \mathcal G_4^S \}
\subseteq P^2 / S$ with
\begin{align*}
\mathcal G_1^S & = \varepsilon_S(\mathcal G_1) = x e_1 + \tfrac{4}{3} e_1 +
\tfrac{2}{3} e_2 + S,\\
\mathcal G_2^S & = \varepsilon_S(\mathcal G_2) = x e_2 - \tfrac{2}{3} e_1 -
\tfrac{1}{3} e_2 + S,\\
\mathcal G_3^S & = \varepsilon_S(\mathcal G_3) = y e_1 - e_1 + S,\\
\mathcal G_4^S & = \varepsilon_S(\mathcal G_4) = y e_2 - e_2 + S.
\end{align*}
Then the algorithm returns $(\mathcal M^S, \mathcal G^S)$ and stops.

According to Corollary~\ref{thm:quotModuleBBAlg}, $\mathcal G^S$ is the
$\mathcal M^S$-quotient module border basis of~$U^S$. Note that this result
coincides with Example~\ref{exmp:char-quot}
\end{exmp}

At last, we now derive characterizations of quotient module border bases from
Theorem~\ref{thm:char-quot} and from the characterizations of module border
bases in Section~\ref{sect:chars}.

\begin{cor}[Quotient Module Border Bases and Special Generation]
\label{thm:specGen-quot}
Suppose there exists an order module $\mathcal M \subseteq \mathbb T^n \langle
e_1, \ldots, e_r \rangle$ characterizing~$\mathcal M^S$ and let $\mathcal G = \{
\mathcal G_1, \ldots, \mathcal G_\omega \} \subseteq P^r$ be the $\mathcal
M$-module border prebasis characterizing~$\mathcal G^S$ like in
Definition~\ref{defn:charModuleBB}. Then the $\mathcal M^S$-quotient module
border prebasis~$\mathcal G^S$ is the $\mathcal M^S$-quotient module border
basis of~$\langle \mathcal G^S \rangle$ if and only if $S \subseteq \langle
\mathcal G \rangle$ and the following equivalent conditions are satisfied.
\begin{enumerate}
\renewcommand{\labelenumi}{$A_{\arabic{enumi}})$}
  \item For every vector $\mathcal V \in \langle \mathcal G \rangle \setminus
  \{ 0 \}$, there exist polynomials $p_1, \ldots, p_\omega \in P$ such that
  \begin{align*}
  \mathcal V = p_1 \mathcal G_1 + \cdots + p_\omega \mathcal G_\omega
  \end{align*}
  and
  \begin{align*}
  \deg(p_j) \leq \ind_{\mathcal M}(\mathcal V) - 1
  \end{align*}
  for all $j \in \{ 1, \ldots, \omega \}$ with~$p_j \neq 0$.
  \item For every vector $\mathcal V \in \langle \mathcal G \rangle \setminus
  \{ 0 \}$, there exist polynomials $p_1, \ldots, p_\omega \in P$ such that
  \begin{align*}
  \mathcal V = p_1 \mathcal G_1 + \cdots + p_\omega \mathcal G_\omega
  \end{align*}
  and
  \begin{align*}
  \max \{ \deg(p_j) \mid j \in \{ 1, \ldots, \omega \}, p_j \neq 0 \} =
  \ind_{\mathcal M}(\mathcal V) - 1.
  \end{align*}
\end{enumerate}
\end{cor}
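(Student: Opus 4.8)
The plan is to reduce everything to results already established for module border bases in Part~\ref{part:freeMod}, using Theorem~\ref{thm:char-quot} as the bridge. By Theorem~\ref{thm:char-quot}, the $\mathcal M^S$-quotient module border prebasis~$\mathcal G^S$ is the $\mathcal M^S$-quotient module border basis of~$\langle \mathcal G^S \rangle$ if and only if $\mathcal G$ is the $\mathcal M$-module border basis of~$\langle \mathcal G \rangle$ and $S \subseteq \langle \mathcal G \rangle$. So it suffices to show that, given $S \subseteq \langle \mathcal G \rangle$, the statement ``$\mathcal G$ is the $\mathcal M$-module border basis of $\langle \mathcal G \rangle$'' is equivalent to each of the conditions $A_1)$ and $A_2)$ stated here. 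But $\mathcal G$ is an $\mathcal M$-module border prebasis by Lemma~\ref{thm:charModuleBB}.c), so this equivalence is \emph{exactly} Theorem~\ref{thm:specGen} applied to~$\mathcal G$, with $\nu$ replaced by~$\omega$ (the number of border terms of~$\mathcal M$, which need not coincide with the number $\nu$ of border terms of~$\mathcal M^S$).

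Concretely, I would first invoke Lemma~\ref{thm:charModuleBB}.c) to record that $\mathcal G = \{\mathcal G_1, \ldots, \mathcal G_\omega\}$ is an $\mathcal M$-module border prebasis, where $\partial \mathcal M = \{b_1 e_{\beta_1}, \ldots, b_\omega e_{\beta_\omega}\}$ as in Definition~\ref{defn:charModuleBB}. Then I would apply Theorem~\ref{thm:specGen} directly to this prebasis: it states that $\mathcal G$ is the $\mathcal M$-module border basis of~$\langle \mathcal G \rangle$ if and only if $A_1)$ holds if and only if $A_2)$ holds, where $A_1)$ and $A_2)$ are verbatim the conditions in the present corollary (with index set $\{1, \ldots, \omega\}$). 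Finally I would conjoin this with the condition $S \subseteq \langle \mathcal G \rangle$ and feed the result into Theorem~\ref{thm:char-quot} to conclude.

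The only subtlety worth spelling out is the role of the hypothesis $S \subseteq \langle \mathcal G \rangle$. In the chain of equivalences, Theorem~\ref{thm:specGen} gives ``$\mathcal G$ is the $\mathcal M$-module border basis of $\langle \mathcal G\rangle$'' $\Leftrightarrow$ $A_1)$ $\Leftrightarrow$ $A_2)$ unconditionally; but the link to~$\mathcal G^S$ via Theorem~\ref{thm:char-quot} requires both that $\mathcal G$ be the $\mathcal M$-module border basis of~$\langle \mathcal G\rangle$ \emph{and} that $S \subseteq \langle \mathcal G\rangle$. Hence the corollary must carry $S \subseteq \langle \mathcal G\rangle$ as a separate standing hypothesis alongside $A_1)$/$A_2)$, and the statement is phrased accordingly. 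There is essentially no hard step here: the work was already done in Theorem~\ref{thm:char-quot} and Theorem~\ref{thm:specGen}, and the proof is a two-line composition of these two results, the only care needed being the bookkeeping of $\omega$ versus $\nu$ in the index ranges.

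\begin{proof}
By Lemma~\ref{thm:charModuleBB}.c), the set $\mathcal G = \{ \mathcal G_1, \ldots, \mathcal G_\omega \}$ is an $\mathcal M$-module border prebasis, where $\partial \mathcal M = \{ b_1 e_{\beta_1}, \ldots, b_\omega e_{\beta_\omega} \}$ as in Definition~\ref{defn:charModuleBB}. Applying Theorem~\ref{thm:specGen} to this $\mathcal M$-module border prebasis, we see that $\mathcal G$ is the $\mathcal M$-module border basis of~$\langle \mathcal G \rangle$ if and only if condition~$A_1)$ is satisfied if and only if condition~$A_2)$ is satisfied, where $A_1)$ and $A_2)$ are exactly the conditions stated above with index set $\{ 1, \ldots, \omega \}$. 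By Theorem~\ref{thm:char-quot}, the $\mathcal M^S$-quotient module border prebasis~$\mathcal G^S$ is the $\mathcal M^S$-quotient module border basis of~$\langle \mathcal G^S \rangle$ if and only if $\mathcal G$ is the $\mathcal M$-module border basis of~$\langle \mathcal G \rangle$ and $S \subseteq \langle \mathcal G \rangle$. Combining these two equivalences yields the claim.
\end{proof}
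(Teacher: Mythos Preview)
Your proof is correct and follows essentially the same approach as the paper, which simply states that the corollary is a direct consequence of Theorem~\ref{thm:specGen} and Theorem~\ref{thm:char-quot}. Your version is more detailed---explicitly invoking Lemma~\ref{thm:charModuleBB}.c) and spelling out the $\omega$ versus $\nu$ bookkeeping---but the underlying argument is identical.
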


\begin{proof}
This corollary is a direct consequence of Theorem~\ref{thm:specGen} and
Theorem~\ref{thm:char-quot}.
\end{proof}

\begin{cor}[Quotient Module Border Bases and Border Form Modules]
\label{thm:BFMod-quot}
Suppose there exists an order module $\mathcal M \subseteq \mathbb T^n \langle
e_1, \ldots, e_r \rangle$ characterizing~$\mathcal M^S$ and let $\mathcal G = \{
\mathcal G_1, \ldots, \mathcal G_\omega \} \subseteq P^r$ be the $\mathcal
M$-module border prebasis characterizing~$\mathcal G^S$ like in
Definition~\ref{defn:charModuleBB}. Then the $\mathcal M^S$-quotient module
border prebasis~$\mathcal G^S$ is the $\mathcal M^S$-quotient module border
basis of~$\langle \mathcal G^S \rangle$ if and only if $S \subseteq \langle
\mathcal G \rangle$ and the following equivalent conditions are satisfied.
\begin{enumerate}
\renewcommand{\labelenumi}{$B_{\arabic{enumi}})$}
  \item For every $\mathcal V \in \langle \mathcal G \rangle \setminus \{ 0 \}$,
  we have
  \begin{align*}
  \Supp(\BF_{\mathcal M}(\mathcal V)) \subseteq \mathbb T^n \langle e_1, \ldots,
  e_r \rangle \setminus \mathcal M.
  \end{align*}
  \item We have
  \begin{align*}
  \BF_{\mathcal M}(\langle \mathcal G \rangle) = \langle \BF_{\mathcal
  M}(\mathcal G_1), \ldots, \BF_{\mathcal M}(\mathcal G_\omega) \rangle =
  \langle b_1 e_{\beta_1}, \ldots, b_\omega e_{\beta_\omega} \rangle.
  \end{align*}
\end{enumerate}
\end{cor}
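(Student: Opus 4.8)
The plan is to obtain this corollary as an immediate combination of Theorem~\ref{thm:char-quot} with the border form module characterization of module border bases in Theorem~\ref{thm:BFMod}, in exact parallel to the proof of Corollary~\ref{thm:specGen-quot}. The standing hypothesis is that an order module $\mathcal M \subseteq \mathbb T^n \langle e_1, \ldots, e_r \rangle$ characterizing~$\mathcal M^S$ exists, and that $\mathcal G = \{ \mathcal G_1, \ldots, \mathcal G_\omega \} \subseteq P^r$ is the $\mathcal M$-module border prebasis characterizing~$\mathcal G^S$ in the sense of Definition~\ref{defn:charModuleBB}.

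First I would recall, via Lemma~\ref{thm:charModuleBB}~c), that $\mathcal G$ is indeed an $\mathcal M$-module border prebasis; this is the only point that needs a moment's attention, since $\mathcal G$ has $\omega \geq \nu$ elements, indexed by the full border $\partial \mathcal M = \{ b_1 e_{\beta_1}, \ldots, b_\omega e_{\beta_\omega} \}$ rather than only by the $\nu$ border terms occurring in~$\mathcal G^S$. With that in hand, Theorem~\ref{thm:BFMod} applies directly to~$\mathcal G$ (with $\nu$ replaced by $\omega$ throughout), yielding that $\mathcal G$ is the $\mathcal M$-module border basis of~$\langle \mathcal G \rangle$ if and only if condition~$B_1)$ holds if and only if condition~$B_2)$ holds---and these are precisely the two conditions appearing in the statement of the corollary, including the assertion $\BF_{\mathcal M}(\langle \mathcal G \rangle) = \langle b_1 e_{\beta_1}, \ldots, b_\omega e_{\beta_\omega} \rangle$.

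Then I would invoke Theorem~\ref{thm:char-quot}, which under the same existence hypothesis states that $\mathcal G^S$ is the $\mathcal M^S$-quotient module border basis of~$\langle \mathcal G^S \rangle$ if and only if $\mathcal G$ is the $\mathcal M$-module border basis of~$\langle \mathcal G \rangle$ and $S \subseteq \langle \mathcal G \rangle$. Chaining the two equivalences gives that $\mathcal G^S$ is the $\mathcal M^S$-quotient module border basis of~$\langle \mathcal G^S \rangle$ exactly when $S \subseteq \langle \mathcal G \rangle$ and $B_1)$ (equivalently $B_2)$) is satisfied, which is the claim. I do not anticipate any genuine obstacle here: the only bookkeeping is to ensure the extra border terms $b_{\nu+1} e_{\beta_{\nu+1}}, \ldots, b_\omega e_{\beta_\omega}$ are consistently carried along in~$B_2)$, which they are by construction, and to note that the equivalence $B_1) \Leftrightarrow B_2)$ is already furnished by Theorem~\ref{thm:BFMod} and need not be reproved.
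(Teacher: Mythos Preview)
Your proposal is correct and matches the paper's own proof, which is the one-line statement that the corollary is a direct consequence of Theorem~\ref{thm:BFMod} and Theorem~\ref{thm:char-quot}. Your write-up simply spells out the chaining of these two results (and the harmless bookkeeping that $\mathcal G$ really is an $\mathcal M$-module border prebasis with $\omega$ elements via Lemma~\ref{thm:charModuleBB}~c)), which is exactly what the paper leaves implicit.
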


\begin{proof}
This corollary is a direct consequence of Theorem~\ref{thm:BFMod} and
Theorem~\ref{thm:char-quot}.
\end{proof}

\begin{cor}[Quotient Module Border Bases and Rewrite Rules]
\label{thm:rewrite-quot}
Suppose there exists an order module $\mathcal M \subseteq \mathbb T^n \langle
e_1, \ldots, e_r \rangle$ characterizing~$\mathcal M^S$ and let $\mathcal G = \{
\mathcal G_1, \ldots, \mathcal G_\omega \} \subseteq P^r$ be the $\mathcal
M$-module border prebasis characterizing~$\mathcal G^S$ like in
Definition~\ref{defn:charModuleBB}. Then the $\mathcal M^S$-quotient module
border prebasis~$\mathcal G^S$ is the $\mathcal M^S$-quotient module border
basis of~$\langle \mathcal G^S \rangle$ if and only if $S \subseteq \langle
\mathcal G \rangle$ and the following equivalent conditions are satisfied.
\begin{enumerate}
\renewcommand{\labelenumi}{$C_{\arabic{enumi}})$}
  \item For all $\mathcal V \in P^r$, we have $\mathcal V \RedR{\mathcal G} 0$
  if and only if~$\mathcal V \in \langle \mathcal G \rangle$.
  \item If $\mathcal V \in \langle \mathcal G \rangle$ is irreducible with
  respect to~$\RedR{\mathcal G}$, then we have~$\mathcal V = 0$.
  \item For all $\mathcal V \in P^r$, there is a unique vector $\mathcal W \in
  P^r$ such that $\mathcal V \RedR{\mathcal G} \mathcal W$ and~$\mathcal W$ is
  irreducible with respect to~$\RedR{\mathcal G}$.
  \item The rewrite relation $\RedR{\mathcal G}$ is confluent.
\end{enumerate}
\end{cor}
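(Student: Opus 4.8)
The plan is to follow exactly the pattern of the two preceding corollaries (Corollaries~\ref{thm:specGen-quot} and~\ref{thm:BFMod-quot}): derive the statement by chaining Theorem~\ref{thm:char-quot} with Theorem~\ref{thm:rewrite}. First I would invoke Theorem~\ref{thm:char-quot}, whose standing hypothesis — the existence of an order module $\mathcal M \subseteq \mathbb T^n \langle e_1, \ldots, e_r \rangle$ characterizing $\mathcal M^S$, together with the associated characterizing $\mathcal M$-module border prebasis $\mathcal G$ from Definition~\ref{defn:charModuleBB} — is precisely what this corollary assumes. That theorem gives the equivalence: the $\mathcal M^S$-quotient module border prebasis $\mathcal G^S$ is the $\mathcal M^S$-quotient module border basis of $\langle \mathcal G^S \rangle$ if and only if $\mathcal G$ is the $\mathcal M$-module border basis of $\langle \mathcal G \rangle$ and $S \subseteq \langle \mathcal G \rangle$. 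This already produces the ``$S \subseteq \langle \mathcal G \rangle$ and (equivalent conditions)'' shape of the claimed statement.

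Second, I would observe that $\mathcal G = \{ \mathcal G_1, \ldots, \mathcal G_\omega \}$ is an $\mathcal M$-module border prebasis by Lemma~\ref{thm:charModuleBB}~c), so Theorem~\ref{thm:rewrite} applies verbatim to $\mathcal G$ (reading $\omega$ for $\nu$). Its conclusion is that ``$\mathcal G$ is the $\mathcal M$-module border basis of $\langle \mathcal G \rangle$'' is equivalent to each of the four conditions $C_1)$–$C_4)$, and these are literally the conditions $C_1)$–$C_4)$ displayed in the corollary, since they refer only to $P^r$, to $\langle \mathcal G \rangle$, and to the rewrite relation $\RedR{\mathcal G}$ attached to $\mathcal G$ — none of which involves $S$. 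Transitivity of the two ``if and only if'' statements then yields the corollary, and the proof reduces to one sentence stating that it is a direct consequence of Theorem~\ref{thm:rewrite} and Theorem~\ref{thm:char-quot}.

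I do not expect a genuine obstacle here; the only points requiring a little care are bookkeeping ones: making explicit that the rewrite relation $\RedR{\mathcal G}$ in the corollary is the one associated to the \emph{characterizing} module border prebasis $\mathcal G \subseteq P^r$ of Definition~\ref{defn:charModuleBB}, not to $\mathcal G^S$ itself, and verifying that the standing hypothesis of Theorem~\ref{thm:rewrite} (that the ambient object is an $\mathcal M$-module border prebasis) is met, which is exactly Lemma~\ref{thm:charModuleBB}~c). Everything else is the mechanical composition of equivalences, just as in the proofs of the two corollaries immediately above.
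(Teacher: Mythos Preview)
Your proposal is correct and matches the paper's approach exactly: the paper's proof consists of the single sentence ``This corollary is a direct consequence of Theorem~\ref{thm:rewrite} and Theorem~\ref{thm:char-quot}.'' Your elaboration of why this composition of equivalences works (via Lemma~\ref{thm:charModuleBB}~c) to verify the hypothesis of Theorem~\ref{thm:rewrite}) is accurate and simply spells out what the paper leaves implicit.
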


\begin{proof}
This corollary is a direct consequence of Theorem~\ref{thm:rewrite} and
Theorem~\ref{thm:char-quot}.
\end{proof}

\begin{cor}[Quotient Module Border Bases and Commuting Matrices]
\label{thm:commMat-quot}
Suppose that there exists an order module $\mathcal M \subseteq \mathbb T^n
\langle e_1, \ldots, e_r \rangle$ characterizing~$\mathcal M^S$ and let
$\mathcal G = \{ \mathcal G_1, \ldots, \mathcal G_\omega \} \subseteq P^r$ be
the $\mathcal M$-module border prebasis corresponding to~$\mathcal G^S$ like in
Definition~\ref{defn:charModuleBB}. For all $s \in \{ 1, \ldots, n \}$, we
define the map
\begin{align*}
\varrho_s: \{ 1, \ldots, \mu \} \to \mathbb N, \quad i \mapsto \begin{cases}
j & \text{if $x_s t_i e_{\alpha_i} = t_j e_{\alpha_j} \in \mathcal M$,}\\
k & \text{if $x_s t_i e_{\alpha_i} = b_k e_{\beta_k} \in \partial \mathcal M$.}
\end{cases}
\end{align*}
Then the $\mathcal M^S$-quotient module border prebasis~$\mathcal G^S$ is the
$\mathcal M^S$-quotient module border basis of~$\langle \mathcal G^S \rangle$ if
and only if $S \subseteq \langle \mathcal G \rangle$ and the following
equivalent conditions are satisfied.
\begin{enumerate}
\renewcommand{\labelenumi}{$D_{\arabic{enumi}})$}
  \item The formal multiplication matrices $\mathfrak X_1, \ldots, \mathfrak X_n
  \in \Mat_\mu(K)$ of~$\mathcal G$ are pairwise commuting.
  \item The following equations are satisfied for every $p \in \{ 1, \ldots, \mu
  \}$ and for every $s, u \in \{ 1, \ldots, n \}$ with $s \neq u$.
  \begin{enumerate}
  \renewcommand{\labelenumi}{(\arabic{enumi})}
    \item If $x_s t_i e_{\alpha_i} = t_j e_{\alpha_j}$, $x_u t_i e_{\alpha_i} =
    b_k e_{\beta_k}$, and $x_s b_k e_{\beta_k} = b_\ell e_{\beta_\ell}$ with
    indices $i, j \in \{ 1, \ldots, \mu \}$ and $k, \ell \in \{ 1, \ldots,
    \omega \}$, we have
    \begin{align*}
    \sum_{\substack{m \in \{ 1, \ldots, \mu \}\\ x_s t_m e_{\alpha_m} \in
    \mathcal M}} \delta_{p \varrho_s(m)} c_{mk} + \sum_{\substack{m \in \{ 1,
    \ldots, \mu \}\\ x_s t_m e_{\alpha_m} \in \partial \mathcal M}} c_{p
    \varrho_s(m)} c_{mk} = c_{p \ell}.
    \end{align*}
    \item If $x_s t_i e_{\alpha_i} = b_j e_{\beta_j}$ and $x_u t_i e_{\alpha_i}
    = b_k e_{\beta_k}$ with indices $i \in \{ 1, \ldots, \mu \}$ and $j, k \in
    \{ 1, \ldots, \omega \}$, we have
    \begin{align*}
    & \alignLongFormula \sum_{\substack{m \in \{ 1, \ldots, \mu \}\\ x_s t_m
    e_{\alpha_m} \in \mathcal M}} \delta_{p \varrho_s(m)} c_{mk} +
    \sum_{\substack{m \in \{ 1, \ldots, \mu \}\\ x_s t_m e_{\alpha_m} \in
    \partial \mathcal M}} c_{p \varrho_s(m)} c_{mk}\\
    & = \sum_{\substack{m \in \{ 1, \ldots, \mu \}\\ x_u t_m e_{\alpha_m} \in
    \mathcal M}} \delta_{p \varrho_u(m)} c_{mj} + \sum_{\substack{m \in \{ 1,
    \ldots, \mu \}\\ x_u t_m e_{\alpha_m} \in \partial \mathcal M}} c_{p
    \varrho_u(m)} c_{mj}.
    \end{align*}
  \end{enumerate}
\end{enumerate}
If the equivalent conditions are satisfied, for all~$s \in \{ 1, \ldots, n \}$,
the formal multiplication matrix~$\mathfrak X_s$ represents the multiplication
endomorphism of the $K$-vector space~$(P^r / S) / \langle \mathcal G^S \rangle$
defined by $\mathcal V^S + \langle \mathcal G \rangle \mapsto x_s \mathcal
V^S + \langle \mathcal G \rangle$, where $\mathcal V^S \in P^r / S$, with
respect to the $K$-vector space basis $\varepsilon_{\langle \mathcal G^S
\rangle}(\mathcal M^S) \subseteq (P^r / S) / \langle \mathcal G^S \rangle$.
\end{cor}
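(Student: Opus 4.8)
The plan is to deduce this corollary by combining Theorem~\ref{thm:commMat} with Theorem~\ref{thm:char-quot}, exactly in the spirit of Corollaries~\ref{thm:specGen-quot}--\ref{thm:rewrite-quot}. First I would record that, by Lemma~\ref{thm:charModuleBB}~c), the characterizing prebasis $\mathcal G = \{\mathcal G_1, \ldots, \mathcal G_\omega\}$ is a genuine $\mathcal M$-module border prebasis, so that Definition~\ref{defn:multMat} applies and the formal multiplication matrices $\mathfrak X_1, \ldots, \mathfrak X_n \in \Mat_\mu(K)$ of~$\mathcal G$ are well defined. Then I would observe that conditions~$D_1)$ and~$D_2)$ stated in the corollary are literally conditions~$D_1)$ and~$D_2)$ of Theorem~\ref{thm:commMat} applied to~$\mathcal G$, the only change being that the index set $\{1, \ldots, \nu\}$ for the border terms is replaced by $\{1, \ldots, \omega\}$ with $\omega = \#\partial\mathcal M$; this is purely bookkeeping, since $\partial\mathcal M = \{b_1 e_{\beta_1}, \ldots, b_\omega e_{\beta_\omega}\}$ for the characterizing order module by Definition~\ref{defn:charModuleBB}.

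Next I would chain the equivalences. By Theorem~\ref{thm:commMat}, the statements ``$\mathcal G$ is the $\mathcal M$-module border basis of~$\langle\mathcal G\rangle$'', $D_1)$, and $D_2)$ are pairwise equivalent. By Theorem~\ref{thm:char-quot}, ``$\mathcal G^S$ is the $\mathcal M^S$-quotient module border basis of~$\langle\mathcal G^S\rangle$'' is equivalent to ``$\mathcal G$ is the $\mathcal M$-module border basis of~$\langle\mathcal G\rangle$ and $S\subseteq\langle\mathcal G\rangle$''. Splicing these together yields that $\mathcal G^S$ is the $\mathcal M^S$-quotient module border basis of~$\langle\mathcal G^S\rangle$ if and only if $S\subseteq\langle\mathcal G\rangle$ together with $D_1)$ (equivalently $D_2)$) holds, which is precisely the asserted equivalence.

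For the final assertion about the multiplication endomorphism I would reuse the isomorphism constructed inside the proof of Theorem~\ref{thm:char-quot}: when $S\subseteq\langle\mathcal G\rangle$, the map $\varepsilon_S$ induces, via the Second Noether Isomorphism Theorem, a $P$-module isomorphism
\begin{align*}
\Phi\colon P^r / \langle\mathcal G\rangle \xrightarrow{\sim} (P^r / S) / \langle\mathcal G^S\rangle, \quad \mathcal V + \langle\mathcal G\rangle \mapsto (\mathcal V + S) + \langle\mathcal G^S\rangle.
\end{align*}
Since $\Phi$ is $P$-linear it intertwines the two multiplication-by-$x_s$ endomorphisms, and since $\varepsilon_S(\mathcal M) = \mathcal M^S$ with $\restrict{\varepsilon_S}{\mathcal M}$ injective (Definition~\ref{defn:charOrderModule}), $\Phi$ carries the basis $\varepsilon_{\langle\mathcal G\rangle}(\mathcal M)$ bijectively onto $\varepsilon_{\langle\mathcal G^S\rangle}(\mathcal M^S)$, in the given order. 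Hence the matrix of $x_s$-multiplication on $(P^r/S)/\langle\mathcal G^S\rangle$ with respect to $\varepsilon_{\langle\mathcal G^S\rangle}(\mathcal M^S)$ coincides with the matrix of $x_s$-multiplication on $P^r/\langle\mathcal G\rangle$ with respect to $\varepsilon_{\langle\mathcal G\rangle}(\mathcal M)$, which by the addendum of Theorem~\ref{thm:commMat} is $\mathfrak X_s$. I do not expect a serious obstacle here: the content is all imported from the two cited results, and the only points requiring genuine care are the $\nu$-versus-$\omega$ index bookkeeping in $D_1)$/$D_2)$ and checking that the Noether isomorphism respects both the $x_s$-action and the chosen bases.
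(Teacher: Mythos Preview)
Your proposal is correct and follows exactly the paper's approach: the paper's entire proof reads ``This corollary is a direct consequence of Theorem~\ref{thm:commMat} and Theorem~\ref{thm:char-quot},'' and you have simply unpacked that one sentence, including the $\nu$-versus-$\omega$ bookkeeping and the transport of the multiplication-endomorphism statement through the Noether isomorphism. There is nothing to add.
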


\begin{proof}
This corollary is a direct consequence of Theorem~\ref{thm:commMat} and
Theorem~\ref{thm:char-quot}.
\end{proof}

\begin{cor}[Quotient Module Border Bases and Liftings of Border Syzygies]
\label{thm:liftings-quot}
Suppose there exists an order module $\mathcal M \subseteq \mathbb T^n \langle
e_1, \ldots, e_r \rangle$ characterizing~$\mathcal M^S$ and let $\mathcal G = \{
\mathcal G_1, \ldots, \mathcal G_\omega \} \subseteq P^r$ be the $\mathcal
M$-module border prebasis characterizing~$\mathcal G^S$ like in
Definition~\ref{defn:charModuleBB}. Then the $\mathcal M^S$-quotient module
border prebasis~$\mathcal G^S$ is the $\mathcal M^S$-quotient module border
basis of~$\langle \mathcal G^S \rangle$ if and only if $S \subseteq \langle
\mathcal G \rangle$ and the following equivalent conditions are satisfied.
\begin{enumerate}
\renewcommand{\labelenumi}{$E_{\arabic{enumi}})$}
  \item Every border syzygy with respect to~$\mathcal M$ lifts to a syzygy
  of~$(\mathcal G_1, \ldots, \mathcal G_\omega)$.
  \item Every neighbor syzygy with respect to~$\mathcal M$ lifts to a syzygy
  of~$(\mathcal G_1, \ldots, \mathcal G_\omega)$.
\end{enumerate}
\end{cor}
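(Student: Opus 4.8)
The plan is to deduce this corollary directly from Theorem~\ref{thm:liftings} together with Theorem~\ref{thm:char-quot}, exactly in the spirit of the proofs of Corollaries~\ref{thm:specGen-quot}, \ref{thm:BFMod-quot}, \ref{thm:rewrite-quot}, and \ref{thm:commMat-quot}. The key observation is that Theorem~\ref{thm:char-quot} establishes the equivalence
\begin{align*}
\text{($\mathcal G^S$ is the $\mathcal M^S$-quotient module border basis of $\langle \mathcal G^S \rangle$)} \iff \text{($\mathcal G$ is the $\mathcal M$-module border basis of $\langle \mathcal G \rangle$ \textbf{and} $S \subseteq \langle \mathcal G \rangle$)},
\end{align*}
where $\mathcal G = \{ \mathcal G_1, \ldots, \mathcal G_\omega \}$ is the characterizing $\mathcal M$-module border prebasis. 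So the proof just needs to rewrite the right-hand side using the characterization of ordinary module border bases via liftings of border syzygies.

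First I would invoke Theorem~\ref{thm:char-quot} to replace condition~i) of the corollary by the conjunction that $\mathcal G$ is the $\mathcal M$-module border basis of $\langle \mathcal G \rangle$ and $S \subseteq \langle \mathcal G \rangle$. Next I would apply Theorem~\ref{thm:liftings}, with the index range $\{1, \ldots, \nu\}$ there replaced by $\{1, \ldots, \omega\}$ (since the characterizing prebasis $\mathcal G$ has $\omega$ elements indexed by $\partial \mathcal M$, not by $\partial \mathcal M^S$), to see that ``$\mathcal G$ is the $\mathcal M$-module border basis of $\langle \mathcal G \rangle$'' is equivalent to either of the conditions $E_1)$ or $E_2)$ as stated. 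Here the notions of border syzygy and neighbor syzygy are taken with respect to the order module $\mathcal M$ and the prebasis $\mathcal G$, which is legitimate because Definitions~\ref{defn:borderSyz}, \ref{defn:neighbors}, and \ref{defn:lifting} only require an order module in $\mathbb T^n \langle e_1, \ldots, e_r \rangle$ and an associated module border prebasis, both of which $\mathcal M$ and $\mathcal G$ provide. Combining the two equivalences yields the statement: condition~i) holds if and only if $S \subseteq \langle \mathcal G \rangle$ together with $E_1)$ (equivalently $E_2)$).

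I do not anticipate a genuine obstacle, since this is a routine chaining of two already-proven theorems; the proof will be a single sentence, exactly parallel to the preceding corollaries. The only subtlety worth a moment's care is the bookkeeping of index ranges: the corollary's statement already correctly uses $(\mathcal G_1, \ldots, \mathcal G_\omega)$ rather than $(\mathcal G_1, \ldots, \mathcal G_\nu)$, so one need only make sure, when citing Theorem~\ref{thm:liftings}, to read it with $\nu$ replaced by $\omega$ and with $\mathcal G$ denoting the characterizing prebasis rather than $\mathcal G^S$. Concretely, the proof I would write is simply:

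\begin{proof}
This corollary is a direct consequence of Theorem~\ref{thm:liftings} and
Theorem~\ref{thm:char-quot}.
\end{proof}
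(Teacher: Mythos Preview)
Your proposal is correct and is essentially identical to the paper's own proof, which reads verbatim: ``This corollary is a direct consequence of Theorem~\ref{thm:liftings} and Theorem~\ref{thm:char-quot}.'' Your remark about the index bookkeeping ($\omega$ in place of $\nu$) is the only subtlety, and the paper leaves it implicit just as you do.
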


\begin{proof}
This corollary is a direct consequence of Theorem~\ref{thm:liftings} and
Theorem~\ref{thm:char-quot}.
\end{proof}

\begin{cor}[Buchberger's Criterion for Quotient Module Border Bases]
\label{thm:buchbCrit-quot}
Suppose there exists an order module $\mathcal M \subseteq \mathbb T^n \langle
e_1, \ldots, e_r \rangle$ characterizing~$\mathcal M^S$ and let $\mathcal G = \{
\mathcal G_1, \ldots, \mathcal G_\omega \} \subseteq P^r$ be the $\mathcal
M$-module border prebasis characterizing~$\mathcal G^S$ like in
Definition~\ref{defn:charModuleBB}. Then the $\mathcal M^S$-quotient module
border prebasis~$\mathcal G^S$ is the $\mathcal M^S$-quotient module border
basis of~$\langle \mathcal G^S \rangle$ if and only if $S \subseteq \langle
\mathcal G \rangle$ and the following equivalent conditions are satisfied.
\begin{enumerate}
\renewcommand{\labelenumi}{$F_{\arabic{enumi}})$}
  \item We have $\NR_{\mathcal G}(\SV(\mathcal G_i, \mathcal G_j)) = 0$ for all
  $i, j \in \{ 1, \ldots, \omega \}$.
  \item We have $\NR_{\mathcal G}(\SV(\mathcal G_i, \mathcal G_j)) = 0$ for all
  $i, j \in \{ 1, \ldots, \omega \}$ such that the border terms $b_i
  e_{\beta_i}, b_j e_{\beta_j} \in \partial \mathcal M$ are neighbors with
  respect to~$\mathcal M$.
\end{enumerate}
\end{cor}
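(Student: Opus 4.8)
The plan is to obtain this corollary by chaining together the two results that the section has been built towards: the characterization of quotient module border bases via characterizing module border bases (Theorem~\ref{thm:char-quot}) and Buchberger's Criterion for Module Border Bases (Theorem~\ref{thm:buchbCrit}). Since the hypotheses of the corollary guarantee that an order module $\mathcal M \subseteq \mathbb T^n \langle e_1, \ldots, e_r \rangle$ characterizing~$\mathcal M^S$ exists and that $\mathcal G = \{ \mathcal G_1, \ldots, \mathcal G_\omega \}$ is the $\mathcal M$-module border prebasis characterizing~$\mathcal G^S$ in the sense of Definition~\ref{defn:charModuleBB}, these hypotheses are exactly what is needed to apply Theorem~\ref{thm:char-quot}.

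First I would invoke Theorem~\ref{thm:char-quot}: under the standing assumptions it says that $\mathcal G^S$ is the $\mathcal M^S$-quotient module border basis of~$\langle \mathcal G^S \rangle$ if and only if $\mathcal G$ is the $\mathcal M$-module border basis of~$\langle \mathcal G \rangle$ \emph{and} $S \subseteq \langle \mathcal G \rangle$. This already isolates the side condition $S \subseteq \langle \mathcal G \rangle$ that appears in the corollary, and reduces the remaining work to characterizing when~$\mathcal G$ is the $\mathcal M$-module border basis of~$\langle \mathcal G \rangle$.

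Second, I would apply Theorem~\ref{thm:buchbCrit} to the $\mathcal M$-module border prebasis $\mathcal G = \{ \mathcal G_1, \ldots, \mathcal G_\omega \}$. Here one only has to note that the role played by~$\nu$ in Theorem~\ref{thm:buchbCrit} is now played by~$\omega$, and that the $\SV$-vectors $\SV(\mathcal G_i, \mathcal G_j)$ and the neighbor relation are taken with respect to $\partial \mathcal M = \{ b_1 e_{\beta_1}, \ldots, b_\omega e_{\beta_\omega} \}$, consistently with Definition~\ref{defn:SVector} and Definition~\ref{defn:neighbors}; this is precisely how conditions~$F_1)$ and~$F_2)$ are phrased in the corollary. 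Theorem~\ref{thm:buchbCrit} then gives that $\mathcal G$ is the $\mathcal M$-module border basis of~$\langle \mathcal G \rangle$ if and only if~$F_1)$ holds, and that~$F_1)$ and~$F_2)$ are equivalent.

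Combining the two equivalences yields the statement: $\mathcal G^S$ is the $\mathcal M^S$-quotient module border basis of~$\langle \mathcal G^S \rangle$ if and only if $S \subseteq \langle \mathcal G \rangle$ and~$F_1)$ (equivalently~$F_2)$) is satisfied. There is no genuine obstacle in this argument; the only thing to be careful about is the purely bookkeeping matter of matching the indexing ($\omega$ versus $\nu$) and making sure that the neighbor/$\SV$-vector notions invoked in~$F_1)$ and~$F_2)$ are the ones associated with the full border $\partial \mathcal M$ of the characterizing order module, so that Theorem~\ref{thm:buchbCrit} applies verbatim. Hence the proof reduces to the sentence: this corollary is a direct consequence of Theorem~\ref{thm:buchbCrit} and Theorem~\ref{thm:char-quot}.
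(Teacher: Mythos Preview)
Your proposal is correct and follows exactly the same approach as the paper: the paper's proof is the single sentence that this corollary is a direct consequence of Theorem~\ref{thm:buchbCrit} and Theorem~\ref{thm:char-quot}. Your more detailed unpacking of how the two results chain together (and the bookkeeping remark about $\omega$ playing the role of $\nu$) is precisely the argument the paper leaves implicit.
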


\begin{proof}
This corollary is a direct consequence of Theorem~\ref{thm:buchbCrit} and
Theorem~\ref{thm:char-quot}.
\end{proof}

%
%

\section{Subideal Border Bases}
\label{sect:subBB}

In this final section, we apply the theory of quotient module border bases
introduced in Section~\ref{sect:quotModuleBB} to the theory of subideal border
bases of zero-dimensional ideals, which have been recently introduced
in~\cite{SubBB}. To this end, we show that every subideal border basis is
isomorphic to a quotient module border basis in
Proposition~\ref{thm:subBBmoduleBB}. Using this isomorphy, we can apply all
the results of Section~\ref{sect:quotModuleBB} to subideal border bases and get
the following new results about subideal border bases. We start to deduce an
algorithm that computes subideal border bases for an arbitrary zero-dimensional
ideal in Corollary~\ref{thm:subBBcompute} and that does not need to compute any
Gröbner basis. By now, the only way to compute an arbitrary subideal border
basis has been given in \cite[Section~6]{SubBB} and requires the computation of
a Gröbner basis. After that, we deduce characterizations of subideal border
bases in Remark~\ref{rem:char-subBB} that have not been proven in~\cite{SubBB}.
In particular, this yields an alternative proof of the characterization of
subideal border bases via the special generation property, which has already
been proven in \cite[Coro.~3.6]{SubBB}, and we get the new characterizations
via border form modules, via rewrite rules, via commuting matrices, via
liftings of border syzygies, and via Buchberger's Criterion for Subideal Border
Bases.

Similar to the previous sections, we fix some notation. We let $J = \langle F
\rangle \subseteq P$ be the ideal in~$P$ generated by the set $F = \{ f_1,
\ldots, f_r \} \subseteq P \setminus \{ 0 \}$ where $r \in \mathbb N$. Recall,
for every $P$-module~$M$ (respectively $K$-vector space) and every $P$-submodule
$U \subseteq M$ (respectively $K$-vector subspace), we let
\begin{align*}
\varepsilon_U: M \twoheadrightarrow M / U, \quad m \mapsto m + U
\end{align*}
be the canonical $P$-module epimorphism ($K$-vector space epimorphism).

We start to recall the definition of an $\mathcal O_F$-order ideal, its border,
and of an $\mathcal O_F$-subideal border basis as introduced in
\cite[Defn.~2.1]{SubBB} and \cite[Defn.~2.7]{SubBB}.

\begin{defn}
\label{defn:ForderIdeal}
Let $\mathcal O_1, \ldots, \mathcal O_r \subseteq \mathbb T^n$ be order ideals.
\begin{enumerate}
  \item We call the set
  \begin{align*}
  \mathcal O_F = \mathcal O_1 \cdot f_1 \cup \cdots \cup \mathcal O_r \cdot f_r
  \subseteq J
  \end{align*}
  an \emph{$F$-order ideal}.
  \item Let $\mathcal O_F = \mathcal O_1 f_1 \cup \cdots \cup \mathcal O_r f_r
  \subseteq J$ be an $F$-order ideal. Then we call the set
  \begin{align*}
  \partial \mathcal O_F = \partial \mathcal O_1 \cdot f_1 \cup \cdots \cup
  \partial \mathcal O_r \cdot f_r \subseteq J
  \end{align*}
  the \emph{(first) border} of~$\mathcal O_F$.
\end{enumerate}
\end{defn}

\begin{defn}
\label{defn:subBB}
Let $\mathcal O_F = \mathcal O_1 f_1 \cup \cdots \cup \mathcal O_r f_r \subseteq
J$ be an $F$-order ideal where we let $\mathcal O_1, \ldots, \mathcal O_r
\subseteq \mathbb T^n$ be finite order ideals. We write $\mathcal O_F = \{ t_1
f_{\alpha_1}, \ldots, t_\mu f_{\alpha_\mu} \}$ and $\partial \mathcal O_F = \{
b_1 f_{\beta_1}, \ldots, b_\nu f_{\beta_\nu} \}$ with $\mu, \nu \in \mathbb N$,
terms $t_i, b_j \in \mathbb T^n$, and $\alpha_i, \beta_j \in \{ 1, \ldots, r \}$
for all $i \in \{ 1, \ldots, \mu \}$ and $j \in \{ 1, \ldots, \nu \}$.
\begin{enumerate}
  \item A set of polynomials $G = \{ g_1, \ldots, g_\nu \} \subseteq J$ is
  called an \emph{$\mathcal O_F$-subideal border prebasis} if the polynomials
  have the form
  \begin{align*}
  g_j = b_j f_{\beta_j} - \sum_{i=1}^\mu c_{ij} t_i f_{\alpha_i}
  \end{align*}
  with $c_{ij} \in K$ for all $i \in \{ 1, \ldots, \mu \}$ and $j \in \{ 1,
  \ldots, \nu \}$.
  \item Let $G = \{ g_1, \ldots, g_\nu \}$ be an $\mathcal O_F$-subideal border
  prebasis. We call~$G$ an \emph{$\mathcal O_F$-subideal border basis} of the
  ideal $I \subseteq P$ if $G \subseteq I$, if the set
  \begin{align*}
  \varepsilon_{I \cap J}(\mathcal O_F) = \{ t_1 f_{\alpha_1} + I \cap J, \ldots,
  t_\mu f_{\alpha_\mu} + I \cap J \}
  \end{align*}
  is a $K$-vector space basis of $J / I \cap J$, and if~$\# \varepsilon_{I \cap
  J}(\mathcal O_F) = \mu$.
\end{enumerate}
\end{defn}

For the remainder of this section, we always let $\mathcal O_F = \mathcal O_1
f_1 \cup \cdots \cup \mathcal O_r f_r$ be an $F$-order ideal with finite order
ideals $\mathcal O_1, \ldots, \mathcal O_r \subseteq \mathbb T^n$. Moreover, we
write the $F$-order ideal $\mathcal O_F = \{ t_1 f_{\alpha_1}, \ldots, t_\mu
f_{\alpha_\mu} \}$ and its border $\partial \mathcal O_F = \{ b_1 f_{\beta_1},
\ldots, b_\nu f_{\beta_\nu} \}$ with $\mu, \nu \in \mathbb N$, terms $t_i, b_j
\in \mathbb T^n$, and indices $\alpha_i, \beta_j \in \{ 1, \ldots, r \}$ for all
$i \in \{ 1, \ldots, \mu \}$ and $j \in \{ 1, \ldots, \nu \}$. Furthermore, we
let $G = \{ g_1, \ldots, g_\nu \} \subseteq J$ with polynomials $g_j = b_j
f_{\beta_j} - \sum_{i=1}^\mu c_{ij} t_i f_{\alpha_i}$ where $c_{ij} \in K$ for
all $i \in \{ 1, \ldots, \mu \}$ and $j \in \{ 1, \ldots, \nu \}$ be an
$\mathcal O_F$-subideal border prebasis.

We now prove th central result of this section, namely that every subideal
border prebasis is isomorphic as a $P$-module to a quotient module border
prebasis.

\begin{prop}
\label{thm:subBBmoduleBB}
Let $S = \Syz_P(f_1, \ldots, f_r) \subseteq P^r$. Then
\begin{align*}
\varphi: P^r / S \xrightarrow{\sim} J, \quad e_k + S \mapsto f_k
\end{align*}
is a $P$-module isomorphism such that $\varphi^{-1}(\mathcal O_F) \subseteq P^r
/ S$ is an order quotient module and $\varphi^{-1}(G) \subseteq P^r / S$ is a
$\varphi^{-1}(\mathcal O_F)$-quotient module border prebasis.
\end{prop}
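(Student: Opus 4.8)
The plan is to verify three things in sequence: that $\varphi$ is a well-defined $P$-module isomorphism, that $\varphi^{-1}(\mathcal O_F)$ is an order quotient module, and that $\varphi^{-1}(G)$ is a $\varphi^{-1}(\mathcal O_F)$-quotient module border prebasis. For the first, I would start from the surjective $P$-module homomorphism $\pi\colon P^r \twoheadrightarrow J$ defined on the canonical basis by $e_k \mapsto f_k$; it is surjective since $F = \{f_1,\ldots,f_r\}$ generates $J$. Its kernel is exactly $\Syz_P(f_1,\ldots,f_r) = S$ by definition of the syzygy module, so the Isomorphism Theorem yields the induced $P$-module isomorphism $\varphi\colon P^r/S \xrightarrow{\sim} J$ with $e_k + S \mapsto f_k$. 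This is the core of the proof and the rest is bookkeeping; I expect the slightly delicate point to be matching up the notational conventions, namely that applying $\varphi$ to a residue class $t e_k + S$ yields $t f_k$, so that $\varphi$ carries $\mathcal O_k \cdot (e_k + S)$ bijectively onto $\mathcal O_k \cdot f_k$ and correspondingly $\varepsilon_S(\mathcal M) \mapsto \mathcal O_F$, where $\mathcal M = \mathcal O_1 e_1 \cup \cdots \cup \mathcal O_r e_r$.

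Next, for the claim that $\varphi^{-1}(\mathcal O_F)$ is an order quotient module, I would simply observe that $\varphi^{-1}(\mathcal O_F) = \varepsilon_S(\mathcal M)$ where $\mathcal M = \mathcal O_1 e_1 \cup \cdots \cup \mathcal O_r e_r$ is an order module by Definition~\ref{defn:orderModule} (using that $\mathcal O_1, \ldots, \mathcal O_r$ are order ideals). Then $\varepsilon_S(\mathcal M)$ is an order quotient module by Definition~\ref{defn:orderModule-quot}. Concretely, writing $\mathcal O_F = \{ t_1 f_{\alpha_1}, \ldots, t_\mu f_{\alpha_\mu} \}$, we have $\varphi(t_i e_{\alpha_i} + S) = t_i f_{\alpha_i}$, so $\varphi^{-1}(\mathcal O_F) = \{ t_1 e_{\alpha_1} + S, \ldots, t_\mu e_{\alpha_\mu} + S \} = \varepsilon_S(\mathcal M)$, and its border under $\varphi$ corresponds to $\partial \mathcal O_F = \{ b_1 f_{\beta_1}, \ldots, b_\nu f_{\beta_\nu} \}$, i.e.\ $\varphi^{-1}(\partial\mathcal O_F) = \partial\varepsilon_S(\mathcal M)$, again matching Definition~\ref{defn:orderModule-quot} and Definition~\ref{defn:ForderIdeal}.

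Finally, for $\varphi^{-1}(G)$ being a quotient module border prebasis: each $g_j = b_j f_{\beta_j} - \sum_{i=1}^\mu c_{ij} t_i f_{\alpha_i}$ pulls back under the isomorphism $\varphi$ to
\begin{align*}
\varphi^{-1}(g_j) = b_j e_{\beta_j} - \sum_{i=1}^\mu c_{ij} t_i e_{\alpha_i} + S \in P^r / S,
\end{align*}
which is precisely of the form $\varepsilon_S(\mathcal G_j)$ with $\mathcal G_j = b_j e_{\beta_j} - \sum_{i=1}^\mu c_{ij} t_i e_{\alpha_i} \in P^r$, and $\mathcal G = \{ \mathcal G_1, \ldots, \mathcal G_\nu \}$ is an $\mathcal M$-module border prebasis by Definition~\ref{defn:moduleBB}. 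Hence $\varphi^{-1}(G) = \varepsilon_S(\mathcal G)$ is a $\varepsilon_S(\mathcal M)$-quotient module border prebasis by Definition~\ref{defn:moduleBB-quot}, i.e.\ a $\varphi^{-1}(\mathcal O_F)$-quotient module border prebasis. The only subtlety worth a sentence is linearity of $\varphi^{-1}$ over $K$ (hence over the coefficients $c_{ij}$) and the fact that multiplication by a term $t \in \mathbb T^n$ commutes with $\varphi$, both immediate since $\varphi$ is a $P$-module homomorphism; with that, all three assertions are established.
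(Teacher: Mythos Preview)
Your proposal is correct and follows essentially the same approach as the paper: both construct the epimorphism $P^r \twoheadrightarrow J$, $e_k \mapsto f_k$, identify its kernel as $S = \Syz_P(f_1,\ldots,f_r)$, apply the Isomorphism Theorem to obtain $\varphi$, and then verify directly from Definitions~\ref{defn:orderModule-quot} and~\ref{defn:moduleBB-quot} that $\varphi^{-1}(\mathcal O_F) = \varepsilon_S(\mathcal M)$ and $\varphi^{-1}(G) = \varepsilon_S(\mathcal G)$ have the required form. Your write-up is slightly more explicit about the $P$-linearity bookkeeping, but the argument is the same.
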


\begin{proof}
The two maps
\begin{align*}
\{ e_1, \ldots, e_r \} \to P^r, \quad e_k \mapsto e_k
\end{align*}
and
\begin{align*}
\{ e_1, \ldots, e_r \} \to J, \quad e_k \mapsto f_k
\end{align*}
induce the $P$-module epimorphism
\begin{align*}
\tilde{\varphi}: P^r \twoheadrightarrow J, \quad e_k \mapsto f_k
\end{align*}
by the Universal Property of the Free Module. Additionally, we see that $S =
\ker(\tilde{\varphi})$ and the Isomorphism Theorem induces the $P$-module
isomorphism
\begin{align*}
\varphi: P^r / S \xrightarrow{\sim} J, \quad e_k + S \mapsto f_k.
\end{align*}
Moreover,
\begin{align*}
\varphi^{-1}(\mathcal O_F) = \mathcal O_1 (e_1 + S) \cup \cdots \cup \mathcal
O_r (e_r + S) \subseteq P^r / S
\end{align*}
is a quotient order module by Definition~\ref{defn:orderModule-quot} and
\begin{align*}
\varphi^{-1}(G) = \{ \varphi^{-1}(g_1), \ldots, \varphi^{-1}(g_\nu) \} \subseteq
P^r / S
\end{align*}
with
\begin{align*}
\varphi^{-1}(g_j) = b_j e_{\beta_j} - \sum_{i=1}^\mu c_{ij} t_i e_{\alpha_i} + S
\end{align*}
for all $j \in \{ 1, \ldots, \nu \}$ is an $\varphi^{-1}(\mathcal O_F)$-quotient
module border prebasis according to Definition~\ref{defn:moduleBB-quot}.
\end{proof}

Using the isomorphism constructed in Proposition~\ref{thm:subBBmoduleBB}, the
problem of the computation of subideal border bases is reduced to the problem of
the computation of quotient module border bases, which we have already solved in
Corollary~\ref{thm:quotModuleBBAlg}. We now deduce an algorithm for the
computation of subideal border bases from these results.

\begin{algorithm}[H]
\caption{${\tt subidealBB}(\{ h_1, \ldots, h_s \}, \{ f_1, \ldots, f_r \},
\sigma)$}
\begin{algorithmic}[1]
\label{algo:subBB}
  \REQUIRE $s \in \mathbb N$ and $\{ h_1, \ldots, h_s \} \subseteq P \setminus
  \{ 0 \}$,\\
  $\codim_K(\langle h_1, \ldots, h_s \rangle, P) < \infty$,\\
  $r \in \mathbb N$ and $\{ f_1, \ldots, f_r \} \subseteq P \setminus \{ 0
  \}$,\\
  $\sigma$ is a degree compatible term ordering on~$\mathbb T^n$
  \STATE $I \assign \langle h_1, \ldots, h_s \rangle$\label{algo:subBB-initI}
  \STATE $J \assign \langle f_1, \ldots, f_r \rangle$\label{algo:subBB-initJ}
  \STATE Compute $\{ \mathcal S_1, \ldots, \mathcal S_\ell \} \subseteq P^r
  \setminus \{ 0 \}$ such that $\langle \mathcal S_1, \ldots, \mathcal S_\ell
  \rangle = \Syz_P(f_1, \ldots, f_r)$, where $\ell \in \mathbb
  N$.\label{algo:subBB-gensS}
  \STATE $S \assign \langle \mathcal S_1, \ldots, \mathcal S_\ell
  \rangle$\label{algo:subBB-setS}
  \STATE Let $\varphi: P^r / S \xrightarrow{\sim} J, ~e_k + S \mapsto
  f_k$.\label{algo:subBB-setPhi}
  \STATE Compute $q_{vw} \in P$ for all $v \in \{ 1, \ldots, r \}$ and $w \in
  \{ 1, \ldots, k \}$, where $k \in \mathbb N$, such that $I \cap J = \langle
  \sum_{v=1}^r q_{vw} f_v \mid w \in \{ 1, \ldots, k \}
  \rangle$.\label{algo:subBB-intersect}
  \FOR{$w = 1, \ldots, k$}\label{algo:subBB-for}
    \STATE $\mathcal B_w \assign \sum_{v=1}^r q_{vw} e_v$\label{algo:subBB-setB}
  \ENDFOR
  \STATE $(\mathcal M^S, \mathcal G^S) \assign {\tt quotModuleBB}(\{ \mathcal
  B_1, \ldots, \mathcal B_k \}, \{ \mathcal S_1, \ldots, \mathcal S_\ell \},
  \sigma)$\label{algo:subBB-modBB}
  \STATE $\mathcal O_F \assign \varphi(\mathcal M^S)$\label{algo:subBB-setOF}
  \STATE $G \assign \varphi(\mathcal G^S)$\label{algo:subBB-setG}
  \RETURN $(\mathcal O_F, G)$
\end{algorithmic}
\end{algorithm}

\begin{cor}[Computation of Subideal Border Bases]
\label{thm:subBBcompute}
Let $r \in \mathbb N$, let $F = \{ f_1, \ldots, f_r \} \subseteq P^r \setminus
\{ 0 \}$, and let $J = \langle F \rangle \subseteq P$ be an ideal. Moreover, let
$s \in \mathbb N$, let $I = \langle h_1, \ldots, h_s \rangle \subseteq P$ with
$\{ h_1, \ldots, h_s \} \subseteq P \setminus \{ 0 \}$ be a zero-dimensional
ideal, and let~$\sigma$ be a degree compatible term ordering on~$\mathbb T^n$.
Then Algorithm~\ref{algo:subBB} is actually an algorithm and the result
\begin{align*}
(\mathcal O_F, G) \assign {\tt subidealBB}(\{ h_1, \ldots, h_s \}, \{ f_1,
\ldots, f_r \}, \sigma)
\end{align*}
of Algorithm~\ref{algo:subBB} applied to the input data~$\{ h_1, \ldots, h_s
\}$, $\{ f_1, \ldots, f_r \}$, and~$\sigma$ satisfies the following conditions.
\begin{enumerate}
\renewcommand{\labelenumi}{\roman{enumi})}
  \item The set $\mathcal O_F \subseteq J$ is an $F$-order ideal.
  \item The set $G \subseteq J$ is an $\mathcal O_F$-subideal border basis
  of~$I$.
\end{enumerate}
\end{cor}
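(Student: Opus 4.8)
The plan is to push everything through the $P$-module isomorphism $\varphi$ of Proposition~\ref{thm:subBBmoduleBB} and thereby reduce the statement to the Quotient Module Border Basis Algorithm, Corollary~\ref{thm:quotModuleBBAlg}. First I would fix the notation of Algorithm~\ref{algo:subBB}: $I = \langle h_1,\ldots,h_s\rangle$, $J = \langle f_1,\ldots,f_r\rangle$, $S = \Syz_P(f_1,\ldots,f_r)$, the isomorphism $\varphi\colon P^r/S \xrightarrow{\sim} J$ with $e_k+S\mapsto f_k$, the polynomials $q_{vw}$ of line~\ref{algo:subBB-intersect}, and $\mathcal B_w = \sum_{v=1}^r q_{vw}e_v$; and I would set $U^S = \langle \mathcal B_1+S,\ldots,\mathcal B_k+S\rangle \subseteq P^r/S$. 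The observation that makes everything work is that $\varphi\circ\varepsilon_S$ is precisely the epimorphism $P^r\twoheadrightarrow J$, $e_k\mapsto f_k$, from the proof of Proposition~\ref{thm:subBBmoduleBB}, so that $\varphi(\mathcal B_w+S) = \sum_v q_{vw}f_v$ and hence $\varphi(U^S) = I\cap J$, i.e.\ $U^S = \varphi^{-1}(I\cap J)$; more generally $\varphi(\varepsilon_S(te_k)) = tf_k$ by $P$-linearity, which is what lets order modules, borders, and prebases be transported verbatim.

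Next I would check that the procedure is actually an algorithm. Lines~\ref{algo:subBB-initI}--\ref{algo:subBB-setPhi} are immediate, a finite generating set of $\Syz_P(f_1,\ldots,f_r)$ (inside $P^r\setminus\{0\}$) can be computed by the standard methods of \cite{KR1}, and a finite generating set of the ideal $I\cap J$ can be computed; rewriting each such generator in the shape $\sum_v q_{vw}f_v$ is an ideal-membership computation in $J$, and the resulting $\mathcal B_w$ lie in $P^r\setminus\{0\}$ since $\sum_v q_{vw}f_v\neq 0$. To invoke Algorithm~\ref{algo:quotModuleBB} in line~\ref{algo:subBB-modBB} I must verify its hypothesis $\codim_K(\langle\mathcal B_1,\ldots,\mathcal B_k\rangle + S,\,P^r)<\infty$. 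Here $\varphi$ yields $K$-vector space isomorphisms
\begin{align*}
P^r/(\langle\mathcal B_1,\ldots,\mathcal B_k\rangle + S) \;\cong\; (P^r/S)/U^S \;\cong\; J/(I\cap J) \;\cong\; (J+I)/I,
\end{align*}
the last by the Second Noether Isomorphism Theorem, and $(J+I)/I$ is a $K$-subspace of $P/I$, which is finite-dimensional because $I$ is zero-dimensional; the same computation gives $\codim_K(U^S,P^r/S)<\infty$. Thus the hypotheses of Corollary~\ref{thm:quotModuleBBAlg} are met, and that corollary yields that $\mathcal M^S$ is an order quotient module and $\mathcal G^S$ is the $\mathcal M^S$-quotient module border basis of $U^S = \langle\mathcal G^S\rangle$. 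The remaining lines are trivial, so the procedure terminates.

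For correctness I would transport these conclusions along $\varphi$. Since $\mathcal M^S$ is an order quotient module, Definition~\ref{defn:orderModule-quot} provides order ideals $\mathcal O_1,\ldots,\mathcal O_r\subseteq\mathbb T^n$ with $\mathcal M^S = \varepsilon_S(\mathcal O_1 e_1\cup\cdots\cup\mathcal O_r e_r)$, and applying $\varphi$ gives $\mathcal O_F = \varphi(\mathcal M^S) = \mathcal O_1 f_1\cup\cdots\cup\mathcal O_r f_r$ and $\varphi(\partial\mathcal M^S) = \partial\mathcal O_F$, so $\mathcal O_F$ is an $F$-order ideal by Definition~\ref{defn:ForderIdeal}. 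Writing $\mathcal O_F = \{t_1 f_{\alpha_1},\ldots,t_\mu f_{\alpha_\mu}\}$ and $\partial\mathcal O_F = \{b_1 f_{\beta_1},\ldots,b_\nu f_{\beta_\nu}\}$ and applying $\varphi$ to the elements $\mathcal G_j^S = b_j e_{\beta_j}-\sum_i c_{ij}t_i e_{\alpha_i}+S$ of $\mathcal G^S$ shows that $G = \varphi(\mathcal G^S)$ consists of the polynomials $g_j = b_j f_{\beta_j}-\sum_i c_{ij}t_i f_{\alpha_i}$, i.e.\ $G$ is an $\mathcal O_F$-subideal border prebasis. Finally I would verify the three conditions of Definition~\ref{defn:subBB}: $\mathcal G^S\subseteq U^S = \varphi^{-1}(I\cap J)$ gives $G\subseteq I\cap J\subseteq I$; and since $\mathcal G^S$ is an $\mathcal M^S$-quotient module border basis, Definition~\ref{defn:moduleBB-quot} says $(\varepsilon_{U^S}\circ\varepsilon_S)(\mathcal M) = \{(t_i e_{\alpha_i}+S)+U^S \mid 1\le i\le\mu\}$ is a $K$-vector space basis of $(P^r/S)/U^S$ of cardinality $\mu$; pushing it forward through the $K$-isomorphism $(P^r/S)/U^S\xrightarrow{\sim}J/(I\cap J)$ induced by $\varphi$, which sends $(t_i e_{\alpha_i}+S)+U^S$ to $t_i f_{\alpha_i}+(I\cap J)$, shows that $\varepsilon_{I\cap J}(\mathcal O_F)$ is a $K$-vector space basis of $J/(I\cap J)$ with $\#\varepsilon_{I\cap J}(\mathcal O_F)=\mu$. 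By Definition~\ref{defn:subBB}, $G$ is then the $\mathcal O_F$-subideal border basis of $I$.

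The only genuinely delicate points will be the codimension bookkeeping needed to license the call to Algorithm~\ref{algo:quotModuleBB} --- pinning down the isomorphism chain from $P^r/(\langle\mathcal B_1,\ldots,\mathcal B_k\rangle+S)$ to $(J+I)/I$ --- and making sure the order-module, border, and prebasis data survive the transport along $\varphi$ exactly, including the fact that $\varphi$ restricts to a bijection from $\mathcal M^S$ onto $\mathcal O_F$ and from $\partial\mathcal M^S$ onto $\partial\mathcal O_F$. Everything else is a direct translation, so no new ideas beyond Proposition~\ref{thm:subBBmoduleBB} and Corollary~\ref{thm:quotModuleBBAlg} should be required.
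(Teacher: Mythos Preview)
Your proposal is correct and follows essentially the same approach as the paper: reduce to Corollary~\ref{thm:quotModuleBBAlg} via the isomorphism $\varphi$ of Proposition~\ref{thm:subBBmoduleBB}, check the codimension hypothesis through the chain $(P^r/S)/U^S \cong J/(I\cap J) \cong (I+J)/I \subseteq P/I$, and then transport the output along $\varphi$. Your final paragraph spells out the transport of the border-basis conditions through $\varphi$ more explicitly than the paper does (the paper simply invokes Definitions~\ref{defn:ForderIdeal} and~\ref{defn:subBB} and the fact that $\varphi$ is an isomorphism), but the underlying argument is identical.
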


\begin{proof}
We start to prove that the procedure is actually an algorithm. We can compute a
generating system $\{ \mathcal S_1, \ldots, \mathcal S_\ell \} \subseteq P^r
\setminus \{ 0 \}$ with $\ell \in \mathbb N$ of~$\Syz_P(f_1, \ldots, f_r)$ in
line~\ref{algo:subBB-gensS} according to \cite[Thm.~3.1.8]{KR1}. Let $S =
\langle \mathcal S_1, \ldots, \mathcal S_\ell \rangle$ be like in
line~\ref{algo:subBB-setS}, and let
\begin{align*}
\varphi: P^r / S \xrightarrow{\sim} J, \quad e_k + S \mapsto f_k
\end{align*}
be like in line~\ref{algo:subBB-setPhi}. Then we see that~$S$ and~$\varphi$
coincide with~$S$ and~$\varphi$ in Proposition~\ref{thm:subBBmoduleBB}. In
particular, $\varphi$ is a $P$-module isomorphism. We can compute the
polynomials $q_{vw} \in P$ for all $v \in \{ 1, \ldots, r \}$ and $w \in \{ 1,
\ldots, k \}$, where $k \in \mathbb N$, such that
\begin{align*}
I \cap J = \Bigl\langle \sum_{v=1}^r q_{vw} f_v \Bigm| w \in \{ 1, \ldots, k
\} \Bigr\rangle
\end{align*}
as proposed in
line~\ref{algo:subBB-intersect} according to \cite[Prop.~3.2.3]{KR1}. Thus it
only remains to prove, that line~\ref{algo:subBB-modBB} can be computed. For
every $w \in \{ 1, \ldots, k \}$, we let
\begin{align*}
\mathcal B_w = \sum_{v=1}^r q_{vw} e_v
\end{align*}
be like in line~\ref{algo:subBB-setB} and we let $U^S = \langle \mathcal B_1 +
S, \ldots, \mathcal B_w + S \rangle \subseteq P^r / S$.\\
If we show that $\codim_K(U^S, P^r / S) < \infty$, then
Corollary~\ref{thm:quotModuleBBAlg} yields that we can compute
line~\ref{algo:subBB-modBB}. The First Noether Isomorphism Theorem yields
\begin{align*}
J / I \cap J \cong I + J / I \subseteq P / I.
\end{align*}
As~$\varphi$ is an $P$-module isomorphism, and as we have $\varphi(U^S) = I \cap
J$ and $\varphi(P^r/S) = J$ by construction, it follows that
\begin{align*}
\codim_K(U^S, P^r /S) = \codim_K(I \cap J, J) \leq \codim_K(I, P) < \infty.
\end{align*}
Thus the conditions of Corollary~\ref{thm:quotModuleBBAlg} are satisfied in
line~\ref{algo:subBB-modBB}. In particular, in line~\ref{algo:subBB-modBB}, it
follows that $\mathcal M^S \subseteq P^r / S$ is an order quotient module and
that $\mathcal G^S \subseteq P^r / S$ is the $\mathcal M^S$-quotient module
border bases of~$U^S$. Thus the claim follows with
Definition~\ref{defn:ForderIdeal} and Definition~\ref{defn:subBB}
since~$\varphi$ is a $P$-module isomorphism.
\end{proof}

\begin{exmp}
\label{exmp:computeSubBB}
Let $K = \mathbb Q$ and $P = \mathbb Q[x,y]$.Moreover, let $F = \{ f_1, f_2 \}
\subseteq P \setminus \{ 0 \}$ be with
\begin{align*}
f_1 & = x - y, & f_2 & = x + y + 1,
\end{align*}
let $J = \langle F \rangle$, let
\begin{align*}
I = \langle h_1, h_2 \rangle = \langle x^2 + xy, y - 1 \rangle \subseteq P,
\end{align*}
and let $\sigma = \DegRevLex$. Then we have $x^2, y \in \LT_\sigma(I)$, i.\,e.\
$I \subseteq P$ has finite $K$-codimension in~$P$ according to the Finiteness
Criterion~\cite[Prop.~3.7.1]{KR1}.

We now take a closer look at the steps of Algorithm~\ref{algo:subBB}
applied to~$\{ h_1, h_2 \}$, $\{ f_1, f_2 \}$, and~$\sigma$. The definition
of~$I$ and~$J$ above obviously coincide with the definition of~$I$ and~$J$ in
the lines~\ref{algo:subBB-initI}--\ref{algo:subBB-initJ}. In the
lines~\ref{algo:subBB-gensS}--\ref{algo:subBB-setS}, we compute the syzygy
module
\begin{align*}
S = \Syz_P(f_1, f_2) = \langle \mathcal S_1 \rangle = \langle x + y + 1, -x + y
\rangle \subseteq P^2
\end{align*}
with the help of \cite[Thm.~3.1.8]{KR1}. Let $\{ e_1, e_2 \} \subseteq P^2$ be
the canonical $P$-module basis of the free $P$-module~$P^2$ and let
\begin{align*}
\varphi: P^2 / S \xrightarrow{\sim} J, \quad e_k + S \mapsto f_k 
\end{align*}
be like in line~\ref{algo:subBB-setPhi}. Then we use \cite[Prop.~3.2.3]{KR1} to
compute
\begin{align*}
q_{11} & = -2,   & q_{12} & = 3x-1,\\
q_{21} & = 3x+4, & q_{22} & = 2,\\
q_{31} & = 0,    & q_{32} & = y-1,\\
q_{41} & = y-1,  & q_{42} & = 0,
\end{align*}
in line~\ref{algo:subBB-intersect}. Therefore, we have
\begin{align*}
\{ \mathcal B_1, \ldots, \mathcal B_4 \} = \{ (-2, 3x-1), (3x+4, 2), (0, y-1),
(y-1,0) \} \subseteq P^2
\end{align*}
after the for-loop in line~\ref{algo:subBB-for}.

Note that~$\{ B_1, \ldots, B_4 \}$, $\{ \mathcal S_1 \}$, and~$\sigma$ is
exactly the input data of Algorithm~\ref{algo:quotModuleBB} in
Example~\ref{exmp:quotModuleBBAlgo}. Thus it follows that
\begin{align*}
\mathcal M^S = \{ e_1 + S, e_2 + S \} \subseteq P^2 / S
\end{align*}
is an order quotient module and $\mathcal G^S = \{ \mathcal G_1^S, \ldots,
\mathcal G_4^S \} \subseteq P^2 / S$ with
\begin{align*}
\mathcal G_1^S & = x e_1 + \tfrac{4}{3} e_1 + \tfrac{2}{3} e_2 + S,\\
\mathcal G_2^S & = x e_2 - \tfrac{2}{3} e_1 - \tfrac{1}{3} e_2 + S,\\
\mathcal G_3^S & = y e_1 - e_1 + S,\\
\mathcal G_4^S & = y e_2 - e_2 + S.
\end{align*}
is the $\mathcal M^S$-quotient module border basis of~$\langle \mathcal B_1 + S,
\ldots, \mathcal B_4 + S \rangle$ as computed in line~\ref{algo:subBB-modBB}.
Altogether, we compute
\begin{align*}
\mathcal O_F = \varphi(\mathcal M^S) = \{ f_1, f_2 \} \subseteq J
\end{align*}
in line~\ref{algo:subBB-setOF} and
\begin{align*}
G = \varphi(\mathcal G^S) = \{ \varphi(\mathcal G_1^S), \ldots, \varphi(\mathcal
G_4^S) \} \subseteq J
\end{align*}
with
\begin{align*}
\varphi(\mathcal G_1^S) & = x f_1 + \tfrac{4}{3} f_1 + \tfrac{2}{3} f_2,\\
\varphi(\mathcal G_2^S) & = x f_2 - \tfrac{2}{3} f_1 - \tfrac{1}{3} f_2,\\
\varphi(\mathcal G_3^S) & = y f_1 - f_1,\\
\varphi(\mathcal G_4^S) & = y f_2 - f_2
\end{align*}
in line~\ref{algo:subBB-setG}.

According to Corollary~\ref{thm:subBBcompute}, the set $\mathcal O_F \subseteq
J$ is an $F$-order ideal and~$G \subseteq J$ is an $\mathcal O_F$-subideal
border basis of~$I$.
\end{exmp}

As Remark~\ref{rem:problems-quot} has already shown for quotient module border
bases, we cannot hope to reuse all results of module border bases in a
straightforward for subideal border bases. The following remark explicitly shows
some of these problems---using the examples of a $\mathcal O_F$-index similar
to Definition~\ref{defn:index} and of a subideal border division algorithm
similar to Theorem~\ref{thm:divAlg}---and compares these concepts with the
corresponding concepts introduced in~\cite{SubBB}.

\begin{rem}
\label{rem:subBBindexEtc}
Using the isomorphism in Proposition~\ref{thm:subBBmoduleBB}, we can try to
define and proof the concepts about module border bases for subideal border
bases as described in Remark~\ref{rem:indexDivAlg-quot}. Unfortunately, there
are the same problems, e.\,g.\ this construction does not yield an $\mathcal
O_F$-index that is well-defined for any arbitrary polynomial $p \in J$.\\
But one can also define some of the concepts directly as it has been done in
\cite{SubBB}. In that paper, many concepts we have seen in
Section~\ref{sect:divAlg} have directly been introduced for subideal border
bases, e.\,g.\ an $\mathcal O_F$-index, a Subideal Border Division Algorithm, a
normal remainder, and a characterization of subideal border bases via the
special generation property.\\
In the author's master's thesis~\cite{MA}, several other characterizations have
been proven following the approach of~\cite{SubBB}, e.\,g.\ a direct version of
characterizations via border form modules, via rewrite rules, via liftings of
border syzygies, and via Buchberger's Criterion.
\end{rem}

Although we cannot define all the concepts that we introduced in
Part~\ref{part:freeMod} for subideal border bases according to
Remark~\ref{rem:subBBindexEtc}, we can characterize subideal border bases via
the isomorphism constructed in Proposition~\ref{thm:subBBmoduleBB} the same way
we have characterized quotient module border bases in
Section~\ref{sect:quotModuleBB}. In particular, this yields a new proof of the
characterization via special generation, which has first been proven in
\cite[Coro.~3.6]{SubBB}, and new characterizations via border form modules, via
rewrite rules, via commuting matrices, via liftings of border syzygies and
via (a weaker version of) Buchberger's Criterion for Subideal Border Bases for
Subideal Border Bases.

\begin{rem}[Characterizations of Subideal Border Bases]
\label{rem:char-subBB}
Using the isomorphism in Proposition~\ref{thm:subBBmoduleBB}, all the
characterizations of quotient module border bases in
Section~\ref{sect:quotModuleBB} immediately yield analogous characterizations of
subideal border bases. In particular, we can characterize subideal border bases
via the following properties:
\begin{enumerate}
\renewcommand{\labelenumi}{$\Alph{enumi})$}
  \item special generation, cf.\ Corollary~\ref{thm:specGen-quot},
  \item border form modules, cf.\ Corollary~\ref{thm:BFMod-quot},
  \item rewrite rules, cf.\ Corollary~\ref{thm:rewrite-quot},
  \item commuting matrices, cf.\ Corollary~\ref{thm:commMat-quot},
  \item liftings of border syszygies, cf.\ Corollary~\ref{thm:liftings-quot},
  \item Buchberger's Criterion for Subideal Border Bases, cf.\
  Corollary~\ref{thm:buchbCrit-quot}.
\end{enumerate}
\end{rem}

%
%


\providecommand{\bysame}{\leavevmode\hbox to3em{\hrulefill}\thinspace}
\providecommand{\MR}{\relax\ifhmode\unskip\space\fi MR }
\providecommand{\MRhref}[2]{%
  \href{http://www.ams.org/mathscinet-getitem?mr=#1}{#2}
}
\providecommand{\href}[2]{#2}

\end{document}